\documentclass[a4paper]{article}
\def\writing{0} 
\def\thesis{0} 

\usepackage[utf8]{inputenc}
\usepackage[T1]{fontenc}
\usepackage{textcomp}
\usepackage{amsmath, amssymb, amsthm}
\usepackage{geometry}
\usepackage{tikz-cd}
\usepackage{mdframed}
\usepackage{microtype}
\usepackage{mathrsfs}
\usepackage{rotating}
\usepackage{comment}
\usepackage{enumerate}
\usepackage{graphicx}
\usepackage{stmaryrd}

\usepackage{import}
\usepackage{xifthen}
\usepackage{pdfpages}
\usepackage{transparent}

\usepackage[backend = biber,style=alphabetic]{biblatex}
\if\writing1
	\usepackage[colorinlistoftodos]{todonotes}
\else
	\usepackage[disable]{todonotes}
\fi
\usepackage{zref-clever}
\newcommand{\cref}[1]{\zcref{#1}}
\newcommand{\Cref}[1]{\zcref[S]{#1}}
\usepackage{hyperref}

\if\writing1
	\usepackage[inline]{showlabels}
\fi
\usepackage{mathrsfs}
\usepackage{dsfont}

\newcommand{\N}{\mathbb{N}}
\newcommand{\Z}{\mathbb{Z}}

\newcommand{\Q}{\mathbb{Q}}
\newcommand{\C}{\mathbb{C}}
\newcommand{\R}{\mathbb{R}}

\newcommand{\fra}{\mathfrak{a}}
\newcommand{\frb}{\mathfrak{b}}

\newcommand{\aff}{\mathbb{A}}
\newcommand{\pro}{\mathbb{P}}

\newcommand{\lctc}{\mathbb{L}^{\mathrm{log}}}
\newcommand{\ctc}{\mathbb{L}}
\newcommand{\lOmega}{\Omega^{\mathrm{log}}}

\newcommand{\ltr}{\par \noindent \framebox[1\width]{ $\implies$ } \hspace{.2cm}}
\newcommand{\rtl}{\par \noindent \framebox[1\width]{ $\impliedby$ } \hspace{.2cm} }

\newcommand{\derotimes}{\otimes^\mathbf{L}}

\DeclareMathOperator*{\colim}{colim}

\DeclareMathOperator{\coker}{coker}

\DeclareMathOperator{\im}{Im}
\DeclareMathOperator{\spec}{Spec}

\DeclareMathOperator{\wt}{wt}

\DeclareMathOperator{\length}{length}
\DeclareMathOperator{\cont}{cont}

\DeclareMathOperator{\vd}{cst}
\DeclareMathOperator{\const}{cst}
\DeclareMathOperator{\bls}{bls}
\DeclareMathOperator{\jac}{Jac}

\DeclareMathOperator{\step}{step}
\DeclareMathOperator{\keys}{key}

\DeclareMathOperator{\GL}{GL}
\DeclareMathOperator{\dlog}{dlog}

\DeclareMathOperator{\disc}{disc}
\DeclareMathOperator{\kv}{kv}

\newcommand{\ldisc}{\operatorname{disc}^{\mathrm{log}}}

\newcommand{\dd}{\mathrm{d}}
\newcommand{\into}{\hookrightarrow}

\newcommand{\diff}{\mathfrak{d}}
\newcommand{\ldiff}{\mathfrak{d}^{\mathrm{log}}}

\newcommand{\aldiff}{\mathfrak{d}^{\mathrm{abs,log}}}

\newcommand{\st}{%
  \nonscript\;
  \ifnum\currentgrouptype=16
    \,\middle|\,
  \else
    \,|\,
  \fi
  \nonscript\;}

\swapnumbers

\NewDocumentCommand{\mynewtheorem}{momo}{%
  \IfValueTF{#4}
    {\newtheorem{#1}{#3}[#4]}
    {%
      \IfValueTF{#2}
        {%
          \AddToHook{env/#1/begin}{\zcsetup{countertype={#2=#1}}}%
          \newtheorem{#1}[#2]{#3}%
        }
        {\newtheorem{#1}{#3}}%
    }%
}

\mynewtheorem{theorem}[subsubsection]{Theorem}
\mynewtheorem{lemma}[subsubsection]{Lemma}
\mynewtheorem{corollary}[subsubsection]{Corollary}
\mynewtheorem{conjecture}[subsubsection]{Conjecture}
\mynewtheorem{proposition}[subsubsection]{Proposition}
\theoremstyle{definition}
\mynewtheorem{definition}[subsubsection]{Definition}
\mynewtheorem{question}[subsubsection]{Question}
\mynewtheorem{remark}[subsubsection]{Remark}
\mynewtheorem{observation}[subsubsection]{Observation}
\mynewtheorem{example}[subsubsection]{Example}
\mynewtheorem{computation}[subsubsection]{Computation}
\mynewtheorem{candef}[subsubsection]{Candidate Definition}
\zcRefTypeSetup{computation}{
Name-sg = Computation ,
name-sg = computation ,
Name-pl = Computations ,
name-pl = computations ,
}
\mynewtheorem{notation}[subsubsection]{Notation}
\newtheoremstyle{para} 
    {0}                    
    {1mm}                    
    {}                   
    {}                           
    {\bfseries}                   
    {}                          
    {.5em}                       
    {}  
\theoremstyle{para}
\zcRefTypeSetup{para}{
Name-sg = § ,
name-sg = § ,
Name-pl = § ,
name-pl = § ,
}
\mynewtheorem{para}[subsubsection]{}

\newcommand\nr{\refstepcounter{equation}\tag{\theequation}}
\numberwithin{equation}{subsection}

\renewcommand{\theequation}{\arabic{section}.\arabic{subsection}.\alph{equation}}

\newcommand{\stacks}[1]{\cite[\href{https://stacks.math.columbia.edu/tag/#1}{Tag #1}]{stacks-project}}

\newcommand{\refactor}[1]{\todo[inline, color=green]{#1}}

\author{Micha\"el Maex}
\date{\today}
\title{The integral (log) cotangent complex of extensions of valued fields}
\begin{document}
\let\temp\phi
\let\phi\varphi
\let\varphi\temp
\maketitle
\todo[inline, color=red]{Before posting anywhere, remove todonotes, parts and later sections.}
\begin{abstract}
	Let $(L, v_L) / (K, v_K)$ be a finite or purely transcendental extension of real valued fields. 
	We construct the associated integral cotangent and log cotangent complexes in terms of a MacLane-Vaquié chain approximating  $v_L$. 
	This leads to explicit formulas for associated invariants such as the (absolute) (log) different, weight norm and Kähler norm. 
	As a corollary of our methods we obtain strong control of the higher homology of the integral (log) cotangent complex, generalizing an important result of Gabber and Ramero to the logarithmic setting.  
\end{abstract}
\tableofcontents

\section{Introduction}\label{sec:introduction}

\subsection{The integral (log) differentials and cotangent complex} \label{sec:the_integral_(log)_differentials_and_cotangent_complex}
Let $(L, v_L) / (K, v_K)$ be an extension of valued fields, and let $L^{\circ}, K^{\circ}$ be their respective rings of integers. 
The integral differentials, $\Omega_{L^\circ / K^{\circ}}$ and the integral log differentials $\lOmega_{L^{\circ} / K^{\circ}}$ are the source of many invariants in birational, analytic, and arithmetic geometry. 
We highlight a few examples. 
\begin{itemize}
	\item If $L / K$ is an algebraic extension, then the different is the length of $\Omega_{L^{\circ} / K^{\circ}}$. The log different is defined analogously. These are important invariants in valuation and ramification theory. For example, see \cite[Chapitre~III]{serreCorpsLocaux1968}. 

	\item On Berkovich analytic spaces, an appropriate notion of log discrepancy defines weight functions that encode both birational information and arithmetic information, see \cite{nicaisexu2016}, and \cite{kayaJumps2025}.
		The Kähler norm on $\Omega_{L / K}$ is the unique norm such that $(\Omega_{L^{\circ} / K^{\circ}})_{tf}$ is an almost unit ball \cite{temkinMetrizationDifferentialPluriforms2016}. 
	\item In the theory of embedded resolutions of a singular map $f: X \to \spec \C[T]$, the log discrepancy at each component $E$ of a resolution with generic point $\eta$ is the order of vanishing of the Gelfand-Leray form in the module $\Omega_{\mathcal{O}_{\eta} / \C[T]_{(T)}}^{\text{log,} \wedge \dim X - 1}$ \cite[Lemma~9.6]{nicaisesebag2007}.  
		One of the most important invariants, the log canonical threshold, is the minimal value of the normalized log discrepancies at each component. 
\end{itemize}
All of these invariants can be defined in terms of the length of specific torsion subquotients of the $L^{\circ}$-modules $\Omega_{L^{\circ} / K^{\circ}}$ or $\lOmega_{L^{\circ} / K^{\circ}}$, and they can all be computed from a good description of these modules. 
When $L$ is not discretely valued (i.e.\ $L^{\circ}$ is non-Noetherian), the length of an $L^{\circ}$-module is no longer a useful invariant. 
Instead, we will use Temkin's \emph{content} \cite[Section 2.6]{temkinMetrizationDifferentialPluriforms2016}, which generalizes length and in particular, agrees with length when the value group of $L$ is $\Z$. 

While the original goal was to obtain a description of the module of integral (log) differentials that is sufficient to compute these invariants, it is actually easier to keep track of the whole (log) cotangent complex throughout the computation.
The cotangent complex of a ring morphism $A \to B$, denoted $\ctc_{B  /A}$, is a derived version of the differentials $\Omega_{B / A}$ and was first introduced in \cite{illusieComplexeCotangentDeformations2009}. 
The log cotangent complex, denoted $\lctc_{B / A}$ is an analogous version for log rings.
In this paper, the log cotangent complex will always refer to Gabber's log cotangent complex as described in \cite[Section 8]{olssonLogarithmicCotangentComplex2005}.
\subsection{Motivation} \label{sec:motivation}

This paper is part of the author's PhD thesis, which aims to define and study weight functions on Berkovich spaces for the purpose of studying skeleta. 
However, this construction of $\ctc_{L^{\circ} / K^{\circ}}$ and $\lctc_{L^{\circ} / K^{\circ}}$ and computation of the associated invariants is an entirely value-theoretic matter, and is interesting in its own right due to the aforementioned connections to number theory and birational geometry. 
For this reason, we publish this separately.  
Except for a few remarks, we defer the applications in Berkovich geometry to a follow-up paper.

\subsection{Main results} \label{sec:main_theorems}

After the technical computation of the (log) cotangent complex of $L^{\circ} / K^{\circ}$ when $L / K$ is finite or purely transcendental (PT) in \Cref{sec:the_torsion_of_Omega_for_primitive_extensions} we conclude with the main results in Sections \ref{sec:extensions_of_valued_fields} and \ref{sec:the_kahler_valuation_and_the_absolute_log_different}. 
These consist of simple formulas for various invariants associated with extensions of valued fields in terms of augmentation chains, as well as control of the higher homology of the integral (log) cotangent complex. 
By the nature of this kind of work, the main results are multiple smaller results. As not all invariants are defined for all extensions, it would be infeasible to repeat every result in the introduction. 
Instead, we have assembled an overview of the main results in \Cref{tab:overview_results}. 

\begin{table}[h!]
	\centering
\if\thesis1
\begin{tabular}{l|ll}
	Extension & Computed object & Theorem \\
	\hline 
	\textbf{finite} &
	Structure of $\ctc_{L^{\circ} / K^{\circ}}$ and $\lctc_{L^{\circ} / K^{\circ}}$ & \Cref{prop:ctc_finite_ext} \\
			&The (log) different $\diff_{L / K}$ and $\ldiff_{L / K}$ & \Cref{prop:comp_log_diff} \\
	\hline 
	\textbf{PT} & 
	Structure of $\ctc_{L^{\circ} / K^{\circ}}$ and $\lctc_{L^{\circ} / K^{\circ}}$ & \Cref{prop:pure_trans_deg_0} \\
		& The (log) discrepancy $\disc \dd T$ and $\ldisc \dd T$ & \Cref{thm:disc_and_log_disc_pure_trans}\\
		& Kähler valuation $\kv \dd T$ & \Cref{thm:comp_kahler}\\
		& The absolute log different $\aldiff_{L / K}$  & \Cref{thm:comp_log_different} \\ 
	\hline
	\textbf{arbitrary} & Homology $H_i(\ctc_{L^{\circ} / K^{\circ}})$ and $H_i(\lctc_{L^{\circ} / K^{\circ}})$ & \Cref{thm:lctc_higher_homology}
\end{tabular}
\caption{Overview of the computed invariants for an extension $L/ K$}
\else
\begin{tabular}{l|ll}
	Extension & Computed object & Theorem \\
	\hline 
	\textbf{$L / K$ is finite} &
	Structure of $\ctc_{L^{\circ} / K^{\circ}}$ and $\lctc_{L^{\circ} / K^{\circ}}$ & \Cref{prop:ctc_finite_ext} \\
			&The (log) different $\diff_{L / K}$ and $\ldiff_{L / K}$ & \Cref{prop:comp_log_diff} \\
	\hline 
	\textbf{$L := K(T) / K$ is PT} & 
	Structure of $\ctc_{L^{\circ} / K^{\circ}}$ and $\lctc_{L^{\circ} / K^{\circ}}$ & \Cref{prop:pure_trans_deg_0} \\
		& The (log) discrepancy $\disc \dd T$ and $\ldisc \dd T$ & \Cref{thm:disc_and_log_disc_pure_trans}\\
		& Kähler valuation $\kv \dd T$ & \Cref{thm:comp_kahler}\\
		& The absolute log different $\aldiff_{L / K}$  & \Cref{thm:comp_log_different} \\ 
	\hline
	\textbf{$L / K$ is arbitrary} & Homology $H_i(\ctc_{L^{\circ} / K^{\circ}})$ and $H_i(\lctc_{L^{\circ} / K^{\circ}})$ & \Cref{thm:lctc_higher_homology}
\end{tabular}
\caption{Overview of the computed invariants}
\fi
\label{tab:overview_results}

\end{table}
\subsection{The approach} \label{sec:the_approach}

Let $(K, v_K)$ be a non-trivially valued field. 
One of the main technical tools we use are chains of ordinary and limit augmentations as introduced by M.\ Vaquié \cite{vaquie2005,vaquie2007a,vaquie2007b} and summarized by E. Nart in \cite{nartMacLaneVaquieChains2021}.
In short, these give an explicit description of any semi-valuation, $w$, on $K[T]$ extending $v_K$ in terms of the data called key polynomials. 
For each intermediate step of the chain, we define a ring $R_i$ such that the union of these rings gives $(K^{\circ}[T]_{\ker w}, w)^{\circ}$ after a localization.
We can then keep track of the complexes $\ctc_{R_i / K^{\circ}}$ and $\lctc_{R_i / K^{\circ}}$, as well as the content of certain subquotients which define the aforementioned invariants. 
The process is analogous to breaking up a complicated field extension into a series of smaller ones. 
However, the rings $R_i$ are not valuation rings.
Instead, we introduce the notion of \emph{scalable} rings, which for these purposes behave similarly. 
This way we can understand any finite and purely transcendental extension.

MacLane-Vaquié chains were chosen primarily because, unlike similar constructions, they work without any further assumptions on $K$.
The theory is ``blind'' to any base change behavior, which, for this work, is a feature as it avoids any wild ramification complications. 
There are some further advantages. 
The data in the chain is closely related to the metric and piecewise linear structure on the valuative tree. This will be studied more extensively in the special case of the Berkovich affine line in a follow up work. 
The explicit nature makes it easy to construct and study examples. 
The key polynomials match well to the structure of the log differentials, allowing us to directly compute the log differentials, whereas previous works have usually studied the log case by comparing it to the (non-log) differentials.
In fact $\{\dlog \phi_i\}_{i \in I}$ where $\phi_i$ runs over the key polynomials in an augmentation chain generate $\lOmega_{(K^{\circ}[T]_{\ker w})^{\circ}/ K^{\circ}}$, see \Cref{rem:total_generators_log_differentials}, and each augmentation corresponds to an extension of the log differentials by a cyclic module.

\subsection{Overview of the paper} \label{sec:overview}

\Cref{sec:maclane-vaquie_valuations} is dedicated to valuation theory. 
We start by fixing some conventions for (semi) valuations on rings and modules. 
Then we give a brief overview of the theory of MacLane-Vaquié. 
Finally, we introduce some notions that will be useful for the computations later on. 

Most of the technical computations are contained in \Cref{sec:the_torsion_of_Omega_for_primitive_extensions}. 
Here we compute the (log) cotangent complex and associated invariants of an extension in terms of a MacLane-Vaquié chain. 
Readers who are not interested in the methods can safely skip this section.

Sections \ref{sec:extensions_of_valued_fields} and \ref{sec:the_kahler_valuation_and_the_absolute_log_different} contain the main results of this paper. 
The results are, for the most part, direct consequences of the computations in \Cref{sec:the_torsion_of_Omega_for_primitive_extensions}. 

\subsection{Further questions} \label{sec:further_questions}

It is natural to ask whether the same computations can be carried out when we remove the assumption that valuations are real valued. 
As the original motivation was Berkovich geometry, where only real valued fields are considered, we did not study this in detail. 
Still, we can make the following conjectural remarks. 
It is likely that a version of the computation in \Cref{sec:the_torsion_of_Omega_for_primitive_extensions} still holds for more general valuations, although one will probably need to perform a more careful analysis of the cases ($\aleph$)-($\daleth$) in \Cref{par:cases_aleph_to_daleth} and add a case when $\mu$ increases the rank of the value group. 
With these modifications, we expect that all results regarding the structure of $\ctc_{(K[T]_{\ker w}, w)^{\circ} / K^{\circ}}$ and $\lctc_{(K[T]_{\ker w}, w)^{\circ} / K^{\circ}}$ will generalize. 
However, any results that concern the content of modules, suffer from the fact that no good notion of content exists for general modules over general valuation rings, see \cite[§~5.1.10-5.1.11]{hubtem}. 

\subsection{Relation to other works} \label{sec:relation_to_other_works}

The idea that the integral log differential can contain interesting information about reductions of analytic spaces goes back to \cite{kontsevichAffineStructuresNonArchimedean2006} and was further developed into the theory of weight functions in \cite{mustataWeightFunctionsNonArchimedean2015,bakerWeightFunctionsBerkovich2016}. 
This is what inspired the study of the log-discrepancy in this work. 
In fact, the log-discrepancy can be extended to the weight valuation, see \Cref{rem:hint_weight}.
Another metrization of differential forms on Berkovich spaces, the Kähler norm/valuation, appears in \cite{temkinMetrizationDifferentialPluriforms2016}. 
We study this in \Cref{sec:the_kahler_norm_and_key_polynomials}. 

As far as the author is aware, the integral (log) differentials of $L / K$ outside the case where $L$ is discretely valued were first studied in \cite{gabberAlmostRingTheory2003}, and this has inspired further exploration, for example \cite{temkinMetrizationDifferentialPluriforms2016} and this paper.  

Other works have related similar invariants to MacLane-Vaquié chains. 
In \cite{nartLocalComputationDifferents2014} the different of a finite extension is computed from a MacLane chain in the discretely valued case. 
In \cite{novacoskiKahlerDifferentialsPure2025} the structure of $\Omega_{L^{\circ} / K^{\circ}}$ is described in the special case when $L$ is a pure extension of $K$.

Towards the end of the writing of this paper, the author was made aware of an unreleased draft by Katharina Hübner and Michael Temkin \cite{hubtem}, which obtains some overlapping results, albeit by different methods. 
More precisely, \cite[Theorem 5.4.1]{hubtem} is the same as \Cref{thm:lctc_higher_homology}. 
And, \cite[Lemma 2.5.3]{hubtem}  is a special case of \Cref{thm:comp_log_different}, when the MacLane-Vaquié chain consists of a single augmentation of degree $p$.

\subsection{Acknowledgements} \label{sec:acknowledgements}

The author would like to thank his PhD advisor, Johannes Nicaise, for the support throughout this project. 
Art Waeterschoot, Tom Biesbrouck, and Antoine Ducros have been very helpful through many informal discussions about this and related material. 
Special gratitude goes to Michael Temkin for taking a lot of time to discuss with the author during his visit at the IHES. 
The author was supported by FWO (grant G0A2923N).

\section{Valuations and augmentations}\label{sec:maclane-vaquie_valuations}
The purpose of this section is to fix some conventions and notation relating to valuations as well as introduce the theory of Vaquié-MacLane, which serves as our main tool for studying finite and purely transcendental extensions of valued fields. 
\Cref{sec:notation_of_valuations} introduces basic concepts on (semi-)valuations, \Cref{sec:a_recap_of_maclane} gives an overview of constructing semi-valuations on $K[T]$. 
Finally, we use \Cref{sec:steps_and_w_optimal} to introduce a few concepts related to augmentation chains that will be useful in the computations later on. 

\subsection{Conventions on valuations} \label{sec:notation_of_valuations}
\begin{definition}
	A semi-valuation on a ring $A$ is a function $v: A \to (-\infty, \infty]$ such that for all $a, b \in A$
	\begin{itemize}
		\item $v(0) = \infty$,
		\item $v(a\cdot b) = v(a) + v(b)$,
		\item  $v(a+ b) \ge \min \{v(a), v(b)\} $.
	\end{itemize}
	A \emph{semi-valued ring} is a pair $(A, v)$. 
	We further define 
	\begin{itemize}
		\item the \emph{value group} $\Gamma_v $ to be the groupification of $v(A)\setminus \{\infty\} $. If the semi-valuation $v$ on $A$ is clear from context we may write $\Gamma_A$ instead of $\Gamma_v$. 
			We write $\Gamma_v(\infty) = \Gamma_v \cup \{\infty\} $. 
		\item We say that $v$ is \emph{trivial} if $\Gamma_v = \{0\} $. 
		\item For any $r \in \R \cup \{-\infty, \infty\}$ we write $A^{\ge r} = v^{-1}([r, \infty)) $ and $A^{> r} = v^{-1}((r, \infty))$. 
		\item  The \emph{kernel of} $v$ is $\ker v := v^{-1}(\infty)$.
		\item We say that  $v$ is a valuation if $\ker v = \{0\} $. 
		\item The \emph{unit ball} $A^{\circ} = A^{\ge 0} $ and the \emph{residue ring} $\widetilde A = A^{\circ} / A^{> 0}$. 
	\end{itemize}
\end{definition}
\begin{para}
The definition of a (semi-)valuation is not entirely standard across the literature. 
Notably, we require that $\Gamma_v \subset \R$, i.e. our semi-valuations are of rank 1. 
We require $\ker v = 0$ in the definition of a valuation, as is standard in the literature on non-archimedean geometry and number theory. 
Most literature on augmentations and key polynomials such as \cite{maclane1936construction,nartMacLaneVaquieChains2021} omit the requirement $\ker v = 0$, and thus their valuation is what we call a semi-valuation.  
\end{para}
\begin{definition}
	Let $(A, v)$ be a semi-valued ring and $M$ an $A$-module. 
	Then a semi-valuation on $M$ is a map $w: M \to \R(\infty)$ such that for any $a, b\in M$ and $c \in A$
	\begin{itemize}
		\item $w(a + b) \ge \min \{w(a), w(b)\} $ and, 
		\item $w(c\cdot a) = v(c) + w(a)$.
	\end{itemize}
	Suppose that $B$  is a $(A, v)$-algebra, then a semi-valuation $w$ on $B$ is a function that is both a semi-valuation when considering $B$ as a ring and as an $A$-module, or equivalently it is a semi-valuation on the ring  $B$ that extends the semi-valuation on  $A$. 
\end{definition}

\subsection{A brief overview of MacLane-Vaquié theory} \label{sec:a_recap_of_maclane}
\begin{para}
	Let $(K, v_K)$ be a non-trivially valued field. Throughout the rest of the paper we will assume that all semi-valuations on $K[T]$ are semi-valuations as a  $K$-algebra, i.e.\ they extend the valuation of $K$. 
	Several efforts have been made to classify or describe the set of all semi-valuations on $K[T]$. 
	When  $K$ is discretely valued Saunders MacLane found a way of constructing all semi-valuations on $K[T]$ in an iterative way \cite{nartMacLaneVaquieChains2021}. 
	When $K$ is complete, the set of semi-valuations on $K[T]$ can be interpreted as the points of the Berkovich affine line.
	In the special case when $K$ is also algebraically closed Vladimir Berkovich classified all semi-valuations on $K[T]$ in terms of decreasing chains of closed disks on $K$ and used this to divide the points into types I-IV \cite[§~1.4.4]{berkovichSpectralTheoryAnalytic1990}. 

	Later work by Michelle Vaquié extends the approach of MacLane to remove any assumptions on $(K, v_K)$ other than  $v$ being non trivially valued \cite{vaquie2005,vaquie2007a,vaquie2007b}.
	A nice overview paper on this was written by Enric Nart \cite{nartMacLaneVaquieChains2021}, which serves as the main source for this subsection. 
	We only study rank 1 semi-valuations on $K[T]$. This simplifies some results in \cite{nartMacLaneVaquieChains2021}, as we can uniformly choose the value group to a subgroup of $\R$ everywhere. 
\end{para}

\begin{para}
	The theory describes three ways of constructing a new semi-valuation on $K$-algebra $K[T]$ from one or more semi-valuations on $K[T]$: \emph{ordinary augmentations}, \emph{limit augmentations} and \emph{stable limits}, whose definitions we will recall shortly.
	The main result of the theory, \Cref{thm:vaquie_main}, states that a semi-valuation $w$ on $K[T]$ can be obtained by starting from a Gauss semi-valuation $v_0$ and the aforementioned building blocks. 
	By imposing simple restrictions on how we build $w$ with these augmentations - known as a MacLane-Vaquié chain - one can even construct $w$ in a canonical way up to some well described equivalence. 
\end{para}
\begin{definition}
	Let $\phi \in K[T]$ be a non-constant polynomial, and $f \in K[T]$ be any polynomial. 
	Then the $\phi$-expansion of $f$ is the unique way of writing \[
		f = \sum_{i = 0}^{n} a_i\phi^{i},
	\] 
	with $a_i \in K[T]$ and $\deg(a_i) < \deg(\phi)$.
	The $\phi$-expansion can be easily computed using Euclid's algorithm. 
\end{definition}

\begin{definition}
	A \emph{Gauss semi-valuation} on $K[T]$ is a semi-valuation of the form 
	 \[
		 w: K[T] \to \R(\infty): \sum_{i = 0}^{n} a_i (T - \alpha)^{i} \mapsto \min_{i = 0}^{n} \left\{v_K(a_i) + i\cdot \mu\right\}
	,\] 
	for a fixed $\alpha \in K$ and $\mu \in \R(\infty)$.
	We say that $\alpha$ is a \emph{center of $w$} and $\mu$ is the \emph{log-radius}. 
	To be consistent with notation later on, we will use the notation  \[
		w = [v_K, T-\alpha, \mu]
	.\] 
\end{definition}
Note that in our additive language the log-radius is simply the negative logarithm of the radius in the multiplicative language. For example a disk of radius $1$ would have log-radius $0$.  

\begin{para}
	Let $v$ be a semi-valuation on $K[T]$ and let $f, g, h \in K[T]$. 
	Then we say that 
	\begin{itemize}
		\item $f$ and $g$ are \emph{$v$-equivalent}, written $f \sim_v g$, if $f = g = 0$ or $v(f-g) > v(f)$, equivalently $v(f-g) > v(g)$, 
		\item $f$ is \emph{$v$-divisible} by $g$, written $g \mid_v f$, if there is a $c \in K[T]$ such that $gc \sim_v f$, 
		\item $f$ is $v$-irreducible\footnote{Perhaps $v$-prime would be a more apt name, but historically this is called $v$-irreducible.} if whenever $f \mid_v h_1h_2$ with $h_1, h_2 \in K[T]$, then $f \mid_v h_1$ or $f \mid_v h_2$,
		\item $f$ is $v$-minimal if whenever $f \mid_v g$ and $g\ne 0$, then $\deg f \le \deg g$,
	\end{itemize}
\end{para}
\begin{definition}
	Let $v$ be a semi-valuation on $K[T]$ and $f \in K[T]$.
	We say that $f$ is a \emph{key polynomial for $v$} if $f$ is monic, $v$-minimal, and $v$-irreducible. 
\end{definition}
Perhaps, the most important property of key polynomials is that they make the semi-valuation in the following definition well-defined. 

\begin{definition}\label{def:ordinary_augmentation_datum}
	An \emph{ordinary augmentation} of degree $m$ is a triple $\fra = (v, \phi, \mu)$ with $v$ a semi-valuation on $K[T]$,  $\phi$ a key polynomial for $v$ with $\deg \phi = m$, and $\mu \ge v(\phi)$. 

	Then we define
	\[
		[\fra]: K[T] \to \R: f = \sum_{i = 0}^{n}a_i \phi^{i} \mapsto \min_{i= 0}^{n} \{v(a_i) + i\cdot \mu\} 
	,\] 
	with $f = \sum_{i = 0}^{n}a_i \phi^{i}$ the $\phi$-expansion of $f$. 
	Then $[\fra]$ is a semi-valuation with $[\fra] \ge v$. 
	We may also write this as $[v, \fra]$ or $[v, \phi, \mu]$. 
\end{definition}
\begin{para}
In works cited on aumentations of semi-valuations the augmented semi-valuations are implicitly identified with the data defining the augmentation. 
But we find it clearer to distinguish the two. 
For us the augmentation will always refer to the defining data, and the resulting semi-valuation is referred to as the augmented semi-valuation. 
\end{para}
\begin{definition}
	Let  $I$ be a totally ordered set and $(v_i)_{i \in I}$ a sequence of semi-valuations on a ring $A$. 
	Then we say that $(v_i)_{i \in I}$ is a \emph{stable family} if $(v_i(a))_{i \in I}$ stabilizes for every $a \in A$, and we say that $\lim_{i} v_i$ is the \emph{stable limit} of $(v_i)_{i \in I}$. 
\end{definition}
\begin{definition}\label{def:continuous_family_datum}
	A \emph{continuous family} of degree $m$ and log-radius $\mu$ is a tuple $\fra = (v, (\psi_i, \gamma_i)_{i \in I})$ where 
	\begin{itemize}
		\item $I$ is a totally ordered set without maximal element, 
		\item each $(v, \psi_i, \gamma_i)$ is an ordinary augmentation of degree $m$. 
			For the remainder of the definition we write $v_i = [v, \psi_i, \gamma_i]$.
		\item For each $i <  j \in I$, $(v_i, \psi_j, \gamma_j)$   is an ordinary augmentation, with $\psi_j \not\sim_{v_i} \psi_i$. 
	\end{itemize}
	Moreover, we say that $\fra$ is
	\begin{itemize}
		\item \emph{stable} if $(v_i)_{i \in I}$ is a stable family,
		\item \emph{almost stable} if it is not stable and $\lim_i \gamma_i  = \infty$, 
		\item \emph{essential} if is not stable and $(v_i(f))_{i \in I}$ stabilizes for every $f \in K[T]$ with  $\deg(f) \le m$.
	\end{itemize} 
	We define $[\fra] = \lim_{i} v_i$ to be the pointwise limit, which also is a semi-valuation.
\end{definition}
\refactor{change almost stable to Cauchy? (Think about this)}

\begin{definition}
	Continue with the notation as in \Cref{def:continuous_family_datum}. 
	If $\fra$ is essential, a \emph{limit key polynomial for $\fra$} is a monic $f \in K[T]$ of minimal degree with the property that $(v_i(f))_{i \in I}$ does not stabilize. 
\end{definition}
\begin{remark}\label{rem:continuous_family}
	Continue with the context of the previous definition. 
	By the line before Lemma 3.4 in \cite{nartMacLaneVaquieChains2021} we see that $(v(\psi_i))_{i \in I}$ is constant and for  $i < j \in I$, it holds that $[v, \psi_i, \gamma_i] = [v, \psi_j, \gamma_i]$. 
	So we see that $(\gamma_i)_{i \in I}$ is strictly increasing. 
	Intuitively, the family of key polynomials $(\psi_i)$ give increasingly accurate directions in the valuative tree, and the $\gamma_i$'s tell us how far in the valuative tree the augmentation moves. 
\end{remark}

\begin{para}\label{rem:almost_stable}
	Note that if $\frb$ is almost stable and $f \in K[T]$ such that $(v_i(f))_{i \in I}$ does not stabilize, then necessarily $\lim_i v_i(f) = \infty$. 
	Indeed, let $f = \sum_{j = 0}^{n}a_j \psi_i^{j}$ be the $\psi_i$ expansion and write $v(\psi)$ for the constant value of $v(\psi_i)$.
	The function 
	\[
		p_{i}:[v(\psi), \gamma_i] \to \R\infty: r\mapsto [v, \psi_i, r](f) = \min_{i = 0}^{n} \{v(a_j) + j r\} 
	,\] 
	is the minimum of non-decreasing affine functions, and as such non-decreasing with decreasing slopes. 
	If $p_i$ would stabilize at some point then there would be $r_1< r_2 \in [v(\psi_i), \gamma_i]$ and $v_{r_1} := [v, \psi_i, r_1], v_{r_2} := [v, \psi_i, r_2]$, and $v \le v_{r_1} < v_{r_2} \le v_i$ such that $v_{r_1}(f) = v_{r_2}(f)$.
	But this contradicts that $(v_i(f))_{i \in I} $ does not stabilize, see \cite[Corollary~2.5(ii)]{nartMacLaneVaquieChains2021}.
	Thus $p_i$ has at least slope 1 at every point and thus $v_i(f) \ge v(f) + (\gamma_i - v(\psi)) $, which tends to $\infty$ as $\gamma_i $ tends to $\infty$. 
\end{para}

\begin{definition}\label{def:limit_augmentation_datum}
	A \emph{limit augmentation} of degree $m$ and log-radius $\mu$ is a tuple $\fra = (v, (\psi_i, \gamma_i)_{i \in I}, \phi, \mu)$ such that $\frb = (v, (\psi_i, \gamma_i)_{i \in I})$ is an essential continuous family, $\phi$ is a limit key polynomial for $\frb$, $\deg \phi = m$,  and $\mu \ge [v, \psi_i, \gamma_i](\phi)$ for all $i \in I$. 
	Sometimes we may write $(\frb, \phi, \mu)$ instead. 

	We define 
	\[
		[\fra]: K[T] \to \R: f = \sum_{i = 0}^{n} a_i \phi^{i} \mapsto \min_{i =0}^{n} \{[\frb](a_i) + i\cdot \mu \}
	,\]
	with $f = \sum_{i = 0}^{n}a_i \phi^{i}$ the $\phi$-expansion of $f$.
	This defines a semi-valuation, see \cite[Proposition~3.5]{nartMacLaneVaquieChains2021}, which we may sometimes denote as $[\frb, \phi, \mu]$.
	Moreover, we say that $\fra$ is \emph{almost stable} if $\frb$ is almost stable. 
\end{definition}

Note that if a limit augmentation $\fra = (v, (\psi_i, \gamma_i)_{i \in I}, \phi, \mu)$ is almost stable, then necessarily $\mu = \infty$ by \cref{rem:almost_stable}.
\begin{definition}\label{def:augmentation_datum}
	An \emph{augmentation } $\mathfrak{a}$ on $v$ is an ordinary augmentation, limit augmentation, or a stable continuous family on $v$. 
	Recall that in each case we denote the augmented semi-valuation by $[\mathfrak{a}]$.
\end{definition}
\begin{definition}\label{def:augmentation_chain}
	Let $v$ be a semi-valuation on $K[T]$. 
	An \emph{augmentation chain} on  $v$ is a sequence of augmentation data $(\mathfrak{a}_i)_{i = 1}^{n}$ with $n \in \Z_{\ge 0}\cup \{\infty\}$, $\fra_1$ an augmentation on $v$ and for all $i < n$, 
	\begin{itemize}
		\item $\fra_i$ is an ordinary or limit augmentation, 
		\item $\fra_{i + 1}$ is an augmentation on $[\fra_i]$.
	\end{itemize}
	In particular, a stable continuous family may only appear at the end of the chain. 

	Let $w$ be a semi-valuation on $K[T]$. 
	We say that an augmentation chain $(\mathfrak{a} _i)_{i = 1}^{n}$ \emph{approximates} $w$ if $w = [\fra_n]$ when $n \ne \infty$ or $w$ is the stable limit of $([\fra_i])_i$ when $n = \infty$. 
	We write this as $w = [(\fra_i)_{i = 1}^{n}]$.
\end{definition}
\begin{notation}
	Let $w, v$ be two semi-valuations on $K[T]$. Then we write 
	\if\thesis1
	 \begin{align*}
		 K[T]_{w = v} &= \{f \in K[T] \mid v(f) = w(f)\}, \\ K[T]_{w \ne v} &= \{f \in K[T] \mid v(f) \ne w(f)\} 
	 ,\end{align*}
	 \else
	 \[
		 K[T]_{w = v} = \{f \in K[T] \mid v(f) = w(f)\}, \quad K[T]_{w \ne v} = \{f \in K[T] \mid v(f) \ne w(f)\} 
	 ,\] 
	\fi 
	 and let $\Phi_{v, w}$ be the subset of monic polynomials of minimal degree in $K[T]_{v \ne w}$.
\end{notation}
If  $w \ge v$ one should think of $\Phi_{v,w}$ as the set of key polynomials on  $v$ that point towards $w$ in the valuative tree.  
\begin{definition}
	An augmentation chain $(\fra_i)_{i = 1}^{n}$ on $v_0$ is a \emph{MacLane-Vaquié chain} if it consists of only ordinary and limit augmentations and for every $i$ with $1 \le  i < n$
	\begin{itemize}
		\item when $\fra_i$ is ordinary, then $\deg \phi_{i-1} < \deg(\Phi_{v_{i-1}, v_i})$,
		\item when $\fra_i$ is limit, then  $\deg \phi_{i} = \deg (\Phi_{v_{i-1}, v_i})$ and $\phi_{i-1} \not\in  \Phi_{v_{i-1}, v_i}$,
	\end{itemize}
	where $v_j = [\fra_j]$, $\phi_j$ is the key polynomial of $\fra_j$ and $\phi_0$ is a key polynomial for $v_0$ of minimal degree. 
\end{definition}

\begin{theorem}[The main theorem of Vaquié, \protect{\cite[Thm~4.3]{nartMacLaneVaquieChains2021}}]\label{thm:vaquie_main}
	Any semi-valuation $w$ on $K[T]$ falls into exactly one of the following cases, denoted (a), (b), and (c) following \textit{loc.\ cit}.
	\begin{itemize}
		\item[(a)] There is a Gauss semi-valuation $v$ and a finite MacLane-Vaquié chain $(\mathfrak{a} _i)_i$ on $v$ such that $(\mathfrak{a}_i)_i$ approximates $w$. 
		\item[(b)] There is a Gauss valuation $v$ and a finite augmentation chain $(\mathfrak{a} _i)_{i = 1}^{n}$ on $v$ approximating $w$ such that $(\mathfrak{a} _i)^{n-1}_{i = 1}$ is a MacLane-Vaquié chain and $\mathfrak{a} _n$ is a stable continuous family augmentation with $\deg v_{n-1} = \deg \Phi_{v_{n-1}, w}$ and $\phi_{n-1} \notin \Phi_{v_{n-1}, w}$, with $\phi_{n-1}$ the key polynomial of $\fra_{n-1}$. 
		\item[(c)] There is a Gauss valuation $v$ and an infinite MacLane-Vaquié chain $(\mathfrak{a} _i)_i$ on $v$ such that $(\mathfrak{a}_i)_i$ approximates $w$. 
	\end{itemize}
\end{theorem}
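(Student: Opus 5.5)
This statement is quoted from \cite[Thm~4.3]{nartMacLaneVaquieChains2021}, so in the paper it will simply be cited. If I were to reprove it, I would follow Vaquié's iterative construction, organised around the sets $\Phi_{v,w}$. The plan is to build, for a fixed semi-valuation $w$ on $K[T]$, a chain $v_0 < v_1 < \dots$ of semi-valuations with $v_i \le w$. At each stage $v_i$ either equals $w$ or is improved by an augmentation in the direction of $\Phi_{v_i,w}$.

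\textbf{Starting point.} Consider the Gauss semi-valuations $[v_K, T-\alpha, \mu] \le w$ with $\mu$ as large as possible.
\begin{itemize}
\item If $\sup_\alpha w(T-\alpha)$ is attained, say at $\alpha$, take $v_0 = [v_K, T-\alpha, w(T-\alpha)]$. Then $v_0 \le w$, and $v_0$ agrees with $w$ on linear polynomials.
\item If the supremum is not attained, the $T-\alpha$ already form a continuous family of degree 1.
\end{itemize}

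\textbf{Inductive step.} Given $v = v_i \le w$ with $v \ne w$, I would first show that every element of $\Phi_{v,w}$ is a key polynomial for $v$. This is the standard lemma: minimality of degree in $K[T]_{v\ne w}$ gives $v$-minimality, and $v$-irreducibility follows from $v(fg)<w(fg)$ forcing $v(f)<w(f)$ or $v(g)<w(g)$. Next I would compare $m = \deg \Phi_{v,w}$ with $\deg \phi_i$.
\begin{itemize}
\item If $m > \deg\phi_i$, choose $\phi \in \Phi_{v,w}$ and consider the ordinary augmentation $[v,\phi,w(\phi)]$. One checks that it is still $\le w$, using $\phi$-expansions and the fact that the coefficients have degree $<m$, so $v$ and $w$ agree on them. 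This gives the ordinary step.
\item If $m = \deg \phi_i$ and $\sup_{\phi\in\Phi_{v,w}} w(\phi)$ is attained, again do an ordinary augmentation.
\item If the supremum is not attained, index by an increasing sequence of values $\gamma$ with $\psi_\gamma \in \Phi_{v,w}$. This produces a continuous family $(v,(\psi_\gamma,\gamma))$. Such a family is either stable with limit $w$, giving case (b), or essential. In the essential case, a limit key polynomial $\phi$ of minimal degree exists, and the limit augmentation $[\frb,\phi,w(\phi)] \le w$ gives the limit step.
\end{itemize}
Almost stable families fall under the essential/limit case with $\mu=\infty$, by \cref{rem:almost_stable}. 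The MacLane-Vaquié conditions on degrees and on $\phi_{i-1}\notin\Phi$ are built into these choices.

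\textbf{Termination and the trichotomy.} I would show that the degrees of the key polynomials are non-decreasing, and that strict increase happens at least every two steps. If the chain stops, we are in case (a), or in case (b) if it ended with a stable family. If it never stops, I would show that $w$ is the stable limit: for a fixed $f$, once $\deg\phi_i > \deg f$ we have $v_i(f)=w(f)$, which gives case (c). The bounded-degree infinite case needs care, since the degrees could stay bounded if $K$ is not henselian. Exclusivity of the three cases follows from the uniqueness results for MacLane-Vaquié chains: the degrees, and the data $\deg \Phi_{v_i,w}$, are invariants of $w$.

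\textbf{Main obstacle.} I expect the hardest step to be showing that an essential continuous family with a limit key polynomial $\phi$ gives $[\frb,\phi,\mu]\le w$. This amounts to controlling $\phi$-expansions against the limit $[\frb]$, and I would handle it via \cite[Proposition~3.5]{nartMacLaneVaquieChains2021}.
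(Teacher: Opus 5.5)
The paper gives no argument of its own here; it simply quotes \cite[Thm~4.3]{nartMacLaneVaquieChains2021}, as you anticipated. Your reconstruction follows the right Vaqui\'e--Nart strategy (greedy augmentation towards $w$ via $\Phi_{v,w}$), but as written it does not produce MacLane--Vaqui\'e chains. Your second bullet appends an ordinary augmentation whose key polynomial has degree $\deg\phi_i$. In an MLV chain, however, an ordinary step $\mathfrak{a}_{i+1}$ must satisfy $\deg\phi_i<\deg\Phi_{v_i,v_{i+1}}=\deg\phi_{i+1}$. So the MLV conditions are \emph{not} built into your choices, and your count ``strict increase at least every two steps'' reflects exactly this defect.

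The repair is to augment with a maximiser of $w$ on $\Phi_{v,w}$ whenever one exists, both in the first bullet and when choosing the limit key polynomial. Then the second bullet never arises. Suppose $v'=[v,\phi,w(\phi)]$ with $\phi$ maximising, and suppose some $\phi'\in\Phi_{v',w}$ had $\deg\phi'=\deg\phi$. Write $\phi'=\phi+a$ with $\deg a<\deg\phi$.
\begin{itemize}
\item Since $w(a)=v(a)$, the condition $v'(\phi')<w(\phi')$ forces $v(a)=w(\phi)$.
\item By $v$-minimality of $\phi$, this gives $v(\phi')=v(\phi)<w(\phi')$.
\item Hence $\phi'\in\Phi_{v,w}$ with $w(\phi')>w(\phi)$, contradicting maximality.
\end{itemize}
Alternatively, you can merge consecutive equal-degree steps via $[[v,\phi,\gamma],\phi',\gamma']=[v,\phi',\gamma']$. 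When the maximum is not attained, your pattern of one ordinary step followed by a continuous family of the same degree is exactly what the MLV conditions require.

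With this fix, $\deg\phi_i$ increases strictly at \emph{every} step. For limit steps this is because a limit key polynomial has degree larger than its essential family, whose degree is $\deg\phi_{i-1}$. So an infinite MLV chain automatically has unbounded degrees, and your stabilisation argument yields (c). The ``bounded-degree infinite case'' you worry about does not occur: bounded-degree behaviour is precisely what continuous families absorb (your third bullet).

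Conversely, the step you single out as the main obstacle is routine. The coefficients of a $\phi$-expansion have degree $<\deg\phi$, hence are $\mathfrak{b}$-stable, and a $\mathfrak{b}$-stable $a$ satisfies $[\mathfrak{b}](a)=w(a)$. To see this, write $\rho_i=[v,\psi_i,\gamma_i]$. If $\rho_i(a)<w(a)$, then since $\psi_j\in\Phi_{\rho_i,w}$ for $j>i$, we get $\psi_j\mid_{\rho_i}a$ and hence $\rho_i(a)<\rho_j(a)$, contradicting stability. So $[\mathfrak{b},\phi,w(\phi)]\le w$ exactly as in the ordinary case. What \cite[Prop.~3.5]{nartMacLaneVaquieChains2021} is really needed for is that the limit augmentation is a semi-valuation having $\phi$ as a key polynomial, so that the induction can continue.

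Finally, exclusivity does not come for free from ``uniqueness''; you need an invariant of $w$. For example, $w$ has nonzero kernel or a key polynomial only in case (a), and cases (b) and (c) are separated by whether $\deg\Phi_{u,w}$ stays bounded as $u<w$ varies.
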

\begin{para}
	These chains are unique up to some well described equivalence. Its precise formulation is a bit involved, and we will not need it going further. 
	The interested reader may consult \cite[Section~4.3]{nartMacLaneVaquieChains2021}. 
	Some remarks on how \Cref{thm:vaquie_main} relates to other classifications of semi-valuations on $K[T]$ are in order.
\end{para}

\begin{remark}
	If $K$ is discretely valued, the theorem relates to MacLane's original classification \cite[Thm 8.1]{maclane1936construction} as follows. 
	This classification constructs any semi-valuation as the last semi-valuation in a finite chain of ordinary augmentations or a point-wise (not necessarily stable) limit of an infinite chain of ordinary augmentations, called an inductive semi-valuation and a limit semi-valuation respectively. 
	
		Suppose that $w$ is an inductive semi-valuation, then clearly $w$ falls in case (a) and all $\fra_i$ are ordinary augmentations. 
		Suppose that $w$ is a limit semi-valuation, and $([v_{i -1}, \phi_i, \mu_i])_{i \in \N}$ the augmentation chain of ordinary augmentations. 
			\begin{itemize}
				\item If $(\deg \phi_i)_{i}$  is unbounded, then $\lim_i v_{i}$ is necessarily a stable limit. Thus, $w$ belongs to case (c).
				\item If $(\deg \phi_i)_i$ is bounded and stabilizes at $i = n$ then we may put all augmentations of top degree $(\fra_i)_{i = n}^{\infty}$ in a continuous family $\frb$. 
				\begin{itemize}
				\item If  $\frb$ is unstable, then $w$ belongs to case (b) by discreteness. 
				\item If $\frb$ is inessential, then we may replace it by one ordinary augmentation and $w$ belongs to case (a). 
				\item If $\frb$ is essential, we may choose a limit key polynomial  $\psi$ and then the chain $(\fra_0, \ldots, \fra_{n-1}, (\frb, \psi, \infty))$ is a chain approximating $w$. 
				Thus, $w$ belongs to case (a).
			\end{itemize}
	\end{itemize}
	Note that when $K$ is discretely valued, the only limit augmentation that may appear in a chain from \Cref{thm:vaquie_main} is an almost stable limit augmentation at the end of the chain.
	Note that if $K$ is complete, this cannot occur. 
\end{remark}
\begin{remark}
	If $K$ is complete and algebraically closed, then all key polynomials are linear. 
	As a result, all MacLane-Vaquié chains are of length $0$ on a Gauss semi-valuation $v(x - a) = \mu$ which is the supremum norm on the disk with center $a$ and log-radius $\mu$. 
	So case (c) does not occur. In case (a), the semi-valuation is simply the supremum norm on a disk. 
	And in case  (b) it is a limit of the supremum norms on a chain of decreasing disks. 
	Thus, the theorem reduces to the classical Berkovich classification theorem. 
\end{remark}
\begin{remark}
	If $K$ is complete, the semi-valuations on $K[T]$ are points in the Berkovich affine line $\aff_K^{\text{an}}$. 
	There is a correspondence between the cases of \Cref{thm:vaquie_main} and the type I, II, III, IV points on $\aff^{1, \text{an}}_K$. 
	Let $v$ be a semi-valuation on $K[T]$ and $(\fra_i)_{i = 1}^{n}$ be an augmentation chain approximating $v$ as in \Cref{thm:vaquie_main}.
	Let $x$ be the corresponding point of $\aff^{\text{an}}_K$.
	If $v$ falls in case (a), then $x$ is of type I, II, III if $\mu = \infty$, $\mu \in \Gamma_v \otimes\Q$, $\mu \not\in \Gamma_v \otimes \Q \cup \{\infty\} $ respectively with $\mu$ the log-radius of $\fra_n$. 
	If $v$ falls in case (b), then $x$ is of type IV. 
	Finally, if $v$ falls in case (c), then $x$ is of type IV or a non-rigid type I point, which means that $\mathcal{H} (x)$ is not a finite field extension of $K$. 
	The author does not know of a simple criterion to distinguish between type IV and I in this case.  
\end{remark}

\subsection{Steps and \texorpdfstring{$w$}{w}-optimal augmentations} \label{sec:steps_and_w_optimal}
\begin{para}
	In this section, we introduce the concepts of the step of an augmentation and $w$-optimal augmentations, which will be useful for the results and computations in the following sections. 
\end{para}

\begin{definition}\label{def:w-optimal}
	Let $w$ be a semi-valuation on $K[T]$ and let $\mathfrak{a} $ be an augmentation on $v$.
	We say that $\mathfrak{a} $ is \emph{$w$-optimal} if 
	\begin{itemize}
		\item $\mathfrak{a} = (v , \phi, \mu) $ is an ordinary augmentation, and $\mu = w(\phi)$,
		\item or $\mathfrak{a}  = (v, (\psi_i, \gamma_i)_{i \in I})$ is a continuous family, and each ordinary augmentation $(v, \psi_i, \gamma_i) $ is $w$-optimal,
		\item or $\mathfrak{a}  = (v, (\psi_i, \gamma_i)_{i \in I}, \phi, \mu)$ is a limit augmentation, and for each $i$ we have $\gamma_i = w(\psi_i)$ and $\mu = w(\phi)$. 
	\end{itemize}

	We say that an augmentation chain is $w$-optimal if each augmentation in the chain is $w$-optimal. 
\end{definition}

\begin{para}
	By \cite[Corollary~2.5.(2)]{nartMacLaneVaquieChains2021} we see that a chain $(\fra_i)_{i = 1}^{n}$ approximating $w'$ with $w' \le w$ is $w$-optimal if and only if for every $i < n$ we have that $\fra_i$ is $[\fra_i]_{i + 1}$-optimal and, if $n$ is finite, $\fra_n$ is $w$-optimal. 
\end{para}

\begin{proposition}\label{prop:optimal_chain}
	Let $w$ be any semi-valuation on $K[T]$, and let $(\mathfrak{a}_i)_{i = 1}^{n} $ be an augmentation chain approximating $w$ as in \Cref{thm:vaquie_main}.
	Then $(\mathfrak{a}_i)_{i = 1}^{n} $  is $w$-optimal.
\end{proposition}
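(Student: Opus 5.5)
We reduce to a local statement about consecutive augmentations. Write $v_i = [\fra_i]$, $v_0 = v$ the Gauss semi-valuation, and $\phi_i,\mu_i$ for the key polynomial and log-radius of $\fra_i$ (and $\psi^{(i)}_j,\gamma^{(i)}_j$ for the continuous family data when $\fra_i$ is a limit augmentation). By the paragraph preceding the proposition (using \cite[Corollary~2.5]{nartMacLaneVaquieChains2021}), it suffices to show two things. First, each $\fra_i$ with $i<n$ is $v_{i+1}$-optimal, i.e.\ $\mu_i = v_{i+1}(\phi_i)$, and in the limit case also $\gamma^{(i)}_j = v_{i+1}(\psi^{(i)}_j)$. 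Second, if $n$ is finite, $\fra_n$ is $w$-optimal. In case (c) we instead argue that $v_{i+1}(\phi_i)=v_k(\phi_i)$ for all $k>i$, and hence equals $w(\phi_i)$ by stability.

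\textbf{The last augmentation.} In cases (a) and (b) we have $w=[\fra_n]$. For an ordinary or limit $\fra_n$, the $\phi_n$-expansion of $\phi_n$ is $\phi_n$ itself, so $[\fra_n](\phi_n)=\mu_n$ directly from the defining formula; that is, $\mu_n = w(\phi_n)$. For the $\psi_j$ in a limit augmentation, note that $\deg \psi_j<\deg\phi$, since $\phi$ is a limit key polynomial and the $v_j(\psi_j)$ stabilize. So the $\phi$-expansion of $\psi_j$ is the constant $\psi_j$, and $[\fra](\psi_j)=[\frb](\psi_j)=\lim_k v_k(\psi_j)$. By \Cref{rem:continuous_family}, $v_k=[v,\psi_k,\gamma_k]=[v,\psi_j,\gamma_k]$ for $k>j$. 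Hence for $k\ge j$ the $\psi_j$-expansion of $\psi_j$ gives $v_k(\psi_j)=\gamma_j$ when $k=j$. For $k>j$ we need $v_k(\psi_j)=\gamma_j$ still, using $[v,\psi_j,\gamma_j]\le v_k$ together with $\psi_j\not\sim_{v_j}\psi_k$. Indeed, writing $\psi_j=\psi_k+(\psi_j-\psi_k)$, where $\deg(\psi_j-\psi_k)<m$, gives $v_k(\psi_j)=\min(\gamma_k, v(\psi_j-\psi_k))$, and $v(\psi_j-\psi_k)=v_j(\psi_j-\psi_k)=\gamma_j<\gamma_k$ by the non-equivalence. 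This yields $\gamma_j=[\fra](\psi_j)$. In case (b), the same computation applied to the stable continuous family $\fra_n$ shows $w(\psi_j)=\gamma_j$.

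\textbf{Intermediate augmentations.} We must show $v_{i+1}(\phi_i)=\mu_i$. If $\fra_{i+1}$ is ordinary with key polynomial $\phi_{i+1}$, then the MacLane–Vaquié condition gives $\deg\phi_i<\deg\phi_{i+1}$. So $\phi_i$ is its own $\phi_{i+1}$-expansion, and $v_{i+1}(\phi_i)=v_i(\phi_i)=\mu_i$. If $\fra_{i+1}$ is a limit augmentation on $v_i$ with family $(\psi_j,\gamma_j)$, we argue in two stages. First, the degree condition $\deg\phi_i\le\deg\psi_j$ together with $\phi_i\notin\Phi_{v_i,v_{i+1}}$ gives $v_i(\phi_i)=v_{i+1}(\phi_i)$, provided $\deg\phi_i=\deg\Phi_{v_i,v_{i+1}}$. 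When $\deg \phi_i$ is strictly smaller, the claim holds simply because $\phi_i$ lies below the minimal degree of $K[T]_{v_i\neq v_{i+1}}$. The same argument handles (b) with $\fra_n$ a stable family. The remaining data $\gamma^{(i)}_j$ for $i<n$ are handled by the previous paragraph applied to $[\fra_i]$, combined with $v_{i+1}(\psi)=v_i(\psi)$ for $\deg\psi<\deg\phi_{i+1}$, which follows from the minimal-degree property of $\Phi$.

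\textbf{Main obstacle.} The main obstacle is the limit-augmentation bookkeeping in the intermediate step. There we must carefully use $\phi_{i-1}\notin\Phi_{v_{i-1},v_i}$ to rule out the degenerate case where $\phi_{i}$ itself is moved by the continuous family. We also need the transfer of equalities along the chain all the way to $w$ via \cite[Corollary~2.5]{nartMacLaneVaquieChains2021}.
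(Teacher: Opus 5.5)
Your proof is correct and follows the paper's argument. The final augmentation is optimal by its defining formula, and the MacLane--Vaqui\'e conditions ($\deg\phi_i<\deg\phi_{i+1}$, respectively $\deg\phi_i=\deg\Phi_{v_i,v_{i+1}}$ with $\phi_i\notin\Phi_{v_i,v_{i+1}}$) keep $\phi_i$ fixed by the next step. The family data satisfy $v_k(\psi_j)=\gamma_j$ for $k\ge j$; you verify this by a direct $\psi_k$-expansion, where the paper cites Nart's properties between 3.3 and 3.4. Finally, \cite[Corollary~2.5]{nartMacLaneVaquieChains2021} transports everything to $w$.

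Two slips should be corrected, though neither is load-bearing. First, \Cref{rem:continuous_family} says $[v,\psi_j,\gamma_j]=[v,\psi_k,\gamma_j]$ for $j<k$. Your version $[v,\psi_k,\gamma_k]=[v,\psi_j,\gamma_k]$ is false: it would give $v_k(\psi_j)=\gamma_k$, contradicting your own computation. Second, the invariance $v_{i+1}(\psi)=v_i(\psi)$ holds for $\deg\psi<\deg\Phi_{v_i,v_{i+1}}$, not for $\deg\psi<\deg\phi_{i+1}$; the family polynomials of a limit augmentation $\fra_{i+1}$ are counterexamples. The correct criterion still applies here, because $\deg\psi^{(i)}_j<\deg\phi_i\le\deg\Phi_{v_i,v_{i+1}}$. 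The first of these inequalities comes from essentiality of the family, not from stabilization of $\psi_j$.
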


\begin{proof}
	Let $\fra_i$ be an ordinary or limit augmentation with (limit) key polynomial $\phi_i$ and $\mu_i = [\fra_i](\phi_i)$. 
	If it is the last augmentation in the chain then $w = [\fra_i]$ and clearly $\mu_i = w(\phi_i)$. 
	Else, consider the next augmentation $\fra_{i+1}$.
	If it is an ordinary or limit augmentation then it's part of the MLV-chain, thus $\phi_i \not\in \Phi_{[\fra_i], [\fra_{i + 1}]}$. 
	So by \cite[Corollary 2.5]{nartMacLaneVaquieChains2021} we see that $\phi_i \not\in  \Phi_{[\fra_i], w}$. 
	On the other hand, if $\fra_{i + 1}$ is a continuous family, then we are in case (b) of \Cref{thm:vaquie_main}, and then also $\phi_i \not\in \Phi_{[\fra_i], w}$. 
	In all cases we can conclude that $w(\phi_i) = \mu_i $. 
	So we have already shown that all ordinary augmentations are $w$-optimal and the second half of the definition for limit augmentations. 
	It remains to show that the continuous families (and those in the limit augmentations) are $w$-optimal. 
	Let $\frb = (v', (\psi_i, \gamma_i)_{i \in I})$ be such a continuous family. 
	Take any  $i \in I$ and choose a $j > i$, which exists as $I$ has no maximal element.
	By the properties written between 3.3 and 3.4 in \cite{nartMacLaneVaquieChains2021} we see that $[v', (\psi_j, \gamma_j)](\psi_i) = \gamma_i$. 
	Thus, by \cite[Corollary 2.5]{nartMacLaneVaquieChains2021} we see that $\gamma_i = w(\psi_i)$. 
\end{proof}

\begin{para}
The $w$-optimal condition ensures that all the individual steps in the augmentation process are controlled in a way suitable for the computations in \Cref{sec:the_torsion_of_Omega_for_primitive_extensions}. 
We would also like our chains to start with as simple a semi-valuation as possible, like the Gauss valuation with $v = [v_K, T, 0]$. 
Then we have the nice property that $(K[T], v)^{\circ} = K^{\circ}[T]$.
As this is not always possible, we slightly relax this to the notion of a \emph{simple} Gauss valuation which is defined by $w = [v_K, x-a, \mu]$ with  $\mu \in \Gamma_K$. 
Let $S = b^{-1}(x - a)$. Then $(K[T], w)^{\circ} = (K[S], w)^{\circ} = K^{\circ}[S]$. 
\end{para}
\begin{corollary}\label{cor:optimal_simple_chain}
	Any semi-valuation $w$ on $K[T]$ can be approximated by a chain of $w$-optimal augmentations on a simple Gauss valuation. 
\end{corollary}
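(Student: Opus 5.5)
Start from \Cref{thm:vaquie_main}: $w$ is approximated by an augmentation chain $(\fra_i)_{i=1}^n$ on a Gauss semi-valuation $v = [v_K, T-\alpha, \mu_0]$, and by \Cref{prop:optimal_chain} this chain is already $w$-optimal. Two things remain: the base $v$ need not be simple, since $\mu_0$ may lie outside $\Gamma_K$ or be $\infty$; and we may have $n = 0$, in which case $w = v$ itself. The plan is to modify the start of the chain, replacing $v$ by a simple Gauss valuation $v' = [v_K, T-\alpha, \nu]$ with $\nu \in \Gamma_K$ and $\nu \le \mu_0$, and inserting an extra ordinary augmentation from $v'$ to $v$.

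Concretely, we use that $K$ is non-trivially valued, so $\Gamma_K \subset \R$ is a nontrivial subgroup and is unbounded below. Pick $\nu \in \Gamma_K$ with $\nu \le \mu_0$; we may take $\nu = \mu_0$ if $\mu_0 \in \Gamma_K$, in which case nothing changes. Since $T-\alpha$ is linear and monic, it is a key polynomial for $v'$: degree-one monic polynomials are always $v'$-minimal, and $v'$-irreducibility follows because, for the Gauss valuation, $v'$-divisibility by $T-\alpha$ is detected on the lowest-order term of the $(T-\alpha)$-expansion. Hence $\fra_0 := (v', T-\alpha, \mu_0)$ is an ordinary augmentation with $\mu_0 \ge \nu = v'(T-\alpha)$. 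Moreover $[\fra_0] = v$, because the two formulas coincide on $(T-\alpha)$-expansions.

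It then remains to check that $\fra_0$ is $w$-optimal, i.e.\ $w(T-\alpha) = \mu_0$. If $n = 0$, then $w = v$ and this is immediate. Otherwise, by the remark after \Cref{def:w-optimal}, it suffices to show $\fra_0$ is $[\fra_1]$-optimal, i.e.\ $v_1(T-\alpha) = v(T-\alpha)$ where $v_1 = [\fra_1]$. This holds because $\fra_1$ is an augmentation on $v$ whose key polynomial, or whose family $\psi_i$, has degree at least $\deg \Phi_{v,w}$. Any polynomial of degree smaller than that degree keeps its $v$-value, so $T-\alpha$ is unaffected provided $\deg \Phi_{v,v_1} > 1$.

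I expect this last point to be the main obstacle. If the first key polynomial $\phi_1$ is linear, $v_1(T-\alpha)$ may exceed $\mu_0$. In that case I would not insert $\fra_0$ in front of $\fra_1$. Instead I would replace $\fra_1 = (v, \phi_1, \mu_1)$ by the single augmentation $(v', \phi_1, \mu_1)$. Since $\phi_1 \sim_{v} T-\alpha'$ for some $\alpha'$, the semi-valuation $[v', \phi_1, \mu_1]$ equals $[v_K, T-\alpha', \mu_1] = v_1$, using the center-change property of Gauss valuations. We then recheck $w$-optimality via \cite[Corollary~2.5]{nartMacLaneVaquieChains2021}. A similar adjustment, choosing $\nu$ below all relevant values, handles a linear continuous family at the start.

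The resulting chain lies on a simple Gauss valuation, approximates $w$, and every augmentation in it is $w$-optimal.
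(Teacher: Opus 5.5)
Your proposal is correct and follows the paper's proof: take the $w$-optimal chain on a Gauss semi-valuation $[v_K, T-\alpha, \mu_0]$ given by \Cref{thm:vaquie_main} and \Cref{prop:optimal_chain}, then prefix the ordinary augmentation $(v', T-\alpha, \mu_0)$ from a simple Gauss valuation $v'$ of smaller log-radius. You are more careful than the paper in checking that this prefix is $w$-optimal, and your fallback for a linear first step (rebasing the first augmentation on $v'$, valid because $[v',\psi,\gamma]=[v_K,\psi,\gamma]=[v,\psi,\gamma]$ for every linear key polynomial $\psi$ of $v$) is sound; it can nonetheless be avoided by taking $T-\alpha$ to be the $\phi_0$ of the MacLane--Vaqui\'e conditions, which rule out a linear ordinary first step and otherwise give $\phi_0\notin\Phi_{v,v_1}$, i.e.\ $v_1(T-\alpha)=\mu_0$.
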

\begin{proof}
	By \Cref{thm:vaquie_main,prop:optimal_chain} there exists a $w$-optimal chain approximating $w$ on a Gauss semi-valuation $v_1 = [v_K, x-a, \mu_1]$ . 
	Choose any $r \in \Gamma_K$ with $r < \mu_1$ and let $v_0$ be the simple Gauss valuation defined by $v_0(x-a) = r$. 
	We can now prefix the chain with the ordinary augmentation $\fra_0 = (v_0, x-a, \mu_1)$, to obtain a $w$-optimal chain on $v_0$.
\end{proof}

\begin{definition}\label{def:step}
	Let $\mathfrak{a} $ be an augmentation on $v$. We define the \emph{step of} $\mathfrak{a}$ as follows.
	\begin{itemize}
		\item If $\mathfrak{a} = (v , \phi, \mu)$ is an ordinary augmentation, then \[\step \mathfrak{a}  = \mu - v (\phi) \]
		\item If $\mathfrak{a} = (v, (\psi_i, \gamma_i)_{i\in I})$ is a continuous family, then 
			\[\step \mathfrak{a}  := \lim_{i \in I}  (\gamma_i - v(\psi_i)).\]
		 \item If $\mathfrak{a}  = (v, (\psi_i, \gamma_i)_{i \in I}, \phi, \mu)$ is a limit augmentation with $\mathfrak{b}  = (v , (\psi_i, \gamma_i)_{i \in I}) = (\mathfrak{b} _i)$ the continuous family. 
			 Then \[\step \mathfrak{a} := \step \mathfrak{b}  + \lim_{i\in I}( \mu - [v, \psi_i, \gamma_i](\phi)).\]
	\end{itemize}
	If $(\fra_i)_{i = 1}^{n}$ is an augmentation chain then we define \[
		\step \left((\fra_i)_{i = 1}^{n}\right) = \sum_{i = 1}^{n} \step \fra_i
	.\] 
\end{definition}
\begin{para}
	Note that for a $\mathfrak{a} = (v, (\psi_i, \gamma_i)_{i\in I})$ the sequence $(\gamma_i - v(\psi_i))$ is increasing by \Cref{rem:continuous_family}.
\end{para}
\begin{para}
	Step measures how far in the valuative tree the augmentation has moved the valuation. 
	Actually, the step of a chain on $v$ approximating $w$ depends only on $v$ and $w$. 
	We won't prove this here as it will be an immediate corollary of \Cref{thm:computation_main}.
\end{para}

\section{The (log) cotangent complex associated to augmentations}\label{sec:the_torsion_of_Omega_for_primitive_extensions}
Throughout this section we take $(K, v_K)$ to be a non-trivially valued field, with no further assumptions. 

\begin{para}
	Let $w$ be a semi-valuation on the polynomial ring $K[T]$. 
	The goal of this section is to construct the cotangent complex, and its logarithmic analogue associated to the extensions  $(K[T]_{\ker w}, w)^{\circ} / K^{\circ}$ and  $(K[T]_{\ker w}, w)^{\circ} / K^{\circ}[\phi_0]$ for some appropriate linear polynomial $\phi_0$, using the data in a $w$-optimal augmentation chain approximating $w$.  
	The main idea is to use the augmentation chain to construct the ring $(K[T]_{\ker w}, w)^{\circ}$ from $K^{\circ}[\phi_0]$ using locally complete intersections, directed increasing unions and localizations. 
	These are all operations under which the behavior of (log) differentials is well understood.  
\end{para}

\subsection{Notation and helpful definitions} \label{sec:value_divisible_elements}
\subsubsection{Measuring torsion with content} \label{sec:measuring_torsion_with_content}

When $K$ is discretely valued and $M$ is a torsion $K^{\circ}$-module, then classically the ``size of $M$'' is measured by $\length M$. 
One of the main goals of this section is to understand the torsion part of $\Omega_{\mathcal{H} ^{\circ}(x) / K^{\circ}}$. 
However, the definition of $\length$ only makes sense when $K^{\circ}$ is Noetherian, i.e.\ when $K$ is discretely valued. 

In \cite[§~2.6]{temkinMetrizationDifferentialPluriforms2016}, the \emph{content} of a $K^{\circ}$-module is defined, which generalizes $\length$, even when  $K^{\circ}$ is non-Noetherian. 
Content, like the rest of \cite{temkinMetrizationDifferentialPluriforms2016}, is phrased in the multiplicative language of norms, whereas we use the additive language of valuations. 
For the convenience of the reader, we will rephrase the definition, and repeat and expand some basic properties in the additive language. 
\begin{definition}\label{def:cont}
	Let $M$ be a finitely presented $K^{\circ}$-module.
	There is an isomorphism $M \simeq \bigoplus_{i = 1}^{n} \frac{K^{\circ}}{(\alpha_i)}$ for some $\alpha_i \in K^{\circ}$.
	Then we define $\cont(M) = \sum_{i = 1}^{n} v_K(\alpha_i)$.
	Note that some $\alpha_i$ may be zero, in which case $\cont(M) = \infty$. 
	If $M$ is not finitely presented, we define 
	\[
		\cont(M) = \sup_{i} \cont(M_i)
	,\] 
	where $M_i$ runs through all finitely presented subquotients of $M$. 
\end{definition}

If $\Gamma_K = \gamma\cdot \Z, \gamma > 0$ and $M$ is finitely presented, then $\cont M = \gamma\cdot \length M$.
In particular, when $\gamma = 1$ then $\length$ and content agree, but content has the further advantage of rescaling automatically when $\gamma \ne 1$. 
Like  $\length$, content is also additive under exact sequences. 

\begin{lemma}\label{lem:cont_additive}
	Let  $\ldots \xrightarrow{f_{-2}} M_{-1} \xrightarrow{f_{-1}} M_0 \xrightarrow{f_0} M_1 \xrightarrow{f_1} \ldots$ be an exact sequence of $K^{\circ}$-modules. 
	Then \[
		\sum_{i \text{ even}} \cont(M_i) = \sum_{i \text{ odd}} \cont(M_i). 
	\]
\end{lemma}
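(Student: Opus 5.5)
The plan is to reduce to short exact sequences and then telescope. Write $K_i := \im f_{i-1} = \ker f_i \subseteq M_i$. Exactness gives short exact sequences
\[
0 \to K_i \to M_i \to K_{i+1} \to 0 \qquad \text{for all } i .
\]
Suppose we know that content is additive on short exact sequences, i.e.\ $\cont(M_i) = \cont(K_i) + \cont(K_{i+1})$. Then both $\sum_{i \text{ even}} \cont(M_i)$ and $\sum_{i \text{ odd}} \cont(M_i)$ are equal to $\sum_{j} \cont(K_j)$. Each $j$ appears exactly once in each sum, since $j$ and $j-1$ have opposite parity. All terms lie in $[0,\infty]$, so rearranging these possibly infinite sums is harmless. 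Everything therefore reduces to the short exact case $0 \to N \to M \to P \to 0$.

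\textbf{Step 1: finitely presented torsion modules.} I would use that $K^{\circ}$ is a valuation ring, hence coherent. By \Cref{def:cont}, a finitely presented torsion module $M$ is isomorphic to $\bigoplus K^{\circ}/(\alpha_i)$ with all $\alpha_i \neq 0$. So $M = \coker(A)$ for the square diagonal matrix $A = \mathrm{diag}(\alpha_i)$, and $\cont(M) = v_K(\det A)$. This valuation of the determinant is independent of the chosen square presentation, since it is the valuation of a generator of the $0$-th Fitting ideal.

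Now let $0 \to N \to M \to P \to 0$ be exact with $N, P$ finitely presented torsion (then $M$ is too, since an extension of finitely presented modules is finitely presented). Choose square presentations of $N$ and $P$ and run the horseshoe construction. This produces a square presentation of $M$ whose matrix is block upper triangular, with diagonal blocks the matrices for $N$ and $P$. Taking determinants gives $\cont(M) = \cont(N) + \cont(P)$.

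The non-torsion finitely presented case is easy. If $P$ has a free summand, then so does $M$, since $P$ is then free-by-torsion and the free part splits. Both sides are then $\infty$. If only $N$ is non-torsion, then $M$ is non-torsion, and again both sides are $\infty$.

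\textbf{Step 2: monotonicity and the general case.} First I would show that $\cont$ is monotone under taking subquotients, i.e.\ a subquotient of a subquotient is a subquotient. Then I would prove the two inequalities separately.

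For $\cont(M) \ge \cont(N) + \cont(P)$, take finitely presented subquotients $N'$ of $N$ and $P'$ of $P$. Lift generators of $P'$ to $M$, and add finitely many elements to realize $N'$ inside. This builds a finitely presented subquotient $M'$ of $M$ sitting in an exact sequence $0 \to N'' \to M' \to P' \to 0$ in which $N''$ surjects onto $N'$. Coherence is what keeps everything finitely presented here. Step 1 then gives $\cont(M') \ge \cont(N') + \cont(P')$.

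For $\cont(M) \le \cont(N) + \cont(P)$, take a finitely presented subquotient $M' = A/B$ of $M$. Intersecting with $N$ and projecting to $P$ gives a short exact sequence $0 \to (A \cap N)/(B \cap N) \to A/B \to \bar A / \bar B \to 0$. Its outer terms are subquotients of $N$ and $P$, though they need not be finitely presented. I would approximate them by finitely presented subquotients from inside, using that $\cont$ of a finitely presented module is attained by any cofinal family of finitely presented sub-presentations.

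\textbf{Main obstacle.} I expect this last approximation, the non-finitely-presented half of Step 2, to be the main obstacle. If it becomes too fiddly, I would instead invoke the corresponding statement in \cite[§~2.6]{temkinMetrizationDifferentialPluriforms2016}, translated from norms to valuations.
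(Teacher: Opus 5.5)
Your reduction is exactly the paper's: split the sequence into the short exact sequences $0 \to \im f_{i-1} \to M_i \to \im f_i \to 0$, telescope, and take additivity on short exact sequences from \cite[Theorem~2.6.7]{temkinMetrizationDifferentialPluriforms2016}, which the paper cites without reproving, so your stated fallback is precisely the paper's proof. Your Step~1 (the finitely presented case, via block-triangular square presentations and Fitting ideals) is sound, but the Step~2 sketch does not go through as written: a finitely generated submodule of an arbitrary $M$ need not be finitely presented, since coherence of $K^{\circ}$ only controls submodules of finitely presented modules, so your $M'$ is not automatically finitely presented, and the citation is therefore needed rather than optional.
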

\begin{proof}
	For every $i$ we consider the SES \[
	0 \to \im f_{i-1} \to M_i \to \im f_i \to 0
	.\] 
	Then by \cite[Theorem~2.6.7]{temkinMetrizationDifferentialPluriforms2016} we find $\cont(M_i) = \cont(\im f_{i-1}) + \cont (\im f_i)$. 
	Thus, \[
		\sum_{i \text{ even}} \cont(M_i) = \sum_{i} \cont (\im f_i) = \sum_{i \text{ odd}} \cont(M_i)
	.\] 
\end{proof}
\begin{lemma}\label{lem:cont_union}
	Suppose that a $K^{\circ}$-module $M$ is a filtered union of submodules $M_i$. Then, 
	\[
		\cont(M) = \lim_{i} \cont(M_i)
	.\] 
\end{lemma}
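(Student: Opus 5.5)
We prove the two inequalities $\lim_i \cont(M_i) \le \cont(M)$ and $\cont(M) \le \lim_i \cont(M_i)$ separately. First note the limit exists in $[0,\infty]$. If $M_i \subset M_j$, then $M_i$ is a submodule of $M_j$. By \Cref{lem:cont_additive} applied to $0 \to M_i \to M_j \to M_j/M_i \to 0$, we get $\cont(M_i) \le \cont(M_j)$, since content is nonnegative. So the net $(\cont(M_i))_i$ is nondecreasing and its limit equals its supremum. The same argument shows that content is monotone under submodules in general: every finitely presented subquotient of a submodule $N \subset M$ is also a subquotient of $M$. Hence directly from \Cref{def:cont} we get $\cont(N) \le \cont(M)$. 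Applying this to each $M_i \subset M$ gives $\sup_i \cont(M_i) \le \cont(M)$.

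For the reverse inequality, by \Cref{def:cont} it suffices to show $\cont(P) \le \sup_i \cont(M_i)$ for every finitely presented subquotient $P = N/N'$ of $M$, where $N' \subset N \subset M$. Since $P$ is finitely generated, choose finitely many $x_1, \ldots, x_r \in N$ whose images generate $P$. Since $P$ is finitely presented, the kernel of $(K^{\circ})^r \to P$ is finitely generated, say by relations $\rho_1, \ldots, \rho_s$. Each relation $\rho_k$ says that $\sum_l \rho_{k,l} x_l$ lies in $N'$, so it equals some element $y_k \in N'$. The union is filtered, so there is a single index $i$ with $x_1, \ldots, x_r, y_1, \ldots, y_s \in M_i$. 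Put $\tilde N = \sum_l K^{\circ} x_l \subset N \cap M_i$ and $\tilde N' = \sum_k K^{\circ} y_k \subset N' \cap M_i$. Then $\tilde N / (\tilde N \cap \tilde N')$ is a subquotient of $M_i$. It surjects onto $P$, because the $x_l$ generate $P$. It also receives a surjection from $P$: the map $(K^{\circ})^r \to \tilde N/(\tilde N\cap \tilde N')$ kills every $\rho_k$, because $\sum_l \rho_{k,l}x_l = y_k \in \tilde N'$. So it factors through $P$, and composing the two maps gives the identity of $P$.

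The step I expect to be the delicate one is concluding from this that $\cont(P) \le \cont(M_i)$, because the module $Q := \tilde N/(\tilde N \cap \tilde N')$ need not itself be finitely presented. I would argue directly. Both maps are surjective and their composite $P \to Q \to P$ is the identity, so the surjection $Q \to P$ is also injective, and $Q \cong P$. Thus $P$ is itself a finitely presented subquotient of $M_i$, and $\cont(P) \le \cont(M_i)$ by \Cref{def:cont}.

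Combining the two inequalities yields $\cont(M) = \sup_i \cont(M_i) = \lim_i \cont(M_i)$. If some $\cont(P) = \infty$, the same argument shows some $\cont(M_i) = \infty$, so the equality also holds in that case.
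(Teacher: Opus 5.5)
Your proof is correct. It differs from the paper mainly in that the paper gives no argument: it just cites Temkin's lemma. You derive the statement directly from \Cref{def:cont}.

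\begin{itemize}
\item Monotonicity of content, via \Cref{lem:cont_additive}, gives $\sup_i\cont(M_i)\le\cont(M)$ and identifies the limit with the supremum.
\item The key observation is that any finitely presented subquotient $P$ of $M$ is, up to isomorphism, already a subquotient of a single $M_i$. Only finitely many generators $x_l$ and finitely many relation witnesses $y_k$ need to be captured by the filtered union.
\item Your argument that $Q\cong P$ is sound. Incidentally, $\tilde N'\subset\tilde N$, so $\tilde N\cap\tilde N'=\tilde N'$.
\end{itemize}

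One point of precision. When $M_i$ (or $M$) is itself finitely presented, its content is given by the direct formula rather than the supremum. So ``$\cont(P)\le\cont(M_i)$ by \Cref{def:cont}'' really uses that the two descriptions agree, i.e.\ monotonicity of content under submodules and quotients. This follows from \Cref{lem:cont_additive} exactly as in your first paragraph, so nothing is missing.

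Your route gives a self-contained proof that uses only the finiteness of a presentation and additivity of content. The paper's citation is shorter because it defers to Temkin's framework, where the consistency of the definition of content is established.
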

\begin{proof}
	This is \cite[Lemma 2.2.6]{temkinMetrizationDifferentialPluriforms2016}.
\end{proof}

\begin{para}
	Content does not commute with arbitrary filtered colimits $(M_i)_{i \in I}$ when it is not an increasing union, i.e. the maps $M_i \to M_j$ are not /injections. For example, let $M$ be any module with non-zero content and take the colimit of $(M)_{i \in \N}$ where all the maps are zero. 
However, if the kernels of the maps $M_i \to M_j$ tend to ``almost vanish'' when $i$ increases, then content still commutes. We make this precise in the following lemma.
\end{para}
\begin{lemma}\label{lem:cont_colim}
	Let $(M_i)_{i \in I}$ be a filtered system of $K^{\circ}$-modules with colimit $M$ and write $f_{ij}: M_i \to M_j$, $f_i: M_i \to M$ for the natural maps.  
	Then, \[
		\cont(M) = \lim_{i} \cont(M_i)
	,\] 
	if one of the following conditions holds
	\begin{enumerate}
		\item[(a)] $\lim_i \cont\left(\ker f_i\right)  = 0 $
		\item[(b)]  $\lim_{i} \lim_{j \ge i}\cont\left(\ker f_{ij}\right)  = 0 $ 
		\item[(c)] $(\cont M_i)_{i \in I}$ converges and $\lim_{i} \lim_{j \ge i}\cont\left(\coker f_{ij}\right)  = 0 $.
	\end{enumerate}
\end{lemma}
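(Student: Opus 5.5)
The idea is to compare $M$ with the images $f_i(M_i)\subseteq M$ and to use only additivity (\Cref{lem:cont_additive}) and the union property (\Cref{lem:cont_union}). Since the system is filtered, $M=\bigcup_i f_i(M_i)$ is a filtered increasing union of submodules. So \Cref{lem:cont_union} gives $\cont(M)=\lim_i \cont(f_i(M_i))$, and this limit is monotone because $f_i(M_i)\subseteq f_j(M_j)$ for $i\le j$. The short exact sequence $0\to \ker f_i\to M_i\to f_i(M_i)\to 0$ together with \Cref{lem:cont_additive} gives
\[
\cont(M_i)=\cont(\ker f_i)+\cont(f_i(M_i)).
\]
Under (a) the first term tends to $0$. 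Hence $\lim_i\cont(M_i)$ exists and equals $\lim_i \cont(f_i(M_i))=\cont(M)$, which settles case (a). All of this arithmetic is done in $[0,\infty]$.

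For (b) I reduce to (a). Since colimits are filtered, an element of $M_i$ dies in $M$ if and only if it dies in some $M_j$, so $\ker f_i=\bigcup_{j\ge i}\ker f_{ij}$. This is an increasing filtered union, because $\ker f_{ij}\subseteq \ker f_{ik}$ for $j\le k$. By \Cref{lem:cont_union} we get $\cont(\ker f_i)=\lim_{j\ge i}\cont(\ker f_{ij})$, so hypothesis (b) is exactly hypothesis (a).

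For (c) I reduce to (b). Fix $i\le j$. The exact sequence $0\to\ker f_{ij}\to M_i\to M_j\to\coker f_{ij}\to 0$ and \Cref{lem:cont_additive} give
\[
\cont(M_i)+\cont(\coker f_{ij})=\cont(M_j)+\cont(\ker f_{ij}).
\]
Let $L=\lim_i\cont(M_i)$, which exists by hypothesis. Assume first that $L<\infty$, so that the $\cont(M_i)$ are finite for $i$ large. The terms $\cont(\ker f_{ij})$ are monotone in $j$, so their limit exists, and therefore so does $\lim_j\cont(\coker f_{ij})$. Letting $j\to\infty$ gives
\[
\cont(M_i)+\lim_j\cont(\coker f_{ij})=L+\lim_j\cont(\ker f_{ij}).
\]
Letting $i\to\infty$ and using (c) gives $\lim_i\lim_j\cont(\ker f_{ij})=0$, which is (b).

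The main obstacle I expect is the infinite case $L=\infty$ in (c), where the subtraction above is illegitimate and one must show directly that $\cont(M)=\infty$. Here I would argue as follows. Choose $i$ with $\lim_j\cont(\coker f_{ij})<\varepsilon$. Then $\cont(M_j)\le\cont(f_{ij}(M_i))+\varepsilon$, so $\cont(f_{ij}(M_i))$ is large once $j$ is large. To transport this into $M$, I would pass to a finitely presented subquotient of $f_{ij}(M_i)$ of large content and follow it along $f_j$. Alternatively, I would apply the identity for the pair $(j,k)$ to see that the kernels $\ker f_{jk}$ cannot absorb infinite content without producing infinite content in $\coker f_{jk}$, which would contradict (c). I am least sure of this last step and would check it carefully against Temkin's definition via finitely presented subquotients.
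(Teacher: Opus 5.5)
Your proofs of (a), (b), and of (c) when $L=\lim_i\cont(M_i)<\infty$ are correct and coincide with the paper's. For (a) you use the images $f_i(M_i)$ together with additivity. For (b) you use $\ker f_i=\bigcup_{j\ge i}\ker f_{ij}$. For (c) you use the four-term sequence $0\to\ker f_{ij}\to M_i\to M_j\to\coker f_{ij}\to 0$ to reduce to (b). The paper likewise passes to a cofinal subsystem with $\cont(M_i)<\infty$ and uses that $(\cont M_i)_i$ is Cauchy. In other words, it tacitly reads ``converges'' in (c) as convergence to a finite limit.

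The step that fails is your sketch for $L=\infty$, and it cannot be repaired, because (c) is false there. Your own identity shows why only one situation is left to consider. If $\lim_j\cont(\coker f_{ij})<\varepsilon$ and $\cont(M_i)<\infty$, then $\cont(M_j)\le\cont(M_i)+\cont(\coker f_{ij})$ stays bounded. So $L=\infty$ together with (c) forces $\cont(M_i)=\infty$ for all large $i$, and in that situation everything may die in the colimit.

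Here is a concrete counterexample. Pick $\pi\in K^{\circ}$ with $0<v_K(\pi)<\infty$ and let $M_i=\bigoplus_{n\ge i}K^{\circ}/(\pi)$ for $i\in\N$. Let $f_{ij}$ be the projection killing the summands with $n<j$. All cokernels vanish and $\cont(M_i)=\infty$ for every $i$. But each element has finite support, so $M=0$ and $\cont(M)=0$.

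This is exactly where your ``transport along $f_j$'' idea breaks. The module $f_{ij}(M_i)=M_j$ has infinite content, yet it lies entirely in $\ker f_j$. Meanwhile each $\ker f_{jk}=\bigoplus_{n=j}^{k-1}K^{\circ}/(\pi)$ has finite content, so the identity for $(j,k)$ reads $\infty=\infty$ and gives no contradiction.

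So drop that case and state (c) with a finite limit. This is all the paper needs, since (c) is only invoked in \Cref{lem:enlargement_differentials} and \Cref{lem:summary_limit_augmentation} with $\mu\neq\infty$, where the limit is finite.
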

\begin{proof}
	We first show $(a)$. 
	By replacing $I$ by a cofinal subsystem, we may assume that $\cont(\ker M_i)$ is finite for all $i$. 
	For any $i \in I$ there is a SES \[
	0 \to \ker f_i \to M_i \to \im f_i \to 0
	.\] 
	Note that $M = \colim_{i} \im f_i$ is an increasing union. 
	Hence, \Cref{lem:cont_union,lem:cont_additive} shows that 
	\[
		\cont(M) = \lim_{i} \cont(\im f_i) = \lim_i \cont(M_i) - \cont (\ker f_i) = \lim_i \cont(M_i)
	.\] 

	We can deduce (b) from (a) by observing that for a fixed $i$, $(\ker f_{ij})_{j \ge i}$ is an increasing union resulting in $\ker f_i$.
	We find that $\lim_{j\ge i} (\cont (\ker f_{ij})) = \cont (\ker f_i)$ from \Cref{lem:cont_union}.

	Now we focus our attention on (c). 
	By passing to a cofinal subsystem of $I$ we assume that $\cont (M_i) < \infty$ for all $i$.
	Then there is an exact sequence 
	\[
	0 \to \ker f_{ij} \to M_i \to M_j \to  \coker f_{ij} \to 0
	.\] 
	Because the kernel and cokernel are subquotients of modules of finite content, they themselves have finite content. 
	By \Cref{lem:cont_additive}, we see that 
	\if\thesis1
	\[
		\resizebox{.98\textwidth}{!}{$\displaystyle\lim_{i} \lim_{j \ge i} \cont (\ker f_{ij})  = \left(\lim_{i} \lim_{j \ge i} \cont M_i - \cont M_j\right) + \left(\lim_{i} \lim_{j \ge i} \cont (\coker f_{ij})\right)$}
	.\] 
	\else
	\[
		\lim_{i} \lim_{j \ge i} \cont (\ker f_{ij})  = \left(\lim_{i} \lim_{j \ge i} \cont M_i - \cont M_j\right) + \left(\lim_{i} \lim_{j \ge i} \cont (\coker f_{ij})\right)
	.\] 
	\fi 
	Thus, we now deduce condition (b) from the fact that the sequence $(\cont M_i)_{i \in I}$ is Cauchy and the condition on the cokernels. 
\end{proof}

The following result is used without proof in \cite{temkinMetrizationDifferentialPluriforms2016}, for example in the proof of \cite[Theorem 5.2.11]{temkinMetrizationDifferentialPluriforms2016}.
For the sake of completeness, we provide a full proof. 
\begin{lemma}\label{lem:cont_invariance}
	Let $(L, v_L) / (K, v_K)$ be an extension of valued fields and let $M$ be a $K^{\circ}$-module. 
	Then 
	\begin{align*}
		\cont_{L^{\circ}}(M \otimes_{K^{\circ}} L^{\circ}) = \cont_{K^{\circ}} (M)
	.\end{align*}
\end{lemma}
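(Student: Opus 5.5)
The plan is to prove the equality first for finitely presented $M$, where it is a direct computation, and then pass to general $M$ by an approximation argument using \Cref{lem:cont_union} and the definition of content as a supremum over finitely presented subquotients.

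\textbf{Finitely presented case.} Suppose $M \simeq \bigoplus_{i=1}^n K^{\circ}/(\alpha_i)$. Then
\[
M\otimes_{K^\circ} L^\circ \simeq \bigoplus_i L^\circ/(\alpha_i),
\]
which is again finitely presented. Since $v_L$ extends $v_K$, we get $\cont_{L^\circ}(M\otimes L^\circ)=\sum_i v_L(\alpha_i)=\sum_i v_K(\alpha_i)=\cont_{K^\circ}(M)$.

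\textbf{Finitely generated submodules.} A general $M$ is the filtered union of its finitely generated submodules $N$. Since $L^\circ$ is torsion-free over the valuation ring $K^\circ$, it is flat. Hence $M\otimes L^\circ$ is the filtered union of the submodules $N\otimes L^\circ$. By \Cref{lem:cont_union} on both sides, it suffices to treat finitely generated $M$.

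For finitely generated $M$, write $M = F/P$ with $F$ free of finite rank. Then $P$ is the filtered union of its finitely generated submodules $P_j$. Over a valuation ring, a finitely generated submodule of a free module is free (finitely generated torsion-free implies free), so each $M_j := F/P_j$ is finitely presented. Moreover $M=\colim_j M_j$, with surjective transition maps whose kernels $P/P_j$ shrink.

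\textbf{Passing to the limit.} By flatness, $M\otimes L^\circ=\colim_j M_j\otimes L^\circ$, and the kernels $(P/P_j)\otimes L^\circ$ also shrink. I would like to apply \Cref{lem:cont_colim} on both sides, combined with the finitely presented case. However, the kernel condition $\lim_j \cont(P/P_j)=0$ can fail, for instance when $\cont M=\infty$.

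So instead I would argue with the two inequalities directly from the definition of content as a supremum over finitely presented subquotients.

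\emph{The inequality $\ge$.} Any finitely presented subquotient $Q$ of $M$ gives, by flatness, a subquotient $Q\otimes L^\circ$ of $M\otimes L^\circ$. This subquotient is finitely presented with the same content by the first step. Hence $\cont_{L^\circ}(M\otimes L^\circ) \ge \cont_{K^\circ}(M)$.

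\emph{The inequality $\le$.} This is the main obstacle, because a finitely presented subquotient of $M\otimes L^\circ$ need not be defined over $K^\circ$. I would try to avoid it via the structure of finitely generated modules. Over a valuation ring, any finitely generated $M$ is a quotient of a finitely presented module $M_j$ with $\cont M_j$ approximating $\cont M$ from above, using Temkin's results on content being the infimum over finitely presented covers.

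Concretely, I would use the invariant description of content of a finitely generated module $M=F/P$ of rank $n$ via Fitting-type ideals: $\cont M = v(\mathrm{Fitt}_0 M)$, where the valuation of an ideal is its infimum, with the convention $\infty$ when $M$ has positive rank. The Fitting ideal commutes with base change: $\mathrm{Fitt}_0(M\otimes L^\circ)=\mathrm{Fitt}_0(M)L^\circ$. The infimum of $v$ over an ideal is preserved by extension since $v_L$ extends $v_K$. This yields equality directly, provided the formula $\cont = v(\mathrm{Fitt}_0)$ holds for finitely generated modules. I expect that verification, via the filtered union of finitely presented approximations and \Cref{lem:cont_union}, to be the crux.
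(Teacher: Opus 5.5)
Your overall structure matches the paper's. The finitely presented computation is correct, and so is the reduction to finitely generated $M$ via flatness and \Cref{lem:cont_union}. Your proof of $\cont_{L^{\circ}}(M\otimes L^{\circ})\ge\cont_{K^{\circ}}(M)$, by base-changing finitely presented subquotients, is also correct.

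The gap is the reverse inequality for a finitely generated but not finitely presented $M=F/P$, which is the only nontrivial case. Your argument there rests entirely on the identity $\cont M=v(\mathrm{Fitt}_0 M)$, which you leave unproved. The verification you sketch cannot work as stated. \Cref{lem:cont_union} concerns increasing unions of submodules, whereas $M$ is the colimit of the surjections $F/P_j\to M$. Those surjections only give $\cont M\le\inf_j\cont(F/P_j)=v(\mathrm{Fitt}_0 M)$, since a finitely presented subquotient of $M$ is also one of $F/P_j$. That easy bound is what you need over $L^{\circ}$. Over $K^{\circ}$, however, you need the opposite bound $v(\mathrm{Fitt}_0 M)\le\cont_{K^{\circ}} M$. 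That means exhibiting finitely presented subquotients of $M$ whose content approaches $v(\mathrm{Fitt}_0 M)$, and nothing in the proposal produces them. The paper fills exactly this point by citing Temkin's Lemma~2.6.4. Once the case $\dim_K(P\otimes K)<n$ is disposed of (both sides are $\infty$), $\cont(F/P)$ equals the index $[F:P]$ of semilattices, and that index is visibly invariant under $-\otimes_{K^{\circ}}L^{\circ}$.

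You can close the gap with tools already in the paper, and then Fitting ideals become unnecessary. Filter $M$ by $M_k=K^{\circ}m_1+\dots+K^{\circ}m_k$, so that $M_k/M_{k-1}\cong K^{\circ}/I_k$. By flatness, the $M_k\otimes L^{\circ}$ filter $M\otimes L^{\circ}$ with graded pieces $L^{\circ}/I_kL^{\circ}$. By \Cref{lem:cont_additive}, both contents are sums over the cyclic pieces. For a cyclic piece with $I\ne0$, pick $0\ne a\in I$ and use the sequence $0\to I/(a)\to K^{\circ}/(a)\to K^{\circ}/I\to 0$. Applying \Cref{lem:cont_union} to $I/(a)=\bigcup_{b\in I}(b)/(a)$ gives $\cont(K^{\circ}/I)=\inf v_K(I)=\inf v_L(IL^{\circ})=\cont(L^{\circ}/IL^{\circ})$; if $I=0$ both sides are $\infty$. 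If you prefer to keep your Fitting formulation, the same filtration together with $\prod_k I_k\subseteq\mathrm{Fitt}_0(M)$ yields the missing bound.

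Incidentally, your reason for abandoning \Cref{lem:cont_colim}(a) is mistaken for finitely generated $M$. Choose $j_0$ with $P_{j_0}\otimes K=P\otimes K$. Then the contents $\cont(P_k/P_{j_0})$ are non-decreasing in $k$ and bounded by $\cont\bigl((F\cap(P\otimes K))/P_{j_0}\bigr)<\infty$. Hence for $j\ge j_0$ one has $\cont(P/P_j)=\lim_k\cont(P_k/P_{j_0})-\cont(P_j/P_{j_0})\to 0$, even when $\cont M=\infty$. Since $(P/P_j)\otimes L^{\circ}$ is the increasing union of the finitely presented modules $(P_k/P_j)\otimes L^{\circ}$, the same holds over $L^{\circ}$, and that route also finishes the proof.
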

\begin{proof}
	Suppose that $M$ is finitely presented. 
	Then we can find an isomorphism $M \simeq \bigoplus_{i = 1}^{n} \frac{K^{\circ}}{(\alpha_i)}$ and then clearly $M \otimes_{K^{\circ}}L^{\circ} \simeq\bigoplus_{i = 1}^{n} \frac{L^{\circ}}{(\alpha_i)}$. 
	Thus,
	\[
		\cont_{L^{\circ}}(M \otimes_{K^{\circ}} L^{\circ}) = \sum_{i = 1}^{n} v(\alpha_i) = \cont_{K^{\circ}} (M)
	.\] 

	Now suppose that $M$ is finitely generated. 
	If $K^{\circ}$ is Noetherian, then $M$ is also finitely presented, so the result follows from the previous case. 
	Now suppose that $K^{\circ}$ is not Noetherian, i.e. $\Gamma_K$ is dense in $\R$. 
	There is some surjection $(K^{\circ})^{n} \to M$ with kernel $A$.
	\[
	0 \to A \to (K^{\circ})^{n} \to M \to 0
	.\] 
	If $A$ is not a semilattice of $K^{n}$, i.e. $\dim_K A\otimes_{K^{\circ}} K < n$, then $\cont(M) =\infty$. 
	Similarly, $A \otimes_{K^{\circ}} L^{\circ}$ is not a semilattice in $L^{n}$ as \[
		\dim_L (A \otimes_{K^{\circ}} L^{\circ})\otimes_{L^{\circ}} L = \dim_L (A \otimes_{K^{\circ}} K)\otimes_{K} L < n
	.\] 
	So $\cont_{L^{\circ}} (M \otimes_{K^{\circ}}L^{\circ}) = \infty = \cont_{K^{\circ}} (M)$.
	Likewise, we see that if $A$ is a semilattice in $K^{n}$, then it is a semilattice in $L^{n}$. 
	Then we see that $[(K^{\circ})^{n}:A] = [(L^{\circ})^{n}: A \otimes_{K^{\circ}}L^{\circ}]$, and we conclude that $\cont_{L^{\circ}}(M \otimes_{K^{\circ}} L^{\circ}) = \cont_{K^{\circ}} (M)$ from \cite[Lemma 2.6.4]{temkinMetrizationDifferentialPluriforms2016}.
	Here $[\cdot, \cdot]$ denotes the \emph{index of lattices} as described in \cite[§~2.5.9]{temkinMetrizationDifferentialPluriforms2016}.

	Finally, let $M$ be any $K^{\circ}$-module. 
	Then $M$ is a filtered union of finitely generated submodules $M_i$.
	By \stacks{00DD} we see that $M \otimes_{K^{\circ}}L^{\circ}$ is also a filtered union of $(M_i \otimes_{K^{\circ}}L^{\circ})_i$ as $L^{\circ}$ is flat over $K^{\circ}$ \cite[Remark~6.1.12(ii)]{gabberAlmostRingTheory2003}. 
	So we may conclude from \Cref{lem:cont_union} that $\cont(M \otimes_{K^{\circ}} L^{\circ}) = \cont (M)$.
\end{proof}

\subsubsection{Constants and scalable rings} \label{sec:value-divisibility}

\begin{para}
	In an effort to make the computations in this section more clear, it will be useful to introduce some new notation and definition, in particular enlargements, constants, and scalable rings. 
	These are quite natural notions, but to the best of the author's knowledge not defined elsewhere.

	An \emph{enlargement} models an ordinary augmentation on the level of unit balls of rings. 
	Roughly speaking, an enlargement adds elements to the unit ball, which corresponds to increasing the valuation of these elements. 
	We also introduce the notions of \emph{constant} elements and \emph{scalable} rings, which can be thought of as an orthogonality condition between the value group and the residue ring. This ensures that we can always rescale an element of the ring to have any valuation of the value group. 
	This is necessary to ensure that some finitely presented rings, that approximate enlargements, are cut out by complete intersections. 

Throughout this subsection, let $(A, v)$ be a semi-valued $K$-algebra with value group $\Gamma$. 
\end{para}
\begin{definition}
	Let $(B, w)$ be a non-trivially semi-valued ring. 
	An element $a \in B$ is a \emph{constant} if $a \notin \ker w$ and for every $b \in B$, it holds that \[
		w(a) \le w(b) \implies a \mid b
	.\] 
	When $w(a) \ge 0$, this is equivalent to the statement that $B^{\ge w(a)}$ is generated by $a$ as a $B^{\circ}$ module. 
	The set of constants is denoted by  $\vd B$. 
	We say that $B$ is \emph{scalable} if for every $r \in w(B) \setminus \{\infty\} $, there exists a constant $a$ with $w(a) = r$, i.e., $w(\const B) = w(B) \setminus \{\infty\}$.
\end{definition}
In a scalable ring, any element can be ``rescaled'' to an element of any valuation by multiplying or dividing by by a constant with an appropriate value. 
\begin{example}
	Let $(K, v_K)$ be a discretely valued field with $\Gamma_K = \Z$, and consider the Gauss valuation $w = [v_K, T, 1 / 2]$.
	Then $T$ is not a constant, nor is there any other constant with valuation $1/2$. 
	Hence, $(K[T], w)$ is not scalable.
	However, $(K[T, T^{-1}], w)^{\circ}$ is scalable.
\end{example}

The notion simplifies a bit when we assume that the rings are $K$-algebras.

\begin{lemma}\label{lem:unit_value_divisor}
	If $A$ is a $K$-algebra, then \[
		\vd(A) = A^{\times}, \quad  \vd(A^{\circ}) = A^{\times } \cap A^{\circ}
	.\] 
\end{lemma}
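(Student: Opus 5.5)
We prove the two equalities separately, each by showing two inclusions directly from the definition of a constant. Throughout we use that $A$ is a $K$-algebra whose semi-valuation extends $v_K$, which is non-trivial. Hence for every $r \in \Gamma_K$ there is $c \in K^{\times} \subset A^{\times}$ with $v(c) = r$, and $v(c^{-1}) = -v(c)$. We also use the elementary fact that a unit $u$ of $A$ is not in $\ker v$: otherwise $0 = v(1) = v(u) + v(u^{-1}) = \infty$.

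\emph{First equality.} Suppose $u \in A^{\times}$. Then $u \notin \ker v$, and $u$ divides every $b \in A$ (take $b = u \cdot u^{-1}b$), so $u \in \vd(A)$. Conversely, let $a \in \vd(A)$, so $v(a) < \infty$. Pick $c \in K^{\times}$ with $v(c) \ge v(a)$; this is possible since $\Gamma_K$ is a nontrivial subgroup of $\R$ and so unbounded above. The constant property gives $a \mid c$, say $c = ad$. Then $a \cdot (dc^{-1}) = 1$, so $a$ is a unit.

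\emph{Second equality.} Here the semi-valued ring is $(A^{\circ}, v|_{A^{\circ}})$; it is non-trivial because $K^{\circ} \subset A^{\circ}$ contains elements of positive value. Suppose $u \in A^{\times} \cap A^{\circ}$ and $b \in A^{\circ}$ with $v(b) \ge v(u)$. Then $v(u^{-1}b) = v(b) - v(u) \ge 0$, so $u^{-1}b \in A^{\circ}$ and $u \mid b$ in $A^{\circ}$; also $u \notin \ker v$ since it is a unit. Conversely, let $a \in \vd(A^{\circ})$, so $0 \le v(a) < \infty$. Choose $c \in K^{\times}$ with $v(c) \ge v(a)$; then $c \in A^{\circ}$. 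The constant property in $A^{\circ}$ gives $c = ad$ with $d \in A^{\circ}$, so $a$ is invertible in $A$ with inverse $d c^{-1}$. Since $a \in A^{\circ}$ by definition, $a \in A^{\times} \cap A^{\circ}$.

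The only point needing care, and the main obstacle, is producing an element $c$ with $v(c) \ge v(a)$ that is invertible in $A$. This is exactly where the $K$-algebra hypothesis and the non-triviality of $v_K$ are used: the element comes from $K^{\times}$. Everything else is a direct check of the definitions.
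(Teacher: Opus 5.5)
Your proof is correct and follows essentially the same route as the paper's. Both arguments treat units as immediately being constants. Conversely, for a constant $a$ both pick $c \in K^{\times}$ with $v(c) \ge v(a)$, obtain $a \mid c$, and conclude that $a$ is a unit.
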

\begin{proof}
	The inclusions $A^{\times } \subset \vd (A)$, and $A^{\times }\cap A^{\circ} \subset \vd (A^{\circ})$ are immediate. 
	Conversely, suppose that $a$ is a constant in $A^{\circ}$ or $A$.
	Then there is some  $b \in K^{\times}$ with $v(b) \ge v(a)$, so $a \mid b$ and thus $a \in A^{\times }$. 
\end{proof}

\begin{lemma}
	The $K$-algebra $A$ is scalable if and only if $A^{\circ}$ is scalable. 
\end{lemma}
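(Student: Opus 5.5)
The plan is to use \Cref{lem:unit_value_divisor} to make both scalability conditions concrete statements about units of $A$, and then pass between them by rescaling with elements of $K^{\times}$. The key inputs are $\vd(A) = A^{\times}$ and $\vd(A^{\circ}) = A^{\times}\cap A^{\circ}$. We also use the value sets: $v(A^{\circ})\setminus\{\infty\} = v(A)\cap[0,\infty)$, with the semi-valuation on $A^{\circ}$ being the restriction of $v$.

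For ($\Leftarrow$), suppose $A^{\circ}$ is scalable and let $r \in v(A)\setminus\{\infty\}$, say $r = v(f)$. Since $v_K$ is non-trivial, pick $c \in K^{\times}$ with $v(c) \ge -r$ and $v(c)\ge 0$. Then $v(cf) = v(c) + r \ge 0$, so $v(c)+r \in v(A^{\circ})\setminus\{\infty\}$. By scalability there is a constant $u \in A^{\times}\cap A^{\circ}$ with $v(u) = v(c)+r$. Then $c^{-1}u \in A^{\times}$ is a constant of $A$ with value $r$.

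For ($\Rightarrow$), suppose $A$ is scalable and let $r \in v(A^{\circ})\setminus\{\infty\}$, so $r \ge 0$ and $r \in v(A)$. Scalability of $A$ gives $u \in A^{\times}$ with $v(u) = r \ge 0$, so $u \in A^{\times}\cap A^{\circ} = \vd(A^{\circ})$.

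One subtlety to check is that \Cref{lem:unit_value_divisor} applies to $A^{\circ}$ with its restricted semi-valuation. This requires $A^{\circ}$ to be non-trivially semi-valued, which holds because $K^{\circ}\subset A^{\circ}$ and $v_K$ is non-trivial. The same fact, $K^{\times} \subset A^{\times}$ with values $\Gamma_K \neq 0$, is also what guarantees the rescaling element $c$ in the ($\Leftarrow$) direction. I expect no real obstacle; the only care needed is with the value-set bookkeeping, namely $\infty$ and negative values.
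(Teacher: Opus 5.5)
Your proof is correct and follows essentially the same route as the paper. In ($\Rightarrow$), a unit of $A$ with nonnegative value is already a constant of $A^{\circ}$. In ($\Leftarrow$), you rescale by an element of $K^{\times}$ to land in $A^{\circ}$, take a constant there, and divide back. Two minor points: your use of $v(A)\setminus\{\infty\}$ rather than the paper's $\Gamma_v$ matches the definition of scalable slightly more faithfully, and the extra requirement $v(c)\ge 0$ is harmless but unnecessary.
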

\begin{proof}
	\ltr Let $r \in \Gamma_{v, \ge 0}$. 
	Then there exists a constant $a \in A$ with  $v(a) = r$. 
	It is easy to see that $a$ is also a constant as an element of $A^{\circ}$. 

	\rtl Let $r \in \Gamma_v$ and let $b \in K$ with $-r \le v(b) < \infty$. 
	Then $v(b) + r \ge 0$, so there is a constant $a$ of  $A^{\circ}$ with valuation $r + v(b)$. 
	Then $a$ is a unit in $A$, and $b^{-1} a$ is also a unit in $A$. 
	So $b^{-1}a$ is a constant of valuation $r$. 
\end{proof}
\begin{para}\label{para:constants_are_stable}
Let $w$ be a semi-valuation on $A$ with $w \ge v$. 
	We say that an element $a \in A$ is \emph{$w$-stable} if $w(a)= v(a)$. 
	All constants of $A$ and $A^{\circ}$ are $w$-stable. 
	Indeed, let $a$ be a constant, then $w(a) \ge v(a)$ and $w(a^{-1}) \ge v(a^{-1})$ from which equality follows. 
\end{para}

\begin{lemma}\label{lem:value_divisible_localisation}
	Suppose that $A$ is a scalable domain. 
	Let $S$ be a multiplicative subset of $A$. 
	Then $(S^{-1}A)^\circ = T^{-1}A^\circ$, where $T = \{a^{-1}s \mid s \in S, a \in \vd(A), v(a) = v(s)\}$ is a multiplicative set.
	Moreover, this ring is also scalable.
	In particular, the unit ball of a localization of $A$ is a localization of $A^{\circ}$. 
\end{lemma}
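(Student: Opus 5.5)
Throughout, $A$ is a semi-valued $K$-algebra, and the valuation on $S^{-1}A$ is the extension $v(a/s) = v(a) - v(s)$. This is well defined because $A$ is a domain and we implicitly assume $S \cap \ker v = \emptyset$ (otherwise $v$ does not extend).

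We first check that $T$ is a multiplicative subset of $A^{\circ}$. For $s \in S$ and a constant $a$ with $v(a)=v(s)$, \Cref{lem:unit_value_divisor} gives $a \in A^{\times}$, so $a^{-1}s \in A$ has valuation $0$ and lies in $A^{\circ}$. Such an $a$ exists for every $s$ by scalability. Constants are closed under products, since $\vd(A)=A^{\times}$. Hence $(a^{-1}s)(b^{-1}t) = (ab)^{-1}(st)$ with $v(ab)=v(st)$, and $1 = 1^{-1}\cdot 1 \in T$ (assuming $1 \in S$, which we may).

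Next we prove the equality of rings inside $S^{-1}A$. The inclusion $T^{-1}A^{\circ} \subset (S^{-1}A)^{\circ}$ is immediate: for $x \in A^{\circ}$ and $t = a^{-1}s\in T$, we have $x/t = xa/s$ with $v(x/t) = v(x) - 0 \ge 0$. Note that $T^{-1}A^{\circ}$ makes sense inside $S^{-1}A$ because $t$ is invertible there, as $a$ is a unit and $s$ is inverted. For the converse, take $f = x/s \in S^{-1}A$ with $x \in A$ and $v(x) \ge v(s)$. Choose a constant $a$ with $v(a)=v(s)$. Then $x a^{-1} \in A$ has valuation $v(x)-v(s) \ge 0$, so $xa^{-1}\in A^{\circ}$. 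Writing
\[
f = \frac{x a^{-1}}{a^{-1}s},
\]
we exhibit $f$ as an element of $T^{-1}A^{\circ}$. The final ``in particular'' statement is then just this equality, after passing from $S^{-1}A$ to $(S^{-1}A)^{\circ}$.

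Finally we show scalability of $S^{-1}A$ by directly exhibiting constants; scalability of the unit ball then follows from the preceding lemma on $A$ versus $A^{\circ}$. The value set of $S^{-1}A$ consists of differences $v(x)-v(s)$. Take constants $a, b$ of $A$ with $v(a)=v(x)$ and $v(b)=v(s)$. Then $ab^{-1}$ is a unit in $S^{-1}A$, hence a constant by \Cref{lem:unit_value_divisor} applied to the $K$-algebra $S^{-1}A$, and it has the required value. The one point needing care is that values of $A$ that are not attained by elements of $K$ are still realised: this is exactly what scalability of $A$ supplies. The domain hypothesis is used only to make the extension of $v$ to $S^{-1}A$ well defined. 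This last scalability check is the only mildly delicate step, and it is settled by the explicit construction $ab^{-1}$.
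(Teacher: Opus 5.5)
Your proof is correct and follows the same route as the paper: both inclusions come from rescaling the denominator $s$ by a constant $a$ with $v(a)=v(s)$, writing $x/s = (xa^{-1})/(a^{-1}s)$. Your extra checks, that $T$ is multiplicative and that $S^{-1}A$ (hence $(S^{-1}A)^{\circ}$) is scalable via the units $ab^{-1}$ and \Cref{lem:unit_value_divisor}, are also correct and supply points the paper's proof leaves unstated.
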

\begin{proof}
	Any element of $(S^{-1} A)^\circ$ can be written as $\frac ab$ with $a \in A, b \in S$ and $v(a) \ge v(b)$. 
Let $c$ be a constant with $v(c) = v(b)$.
Then \[
\frac ab = \frac{a / c}{b / c},\quad a / c \in A^\circ, b / c \in T
\]
So $a / b \in T^{-1}A^\circ$.

Conversely, any element of $T^{-1}A^\circ$ can be written as $\frac{a}{c^{-1}b}$, where $a \in A^\circ$, $b \in S$, $c \in \vd(A)$ with $v(c) = v(b)$.
Then $ac \in A$ and $b\in S$. So $\frac{a}{c^{-1} b}$ is also contained in $(S^{-1} A)^\circ$.
\end{proof}

\subsubsection{Enlargements} \label{sec:enlargements}
\begin{definition}\label{def:enlargement}
	Let $(A, v)$ be a scalable domain and $\phi \in A$ an irreducible element with $\mu \ge v(\phi)$. 
	Then the \emph{$(\phi, \mu)$-enlargement} of $A^{\circ}$ is
	\[
		[A^{\circ}, \phi, \mu] = A^{\circ}[a^{-1} \phi^{n} \mid a \in \vd(A), n \in \Z,  n\cdot \mu - v(a) \ge 0] 
	\]
\end{definition}

\begin{para}
	These enlargements are useful because they help us understand how unit balls change under ordinary augmentations, as is indicated by the following lemma.
\end{para}

\begin{lemma}\label{lem:inductive_model_unit_ball}
	Suppose that $\phi$ is an irreducible element of $A$ and $\mu \in \R$ with $\mu \ge v(\phi)$.
	Let $A^{\circ}, \phi, \mu$ be as in \Cref{def:enlargement} and let $w \ge v$ be a semi-valuation with $w(\phi) = \mu$.
	Suppose that 
	\if\thesis1
	\[
		\resizebox{\linewidth}{!}{$\displaystyle v': A \to \R: a \mapsto \max\left\{\min_{i = 0, \ldots, n} \{v(a_i) + \mu\cdot i\} \st  \forall a_0, \ldots, a_n \in A:\sum_{i = 0}^{n} a_i \phi^{i} = a \right\}$}
	\]
	\else
	\[
		v': A \to \R: a \mapsto \max\left\{\min_{i = 0, \ldots, n} \{v(a_i) + \mu\cdot i\} \st  \forall a_0, \ldots, a_n \in A:\sum_{i = 0}^{n} a_i \phi^{i} = a \right\}
	\]
	\fi 
	is well-defined and defines a semi-valuation on $A$. 
	Here, well-defined means that the maximum is achieved for each $a \in A$.  
	Then \[
		[(A, v)^{\circ}, \phi, \mu] = \begin{cases}
			(A[\phi^{-1}], v')^{\circ} & \text{if } \mu \ne \infty\\
			(A, v')^{\circ} & \text{if } \mu = \infty
		\end{cases} 	
	.\] 

	Moreover, if $(A, v)^{\circ}$ is scalable, then so is $([(A, v)^{\circ}, \phi, \mu], v')$.
\end{lemma}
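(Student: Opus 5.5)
We prove the two inclusions separately, treating $\mu \ne \infty$ first and then noting what changes when $\mu = \infty$; scalability of the enlargement comes at the end. Write $B = [(A, v)^{\circ}, \phi, \mu]$ and $C$ for the right-hand side. Throughout we use that $v' \ge v$ on $A$, since the trivial expansion $a = a\cdot\phi^0$ gives $v'(a) \ge v(a)$, and that $v'(\phi) \ge \mu$, since $\phi = 1\cdot \phi^1$.

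\emph{The inclusion $B \subseteq C$.} It suffices to check the generators.
\begin{itemize}
\item Elements of $A^{\circ}$ lie in $C$ because $v' \ge v$.
\item For a generator $a^{-1}\phi^{n}$ with $a \in \vd(A)$ and $n\mu - v(a) \ge 0$, the constant $a$ is a unit of $A$ by \Cref{lem:unit_value_divisor}. So $a^{-1}\phi^{n} \in A[\phi^{-1}]$.
\item To bound its value, extend $v'$ to $A[\phi^{-1}]$ by $v'(b\phi^{-k}) = v'(b) - k v'(\phi)$, which is legitimate because $v'$ is multiplicative.
\item We need $v'(a) = v(a)$. This holds because $a$ is a unit and $v'(a)+v'(a^{-1}) = 0$ with $v' \ge v$, the same argument as in \Cref{para:constants_are_stable}.
\item We also need $v'(\phi) = \mu$. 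Here we use the given $w$: for any expansion $\phi = \sum a_i\phi^i$ we have $\mu = w(\phi) \ge \min_i \{w(a_i) + i\mu\} \ge \min_i\{v(a_i)+i\mu\}$, so taking the maximum gives $v'(\phi) \le \mu$, and the reverse inequality was noted above. This is exactly where the hypothesis on $w$ enters.
\item Hence $v'(a^{-1}\phi^{n}) = n\mu - v(a) \ge 0$.
\end{itemize}

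\emph{The inclusion $C \subseteq B$.} An element of $C$ has the form $b\phi^{-k}$ with $b \in A$ and $v'(b) \ge k\mu$.
\begin{itemize}
\item Because the maximum defining $v'$ is attained, we can choose an expansion $b = \sum_i a_i\phi^i$ with $v(a_i) + i\mu \ge k\mu$ for every $i$.
\item Then $b\phi^{-k} = \sum_i a_i\phi^{i-k}$, and it suffices to show each term $a_i\phi^{m}$ with $m = i-k$ lies in $B$, given that $v(a_i) + m\mu \ge 0$.
\item If $a_i = 0$ there is nothing to do, since $0 \in B$. Otherwise $v(a_i) \neq \infty$ (as $A$ is a domain), so by scalability of $A$ there is a constant $c$ with $v(c) = v(a_i)$.
\item Write $a_i\phi^m = (a_i c^{-1})\cdot (c\,\phi^{m})$. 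The first factor is in $A^{\circ}$, since $v(a_i c^{-1}) = 0$.
\item The second factor equals $(c^{-1})^{-1}\phi^m$, and $c^{-1}$ is a constant with $m\mu - v(c^{-1}) = m\mu + v(a_i) \ge 0$. So it is one of the generators of $B$.
\end{itemize}
When $\mu=\infty$, only $n \ge 0$ occurs with no negative powers, and the same argument runs inside $A$ with $k = 0$.

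\emph{Scalability.} Let $r \ge 0$ be in the value group of $v'$. The main obstacle is producing a constant of value $r$, since $v'$ can take new values $s + m\mu$ with $s \in \Gamma_v$.
\begin{itemize}
\item First write $r = v(c) + m\mu$, using that every value of $v'$ is a value of some $a_i\phi^i$.
\item Then $c\phi^{m}$ is a unit of $A[\phi^{-1}]$ (we may take $c$ to be a constant by scalability of $A$) and has value $r$.
\item It lies in $B$ by the generator argument above, so it is a constant of $B$ by the unit-ball analogue of \Cref{lem:unit_value_divisor}.
\end{itemize}
In the case $\mu=\infty$ no new finite values arise, and the constants of $A^{\circ}$ suffice.
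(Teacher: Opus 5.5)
Your proof is correct and follows the paper's route. The generators $a^{-1}\phi^{n}$ have nonnegative $v'$-value; a $v'$-optimal Laurent expansion in $\phi$, rescaled termwise by constants, gives the reverse inclusion; and products of constants with powers of $\phi$ give scalability.

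You are in fact more careful than the paper in two places. You derive $v'(\phi)=\mu$ from $w$, which the paper's bound $v'(a^{-1}\phi^{n})\ge n\mu-v(a)$ uses only implicitly for $n<0$. You also rescale by constants of $A$ rather than of $A^{\circ}$, which is needed since $v(a_i)$ may be negative. The only slip is that being a domain does not force $\ker v= 0$; a term with $a_i\in\ker v$ is handled the same way using a constant of sufficiently large value.
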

\begin{proof}
	We only consider the case when $\mu \ne \infty$. The case when $\mu = \infty$ is similar. 
	Let $a \in \vd (A)$ and $n \in \Z$ such that $n\cdot \mu - v(a) \ge 0$. 
	Then $v'(a^{-1}\phi^{n} ) \ge n\cdot \mu - v(a) \ge 0$. 
	Thus, $a^{-1}\phi^{n} \in (A[\phi^{-1}], v')^{\circ}$. 
	So we can conclude that $[(A, v)^{\circ}, \phi, \mu] \subset (A[\phi^{-1}], v')^{\circ} $. 
	Conversely, suppose that $a \in (A[\phi^{-1}], v')^{\circ}$. 
	Then for some $a_n, \ldots, a_m \in A$, we can write  $a = \sum_{i =n}^{m} a_i \phi^{i} $ with $v'(a) \le v(a_i) + \mu \cdot i$ for all $i = n, \ldots, m$. 
	So $v(a_i) + \mu\cdot i \ge 0$. 
	Let $b_i\in A^{\circ}$ be a constant with $v(b_i) = v(a_i)$. 
	Then we can write $a_i \phi^{i} = (a_i / b_i) b_i \phi^{i}$ with $a_i / b_i \in A^{\circ}$ and $v(b_i) + \mu \cdot i \ge 0$.  
	So  $a \in [A^{\circ}, \phi, \mu]$.

	Now suppose that $(A, v)^{\circ}$ is scalable.
	Note that $v \le v' \le w$.  
	Clearly,  $v'([A^{\circ}, \phi, \mu]) = \left<\Gamma_v, \mu \right>_{ \ge 0}$. 
	Let $r \in \left<\Gamma_v, \mu \right>_{ \ge 0}$, with $ r\ne 0$. 
	Then $r$ can be written as $r' + i\cdot \mu$ for some $r \in \Gamma_v$ and $i \in \Z$. 
	Note that as $r < \infty$ either $i = 0$ or $\mu < \infty$. 
	If $i = 0$, then any constant $a$ of  $(A^{\circ}, v)$ is a unit in $A$. 
	So $a$ is also a unit of $A$ or $A[\phi^{-1}]$, thus it is also a constant of the unit ball $([(A, v)^{\circ}, \phi, \mu], v')$.
	It is also a constant because it is a unit in $A[\phi^{-1}]$. 
	So we conclude that $([(A, v)^{\circ}, \phi, \mu], v')$ is scalable.
\end{proof}

\subsection{Explicit descriptions of enlargements and their cotangent complexes} \label{sec:the_computations}

\begin{para}\label{par:cases_aleph_to_daleth}
	In this section we will describe \emph{enlargements} and their relative differentials/cotangent complexes. 
	This is a technical computation. 
	To aid the reader, we will first give an overview of the computation. 
	Suppose that $(A, v)$ is a scalable domain with value group $\Gamma$. 
	Let $\phi \in A$ be a prime element, and let $\mu = w(\phi) \ge v(\phi)$. 
	We will need to consider four cases which we will refer to using the first four letters of the Hebrew alphabet,
	\begin{itemize}
		\item[($\aleph$)] $\mu \in \Q\otimes\Gamma_{v}$,
		\item[($\beth$)] $\mu \not\in \Q \otimes \Gamma_{v}(\infty) $ and $\Gamma_v$ is not discrete,
		\item[($\gimel$)] $\mu \not\in \Q \otimes \Gamma_{v}(\infty)$ and $\Gamma_v$ is discrete,
		\item[($\daleth$)] $\mu  = \infty$.
	\end{itemize}
	Sometimes we may split up case ($\aleph$) into subcases depending on whether $\Gamma_v $ is discrete or not, denoted by ($\aleph$.d) and ($\aleph$.i) respectively.
\end{para}
\begin{para}
	For each step in the computation, we will need to consider the four cases separately, using analogous but slightly different arguments. 
	Each case has its own subtleties that need to be addressed individually. 
	Unfortunately, the author does not know of a way to unify the arguments. 
	Some notation is specific to a single case and will be introduced throughout the computation. 
	This notation remains consistent across all lemmas, propositions, theorems, proofs, etc., and will not be reintroduced each time. 
	\begin{enumerate}
		\item In \Cref{prop:inductive_model_form} we show that $[(A, v)^{\circ}, \phi, \mu]$ can be written as an increasing union of rings $A_i, i \in I$ where $I$ is a directed set and each $A_i$ has an explicit finite presentation as an $A^{\circ}$-algebra. 
		\item In \Cref{comp:quotient_by_gi} we show that the relations in the presentations of $A_i$ obtained in \Cref{prop:inductive_model_form} form a regular sequence. Thus, each $A_i$ is a complete intersection over $A^{\circ}$.
		\item In \Cref{comp:inductive_model_differentials} we use this to show that $\mathbb L_{A_i / A^{\circ}} \simeq \Omega_{A_i / A^{\circ}}[0]$ and give an explicit finite presentation of the latter module. 
	\item In \Cref{lem:lim_dets} we show that the valuation of the determinants of the presentations found in \Cref{comp:inductive_model_differentials} converge as $i$ increases to $\mu - v(\phi)$ up to a fudge term - analogous to the step of an augmentation. 
		These valuations of the determinants are $\cont(\Omega_{A_i / A} \otimes_{A_i} L^{\circ})$  for some suitable valued field $L$. 
	\item In \Cref{lem:enlargement_differentials}, we take a colimit over $i \in I$ to show that \[
			\mathbb L_{[A^{\circ}, \phi, \mu] / A^{\circ}} \derotimes L^{\circ} \simeq \left( \Omega_{[A^{\circ}, \phi, \mu] / A^{\circ}} \otimes L^{\circ}\right)[0] 
		\]
		and that the latter is a module of content $\mu - v(\phi)$ up to some fudge terms. 
		The main difficulty in this step is checking that content commutes with the colimit in this case. 
	\item In \Cref{sec:inducting_on_maclane-vaquie_chains} we interpret the results of \Cref{sec:the_computations} for augmentations on semi-valuations. 
		\Cref{lem:summary_ordinary_augmentation} and \Cref{lem:summary_limit_augmentation} we compute the cotangent complex associated to an ordinary and limit augmentation. 
		In \Cref{thm:computation_main} we use distinguished triangles of cotangent complexes to understand the situation for augmentation chains.  
	\item In Sections \ref{sec:the_log-cotangent_complex_of_an_enlargement} and \ref{sec:the_log-cotangent_complex_for_pure_trancendental_extensions} we prove analogous versions of these statements for the log cotangent complex. 
		We even get a more explicit description of the log differentials than for the (non-log) differentials. 
	\item	Finally, in \Cref{sec:extensions_of_valued_fields} and \Cref{sec:the_kahler_valuation_and_the_absolute_log_different} we use the results of our computations to prove some results about the (log) cotangent complex, (log) differents and (log) discrepancies.
	\end{enumerate}
\end{para}
\begin{notation}
	Let $a, b \in \Z_{\ge 0}$.
	Then we define $\overline{(a, b)}$ to be the minimal tuple $(c, d) \in (\Z_{\ge 0})^2$ such that \[
		ac - bd = \gcd(a, b)
	.\] 
\end{notation}
\begin{proposition}\label{prop:inductive_model_form}
	Suppose that $(A, v)$ is a scalable domain with value group $\Gamma$. 
	Let $\phi \in A$ be a prime element, and let $\mu = w(\phi) \ge v(\phi)$. 
	Let $\theta \in A$ be a constant with $v(\theta) = v(\phi)$.
	Then $[A^{\circ}, \phi, \mu]$ has the following explicit description as an increasing union of finitely presented $A^{\circ}$-algebras over a directed poset. 
	\begin{enumerate}
	\item[($\aleph$)] \textbf{Case $\mu \in \Gamma_{v, \Q}$:}
		Let $m \in \Z_{> 0}$ be the minimal element such that $\mu\cdot m \in \Gamma$.
		Let $\alpha$ be a constant with valuation $\mu\cdot m$. 
		Then $[A^\circ, \phi, \mu]$  is an increasing union 
		\begin{equation}
			[A^\circ, \phi, \mu] = \bigcup_{(\beta, \ell) \in I} A_{(\beta,\ell)}, \quad A_{(\beta, \ell)} = A^{\circ}[\alpha^{-1}\phi^{m}, \alpha \phi^{-m},  \beta^{-1} \phi^{\ell}]
		.\end{equation}
		Here 
		\if\thesis1
		\[
			I = \left\{(\beta,\ell) \in A^{\times }\times \Z \st \begin{aligned}&\gcd(\ell, m) =1, \text{ and}  \\ &\frac{e \mu m+ v(\phi)}{d}\le v(\beta) <  \ell \mu \text{ with } (d, e) = \overline{(\ell, m)} \end{aligned}\right\} 
		\]
		\else 
		\[
			I = \left\{(\beta,\ell) \in A^{\times }\times \Z \st \gcd(\ell, m) =1, \frac{e \mu m+ v(\phi)}{d}\le v(\beta) <  \ell \mu \text{ with } (d, e) = \overline{(\ell, m)} \right\} 
		\]
		\fi
		is the filtered system where $ (\gamma, k) \ge (\beta, \ell)$ if and only if $q \cdot (- v(\gamma)+ k \mu )  \le -v(\beta) + \ell \mu $, with $q$ the smallest positive integer such that $q\cdot k \equiv \ell \mod m$. 

		Moreover, for each $i:= (\beta, \ell) \in I$, the ring $A_i$ has a finite presentation as an  $A^{\circ}$-algebra, 
		\if\thesis1
		\begin{equation}
		\begin{aligned}
			A_i \simeq \frac{A^{\circ}[X, X^{-1}][Y_i]}{(g_i, h_i)}, \quad &X\mapsto \alpha^{-1} \phi^{m}, \quad Y_{i} \mapsto \beta^{-1}\phi^{\ell}, \\
			g_i = X^{-\ell}Y_i^{m} - \alpha^\ell\beta^{-m}, \quad &h_i =  \alpha ^{-e}\beta^{d}\theta^{-1} X^{-e}Y_i^{d} - \theta^{-1}\phi
		,\end{aligned}
		\end{equation}
		\else
		\begin{equation}
		\begin{aligned}
			A_i \simeq \frac{A^{\circ}[X, X^{-1}][Y_i]}{(g_i, h_i)},\quad &g_i = X^{-\ell}Y_i^{m} - \alpha^\ell\beta^{-m}, \quad h_i =  \alpha ^{-e}\beta^{d}\theta^{-1} X^{-e}Y_i^{d} - \theta^{-1}\phi\\
					&X\mapsto \alpha^{-1} \phi^{m}, \quad Y_{i} \mapsto \beta^{-1}\phi^{\ell}
		,\end{aligned}
		\end{equation}
		\fi 
		with $(d, e) = \overline{(\ell, m)}$.
		Note that the conditions on $v(\beta)$ in the definition of $I$ ensure that $v(\alpha ^{-e} \beta^{d} \theta^{-1}) \ge 0$ and $ v(\alpha \beta^{-m}) \ge 0$.
	\item[($\beth$)] \textbf{Case $\mu \not\in \Gamma_{v,\Q}(\infty)$ and $\Gamma_v$ is not discrete:}
		Then $[A^{\circ}, \phi, \mu]$ is an increasing union \[
			[A^{\circ}, \phi, \mu] = \bigcup_{(\alpha, \beta) \in I} A_{(\alpha, \beta) }, \quad A_{(\alpha, \beta)} = A^{\circ}[\alpha^{-1} \phi, \beta \phi^{-1}]
		,\] 
		where $I = \{(\alpha, \beta) \in A^{\times }\times A^{\times } \st v(\phi) \le v(\alpha) \le \mu \le v(\beta)\}$ is the directed set where $(\alpha', \beta') \ge (\alpha, \beta) \iff v(\alpha') \ge v(\alpha), v(\beta') \le v(\beta)$.

		Moreover,  for each $i = (\alpha, \beta)$, the ring $A^{\circ}[\alpha^{-1} \phi, \beta \phi^{-1}]$ admits the following presentation,  
		\begin{align*}
			\frac{A^{\circ}[X_i, Y_i]}{(g_i, h_i)}, \quad & g_i = X_iY_i - \alpha^{-1}\beta, h_i = \theta^{-1}\alpha X_i - \theta^{-1} \phi \\
								& X_i \mapsto \alpha^{-1}\phi, Y_i \mapsto \beta \phi^{-1}
		.\end{align*}
	\item[($\gimel$)] \textbf{Case $\mu \not\in \Gamma_{v,\Q} (\infty)$ and $\Gamma_v$ is discrete:}
		Suppose that $\Gamma_v = \gamma\cdot \Z, \gamma > 0$ and let $\pi$ be a constant with $v(\pi) = \gamma$. 
		Let $\frac{a_0}{b_0}, \frac{a_1}{b_1}, \frac{a_2}{b_2}, \ldots$ be the convergents (in reduced form) of the continued fraction expansion of $\gamma^{-1}(\mu - v(\phi))$. 

		Then $[A^{\circ}, \phi, \mu]$ is an increasing union \[
			[A^{\circ}, \phi, \mu] = \bigcup_{i \in \Z_{\ge 0}} A_{i}, \quad A_{i} = A^{\circ}[\pi^{-a_{2i}}(\theta^{-1}\phi)^{b_{2i}}, \pi^{a_{2i+1}}(\theta^{-1}\phi)^{-b_{2i+1}} ]
		.\] 

		Moreover, for each $i$, the ring $A_i$ admits the following presentation, 
		\begin{align*}
			\frac{A^{\circ}[X_i, Y_i]}{(g_i, h_i)}, \quad & g_i = X_i^{b_{2i+1}}Y_i^{b_{2i}} - \pi, h_i = X_i^{a_{2i+1}}Y_i^{a_{2i}} - \theta^{-1} \phi \\
								& X_i \mapsto \pi^{-a_{2i}}(\theta^{-1}\phi)^{b_{2i}}, Y_i \mapsto \pi^{a_{2i+1}}(\theta^{-1}\phi)^{-b_{2i+1}}
		.\end{align*}

	\item[($\daleth$)] \textbf{Case $\mu = \infty$:}
		Then $[A^\circ, \phi, \mu]$  is an increasing union 
		\begin{align*}
			[A^\circ, \phi, \mu] = \bigcup_{\alpha \in I} A^{\circ}[\alpha^{-1}\phi]
		,\end{align*}
		where $I = \{\alpha \in A^{\times } \mid v(\alpha) > v(\phi)\}$ with $\beta \ge \alpha \iff v(\beta) \ge v(\alpha)$.
		Moreover, for each $\alpha \in I$ the ring  $A^{\circ}[\alpha^{-1}\phi]$ admits the following finite presentation as an $A^{\circ}$-algebra, 
		\begin{align*}
			\frac{A^{\circ}[X_\alpha]}{g_\alpha}, \quad g_\alpha = \theta^{-1}\alpha X_\alpha - \theta^{-1}\phi, \quad X_\alpha \mapsto \alpha^{-1} \phi
		.\end{align*}
\end{enumerate}
\end{proposition}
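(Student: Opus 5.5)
Each case has two claims to check: that the union equals $[A^{\circ},\phi,\mu]$, and that the displayed presentation really presents $A_i$. For the union, recall from \Cref{def:enlargement} that $[A^{\circ},\phi,\mu]$ is generated over $A^{\circ}$ by monomials $a^{-1}\phi^n$ with $a$ a constant (a unit of $A$ by \Cref{lem:unit_value_divisor}) and $n\mu \ge v(a)$. Since $A$ is scalable, such a monomial is determined up to a unit of $A^{\circ}$ by the pair $(n, v(a))$. So $[A^{\circ},\phi,\mu]$ is the monoid algebra over $A^{\circ}$ of the cone
\[ C=\{(n,r)\in \Z\times\Gamma \st n\mu - r\ge 0\}, \]
twisted by $\theta$ to account for $v(\phi)$. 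Each $A_i$ is generated by monomials lying in $C$, which gives $A_i\subset[A^{\circ},\phi,\mu]$. The reverse inclusion, and the claims that the union is increasing and $I$ is directed, reduce to a convex-geometry statement: every lattice point of $C$ lies in the submonoid generated by the generators of some $A_i$, and these submonoids are nested as $i$ grows.

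The cases differ in how the generators approximate the boundary ray $n\mu=r$.
\begin{itemize}
\item[($\daleth$)] Only $n\ge 0$ occurs with $\mu=\infty$, so every generator $a^{-1}\phi^n$ is a power of $\alpha^{-1}\phi$ times an element of $A^{\circ}$, with $v(\alpha)$ close to $v(\phi)$ from above.
\item[($\beth$)] $\Gamma$ is dense. The ring $A^{\circ}[\alpha^{-1}\phi,\beta\phi^{-1}]$ realises the cone spanned by the slopes $v(\alpha)\le\mu\le v(\beta)$. Given a monomial with $n>0$ and $r<n\mu$, density lets us choose $v(\alpha)\in[r/n,\mu)$; for $n<0$ we use $\beta$ similarly.
\item[($\gimel$)] $\Gamma$ is discrete and $\mu$ is irrational. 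Even and odd convergents of the continued fraction of $\gamma^{-1}(\mu-v(\phi))$ approach it from below and above. Consecutive convergents satisfy $a_{2i+1}b_{2i}-a_{2i}b_{2i+1}=\pm1$, so the two vectors form a $\Z$-basis of $\Z^2$ and span exactly the lattice points of the cone between them. These cones exhaust $C$.
\item[($\aleph$)] The boundary ray contains the lattice point $(m,m\mu)$, which gives the invertible element $X=\alpha^{-1}\phi^m$. The remaining generator $Y=\beta^{-1}\phi^\ell$ (with $\gcd(\ell,m)=1$) supplies a second slope just inside the cone. The condition $v(\beta)<\ell\mu$ and the ordering on $I$ express which cone contains which; these are routine inequality checks.
\end{itemize}

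For the presentations, define the surjection $A^{\circ}[X,Y]\to A_i$ (with $X$ inverted in ($\aleph$)) by the stated assignments and check that $g_i,h_i$ map to zero. This is a direct computation, using $\ell d-me=1$ in ($\aleph$) and the determinant $\pm1$ identity in ($\gimel$). It remains to show that $(g_i,h_i)$ generates the kernel. My approach: $h_i$ expresses the image of $\theta^{-1}\phi$ as a monomial in $X,Y$, and $g_i$ is the binomial relation of the two-generator monoid. After inverting $\theta^{-1}\phi$, or after localising so that $A_i$ becomes $A^{\circ}[\phi^{\pm1}]$-like, both rings coincide with the Laurent-type ring $A[\phi^{\pm 1}]$. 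So the kernel modulo $(g_i,h_i)$ is torsion with respect to a constant. One then shows that $A^{\circ}[X,Y]/(g_i,h_i)$ has no such torsion, for instance by exhibiting an $A^{\circ}$-basis of monomials $X^pY^q$ with $0\le q<m$ (or the analogous fundamental-domain basis) and comparing it with the monomial basis of the cone in $A_i$.

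The main obstacle is this kernel computation: proving that $(g_i,h_i)$ gives the full presentation and that there is no hidden torsion. It depends on $\phi$ being prime in $A$, so that $A^{\circ}[X,Y]/(h_i)$ behaves like $A^{\circ}$ adjoined a root, and on scalability, which lets all valuation bookkeeping be carried out by constants. The combinatorics of the cones is elementary by comparison. I would attack the kernel claim first in case ($\daleth$), where it reduces to $\theta^{-1}\alpha X-\theta^{-1}\phi$ being a non-zero-divisor relation with $A^{\circ}$-flat quotient, and then adapt that argument to the two-variable cases.
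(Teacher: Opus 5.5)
You handle the union half the same way the paper does. The paper calls ($\aleph$), ($\beth$), ($\daleth$) clear, and in ($\gimel$) it uses exactly your argument that consecutive convergents give a $\Z$-basis, so the simplicial cones exhaust the cone. There is one slip: in ($\daleth$), cofinality needs $v(\alpha)\to\infty$, not $v(\alpha)$ close to $v(\phi)$, because $a^{-1}\phi^n=(a^{-1}\alpha^n)(\alpha^{-1}\phi)^n$ requires $n\,v(\alpha)\ge v(a)$.

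The genuine gap is the other half. Injectivity of $A^{\circ}[\underline X]/(g_i,h_i)\to A_i$ is the real content of the presentation claims, you leave it open, and the mechanism you propose for it fails. The failure traces back to your description of $[A^{\circ},\phi,\mu]$ as a monoid algebra over $A^{\circ}$ on the cone $C$. It is generated by the cone monomials, but not freely, because $\theta^{-1}\phi$ already lies in $A^{\circ}$.

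Concretely, $h_i$ forces $\theta^{-1}\phi\cdot 1=\alpha^{-e}\beta^{d}\theta^{-1}X^{-e}Y_i^{d}$. So the monomials $X^pY_i^q$ with $0\le q<m$ are $A^{\circ}$-linearly dependent in $A^{\circ}[X^{\pm1},Y_i]/(g_i,h_i)$ and in $A_i$. They form an $A^{\circ}$-basis only of $A^{\circ}[X^{\pm1},Y_i]/(g_i)$, where nothing about $\phi$ has been imposed.

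For the same reason your plan for ($\daleth$) fails as stated. Take $A=K[T]$ with $v(T)=0$, $\phi=T$, $\theta=1$. Then $A^{\circ}[\alpha^{-1}\phi]=K^{\circ}[T/\alpha]$ is not flat over $K^{\circ}[T]$: tensoring the Koszul resolution of $K^{\circ}[T]/(T,\alpha)$ gives $\mathrm{Tor}_1\cong K^{\circ}[T/\alpha]/(\alpha)\neq 0$. What you actually need is only torsion-freeness with respect to a constant. Note also that inverting $\theta^{-1}\phi$ does not turn $A^{\circ}$ into $A$; you must invert a constant $\varpi\in K^{\times}$ with $v_K(\varpi)>0$.

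What is missing is a way to control how $A^{\circ}$ interacts with the new monomials. The paper does this with the $\phi$-adic order on $A[\phi^{-1}]$:
\begin{enumerate}
\item[(1)] Take a kernel element with a representative $\sum_j a_jX^{n_j}Y_i^{m_j}$ having the fewest terms.
\item[(2)] Use $h_i$ to pull the largest power of $\theta^{-1}\phi$ out of each coefficient and replace it by a monomial. The cofactor stays in $A^{\circ}$ since $v(\theta)=v(\phi)$, and afterwards no $a_j$ is divisible by the prime $\phi$.
\item[(3)] Use $g_i$ to move the exponents into a fundamental domain: $0\le m_j<m$ in ($\aleph$), $n_jm_j=0$ in ($\beth$), and the minimal solution of $n\,b_{2i}-m\,b_{2i+1}=s_j$ in ($\gimel$).
\item[(4)] Distinct terms now map to elements of pairwise distinct $\phi$-adic order. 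In ($\aleph$) these orders are $n_jm+m_j\ell$, distinct because $\gcd(\ell,m)=1$. So the image is nonzero unless the representative is empty.
\end{enumerate}

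If you want to keep your torsion strategy, it can be completed, but in a different ring from the one you name. By \Cref{comp:quotient_by_gi}, $A^{\circ}[X^{\pm1},Y_i]/(g_i)$ is the monomial ring $A^{\circ}[\alpha^{-1}T^{m},\alpha T^{-m},\beta^{-1}T^{\ell}]$ in a free variable $T$. In that ring $h_i$ becomes $\theta^{-1}(T-\phi)$, and the ring contains $\theta^{-1}T$, as the image of $\alpha^{-e}\beta^{d}\theta^{-1}X^{-e}Y_i^{d}$. Comparing lowest $T$-degree coefficients, using $v(\theta^{-1}\phi)=0$, shows that $\theta^{-1}(T-\phi)$ is a nonzerodivisor modulo $\varpi$. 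Since this ring is a domain, the quotient by $h_i$ is then $\varpi$-torsion free. Both sides become $A[\phi^{-1}]$ after inverting $\varpi$, so the kernel vanishes.
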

Before we proceed to the proof, a few remarks are in order. 
\begin{remark}
	Although the proposition involves several cases, they all stem from the same idea. 
	We can write \[
		[A^{\circ}, \phi, \mu]  = A^{\circ}[\alpha^{-1}\phi^{n} \mid \alpha \in \vd(A), (-v(\alpha), n) \in C]
	,\] 
	where $C = \{(a, b) \in \Gamma_v \times \Z \mid b\cdot \mu + a \ge 0\} $ is a submonoid of $\Gamma_{v} \times \Z$. 
	The idea is to describe $[A^{\circ}, \phi, \mu]$ as an increasing union of explicit $A^{\circ}$-algebras by approximating $C$ with an increasing union of convenient submonoids.
\end{remark}
\begin{remark}\label{rem:discrete_filtered_system}
	The case ($\aleph$) simplifies in two different ways in the subcases ($\aleph$.d) and ($\aleph$.i) 
	In subcase ($\aleph$.d), i.e.\ when $\Gamma_v$ is discrete, the filtered system $I$ has a maximal element, $(\pi^{a}, \ell)$, with $a, \ell \in \Z_{\ge 0}$ such that $  \ell \mu -a v(\pi) = v(\pi) / m $, where $\pi$ is a constant with $v(\pi)$ a positive generator of $\Gamma_v$.
	So $[A^{\circ}, \mu, \phi]$ is itself is a finitely presented $A^{\circ}$-algebra, and as we will later see in \Cref{comp:inductive_model_differentials} even a complete intersection over $A^{\circ}$. 

	In subcase ($\aleph$.i), i.e.\ when $\Gamma$ is not discrete, the limit simplifies in another way. 
	In this case $I' = \{(\beta, \ell) \in I \mid \ell = 1\} $ is a cofinal subsequence of $I$, and we may instead consider the limit over $I'$. 
	For $(\beta, 1), (\gamma, 1) \in I'$ we see that simply $(\gamma, 1) \ge (\beta, 1) \iff v(\gamma) \ge v(\beta)$. 
\end{remark}
\begin{remark}
	Assume that $ (K,v_K)$ is complete, $A = K[T]$,  $v = [v_K, T, 0]$  and  $\phi = T$. 
	If $\mu \not\in \Gamma_{v,\Q}(\infty)$ then $w = [v, \phi, \mu]$ is a type III point in $\aff_K^{1, \text{an}}$. 
	Then in both cases (regardless of whether $\Gamma$ is discrete or not) the rings $A_i$ are the intersections of $K[T, T^{-1}]$ with the unit balls of affinoid algebras corresponding to annuli shrinking around the type III point $w$.  
	Intuitively, we can think of this as approximating $(K[T, T^{-1}], w)^{\circ}$ with ``affine models'' of shrinking annuli around $w$.
\end{remark}

\begin{proof}[Proof of \Cref{prop:inductive_model_form}]
In all cases, it is clear that the indexing set, either $I$ or $\Z_{\ge 0}$, is a directed poset. 
And in cases ($\aleph$), ($\beth$), and ($\daleth$), it is also clear that the union is increasing and equals $[A^{\circ}, \phi, \mu]$. 

In case $(\gimel)$, i.e.\ when $\mu \not\in \Gamma_{\Q}(\infty)$ and $\Gamma$ is discrete, this needs an argument. 
We know from the classical theory of continued fractions that $a_{2i} b_{2i + 1} - a_{2i+1}b_{2i} = -1$. 
From the classical theory of finitely generated monoids, we know that $\left<(-a_{2i}, b_{2i}), (a_{2i+1}, -b_{2i+1})  \right>$ is a simplicial cone in $\Z^2$ and thus precisely equal to the cone 
\if\thesis1
\[
C_i := \left\{(a, b) \in \Z^2 \st b\cdot \frac{a_{2i}}{b_{2i}} - a \ge 0,\quad b\cdot \frac{a_{2i + 1}}{b_{2i + 1}} -a\ge 0 \right\} 
.\]
\else
\[
C_i := \left< (-a_{2i}, b_{2i}), (a_{2i+1}, -b_{2i+1})  \right> = \left\{(a, b) \in \Z^2 \st b\cdot \frac{a_{2i}}{b_{2i}} - a \ge 0,\quad b\cdot \frac{a_{2i + 1}}{b_{2i + 1}} -a\ge 0 \right\} 
.\]
\fi 
Let $\psi = \theta^{-1}\phi$. Then we see that 
\[
	A_i = A^{\circ}[\pi^{-a_{2i}} \psi^{b_i}, \pi^{a_{2i+1}}\psi^{-b_{2i+1}}] = A^{\circ}[\pi^{a}\psi^{b} \mid (a, b) \in C_i]
,\]
from which we see that the sequence $A_i$ is increasing because the sequence $C_i$ is increasing. 

Next, consider $\alpha^{-1}\phi^{n}$ with $n \in \Z, \alpha \in \vd(A)$ and $n\mu - v(\alpha) \ge 0$.
The inequality is in fact strict because $\mu \not\in \Gamma_{\Q}$.
Let $b = v(\alpha \theta^{-n}) / v(\pi)$.
Then we can write $\beta \pi^{b} = \alpha\theta^{-n}$ with $\beta$ a constant with $v(\beta) = 0$.
We may rewrite this as $\beta \pi^{b}\psi^{n}$ with $n\gamma^{-1}(\mu -v(\phi)) - b > 0$. 
Because the continued fraction convergents $a_i / b_i$ approximate $\gamma^{-1}(\mu - v(\phi))$ we know that for $i \gg 0$, $n \frac{a_{2i}}{b_{2i}} - b \ge 0$ and $n\frac{a_{2i+ 1}}{b_{2i + 1}} - b \ge 0$. 
Thus, $(n, b) \in C_i$. 
So, there are $\ell, k \in \Z_{\ge 0}$ such that $(n, b) = \ell (a_{2i}, b_{2i}) + k (a_{2i + 1}, b_{2i + 1})$. 
Then we see that
\[
	(\pi^{-a_{2i}}(\theta^{-1}\phi)^{b_{2i}})^{\ell} (\pi^{a_{2i+1}}(\theta^{-1}\phi)^{-b_{2i+1}})^{k} = \pi^{b} (\theta^{-n}\psi^{n})
,\] 
from which we may conclude that $\alpha^{-1}\phi^{n} = \beta \pi^{b}\psi^{n} \in A_i$.
This shows that the union of the rings $A_i$ is indeed equal to $[A^{\circ}, \phi, \mu]$. 

\medskip
This leaves us to show that the given presentations for the $A_i$'s are correct in each case. 
\begin{itemize}
	\item[($\aleph$)] \textbf{Case $\mu \in \Gamma_{\Q}$:}
			Fix an $i = (\beta, \ell) \in I$. 
			Let $f: \frac{A^{\circ}(X)[Y_i]}{(g_i, h_i)} \to A^{\circ}[\alpha^{-1}\phi^{m}, \alpha \phi^{-m},  \beta^{-1} \phi^{\ell}]$ be the induced morphism. 
			It is easy to see that $g_i, h_i$ vanish, thus $f$ is well-defined and that $f(X) = \alpha^{-1}\phi^{m}, f(X^{-1}) = \alpha\phi^{-m}, f(Y_{i}) = \beta^{-1}\phi^{\ell}$ making $f$ surjective.

			We will show that $f$ is also injective. 
			Let $x$ be in the domain of $f$ such that $f(x) = 0$. 
			Let $x = \sum_{j =1}^{n_x}a_j X^{n_j}Y^{m_j}$ be a representative of $x$ with a minimal number of terms. 
			If any of  the coefficients $a_j$ are divisible by $\phi$ in $A$ we may factor out the largest power of $\theta^{-1}\phi$ and use relation $h_i$ to substitute $\theta^{-1}\phi$ with $\alpha ^{-e}\beta^{d} \theta^{-1} X^{-e}Y^{d}_i$. 
			Thus, we may assume all $a_j$'s are indivisible by $\phi$ in $A$.  
			We further assume that all $m_j \le m$ as we can use the relation  $g_i$ to substitute $Y_i^{m}$ for $\alpha\beta^{-m}X^{\ell}$. 
			Note that these substitutions do not change the number of terms, so we still have a representation with a minimal number of terms. 

			No two terms share the same monomial as we chose a representation with a minimal number of terms. 
			Because $\gcd(\ell, m) = 1$ and  $0 \le m_j < m$ we know that the $\phi$-adic order of $f(a_j X^{n_j}Y^{m_j})$  
			\[
				v_\phi(f(a_j X^{n_j}Y_i^{m_j})) = n_j\cdot m + m_j\cdot \ell
			,\] 
			is different for each $j  = 1, \ldots, n_x$. 
			Thus, \[
				v_{\phi}(x) = \inf_{j = 1, \ldots, n_x} \{n_j\cdot m + m_j\} = \infty 
			,\] 
			which means that $n_x = 0$, thus $x = 0$.

			The arguments for the other cases are analogous and can be treated in less detail. 	
		\item[($\beth$)] \textbf{Case $\mu \not\in \Gamma_{\Q}$, $\mu \ne \infty$, and $\Gamma$ is not discrete:}
			Fix an $i = (\alpha, \beta)\in I$ and let $f: \frac{A^{\circ}[X_i, Y_i]}{g_i, h_i} \to A^{\circ}[\alpha^{-1} \phi, \beta \phi^{-1}]$  be the induced morphism. 
			Then, like in the previous case, we see that $f$ is well-defined and surjective. 

			To show injectivity, take an $x$ in the domain of $f$ with $f(x) = 0$, and let  $x = \sum_{j = 1}^{n_x}a_jX_i^{n_j}Y_i^{m_j}$ be a representative with a minimal number of terms.
			Using the relation $h_i$ we may assume that none of the coefficients $a_j$ are divisible by $\phi$.
			Further, using relation $g_i$ we may assume that all monomials are either powers of $X_i$ or powers of $Y_i$, i.e. for every  $j$ either  $n_j = 0$ or $m_j = 0$. 
			As the representation has a minimum number of terms we know that no two terms share the same monomial. 
			So we see again that the $\phi$-adic order of $f(a_j X_i^{n_j}Y_i^{m_j}) = n_j - m_j$ is different for each $j$. 
			Thus, \[
				v_\phi(x) = \inf_{j = 1, \ldots, n_x} \{n_j - m_j\}  = \infty
			.\] 
			Therefore, $n_x = 0$, and hence $x = 0$. 

		\item[($\gimel$)] \textbf{Case $\mu \not\in \Gamma_{\Q}$, $\mu \ne \infty$, and $\Gamma$ is discrete:}
			Fix an $i \in \Z_{\ge 0}$ and let $f: \frac{A^{\circ}[X_i, Y_i]}{g_i, h_i} \to A^{\circ}[\pi^{-a_{2i}}(\theta^{-1}\phi)^{b_{2i}}, \pi^{a_{2i+1}}(\theta^{-1}\phi)^{-b_{2i+1}} ]$ be the induced morphism. 
			Like before, we see that $f$ is well-defined and surjective, and we take $x$ in the domain with $f(x) = 0$. 
			Let $x = \sum_{j = 1}^{n_x}a_jX_i^{n_j}Y_i^{m_j}$ be a representative with a minimal number of terms. 
			Again using relation $h_i$ we may assume that none of the coefficients $a_j$ are divisible by $\phi$. 
			Write $s_j = n_j b_{2i} - m_j b_{2i + 1}$.
			Then using relation $g_i$ we may assume that each pair of exponents $(n_j, m_j)$ is the unique minimal positive solution to the Diophantine equation $n_j b_{2i} - m_j b_{2i + 1} = s_j$. 
			As we assumed that  $n_x$ is minimal we know that no two pairs $(n_j, m_j)$ are the same and thus all $s_j$'s are distinct. 
			We see that $v_\phi(f(a_j X_i^{n_j}Y_i^{m_j})) = s_j$ and thus \[
				v_\phi(x) = \inf_{j = 1, \ldots, n_x} \{s_j\}  = \infty
			\]
			from which we conclude that $n_x =0$, and hence $x = 0$.

	\item[($\daleth$)]  \textbf{Case  $\mu = \infty$:}
		This is a much simpler version of the argument of the previous cases.
	\end{itemize}
\end{proof}

\begin{computation}\label{comp:quotient_by_gi}
	Continue with the context of \Cref{prop:inductive_model_form}. 
	Suppose that we are in case ($\aleph$), ($\beth$), or ($\gimel$), and let $A_i$ be one of the rings with presentation $A^{\circ}[X_i, Y_i] / (g_i, h_i)$  or $A^{\circ}[X_i^{\pm 1}, Y_i] / (g_i, h_i)$. 
	Then the ring $A^{\circ}[X_i, Y_i] / (g_i)$ or   $A^{\circ}[X_i^{\pm 1}, Y_i] / (g_i)$ has a simple explicit form and in particular is a domain. 
	\begin{itemize}
		\item[($\aleph$)] \textbf{Case $\mu \in \Gamma_{\Q}$:}
			Let $i = (\beta, \ell) \in I$ and let $B_i = A^{\circ}[\alpha^{-1}T^{m}, \alpha T^{-m}, \beta^{-1}T^{\ell}]$, where $T$ is a free variable. 
			Then $f: A^{\circ}[X_i^{\pm 1}, Y_i] / (g_i) \to B_i$ with $X_i \mapsto \alpha^{-1} T^{m}, Y_i \mapsto \beta^{-1}T^{\ell}$ is an isomorphism.

			The proof strategy is the same as in the proof of \Cref{prop:inductive_model_form}.
			Clearly $f$ is well-defined and surjective. 
			Let  $x \in \ker f$ and choose a representation $\sum_{j = 1}^{n} a_j X^{n_j}Y^{m_j}$ with a minimal number of terms. 
			Then we may use relation $g_i$ to assume that $m_j \le m$ for all $j = 1, \ldots, n$ by substituting $Y_i^{m}$ for $\alpha \beta^{-m}X^{\ell}$. 
			Then all terms have different $T$-adic orders, and thus $n = 0$ and $x= 0$.

		\item[($\beth$)] \textbf{Case $\mu \not\in \Gamma_{\Q}(\infty)$, and $\Gamma$ is not discrete:}
			Let $i = (\alpha, \beta) \in I$ and let  $B_i = A^{\circ}[\alpha^{-1}T, \beta T^{-1}]$. 
			Then $f: A^{\circ}[X_i, Y_i] / (g_i) \to B_i$ with $X_i \mapsto \alpha^{-1}T, Y_i \mapsto \beta T^{-1}$ is an isomorphism. 
			Again it is well-defined and surjective. 
			Let $x = \sum_{j = 1}^{n} a_j X^{n_j}Y^{m_j} \in \ker f$ be a minimal representative. 
			Using the relation $g_i$ we may assume that for all $j$ either $n_j =  0$ or $m_j = 0$. 
			Hence, the  $T$-adic order of each term differs and so $n = 0$ and $x= 0$. 
		\item[($\gimel$)] \textbf{Case $\mu \not\in \Gamma_{\Q}(\infty)$ and $\Gamma$ is discrete:}
			Let $i \in \Z_{\ge 0}$ and let $B_i = A^{\circ}[\pi^{-a_{2i}}T^{b_{2i}}, \pi^{a_{2i+1}}T^{-b_{2i+1}}]$.
			Then $f: A^{\circ}[X_i, Y_i] / (g_i) \to B_i$ with $X_i \mapsto \pi^{-a_{2i}}T^{b_{2i}}, Y_i \pi^{a_{2i+1}}T^{-b_{2i+1}}$ is an isomorphism.
			Again it is well-defined and surjective. 
			Let $x = \sum_{j = 1}^{n} a_j X^{n_j}Y^{m_j} \in \ker f$ be a minimal representative. 
			Write $s_i = n_j b_{2i} - m_jb_{2i + 1}$. 
			Using the relation $g_i$ we may assume that $(n_j, m_j)$ is the minimal positive solution to the Diophantine equation $n_j b_{2i} - m_j b_{2i+1}$. 
			The $T$-adic order of each term is $s_j$, and they all differ. Hence, $x = 0$. 
	\end{itemize}
\end{computation}
\begin{para}\label{para:Ais_are_CI}
	\Cref{comp:quotient_by_gi} shows that the sequences $g_i, h_i$ from \Cref{prop:inductive_model_form} in case ($\aleph$), ($\beth$), or ($\gimel$) are regular. 
	Indeed, write $A^{\circ}[\underline X]$ for $A^{\circ}[X_i, Y_i]$ or $A^{\circ}[X_i^{\pm 1}, Y_i]$. 
	Then $g_i$ is not a zero divisor, as $A^{\circ}[\underline X] $ is a domain. 
	Then the element $h_i$ in $B_i = A^{\circ}[\underline X] / (g_i)$ is also not a zero divisor, as $B_i$ is a domain. 
	In case ($\daleth$), there is only one relation in the presentation which is clearly a regular element. 
	In conclusion, $A_i$ is always a complete intersection over $A^{\circ}$. 
\end{para} 
\begin{computation}\label{comp:inductive_model_differentials}
	Let $(A, v), w, \Gamma, \phi, \mu, \theta$ be as in \Cref{prop:inductive_model_form}. 
	We can compute the differentials $\Omega_{A_i / A^{\circ}}$ as an explicit finitely presented $A_i$-module and moreover we show that all the higher cotangent groups vanish, i.e.\ $\mathbb L_{A_i / A^{\circ}} \simeq \Omega_{A_i / A^{\circ}}[0]$.
	\begin{enumerate}
		\item[($\aleph$)] \textbf{Case $\mu \in \Gamma_\Q$}. 
			We introduce a dummy variable $Z = X^{-1}$.
			Then we can write the representation as 
			\begin{align*}
				\frac{A^{\circ}[X, Z, Y_i]}{(XZ - 1, g_i, h_i)}, \quad g_i = Z^{\ell} Y_{i}^{m} - \alpha \beta^{-m}, \quad h_i = \alpha ^{-e}\beta^{d}\theta^{-1} Z^{e}Y_i^{d} - \theta^{-1} \phi
			.\end{align*}
			Write $\mathcal{I}  = (XZ - 1, g_i, h_i)$.
			By \cref{para:Ais_are_CI}, $XZ - 1, g_i, h_i$ is a regular sequence. 
			So we know that $\mathbb L_{A_i / A^{\circ}[X, Z, Y_i]} \simeq \mathcal{I}  / \mathcal{I}^2 [-1] $, and $\mathcal{I} / \mathcal{I}^2$ is a free $A_i$ module with generators $XZ-1, g_i, h_i$ \stacks{08SJ}. 

			Consider the ring morphisms $A^{\circ} \to A^{\circ}[X, Z, Y_i] \to A_i$ and consider the corresponding LES of cotangent modules
			\if\thesis1
			\begin{equation}\label{eq:fin_pres_omega_type_2}
				\begin{aligned}
					\ldots \to 0  \to 0 \to H_1(\mathbb L_{A_i / A^{\circ}}) &\to \\
					\underbrace{A_i \cdot \left<XZ -1, h_i, j_i \right>}_{H_1(\mathbb L_{A_i / A^{\circ}[Z, X, Y_i]})}\xrightarrow{\dd_i}\underbrace{A_i\cdot \left<\dd X, \dd Z, \dd Y \right>}_{\Omega_{A^{\circ}[Z, X, Y_i]}} \to \Omega_{A_i / A^{\circ}} &\to 0,
				\end{aligned}
			\end{equation} 
			\else
			\begin{equation}\label{eq:fin_pres_omega_type_2}
				\ldots \to 0 \to H_1(\mathbb L_{A_i / A^{\circ}}) \to \underbrace{A_i \cdot \left<XZ -1, h_i, j_i \right>}_{H_1(\mathbb L_{A_i / A^{\circ}[Z, X, Y_i]})}\xrightarrow{\dd_i} \underbrace{A_i\cdot \left<\dd X, \dd Z, \dd Y \right>}_{\Omega_{A^{\circ}[Z, X, Y_i]}} \to \Omega_{A_i / A^{\circ}} \to 0  
			,\end{equation} 
			\fi
			with $\dd_i$ represented by the matrix: \begin{align*}
				\jac(\dd_i) &= \begin{pmatrix} Z & X & 0 \\
				0 & \ell Z^{\ell-1} Y_i^{m} & m Z^{\ell}Y_i^{m-1} \\
			0 & \alpha ^{-e} \beta^{d}\theta^{-1} e Z^{e-1}Y_i^{d} &  \theta^{-1}\alpha ^{-e} \beta^{d} d Z^{e}Y_i^{d -1}\end{pmatrix}  \\
			\det \jac(\dd_i) &= \alpha ^{-e}\beta^{d}\theta^{-1} (ld-em) Z^{\ell +e}Y_i^{m +d-1} \\
				      &=  Y_i^{-1}(\alpha ^{-e}\beta^{d} \theta^{-1} Z^{e}Y^{d})(Z^{\ell}Y_i^{m}) \\
				       &= (Y_i)^{-1}( \theta^{-1} \phi)Z^{\ell}Y_i^{m} \nr \label{eq:det_type_2}
		.\end{align*}
		In particular $\dd_i$ is injective and thus $H_1(\mathbb L_{A_i / A^{\circ}}) = 0$ and all higher cotangent groups vanish.

	\item[($\beth$)] \textbf{Case $\mu \not\in \Gamma_{\Q}(\infty)$, and $\Gamma$ is not discrete:}
		Write $\mathcal{I} := (g_i, h_i)$. 
		Again, $h_i, g_i$ is a regular sequence, and $\mathbb L_{A_i / A^{\circ}[X_i, Y_i]}  \simeq \mathcal{I}  / \mathcal{I}^2 [-1]$.
		Taking the LES yields 
		\if\thesis1
		\begin{equation}\label{eq:fin_pres_omega_type_3_indisc}
			\begin{aligned}
				\ldots \to 0 \to 0 \to H_1(\mathbb L_{A_i / A^{\circ}}) &\to \\
				\underbrace{A_i \cdot \left<h_i, g_i \right>}_{H_1(\mathbb L_{A_i / A^{\circ}[X_i, Y_i]})}\xrightarrow{\dd_i} \underbrace{A_i\cdot \left<\dd X_i, \ \dd Y_i \right>}_{\Omega_{A^{\circ}[X_i, Y_i]}} \to \Omega_{A_i / A^{\circ}} &\to 0, 
			\end{aligned}
		\end{equation} 
		\else
		\begin{equation}\label{eq:fin_pres_omega_type_3_indisc}
				\ldots \to 0 \to H_1(\mathbb L_{A_i / A^{\circ}}) \to \underbrace{A_i \cdot \left<h_i, g_i \right>}_{H_1(\mathbb L_{A_i / A^{\circ}[X_i, Y_i]})}\xrightarrow{\dd_i} \underbrace{A_i\cdot \left<\dd X_i, \ \dd Y_i \right>}_{\Omega_{A^{\circ}[X_i, Y_i]}} \to \Omega_{A_i / A^{\circ}} \to 0  
		,\end{equation} 
		\fi
		with $\dd_i$ represented by the matrix
		\begin{align*}
			\jac(\dd_i)  &= \begin{pmatrix} Y_i & X_i \\ \theta^{-1}\alpha & 0  \end{pmatrix}  \\
			\det \jac(\dd_i) &= -\theta^{-1}\alpha X_i  \\ 
			&= -\theta^{-1}\phi \nr \label{eq:det_type_3_disc_1}
		.\end{align*}
		Thus, $\dd_i$ is injective and thus $H_1(\mathbb L_{A_i / A^{\circ}}) = 0$ and all higher cotangent groups vanish. 

	\item[($\gimel$)] \textbf{Case $\mu \not\in \Gamma_{\Q}(\infty)$ and $\Gamma$ is discrete:}
		Write $\mathcal{I} := (g_i, h_i) $, which is generated by a regular sequence. 
		We get almost the same LES as in the previous case:
		\if\thesis1
		\begin{equation}\label{eq:fin_pres_omega_type_3_disc}
			\begin{aligned}
				\ldots \to 0 \to 0 \to H_1(\mathbb L_{A_i / A^{\circ}}) &\to \\
				\underbrace{A_i \cdot \left<h_i, g_i \right>}_{H_1(\mathbb L_{A_i / A^{\circ}[X_i, Y_i]})}\xrightarrow{\dd_i} \underbrace{A_i\cdot \left<\dd X_i, \ \dd Y_i \right>}_{\Omega_{A^{\circ}[X_i, Y_i]}} \to \Omega_{A_i / A^{\circ}} &\to 0,  
			\end{aligned}
		\end{equation} 
		\else
		\begin{equation}\label{eq:fin_pres_omega_type_3_disc}
				\ldots \to 0 \to H_1(\mathbb L_{A_i / A^{\circ}}) \to \underbrace{A_i \cdot \left<h_i, g_i \right>}_{H_1(\mathbb L_{A_i / A^{\circ}[X_i, Y_i]})}\xrightarrow{\dd_i} \underbrace{A_i\cdot \left<\dd X_i, \ \dd Y_i \right>}_{\Omega_{A^{\circ}[X_i, Y_i]}} \to \Omega_{A_i / A^{\circ}} \to 0  
		,\end{equation} 
		\fi 
		with $\dd_i$ represented by the matrix
		\begin{align*}
			\jac (\dd_i) &= \begin{pmatrix} b_{2i + 1}X_i^{b_{2i + 1}-1} Y_i^{b_{2i}} & b_{2i}X_i^{b_{2i + 1}} Y_i^{b_{2i}-1} \\
			 a_{2i + 1}X_i^{a_{2i + 1}-1} Y_i^{a_{2i}} & a_{2i}X_i^{a_{2i + 1}} Y_i^{a_{2i}-1} \end{pmatrix}  \\
			 \det\jac(\dd_i) &= (b_{2i + 1} a_{2i} - b_{2i} a_{2i + 1}) (X_i Y_i)^{-1}(X_i^{a_{2i + 1} + b_{2i + 1}}Y_i^{a_{2i} + b_{2i}}) \\
					       &= \left(X_iY_i\right)^{-1}\pi \theta^{-1}\phi \nr \label{eq:det_type_3_disc_2}
		.\end{align*}
		Also, $\dd_i$ is injective and thus $H_1(\mathbb L_{A_i / A^{\circ}}) = 0$ and all higher cotangent groups vanish. 
	\item[($\daleth$)] \textbf{Case $\mu = \infty$:}
		As $A_i$ and $A^{\circ}$ are domains we know that $\theta^{-1}\alpha X_i - \alpha^{-1}\phi$ is not a zero divisor and thus a regular sequence. 
		The same argument as the previous cases gives that 
		\begin{equation}\label{eq:fin_pres_omega_type_1}
			\ldots \to 0 \to 0 \to A_i\cdot \left<g_i \right> \xrightarrow{\dd_i} A_i \cdot \left<\dd X_i \right> \to \Omega_{A_i / A^{\circ}} \to 0
		,\end{equation}
		with $\dd_i$ represented by the  $1 \times  1$ matrix
		\begin{equation}\label{eq:det_type_1}
			\jac(\dd_i) = \theta^{-1}\alpha 
		.\end{equation} 
		Likewise, we see that $\mathbb L _{A_i / A} \simeq \Omega_{A_i / A}[0]$. 
	\end{enumerate}
\end{computation}

Using \Cref{comp:inductive_model_differentials}, we obtain a few results. 

\begin{lemma}\label{lem:lim_dets}
	Continue with the context of \Cref{comp:inductive_model_differentials}.
	In all four cases we have that
	\[
		\lim_{i \in I} w(\det (\jac d_i)) = \mu - v(\phi) +  ( \inf \Gamma_{> 0} - \inf(\left<\Gamma, \mu \right>_{>0}))
	.\] 
\end{lemma}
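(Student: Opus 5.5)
The proof is a direct case-by-case evaluation. In each of the four cases, \Cref{comp:inductive_model_differentials} gives $\det(\jac \dd_i)$ as an explicit monomial in the images of the generators. For instance, in ($\beth$) it is $-\theta^{-1}\phi$, and in ($\gimel$) it is $(X_iY_i)^{-1}\pi\theta^{-1}\phi$. We evaluate $w$ of these images using $w(\phi)=\mu$ and $w(\theta)=v(\theta)=v(\phi)$; the latter holds because constants are $w$-stable, see \cref{para:constants_are_stable}. We then take the limit along the directed set $I$.

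The cases, in order:

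\begin{itemize}
\item[($\daleth$)] The determinant is $\theta^{-1}\alpha$, so $w = v(\alpha)-v(\phi)$. As $v(\alpha)\to\infty=\mu$ along $I$, the limit is $\infty$. This matches the right-hand side, since $\inf\langle\Gamma,\infty\rangle_{>0}=\inf\Gamma_{>0}$, so the correction term vanishes.

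\item[($\beth$)] The determinant $-\theta^{-1}\phi$ gives $w = \mu - v(\phi)$ for every $i$. Since $\Gamma$ is dense, $\inf\Gamma_{>0}=0=\inf\langle\Gamma,\mu\rangle_{>0}$, so the correction term vanishes.

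\item[($\gimel$)] We have $w(X_i)=b_{2i}(\mu-v(\phi))-a_{2i}\gamma$ and $w(Y_i)=a_{2i+1}\gamma-b_{2i+1}(\mu-v(\phi))$. Both tend to $0$ because continued-fraction convergents satisfy $|b_k x - a_k|\to 0$ with $x=\gamma^{-1}(\mu-v(\phi))$. Hence the limit of $w(\det)$ is $\gamma+\mu-v(\phi)$. Since $\mu\notin\Gamma_\Q$, the group $\langle\Gamma,\mu\rangle$ is dense, so $\inf\langle\Gamma,\mu\rangle_{>0}=0$ and $\inf\Gamma_{>0}=\gamma$, which gives the claim.

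\item[($\aleph$)] The determinant is $Y_i^{-1}(\theta^{-1}\phi)Z^\ell Y_i^m$. Its value is $-( \ell\mu - v(\beta)) + (\mu - v(\phi)) + (-(m\mu) + \ell\cdot m\mu) + m(\ell\mu-v(\beta))$, corrected by $-\ell(m\mu)$ from $Z^{\ell}=X^{-\ell}$. More cleanly, $Z^\ell Y_i^m$ maps to $\alpha\beta^{-m}$, whose value is $m\mu-mv(\beta)+\ldots$. The first step is to recompute this carefully: $w(Z^\ell Y_i^m) = -\ell m\mu + m(\ell\mu - v(\beta))\cdot$, adjusted so that it equals $v(\alpha^\ell\beta^{-m})$ as the relation $g_i$ dictates. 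The result should be an expression of the form $\mu - v(\phi) + (m-1)\,w(Y_i) + (\text{term from }\alpha,\beta)$. In subcase ($\aleph$.d), $I$ has a maximal element $(\pi^a,\ell)$ with $w(Y_i)=\ell\mu - a v(\pi)=v(\pi)/m$. In that case the value should come out to $\mu-v(\phi)+(m-1)v(\pi)/m$, and this equals the claim because $\inf\Gamma_{>0}=v(\pi)$ and $\inf\langle\Gamma,\mu\rangle_{>0}=v(\pi)/m$. In subcase ($\aleph$.i), we restrict to the cofinal subsystem $\ell=1$ of \Cref{rem:discrete_filtered_system}, where $w(Y_i)=\mu-v(\beta)\to 0$. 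The correction terms then tend to $0$, matching $\inf\Gamma_{>0}=0=\inf\langle\Gamma,\mu\rangle_{>0}$.
\end{itemize}

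The main obstacle is case ($\aleph$). The bookkeeping of the exponents $d,e$ from $\overline{(\ell,m)}$, together with the relation $\ell d - em = 1$, has to be arranged so that the value of the monomial reduces to $\mu - v(\phi) + (m-1)w(Y_i)$. I would first rewrite the determinant using $h_i$, namely $\theta^{-1}\phi = \alpha^{-e}\beta^d\theta^{-1}Z^eY_i^d$, and $g_i$, and then use $\ell d - em=1$ to identify $w(Y_i)$ with the generator $\ell\mu - v(\beta)$ of $\langle\Gamma,\mu\rangle$ modulo $\Gamma$. Finally, I would check that the limit over $I$ drives $w(Y_i)$ to $\inf\langle\Gamma,\mu\rangle_{>0}$.
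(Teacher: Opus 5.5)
Your strategy is the paper's: evaluate $w$ on the explicit determinants of \Cref{comp:inductive_model_differentials} case by case. Your treatment of ($\beth$), ($\gimel$) and ($\daleth$) is correct and matches the paper. Case ($\aleph$), however, is not actually proved. The intermediate computation you write down is inconsistent, you leave an undetermined ``term from $\alpha,\beta$'', and you only assert what the final value should be.

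The trouble comes from the value of $Z$. You charge $Z^{\ell}=X^{-\ell}$ with a contribution $-\ell m\mu$. But $X\mapsto\alpha^{-1}\phi^{m}$, and since constants are $w$-stable, $w(\alpha)=v(\alpha)=m\mu=w(\phi^{m})$. So $w(X)=w(Z)=0$. You also used the image $\alpha\beta^{-m}$, which is a typo in the Computation; by \Cref{prop:inductive_model_form} one has $Z^{\ell}Y_i^{m}\mapsto\alpha^{\ell}\beta^{-m}$. Hence $w(Z^{\ell}Y_i^{m})=\ell m\mu-m\,v(\beta)=m\,w(Y_i)$ exactly. From $\det\jac(\dd_i)=Y_i^{-1}(\theta^{-1}\phi)Z^{\ell}Y_i^{m}$ one then gets $w(\det\jac(\dd_i))=\mu-v(\phi)+(m-1)\,w(Y_i)$, with no extra term. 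What you call the main obstacle, the bookkeeping with $(d,e)$, is not needed. It was already used up in the Computation, where $\ell d-em=1$ and the relation $h_i$ reduced the determinant to this monomial.

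Once this is fixed, your subcase analysis completes the proof. In ($\aleph$.d), the maximal element has $w(Y_i)=v(\pi)/m$, giving $(m-1)v(\pi)/m=\inf\Gamma_{>0}-\inf\langle\Gamma,\mu\rangle_{>0}$. In ($\aleph$.i), along the cofinal subsystem $\ell=1$ one has $w(Y_i)=\mu-v(\beta)\to 0$, matching the vanishing of both infima. This is the same content as the paper's argument, which instead records separately that $w(Y_i)\to\inf\langle\Gamma,\mu\rangle_{>0}$ and $w(Z^{\ell}Y_i^{m})=m\,w(Y_i)\to\inf\Gamma_{>0}$.
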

\begin{para}
Note that if $\Gamma$ not discrete, then the ``fudge terms'' $( \inf \Gamma_{> 0} - \inf(\left<\Gamma, \mu \right>_{>0}))$ vanish. 
If $\Gamma$ is discrete,  then $\inf \Gamma_{> 0}$ is the valuation of a uniformizer. 
Note that if $\phi, \mu$ define a valuation, $v'$ as in \Cref{lem:inductive_model_unit_ball}, then $\Gamma_{v'} = \left<\Gamma, \mu \right>$. 
\end{para}
\begin{proof}[Proof of \Cref{lem:lim_dets}]
	We again treat the four cases separately.
	\begin{enumerate}
		\item[($\aleph$)] \textbf{Case $\mu \in \Gamma_{\Q}:$}
			We will argue that 
			\begin{align*}
				\lim_{(\beta, \ell) \in I} w(\theta^{-1}\phi) &= \mu - v(\phi) \\
				\lim_{(\beta, \ell) \in I} w(Y_i) &= \inf(\left<\Gamma, \mu \right>_{>0})\\			
				\lim_{(\beta, \ell) \in I} w(Z^{\ell} Y_i^{m}) &= \inf  \Gamma_{> 0}
			.\end{align*}
			Then the result follows from \eqref{eq:det_type_2}.
			The first equality holds because $\mu = w(\phi)$ and $v(\phi) = v(\theta)$ and $\theta$ is $w$-stable. 	

			Clearly $w(Y_i) = w(\beta^{-1}\phi^{\ell}) \in \left<\Gamma, \mu \right>_{>0}$ by the bounds on $v(\beta)$. 
			The condition $j \ge i$ translates directly to $w(Y_j) \le w(Y_i)$, so the sequence $(w(Y_i))_{i \in I}$ is non-increasing. 
			If $\Gamma$ is not discrete then we can get $w(Y_i)$ arbitrarily small by choosing $i = (\beta, \ell)$ with $v(\beta) \ge \ell\mu - \epsilon$ for $\epsilon > 0$. 
			If $\Gamma$ is discrete then $I$ has a maximal element $(\pi^{a}, \ell)$ as remarked in \Cref{rem:discrete_filtered_system}.
			Then $w(Y_{(\pi^{a}, \ell)}) = \ell\mu - av(\pi) = v(\pi) / m$. 
			In either case we obtain that $\lim_{(\beta, \ell) \in I} w(Y_i) = \inf(\left<\Gamma, \mu \right>_{>0})$.

			For the last equality we note that $w(Z) = 0$, so it is sufficient to show that $w(Y_i^{m}) = \inf \Gamma_{> 0}$.
			But as $|\left<\Gamma, \mu \right> / \Gamma| = m$ we see that $\Gamma_{> 0} = m\cdot (\left<\Gamma, \mu \right>_{>0}) $ and the result follows from the second equality.

		\item[($\beth$)] \textbf{Case $\mu \not\in \Gamma_{\Q}(\infty)$ and $\Gamma$ is not discrete:}
			From \eqref{eq:det_type_3_disc_1} we see that $w(\det \jac(d_i)) $ is constant and equal to $\mu - v(\phi)$. 
			The fudge terms $( \inf \Gamma_{> 0} - \inf(\left<\Gamma, \mu \right>_{>0}))$ vanish in this case. 
		\item[($\gimel$)] \textbf{Case $\mu \not\in \Gamma_{\Q}(\infty)$, and $\Gamma$ is discrete:}
			Similarly to case ($\aleph$) we will show that 
			\begin{align*}
				w(\theta^{-1}\phi) &= \mu - v(\phi) \\
				w(\pi) &= \inf  \Gamma_{> 0}\\
				w(X_iY_i) &= \inf  \left< \Gamma_{> 0}, \mu \right>_{> 0} = 0
			.\end{align*}
			Then the result follows from \Cref{eq:det_type_3_disc_2}.
			The first equality follows from the exact same arguments as the previous case. 
			By construction $v(\pi) = \inf  \Gamma_{> 0}$ and $\pi$ is constant, whence the second equality holds. 
			Finally, by classical results on continued fractions, we know that 
			\begin{align*}
				\left| \gamma^{-1} (\mu - v(\phi)) - \frac{a_i}{b_i}\right| \le \frac{1}{b_i^2}, \text{ thus }
				\left| b_i (\mu - v(\phi)) - \gamma a_i\right| \le \frac{\gamma}{b_i}
			,\end{align*}
			from which we learn that $|w(X_i)| \le \gamma / b_{2i}, |w(Y_i)| \le \gamma / b_{2i + 1} $. 
			So both tend to $0$ as $i \to \infty$, and we conclude the third inequality. 

		\item[($\daleth$)] \textbf{Case $\mu = \infty$:}
			We easily see that both sides equal $\infty$.
	\end{enumerate}
\end{proof}

\begin{lemma}\label{lem:enlargement_differentials}
	Let $A, \phi, \mu, v, w$ be as in \Cref{comp:inductive_model_differentials},
	and let $(L, v_L)$ be a valued field with a map of semi-valued rings $f:(A, w) \to (L, v_L)$. Then we have \[
		\mathbb L_{[A^{\circ}, \phi, \mu] / A^{\circ}} \otimes^{\mathbf L} L^{\circ} \simeq \Omega_{[A^{\circ}, \phi, \mu] / A^{\circ}} \otimes L^{\circ} [0]
	,\]
	and \[
		\cont \left(\Omega_{[A^{\circ}, \phi, \mu]} \otimes L^{\circ}\right) = \mu - v(\phi) +  ( \inf \Gamma_{> 0} - \inf(\left<\Gamma, \mu \right>_{>0})) 
	.\] 
\end{lemma}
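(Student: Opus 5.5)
We write $A_\infty := [A^{\circ}, \phi, \mu] = \bigcup_{i \in I} A_i$ as in \Cref{prop:inductive_model_form}. The cotangent complex commutes with filtered colimits of rings, so $\mathbb L_{A_\infty / A^{\circ}} \simeq \colim_i \mathbb L_{A_i / A^{\circ}} \otimes_{A_i} A_\infty$. By \Cref{comp:inductive_model_differentials}, each $\mathbb L_{A_i / A^{\circ}} \simeq \Omega_{A_i/A^{\circ}}[0]$. Each $\Omega_{A_i/A^{\circ}}$ also has a two-term free resolution $A_i^{r} \xrightarrow{\dd_i} A_i^{r} \to \Omega_{A_i/A^{\circ}}$ with $\det \jac(\dd_i)$ given by \eqref{eq:det_type_2}, \eqref{eq:det_type_3_disc_1}, \eqref{eq:det_type_3_disc_2}, \eqref{eq:det_type_1}.

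\textbf{First claim.} Base change to $L^{\circ}$ along $f$ (which maps $A_\infty$ into $L^{\circ}$, since $f$ is $w$-contractive and $A_\infty\subset (A[\phi^{-1}],w)^{\circ}$ or $(A,w)^\circ$). This gives
\[
\mathbb L_{A_\infty/A^{\circ}}\derotimes_{A_\infty} L^{\circ} \simeq \colim_i \bigl(L^{\circ r}\xrightarrow{f(\dd_i)} L^{\circ r}\bigr).
\]
The determinant $f(\det\jac\dd_i)$ is nonzero in every case, because its valuation is finite (case ($\daleth$) needs a check: $\theta^{-1}\alpha$ is a unit-type constant, so it has finite valuation). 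Hence each $f(\dd_i)$ is injective on $L^{\circ r}$. Filtered colimits are exact, so $H_1$ of the colimit vanishes, and the complex is $\colim_i(\Omega_{A_i}\otimes L^{\circ})[0] = \Omega_{A_\infty/A^{\circ}}\otimes L^{\circ}[0]$.

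\textbf{Content.} Put $M_i=\Omega_{A_i/A^{\circ}}\otimes_{A_i}L^{\circ}$. This is $\coker f(\dd_i)$, a finitely presented torsion $L^{\circ}$-module. By Smith normal form over the valuation ring, $\cont M_i = v_L(f(\det\jac\dd_i)) = w(\det\jac \dd_i)$, using that $f$ is a map of semi-valued rings; this is where $w$ rather than $v$ enters. By \Cref{lem:lim_dets}, $\cont M_i$ converges to the claimed value. It remains to show $\cont(\colim_i M_i)=\lim_i\cont M_i$, for which we use \Cref{lem:cont_colim}(c): the contents converge, so it suffices to show $\lim_i\lim_{j\ge i}\cont(\coker(M_i\to M_j))=0$.

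\textbf{Main obstacle: the cokernel condition.} The cokernel of $M_i\to M_j$ is generated by the images of $\dd X_j,\dd Y_j$ modulo the image of $\Omega_{A_i}$. Write the generators of $A_j$ in terms of those of $A_i$ (as monomials in $\phi$ and constants) and use $\dd(\phi^n)=n\phi^{n-1}\dd\phi$. We aim to bound this cokernel by a cyclic module $L^{\circ}/(c_{ij})$, where $v_L(c_{ij})$ is governed by $w(Y_i)$ (cases ($\aleph$),($\beth$)) or $w(X_iY_i)$ (case ($\gimel$)). By the proof of \Cref{lem:lim_dets}, these tend to $\inf\langle\Gamma,\mu\rangle_{>0}$ or $0$. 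In the discrete subcases the system stabilizes (($\aleph$.d) has a maximal element), so the condition is trivial. In case ($\daleth$), $A_\alpha\to A_\beta$ sends $\dd X_\alpha\mapsto \beta\alpha^{-1}\dd X_\beta$, so the cokernel is $L^{\circ}/(\beta\alpha^{-1})$ and its content is $v(\beta)-v(\alpha)$. This does not go to $0$, but here the target is $\infty$ and the contents diverge, so the claim follows directly since $\cont$ of the union dominates. If the cokernel estimate proves messy, we would instead verify (b) via kernels: kernels of $M_i\to M_j$ are bounded by the difference of contents plus cokernel contents, so a one-sided estimate suffices.
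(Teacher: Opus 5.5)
Your architecture is the paper's: write $[A^{\circ},\phi,\mu]$ as the filtered union of the complete intersections $A_i$, and base change the two-term complexes to get $H_1=0$. Then take $\cont(M_i)=w(\det\jac\dd_i)$ from \Cref{lem:lim_dets}, pass content through the colimit with \Cref{lem:cont_colim}(c), and treat ($\daleth$) separately as an increasing union. Your first claim is fine, and you state explicitly the injectivity of $f(\dd_i)$, which the paper leaves implicit.

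The gap is in the step you yourself call the main obstacle. You only announce the cokernel condition, and the mechanism you suggest does not work as stated. Relative to $A^{\circ}$ you cannot differentiate the monomial expressions through $\dd(\phi^n)=n\phi^{n-1}\dd\phi$: one has $\dd(\theta^{-1}\phi)=0$ since $\theta^{-1}\phi\in A^{\circ}$, and the scalars $\beta^{-1},\gamma^{-1}$ in front of the powers of $\phi$ need not lie in $A_j$.

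What the paper does is lift $A_i\to A_j$ to a morphism of presentations $A^{\circ}[\underline X_i]\to A^{\circ}[\underline X_j]$. Then $\coker(M_i\to M_j)$ is a quotient of $L^{\circ}\langle \dd\underline X_j\rangle/L^{\circ}\langle\dd\underline X_i\rangle$. The lifts must be chosen so that no integer coefficients survive, and this is a real issue. In ($\aleph$.i), for $i=(\beta,\ell)$ and $j=(\gamma,k)$, the natural lift is $Y_i\mapsto\beta^{-1}\gamma^{q}\alpha^{r}X^{r}Y_j^{q}$ with $\ell=qk+rm$. It only gives the bound $w(Y_i)-w(Y_j)+v_L(q)$, and $v_L(q)>0$ can happen in mixed characteristic, so the bound need not tend to $0$.

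The paper avoids this case by case:
\begin{itemize}
\item In ($\aleph$.i) it restricts to the cofinal subsystem $\ell=1$ of \Cref{rem:discrete_filtered_system}. There the lift is the pure rescaling $Y_i\mapsto\beta^{-1}\gamma Y_j$, with content at most $\mu-v(\beta)=w(Y_i)\to 0$.
\item In ($\beth$) both variables are rescaled, so the bound is $w(X_i)+w(Y_i)$, not $w(Y_i)$ alone.
\item In ($\gimel$) the substitution $X_i\mapsto X_j^{m}Y_j^{n}$, $Y_i\mapsto X_j^{m'}Y_j^{n'}$ has $mn'-m'n=1$. This is exactly what removes the integer factor and gives content $w((X_jY_j)^{-1}X_iY_i)\le w(X_iY_i)\to 0$.
\end{itemize}

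Three smaller points. First, ``the discrete subcases stabilize'' holds only for ($\aleph$.d); ($\gimel$) is discrete but does not stabilize, though you handle it separately anyway. Second, in ($\daleth$) the phrase ``content of the union dominates'' needs the transition maps $M_\alpha\to M_\beta$ to be injective. This follows from your own formula: $\dd X_\alpha\mapsto\beta\alpha^{-1}\dd X_\beta$ is injective from $L^{\circ}/(\theta^{-1}\alpha)$ to $L^{\circ}/(\theta^{-1}\beta)$. The paper gets it from $H_1(\mathbb L_{A_\beta/A_\alpha}\derotimes L^{\circ})=0$, but you should say it. Third, your fallback through kernels is not an alternative, since deducing (b) from (c) in \Cref{lem:cont_colim} uses the same cokernel estimate.
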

\begin{proof}
	We can write $[A^{\circ}, \phi, \mu]$ as an increasing union of  $A^{\circ}$-algebras $A_i$ as in \Cref{prop:inductive_model_form}. 
	By \Cref{eq:fin_pres_omega_type_2,eq:fin_pres_omega_type_3_indisc,eq:fin_pres_omega_type_3_disc,eq:fin_pres_omega_type_1}, the module $\Omega_{A_i}$ is finitely presented as \[
	 0 \to A_i \cdot \left<\underline f \right> \xrightarrow{d_i}  A_i \cdot \left<\underline X \right> \to   \Omega_{A_i / A^{\circ}} \to 0
	,\] 
	for some variables $\underline X$ and relations $\underline f$.
	By \cite[eq. 1.2.3.4]{illusieComplexeCotangentDeformations2009}, we see that
	\[
		\mathbb L_{[A^{\circ}, \phi, \mu] / A^{\circ}}  = \colim_{i \in I}\left(\mathbb L_{A_i / A^{\circ}} \derotimes_{A_i} [A^{\circ}, \phi, \mu]  \right) 
	.\] 
	The functor $- \derotimes_{[A^{\circ}, \phi, \mu]}L^{\circ}$ is right exact and thus commutes with colimits. 
	Then we compute
	\begin{align*}
		\mathbb L_{[A^{\circ}, \phi, \mu] / A^{\circ}} \otimes L^{\circ} &= \colim_{i \in I} \left(\mathbb L_{A_i / A^{\circ}}\derotimes_{A_i} L^{\circ}\right) \\
		&\simeq \colim_{i\in I} \left(\Omega_{A_i / A^{\circ}}[0] \derotimes_{A_i}L^{\circ} \right) \\
			&\simeq \colim_{i \in I} \left(0 \to A_i \cdot \left<\underline f \right> \xrightarrow{d_i}  A_i \cdot \left<\underline X \right> \to   0\right) \derotimes_{A_i} L^{\circ} \\
			&\simeq \colim_{i \in I} \left(0 \to L^{\circ} \cdot \left<\underline f \right> \xrightarrow{d_i \otimes L^{\circ}}  L^{\circ} \cdot \left<\underline X \right> \to   0\right) \\
			&\simeq \colim_{i \in I} \left(\Omega_{A_i / A} \otimes L^{\circ}[0] \right) \\
			&\simeq \Omega_{[A^{\circ}, \phi, \mu]} \otimes_{[A^{\circ}, \phi, \mu]}L^{\circ} [0]
		.\end{align*}
	This completes the proof of the first part. 

	The second part, which states that $\cont(\Omega_{[A^{\circ}, \phi, \mu]/A} \otimes L^{\circ}) = \mu - v(\phi) +  ( \inf \Gamma_{> 0} - \inf(\left<\Gamma, \mu \right>_{>0}))$, would follow from \Cref{lem:lim_dets} if content commutes with the colimit ${\Omega_{[A^{\circ}, \phi, \mu]/A^{\circ}}} \otimes_{[A^{\circ}, \phi, \mu]} L^{\circ} = \colim  \Omega_{A_i / A} \otimes_{A_i} L^{\circ}$, i.e.
	\begin{equation}\label{eq:commute_1}
		\cont \Omega_{[A^{\circ}, \phi, \mu] / A} = \lim_{i \in I} \cont \Omega_{A_i / A} = \mu - v(\phi) +  ( \inf \Gamma_{> 0} - \inf(\left<\Gamma, \mu \right>_{>0}))
	.\end{equation} 
	Suppose that $\mu \ne \infty$, i.e., we are in case ($\aleph$), ($\beth$), ($\gimel$).
	It is sufficient to show that the system $(\Omega_{A_i / A} \otimes L^{\circ})_{i \in I}$ satisfies condition (c) of \Cref{lem:cont_colim} for \eqref{eq:commute_1} to be true, i.e., 
	\if\thesis1
	\begin{align*}
		&\lim_{i \in I} \lim_{j \ge i} \cont(\coker \Omega_{A_i / A} \otimes_{A_i} L^{\circ} \to \Omega_{A_j / A} \otimes_{A_j} L^{\circ}) \\
		& = \lim_{i \in I} \lim_{j \ge i} \cont( \Omega_{A_j / A_i} \otimes_{A_j} L^{\circ})  = 0 
	.\end{align*}
	\else
	\[
		\lim_{i \in I} \lim_{j \ge i} \cont(\coker \Omega_{A_i / A} \otimes_{A_i} L^{\circ} \to \Omega_{A_j / A} \otimes_{A_j} L^{\circ}) = \lim_{i \in I} \lim_{j \ge i} \cont( \Omega_{A_j / A_i} \otimes_{A_j} L^{\circ})  = 0 
	.\]  
	\fi

	Let $j \ge i$.
	Suppose that we make a choice of morphism $A[\underline{X_i}] \to A[\underline{X_j}]$ such that the following diagram commutes:
	\begin{equation}\label{eq:comm_diag_presentations}
	\begin{tikzcd}
		A[\underline{X_i}] \rar \dar & A[\underline{X_j}] \dar \\
		A_i \rar &  A_j
	\end{tikzcd}
	.\end{equation} 
	This leads to the following diagram with exact rows
	\[
	\begin{tikzcd}
		L^{\circ}\cdot \left<\dd \underline{X_i} \right> \rar \dar[two heads] &  L^{\circ}\cdot \left<\dd \underline{X_j} \right> \dar[two heads] \rar & \frac{L^{\circ}\cdot \left<\dd \underline{X_j} \right>}{L^{\circ}\cdot \left<\dd \underline{X_i} \right>} \dar[two heads] \rar & 0 \\
		\Omega_{A_i / A} \otimes L^{\circ} \rar & \Omega_{A_j / A} \otimes L^{\circ} \rar  &   \Omega_{A_j / A_i} \otimes L^{\circ} \rar & 0 
\end{tikzcd}
	.\] 
	It follows that:
	\[
		\cont (\Omega_{A_j / A_i} \otimes L^{\circ} ) \le \cont\left(  {L^{\circ}\cdot \left<\dd \underline{X_j} \right>} / {L^{\circ}\cdot \left<\dd \underline{X_i} \right>}\right)  
	.\] 
	Thus, it suffices to find morphisms $A[\underline {X_i}] \to A[\underline{X_j}]$ such that \eqref{eq:comm_diag_presentations} commutes and 
	\[
	\lim_{i \in I} \lim_{j \ge i} \cont\left(  {L^{\circ}\cdot \left<\dd \underline{X_j} \right>} / {L^{\circ}\cdot \left<\dd \underline{X_i} \right>}\right) = 0
	.\] 
	Again, we handle this case by case. 
	\begin{itemize}
		\item[($\aleph$)] \textbf{Case $\mu \in \Gamma_\Q$:} 
			We may replace $I$ with any cofinal subsequence. 
			In subcase  ($\aleph$.d), i.e., when $\Gamma$ is discrete, \Cref{rem:discrete_filtered_system} shows that we may replace $I$ by a maximal element of $I$ and the result is trivial. 
			The same remark shows that in subcase ($\aleph$.i), i.e., when $\Gamma$ is not discrete, we may replace $I$ by the subsequence $I' = \{(\beta, \ell) \in I \mid \ell = 1\} $. 
			Let $i = (\beta, 1) \in I', j = (\gamma, 1) \in I'$.
			Then $\underline {X_i} = [X, Z, Y_i], \underline {X_j} = [X, Z, Y_j]$ and the natural mapping $X \mapsto X, Z \mapsto Z, Y_i \mapsto \beta^{-1}\gamma Y_j$ defines a suitable morphism  $A[X, Z, Y_i] \to A[X, Z, Y_j]$.
			So  \[
				\frac{L^{\circ}\cdot \left<\dd \underline{X_j} \right>}{L^{\circ}\cdot \left<\dd \underline{X_i} \right>} = \frac{L^{\circ}\left<\dd X, \dd Z, \dd Y_j \right>}{L^{\circ}\left<\dd X, \dd Z, \dd Y_i \right>} = \frac{L^{\circ}\left< \dd Y_j \right>}{L^{\circ}\left<\beta^{-1}\gamma \dd Y_j \right>}
			,\] 
			which is a module of content $w(\beta^{-1}\gamma) = w(\beta^{-1}\phi) - w(\gamma \phi^{-1})  \le  \mu - v_i(\beta)$, and \[
				\lim_{(\beta, 1) \in I} \lim_{(\gamma, 1) \ge (\beta, 1)} \mu - v_i(\beta) = 0
				.\]
			\item[($\beth$)] \textbf{Case $\mu \not\in \Gamma_\Q(\infty)$ and $\Gamma$ is not discrete:} 
			Let $i = (\alpha, \beta) \in I, j = (\gamma, \delta) \in I' $.
			Then $\underline {X_i} = [X_i, Y_i], \underline {X_j} = [X_j, Y_j]$ and the natural mapping $X_i \mapsto \alpha^{-1} \gamma X_j, Y_i \mapsto \beta^{-1}\delta Y_j$ defines a suitable morphism  $A[X_i, Y_i] \to A[X_j, Y_j]$.
			So  \[
				\frac{L^{\circ}\cdot \left<\dd \underline{X_j} \right>}{L^{\circ}\cdot \left<\dd \underline{X_i} \right>} = \frac{L^{\circ}\left<\dd X_j, \dd Y_j \right>}{L^{\circ}\left<\dd X_i, \dd Y_i \right>} = \frac{L^{\circ}\left< \dd X_j \right>}{L^{\circ}\left<\alpha^{-1}\gamma \dd X_j \right>} \oplus \frac{L^{\circ}\left< \dd Y_j \right>}{L^{\circ}\left<\beta^{-1}\delta \dd Y_j \right>}
			,\] 
			which is a module of content \[
				w(\alpha^{-1}\phi) - w(\gamma^{-1}\phi) +  w(\beta^{-1}\phi) - w(\delta^{-1}\phi) \le  2\mu - v_i(\alpha) - v_i(\beta)
				,\] and \[
			\lim_{(\alpha, \beta) \in I} \lim_{(\gamma, \delta) \ge (\alpha, \beta)} 2\mu - v_i(\alpha) - v_i(\beta) = 0
				.\]
			\item[($\gimel$)] \textbf{Case $\mu \not\in \Gamma_\Q(\infty)$ and $\Gamma$ is discrete:} 
			Let $i, j \in \N$ with $j \ge i$. 
			Then $\underline {X_i} = [X_i, Y_i], \underline {X_j} = [X_j, Y_j]$.				Let  $m, n, m', n' \in \Z_{\ge 1}$ be the unique integers such that 
			\begin{align*}
				(a_{2i}, b_{2i}) &= m\cdot (a_{2j}, b_{2j}) + n(a_{2j + 1}, b_{2j}+ 1), \\ 
				(a_{2i+1}, b_{2i+1}) &= m'\cdot (a_{2j}, b_{2j}) + n'(a_{2j + 1}, b_{2j}+ 1)
			.\end{align*}
				Note that $mn' - m'n = 1$ because $\begin{pmatrix} m & m' \\ n & n' \end{pmatrix} \in \GL_2(\Z)^{+} $ as it maps a basis of $\Z^2$ to a basis of $\Z^2$ with the same orientation. 
			Then we can take the natural morphism $A[X_i, Y_i] \mapsto A[X_j, Y_j]$ induced by $X_i \mapsto X_j^{m}Y_j^{n}, Y_i \mapsto X_j^{m'}Y_j^{n'}$.
			So  
			\if\thesis1
			\begin{align*}
				\frac{L^{\circ}\cdot \left<\dd \underline{X_j} \right>}{L^{\circ}\cdot \left<\dd \underline{X_i} \right>} = \frac{L^{\circ}\left< \dd X_j, \dd Y_j \right>}{L^{\circ}\left< \begin{array}{c} m X_j^{m-1}Y_j^{n} \dd X_j + n X_j^{m}Y_j^{n-1} \dd Y_j,\\  m' X_j^{m'-1}Y_j^{n'} \dd X_j + n' X_j^{m'}Y_j^{n'-1} \dd Y_j \end{array} \right>} 
			,\end{align*}
			\else \[
				\frac{L^{\circ}\cdot \left<\dd \underline{X_j} \right>}{L^{\circ}\cdot \left<\dd \underline{X_i} \right>} = \frac{L^{\circ}\left<\dd X_j, \dd Y_j \right>}{L^{\circ}\left<\dd X_i, \dd Y_i \right>} = \frac{L^{\circ}\left< \dd X_j, \dd Y_j \right>}{L^{\circ}\left< \begin{array}{c} m X_j^{m-1}Y_j^{n} \dd X_j + n X_j^{m}Y_j^{n-1} \dd Y_j,\\  m' X_j^{m'-1}Y_j^{n'} \dd X_j + n' X_j^{m'}Y_j^{n'-1} \dd Y_j \end{array} \right>} 
			,\] 
			\fi 
			which is a module of content
			\begin{align*}
				w\left(\det \begin{pmatrix}  m X_j^{m-1}Y_j^{n}  &  n X_j^{m}Y_j^{n-1} \\  
				m' X_j^{m'-1}Y_j^{n'} &  n' X_j^{m'}Y_j^{n'-1} \end{pmatrix}\right) &= w((X_jY_j)^{-1}(X_iY_i)) 
			.\end{align*}
			Just as in the case ($\gimel$) of the proof of \Cref{lem:lim_dets}, we find that $\lim_{i \in \N} w(X_iY_i) = 0$ and $\lim_{j \in \N} w(X_jY_j) = 0$. 
			Thus, \[
			\lim_{i \in \N} \lim_{j \ge i} w((X_jY_j)^{-1}(X_iY_i)) = 0
			.\] 

	\end{itemize}

	We now turn to the case ($\daleth$), i.e., when $\mu = \infty$.
	We will show that \eqref{eq:commute_1} holds by showing that the system  $(\Omega_{A_i / A})_{i \in I}$ is actually an increasing union and applying \Cref{lem:cont_union}.

	Indeed, let $\alpha, \beta \in I$ with $v(\beta) \ge v(\alpha)$.
	Then we easily verify that \[
		\frac{A_{\alpha}[X_{\beta}]}{\beta\alpha^{-1}X_{\beta} - X_{\alpha}} \simeq A_{\beta}, X_{\beta}\mapsto \beta^{-1}\phi
	,\] 
	where $\beta\alpha^{-1} X_{\beta} - X_{\alpha}$ is a regular sequence.
	Thus, $\mathbb L_{A_{\beta} / A_{\alpha}}$ is concentrated in degree $0$ and $\Omega_{A_{\beta} / A_{\alpha}}$ is finitely presented. 
	So the sequence \[
		0 \to \Omega_{A_{\alpha} / A} \otimes L^{\circ} \to \Omega_{A_\beta /A} \otimes L^{\circ} \to \Omega_{A_\beta / A_\alpha} \otimes L^{\circ} \to 0
	,\] 
	is exact, and the first map is injective.
	Hence, the system $(\Omega_{A_i / A})_{i \in I}$ is increasing. 
\end{proof}

\subsection{Inducting on MacLane-Vaquié chains} \label{sec:inducting_on_maclane-vaquie_chains}

\begin{para}
	We are now at a point where we can interpret our work on enlargements and their differentials for augmentations of semi-valuations on $K[T]$.
	\Cref{lem:summary_ordinary_augmentation} is a straightforward summary of the results of the previous section in the specific case of an ordinary augmentation of a semi-valuation on $K[T]$.
	Most of the technical work left is in \Cref{lem:summary_limit_augmentation} and \Cref{lem:summary_almost_stable} where we show that similar results hold for limit augmentations. This requires a limit argument using enlargements that is not entirely straightforward. 
	Finally, in \Cref{thm:computation_main}, we use distinguished triangles to apply this to augmentation chains. 
\end{para}
\begin{lemma}\label{lem:maximal_localisation}
	Let $w$ be a semi-valuation on $K[T]$ and let $(K[T], w) \to (L, v_L)$  a map of semi-valued rings to a valued field $L$. 
	Let $S$ be a multiplicative subset of $K[T]$ such that $(S^{-1}K[T], w)$ is scalable and let $A \to S^{-1}K[T]$ be any ring morphism.
	Then 
	\[
		\mathbb L_{(S^{-1}K[T], w)^{\circ} / A} \derotimes L^{\circ} \simeq \mathbb L_{(K[T]_{\ker w}, w)^{\circ} / A} \derotimes L^{\circ}
	.\] 
\end{lemma}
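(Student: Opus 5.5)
The plan is to show that $C := (K[T]_{\ker w}, w)^{\circ}$ is a localization of $B := (S^{-1}K[T], w)^{\circ}$ at elements that map to units of $L^{\circ}$. The statement then follows from the transitivity triangle and the vanishing of the cotangent complex of a localization.

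First, note that $w$ must be defined on $S^{-1}K[T]$, so $S \cap \ker w = \emptyset$. Since $\ker w$ is a prime ideal, $K[T]_{\ker w} = S'^{-1}(S^{-1}K[T])$ with $S' = K[T] \setminus \ker w$. Also, $S^{-1}K[T]$ is a domain and is scalable by hypothesis. So we can apply \Cref{lem:value_divisible_localisation} to $(S^{-1}K[T], w)$ and the multiplicative set $S'$. This gives $C = \Sigma^{-1} B$, where
\[
	\Sigma = \{a^{-1}s \mid s \in S',\ a \in \vd(S^{-1}K[T]),\ w(a) = w(s)\}.
\]
Every element of $\Sigma$ has $w$-value $0$. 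The one point to check is that \Cref{lem:value_divisible_localisation}, stated for a valuation on a domain, goes through verbatim for our semi-valuation. Its proof only uses constants $c$ with $w(c) = w(b)$ for $b \in S'$, and $w(b) < \infty$, so I expect no problem; this is where I would be most careful.

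Next, since $C$ is a localization of $B$, the morphism $B \to C$ is flat and formally étale, so $\mathbb L_{C/B} \simeq 0$. The transitivity triangle for $A \to B \to C$,
\[
	\mathbb L_{B/A} \derotimes_B C \to \mathbb L_{C/A} \to \mathbb L_{C/B} \to,
\]
then gives $\mathbb L_{C/A} \simeq \mathbb L_{B/A} \derotimes_B C$.

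Finally, consider the map $(K[T], w) \to (L, v_L)$. Elements of $S'$ have finite $w$-value, so they map to nonzero elements of $L$. Hence the map extends to $K[T]_{\ker w}$ and to $S^{-1}K[T]$, and sends the unit balls $B$ and $C$ into $L^{\circ}$. Each $t \in \Sigma$ satisfies $v_L(t) = w(t) = 0$, so it maps to a unit of $L^{\circ}$. Therefore $B \to L^{\circ}$ factors through $C$, compatibly with the inclusion $B \to C$. Then
\[
	\mathbb L_{B/A} \derotimes_B L^{\circ} \simeq (\mathbb L_{B/A} \derotimes_B C) \derotimes_C L^{\circ} \simeq \mathbb L_{C/A} \derotimes_C L^{\circ},
\]
which is the claim. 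The main obstacle is the identification of $C$ as a localization of $B$, that is, justifying \Cref{lem:value_divisible_localisation} in the semi-valued setting; the rest is formal.
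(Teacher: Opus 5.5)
Your proposal is correct and is essentially the paper's argument: \Cref{lem:value_divisible_localisation} exhibits $(K[T]_{\ker w}, w)^{\circ}$ as a localization of $(S^{-1}K[T], w)^{\circ}$ at elements of $w$-value $0$, the cotangent complex commutes with this localization, and these elements map to units of $L^{\circ}$, so tensoring with $L^{\circ}$ absorbs the localization. Your worry about the semi-valued setting is unfounded: that lemma is already stated for semi-valued $K$-algebras, and its proof only uses constants whose value equals that of some $s \in S'$, which is finite.
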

\begin{proof}
	The ring $K[T]_{\ker w}$ is a localization of $S^{-1}K[T]$ which is scalable. 
	\Cref{lem:value_divisible_localisation} shows that there is some multiplicative set $S' \subset S^{-1}K[T]$ such that ${S'}^{-1}S^{-1}K[T] = K[T]_{\ker w}$ and $w(S') = \{0\}$.

	Then \[
		 \mathbb L_{(K[T]_{\ker w}, w)^{\circ} / A} \derotimes L^{\circ} \simeq
{S'}^{-1}\mathbb L_{(S^{-1}K[T], w)^{\circ} / A} \derotimes L^{\circ} 	\]
\end{proof}

\begin{lemma}[$\ctc$ for an ordinary augmentation]\label{lem:summary_ordinary_augmentation}
	Let $v \le w$ be two semi-valuations on  $K[T]$, and  let $S$ be a multiplicative system of $w$-stable elements such that $(S^{-1}K[T], v)$ is scalable. 
	Consider $(K[T], w) \to (L, v_L)$ a map of semi-valued rings to a valued field $L$.
	Let $\fra = [v, \phi, \mu]$ be a $w$-optimal ordinary augmentation and let $S'$ be the multiplicative system generated by $S$ and $\phi$ if $\mu \ne \infty$ and $S' = S$ if $\mu = \infty$.
	Write $R_1 = (S^{-1}K[T], v)^{\circ}$ and $R_2 = (S'^{-1}K[T], [\fra])^{\circ}$. 
	Then,
	\begin{enumerate}[(i)]
		\item $R_2 = [(S^{-1}K[T], v), \phi, \mu]$
		\item $(S'^{-1}K[T], [\fra])$ is scalable and on $(S'^{-1}K[T], [\fra])$ it holds that $[\fra] \le w$. 
		\item 
			$\ctc_{R_2 / R_1} \derotimes_{R_2} L^{\circ} \simeq \left(\Omega_{R_2 / R_1} \otimes_{R_2} L^{\circ}\right)[0]$
		\item $\cont\left( \Omega_{R_2 / R_1} \otimes_{R_2} L^{\circ} \right) = \step \fra + \inf(\Gamma_{v, >0}) - \inf(\Gamma_{[\fra], >0}) $
	\end{enumerate}
\end{lemma}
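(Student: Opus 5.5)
The plan is to reduce everything to the enlargement results of \Cref{sec:the_computations}, applied to the scalable domain $A := S^{-1}K[T]$ with semi-valuation $v$. First I check the hypotheses of \Cref{prop:inductive_model_form}. Since $\phi$ is a key polynomial for $v$, it is irreducible in $K[T]$, and hence prime in $A$ (unless it is inverted by $S$, which cannot happen: elements of $S$ are $w$-stable while $w(\phi)=\mu\ne v(\phi)$ whenever the augmentation is nontrivial; the case $\mu=v(\phi)$ should be trivial, with $R_2=R_1$ after localization). Moreover $\mu=w(\phi)\ge v(\phi)$ by $w$-optimality. The same conclusions also hold for the semi-valuation $w$ restricted to $A$, since $S$ consists of $w$-stable elements and so $w$ extends to $A$.

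For (i) I would invoke \Cref{lem:inductive_model_unit_ball} with $v'$ equal to $[\fra]$ extended to $S^{-1}K[T]$. The required input is that the maximum defining $v'$ is attained and equals $[\fra]$. For $f\in K[T]$ this is the defining property of the $\phi$-expansion: by \cite[Prop.~2.x]{nartMacLaneVaquieChains2021}, $[\fra]$ is the maximum over all $\phi$-adic expansions, with the optimum attained at the canonical one. For general elements of $A$ I would pass through the denominators. Because elements of $S$ are $w$-stable and $v\le[\fra]\le w$, they are also $[\fra]$-stable, so dividing by them preserves the equality. This yields (i) together with the scalability part of (ii). The inequality $[\fra]\le w$ follows from $w(\phi)=\mu$ and the ultrametric inequality applied to the $\phi$-expansion.

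For (iii) and (iv) I would apply \Cref{lem:enlargement_differentials} to the map $(A,w)\to(L,v_L)$. This gives
\[
\ctc_{R_2/R_1}\derotimes L^{\circ}\simeq(\Omega_{R_2/R_1}\otimes L^{\circ})[0],
\qquad
\cont\bigl(\Omega_{R_2/R_1}\otimes L^{\circ}\bigr)=\mu-v(\phi)+\inf\Gamma_{v,>0}-\inf\langle\Gamma_v,\mu\rangle_{>0}.
\]
Two identifications then finish the argument: $\mu-v(\phi)=\step\fra$ by \Cref{def:step}, and $\langle\Gamma_v,\mu\rangle=\Gamma_{[\fra]}$. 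The latter follows from the explicit formula for $[\fra]$, since values are of the form $v(a_i)+i\mu$. Here I need that $\Gamma_v$ is unchanged by localizing at $S$, which holds because $S^{-1}$ only adds differences of existing values.

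The main obstacle is the extension of \Cref{lem:inductive_model_unit_ball} to the localization. Concretely, I must show that the max-over-expansions formula computes $[\fra]$ on $S^{-1}K[T]$ and not just on $K[T]$, and that it is compatible with the convention for $\phi^{-1}$ when $\mu\neq\infty$. I would first treat $S=\{1\}$-type elements via the standard theory. I would then handle fractions $f/s$ by choosing a constant $c$ with $v(c)=v(s)$, as in \Cref{lem:value_divisible_localisation}, so that $s/c$ is a $[\fra]$-unit of value $0$.
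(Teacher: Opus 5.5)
Your route is the paper's: its proof just cites \Cref{lem:inductive_model_unit_ball} for (i)--(ii) and \Cref{lem:enlargement_differentials} for (iii)--(iv). Your extra checks are correct and fill in what the paper leaves implicit. These are the primality of $\phi$ in $S^{-1}K[T]$ (if $\phi\mid s\in S$, then $w$-stability of $s$ and $w\ge v$ on both factors force $w(\phi)=v(\phi)$), the separate treatment of the degenerate case $\mu=v(\phi)$ (where the index set of case ($\aleph$) in \Cref{prop:inductive_model_form} is in fact empty), and $v'=[\fra]$ on $S^{-1}K[T]$ via $[\fra]$-stability of $S$.

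One step is justified by the wrong reason: the identification $\Gamma_{[\fra]}=\langle\Gamma_v,\mu\rangle$, which you need to match the fudge term in (iv). The explicit $\phi$-expansion formula only gives $\Gamma_{[\fra]}=\langle\Gamma',\mu\rangle\subseteq\langle\Gamma_v,\mu\rangle$. Here $\Gamma'$ is the group generated by the $v$-values of polynomials of degree $<\deg\phi$, and $v(\phi)$ need not lie in $\langle\Gamma',\mu\rangle$. For instance, take $\Gamma_K=\Z$, $v=[v_K,T,1/2]$, $\phi=T$, $\mu=1$. Then $\Gamma_{[\fra]}=\Z$ while $\langle\Gamma_v,\mu\rangle=\tfrac12\Z$, so $\inf\Gamma_{[\fra],>0}=1\neq\tfrac12$ and the two fudge terms differ.

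The reverse inclusion comes from scalability, not from the formula. Each $r\in\Gamma_v$ equals $v(c)$ for some constant $c$ of $(S^{-1}K[T],v)$, and such a $c$ is a unit. Since you have already shown $v\le[\fra]$ on $S^{-1}K[T]$, this forces $[\fra](c)=v(c)$ (as in \cref{para:constants_are_stable}), so $\Gamma_v\subseteq\Gamma_{[\fra]}$. This is consistent with the example: there no admissible $S$ exists, because no $w$-stable element has $v$-value in $\tfrac12+\Z$. With this replacement your argument is complete.
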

\begin{proof}
	These are straight forward consequences of previous lemmas. 
	(i) and (ii) follows from \Cref{lem:inductive_model_unit_ball}. (iii) and (iv) follow from \Cref{lem:enlargement_differentials}.
\end{proof}
\begin{lemma}[$\ctc$ for a limit augmentation of finite log-radius]\label{lem:summary_limit_augmentation}
	Let $v \le w$ be two semi-valuations on  $K[T]$ and  $S$ a multiplicative system of $w$-stable elements such that $(S^{-1}K[T], v)$ is scalable. 
	Let $(K[T], w) \to (L, v_L)$ be a map of semi-valued rings to a valued field $L$.
	Suppose $\fra = (v, (\psi_i, \gamma_i)_{i \in I}, \phi, \mu)$ is a $w$-optimal limit augmentation with $\mu \ne \infty$. 
	Let $S' \subset K[T]$ be the multiplicative set generated by $S, \phi$ and all $\psi_i, i \in I$. 
	For each $i \in I$, write $S_i$ for the multiplicative set generated by $S$ and all $\psi_j$ with $j \le i$, and $v_i = [v, \psi_i, \gamma_i]$. 
	We write $R_1:= (S^{-1} K[T], v)^{\circ}$ and $R_2 :=({S'}^{-1}K[T], [\fra])^{\circ}$. 
\begin{enumerate}[(i)]
		\item There is an increasing union
			\[
				R_2 = \bigcup_{i \in I} \left[(S_i^{-1} K[T], v_i)^{\circ}, \phi, \mu\right]
			.\] 
		\item $(S'^{-1} K[T], v')$ is scalable and $S '$ is  $w$-stable.
		\item $\mathbb L_{R_2 / R_1} \derotimes_{R_2} L^{\circ} = \left(\Omega_{R_2 / R_1} \otimes_{R_2} L^{\circ}\right)[0]$
		\item $\cont\left( \Omega_{R_2 / R_1} \otimes_{R_2} L^{\circ} \right) = \step \fra $.
			Note that in this case, $\Gamma_v$ is necessarily dense, so the fudge terms  $ \inf(\Gamma_{v, >0}) - \inf(\Gamma_{[\fra], >0})$ vanish automatically.
	\end{enumerate}
\end{lemma}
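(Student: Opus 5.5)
The strategy is to reduce to \Cref{lem:summary_ordinary_augmentation} and \Cref{lem:enlargement_differentials}. Each stage of the continuous family is an ordinary augmentation, and the final step $(\phi,\mu)$ is an enlargement. We then pass to the colimit over $I$, controlling content with \Cref{lem:cont_colim}(c).

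\textbf{Setup and (i), (ii).} For $i \in I$ put $C_i := (S_i^{-1}K[T], v_i)^{\circ}$ and $B_i := [C_i, \phi, \mu]$. By $w$-optimality each $(v, \psi_i, \gamma_i)$ is a $w$-optimal ordinary augmentation. By \Cref{rem:continuous_family} we have $v_i = [v, \psi_j, \gamma_i]$ for $j \ge i$, so $v_i(\psi_j) = \gamma_j = w(\psi_j)$ for $j \le i$. Hence every element of $S_i$ is $w$-stable. Then \Cref{lem:summary_ordinary_augmentation}, combined with \Cref{lem:value_divisible_localisation} to absorb the extra localisation at the $\psi_j$, shows three things:
\begin{itemize}
\item $C_i$ is scalable;
\item up to localisation at elements of value $0$, $C_i$ is the enlargement $[(S^{-1}K[T],v)^\circ,\psi_i,\gamma_i]$;
\item for $j \ge i$, up to such a localisation, $C_j$ is the enlargement $[C_i,\psi_j,\gamma_j]$, since $v_j = [v_i,\psi_j,\gamma_j]$.
\end{itemize}
Since $\phi$ is irreducible and coprime to the $\psi_j$ (it has strictly larger degree), \Cref{lem:inductive_model_unit_ball} gives $B_i = (S_i^{-1}K[T][\phi^{-1}], [v_i,\phi,\mu])^{\circ}$. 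Because $[v_i,\phi,\mu] \le [\fra]$ and these valuations increase in $i$, the union is increasing and contained in $R_2$.

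For the reverse inclusion, take $f \in R_2$ and write $f = \sum_k a_k\phi^k$ (allowing negative $k$ and denominators in $S'$). Each $a_k$ has degree $<\deg\phi$. Since $\phi$ is a limit key polynomial of minimal degree, $v_i(a_k) = [\frb](a_k)$ for $i \gg 0$, so $f \in B_i$ for $i$ large. Scalability of $R_2$ then follows from scalability of the $B_i$, using the last part of \Cref{lem:inductive_model_unit_ball}.

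\textbf{(iii).} By \cite[eq.~1.2.3.4]{illusieComplexeCotangentDeformations2009}, $\ctc_{R_2/R_1}\derotimes L^\circ = \colim_i \ctc_{B_i/R_1}\derotimes L^\circ$. For each $i$, use the transitivity triangle for $R_1 \to C_i \to B_i$. Both outer terms are concentrated in degree $0$ after $\derotimes L^\circ$: for $\ctc_{C_i/R_1}$ by \Cref{lem:summary_ordinary_augmentation}, since localisation does not affect this, and for $\ctc_{B_i/C_i}$ by \Cref{lem:enlargement_differentials}. Hence $\ctc_{B_i/R_1}\derotimes L^\circ$ is concentrated in degree $0$, and we get a short exact sequence
\[
0\to\Omega_{C_i/R_1}\otimes L^\circ\to\Omega_{B_i/R_1}\otimes L^\circ\to\Omega_{B_i/C_i}\otimes L^\circ\to0 .
\]
A filtered colimit of complexes concentrated in degree $0$ is again concentrated in degree $0$, which proves (iii).

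\textbf{(iv).} First, $\Gamma_v$ is dense: an essential family needs strictly increasing $\gamma_i$ with bounded values $v_i(\phi)$, which is impossible in a discrete value group. So the fudge terms vanish. Additivity of content on the short exact sequence above gives
\[
\cont(\Omega_{B_i/R_1}\otimes L^\circ) = (\gamma_i - v(\psi_i)) + (\mu - v_i(\phi)),
\]
which converges to $\step\fra$.

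The main obstacle is showing that content commutes with this colimit, since the transition maps need not be injective. I would verify condition (c) of \Cref{lem:cont_colim}. The cokernel of $\Omega_{B_i/R_1}\otimes L^\circ \to \Omega_{B_j/R_1}\otimes L^\circ$ is $\Omega_{B_j/B_i}\otimes L^\circ$. To bound its content, use the factorisation $B_i \to [C_j,\phi,\mu]\cdot B_i \to B_j$, or equivalently compare presentations as in the proof of \Cref{lem:enlargement_differentials}. Its content should then be at most
\[
(\gamma_j-\gamma_i) + \bigl(v_j(\phi)-v_i(\phi)\bigr) + (\text{the }\phi\text{-enlargement correction}).
\]
The first two terms tend to $0$ as $i$ grows, by convergence of $\gamma_i$ and of $v_i(\phi)\le\mu$. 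Here $\gamma_i$ converges because $\lim_i(\gamma_i - v(\psi_i)) = \step\frb$ is finite when $\mu \neq \infty$, and $v(\psi_i)$ is constant by \Cref{rem:continuous_family}. The delicate point, where I would work through the explicit presentations of \Cref{prop:inductive_model_form} case by case, is the last term: the $\phi$-enlargement presentations over $C_i$ and over $C_j$ must be compatible up to a controllable rescaling of the generators.
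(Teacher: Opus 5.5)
Your architecture is the paper's. The triangle for $R_1\to C_i\to B_i$ gives (iii) and $\cont(\Omega_{B_i/R_1}\otimes L^\circ)=(\gamma_i-v(\psi_i))+(\mu-v_i(\phi))$, and (iv) is reduced to condition (c) of \Cref{lem:cont_colim}.

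\textbf{The gap is in (iv).} Its whole content is the estimate $\lim_i\lim_{j\ge i}\cont(\Omega_{B_j/B_i}\otimes L^\circ)=0$. You leave the decisive term as an unspecified ``$\phi$-enlargement correction'', to be controlled by a case-by-case comparison of presentations that you never carry out. The paper handles this by re-indexing over a combined directed set of pairs (family index, enlargement index) and comparing the presentations of \Cref{prop:inductive_model_form} via $Y_{(\beta,1)}\mapsto\beta^{-1}\gamma Y_{(\gamma,1)}$.

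\textbf{The missing idea: the correction vanishes.} In your set-up you need no presentations at all.
\begin{enumerate}
\item Restrict to the cofinal set of $i$ on which the minimal $m_i$ with $m_i\mu\in\Gamma_{v_i}$ has stabilised to $m$. This is needed only in case ($\aleph$); in case ($\beth$) there is nothing to do. Fix a constant $\alpha$ of value $m\mu$ there.
\item Then $B_j=C_j\cdot B_i$ for $j\ge i$. Take a generator $a^{-1}\phi^n$ of $B_j$. If $n\mu>v_j(a)$, write it as $(a^{-1}b)(b^{-1}\phi^n)$ with $b\in K^\times$ and $v_j(a)\le v_K(b)\le n\mu$; such $b$ exists because $\Gamma_K$ is dense. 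If $n\mu=v_j(a)$, then $m\mid n$ and $b=\alpha^{n/m}$ works. In both cases $a^{-1}b\in C_j$ and $b^{-1}\phi^n\in B_i$, using stability of constants.
\item Hence $\Omega_{B_j/B_i}$ is a quotient of $\Omega_{C_j/C_i}\otimes_{C_j}B_j$.
\item Since $v_j=[v_i,\psi_j,\gamma_j]$ is a $w$-optimal ordinary augmentation with $v_i(\psi_j)=\gamma_i$ (\Cref{rem:continuous_family}), \Cref{lem:summary_ordinary_augmentation} together with \Cref{lem:value_divisible_localisation} bounds the content after $\otimes L^\circ$ by $\gamma_j-\gamma_i$. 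This tends to $0$.
\end{enumerate}
So the cokernel carries no $v_j(\phi)-v_i(\phi)$ term, condition (c) holds, and your route becomes shorter than the paper's.

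\textbf{A smaller point.} $\phi$ is a key polynomial for $[\frb]$, not in general for the $v_i$. So ``$[v_i,\phi,\mu]$'' is not an augmented valuation, and the hypothesis of \Cref{lem:inductive_model_unit_ball} (that its $v'$ is a well-defined semi-valuation) is unchecked. You do not need it:
\begin{itemize}
\item The inclusions $B_i\subseteq B_j\subseteq R_2$ are immediate on generators. For a constant $a$ of $S_i^{-1}K[T]$, stability of constants gives $[\fra](a^{-1}\phi^n)=n\mu-v_i(a)\ge 0$.
\item Scalability of $R_2$ is best checked directly, as the paper does, by exhibiting the constants $c\,\phi^{b}\prod_k\psi_k^{a_k}$ realising the elements of $\Gamma_{[\fra]}$.
\end{itemize}
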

\begin{proof}
	Unlike the previous lemma, there is still an approximation argument needed to take care of the limit augmentation. 
	\begin{itemize}
		\item[(i)] Let $a \in R_2$ be any element. 
			Define $S'' := \bigcup_{i \in I} S_i$. 
			Then we can write $a = \frac{b}{s}\phi^{-n}$ with  $s \in S'', b \in K[T], n \in \Z$ and $v'(b) \ge v'(s) - n\mu$ and write $\sum_{\ell = 0}^{n}b_\ell \phi^{\ell}$ for the $\phi$-expansion of $b$. 
			Because $\deg b_\ell < \deg \phi$ and $\phi$ is a limit-key polynomial, we know that  $(v_{i}(b_\ell))_{i \in I}$ stabilizes. 
			Choose $j \in I$ sufficiently large, such that  $v_i(b_{\ell})$ has stabilized for all  $\ell = 0, \ldots, n$ and $s \in S_j$. 
		Then $v'(s) = v_j(s)$ because all valuations of the generators of  $S_j$ have stabilized.
		Also, \[
			v'(a) = \min_{\ell = 0}^{n} \{v_j(s^{-1}b_\ell )  + (\ell - n)\cdot \mu\} \ge 0, \text{ thus } v_j(s^{-1} b_\ell) + (\ell -n) \cdot  \mu \ge 0, \forall \ell
		\] 
		So we can write each term $s^{-1}b_\ell \phi^{m-n} =\alpha_\ell \cdot s^{-1}\beta_{\ell}^{-1}\phi^{m-n}$ with $\beta_\ell \in \vd_{v_j(b_\ell)}^{w}$ and $b_\ell = \beta_\ell^{-1}\alpha_\ell$	and $ a \in \left[(S_j^{-1} K[T], v_j), \phi, \mu\right]$.
		So $s ^{-1}b_\ell \phi^{m-n} \in [(S_j^{-1}K[T], v_j), \phi, \mu]$ and thus $a \in [(S_j^{-1}K[T], v_j), \phi, \mu]$.

		By definition 
		\if\thesis1
		\begin{align*}
			&[(S_j^{-1}K[T], v_j), \phi, \mu] \\ &= (S_j^{-1}, K[T])^{\circ}[a^{-1}\phi^{n} \mid a \in \vd(S_j^{-1}K[T]), n \in \Z, n\cdot \mu - v(a) \ge 0]
		,\end{align*}
		\else
		\[
			[(S_j^{-1}K[T], v_j), \phi, \mu] = (S_j^{-1}, K[T])^{\circ}[a^{-1}\phi^{n} \mid a \in \vd(S_j^{-1}K[T]), n \in \Z, n\cdot \mu - v(a) \ge 0]
		,\]
		\fi 
		and, as $v' \ge v_j$ we see that each $a^{-1}\phi \in R_2$. 
	\item [(ii)] The value group $\Gamma_v'$ is generated by $\Gamma_v, \{\gamma_i\} _{i \in I}, \mu$.
		So any $r \in \Gamma_v'$ can be written as a $\Z$-linear combination  $r = r_v + \sum_{i \in I} a_i \gamma_i + b \cdot \mu$ with $a_i = 0$ for all but finitely many $i \in I$ and $r_v = v(\alpha), \alpha \in \vd_{w}((S^{-1}K[T], v))$.
		Then $\alpha \cdot \phi^{b}\cdot \prod_{i \in I} \psi_i^{a_i}$ is a constant with valuation $r$.

	\item[(iii-iv)] 
		Consider the tower of rings $R_1 \to (S^{-1}_i K[T], v_i)^{\circ} \to  \left[(S_i^{-1} K[T], v_i), \phi, \mu\right]$. Note that up to a localization of the middle ring, this is actually two consecutive enlargements. Consider the associated triangle of cotangent complexes
		\if\thesis1
		\[
			\resizebox{.98\linewidth}{!}{$\displaystyle\mathbb L_{\frac{(S^{-1}_i K[T], v_i)^{\circ}}{R_1}} \derotimes \left[(S_i^{-1} K[T], v_i), \phi, \mu\right]  \to \mathbb L_{\frac{\left[(S_i^{-1} K[T], v_i), \phi, \mu\right]}{R_1}} \to \mathbb L_{\frac{\left[(S_i^{-1} K[T], v_i), \phi, \mu\right]}{(S^{-1}_i K[T], v_i)^{\circ} }}$}
		.\] 
		\else
		\[
			\mathbb L_{\frac{(S^{-1}_i K[T], v_i)^{\circ}}{R_1}} \derotimes \left[(S_i^{-1} K[T], v_i), \phi, \mu\right]  \to \mathbb L_{\frac{\left[(S_i^{-1} K[T], v_i), \phi, \mu\right]}{R_1}} \to \mathbb L_{\frac{\left[(S_i^{-1} K[T], v_i), \phi, \mu\right]}{(S^{-1}_i K[T], v_i)^{\circ} }}
		.\] 
		\fi 
		By \Cref{lem:value_divisible_localisation} we know that $(S_i^{-1} K[T], v_i)^{\circ}$ is a localization of $(S^{-1}\psi^{-1} K[T], v_i)^{\circ} = [S^{-1} K[T], \psi, \gamma_i]$  by some multiplicative system $T$, consisting of $w$-stable elements of valuation $0$ which are units in  $L^{\circ}$.

		Thus, taking $-\derotimes L^{\circ}$ gives the triangle
		\if\thesis1
		\[
			\resizebox{.99\linewidth}{!}{$\displaystyle\mathbb L_{\frac{[S^{-1}K[T], \psi, \gamma_i]}{R_1}} \derotimes L^{\circ} \to \mathbb L_{\frac{\left[(S_i^{-1}K[T], v_i), \phi, \mu\right]}{R_1}} \derotimes L^{\circ} \to \mathbb L_{\frac{\left[(S_i^{-1} K[T], v_i), \phi, \mu\right]}{(S^{-1}_i K[T], v_i)^{\circ} }} \derotimes L^{\circ}$}
		.\] 
		\else
		\[
			\mathbb L_{\frac{[S^{-1}K[T], \psi, \gamma_i]}{R_1}} \derotimes L^{\circ} \to \mathbb L_{\frac{\left[(S_i^{-1}K[T], v_i), \phi, \mu\right]}{R_1}} \derotimes L^{\circ} \to \mathbb L_{\frac{\left[(S_i^{-1} K[T], v_i), \phi, \mu\right]}{(S^{-1}_i K[T], v_i)^{\circ} }} \derotimes L^{\circ}
		.\] 
		\fi 
		\Cref{lem:enlargement_differentials,lem:summary_ordinary_augmentation} show that the outer complexes are concentrated in degree $0$, thus so is the middle complex, i.e.\ \[
			\mathbb L_{\frac{\left[(S_i^{-1}K[T], v_i), \phi, \mu\right]}{R_1}} \derotimes L^{\circ} \simeq \Omega_{\frac{\left[(S_i^{-1}K[T], v_i), \phi, \mu\right]}{R_1}} \otimes L^{\circ}[0]
		.\] 
		And by the additivity of content and \Cref{lem:enlargement_differentials,lem:summary_ordinary_augmentation} we find
		\if\thesis1
		\begin{align*}
			\cont\left(\Omega_{\frac{\left[(S_i^{-1}K[T], v_i), \phi, \mu\right]}{R_1}} \otimes L^{\circ}\right) 
															    &= \gamma_i - v(\psi_i) + \mu - v_i(\phi)
		.\end{align*}
		\else		
		\begin{align*}
			\cont\left(\Omega_{\frac{\left[(S_i^{-1}K[T], v_i), \phi, \mu\right]}{R_1}} \otimes L^{\circ}\right) 
			&= \cont\left(\Omega_{\frac{[S^{-1}K[T], \psi, \gamma_i]}{R_1}} \otimes L^{\circ}\right) + \cont \left(\Omega_{\frac{\left[(S_i^{-1} K[T], v_i), \phi, \mu\right]}{(S^{-1}_i K[T], v_i)^{\circ} }} \otimes L^{\circ}\right)\\
															    &= \gamma_i - v(\psi_i) + \mu - v_i(\phi)
		.\end{align*}
		\fi 
		Then (iii) follows from the fact that tensor products and differentials commute with filtered colimits. 
		Statement (iv) would follow if we can show that content commutes with the colimit in this case. 
		We will give a similar argument as in the proof of \Cref{lem:enlargement_differentials}. 
		Recall that limit augmentations only occur when $K$ is not discretely valued. 
		So all semi-valuations involved have dense value group. 
		\begin{itemize}
			\item[($\aleph$)] \textbf{Case $\mu \in \Gamma_{v} \otimes \Q$:}
					For each $i \in I$ let $m_i \in \Z_{> 1}$ be the minimal integer such that $m_i \cdot \mu \in \Gamma_i$. 
					Because $\Gamma_i$ increases with $i$, we see that $m_i$ decreases, and thus must stabilize at some point to $m \in \Z_{>1}$.
					Let $k \in I $ be an element where $m_k = m$ and let $I' = \{j \in I \mid j \ge k\} $. 
					We can also take $\alpha \in \vd(S_k^{-1}K[T], v_k)$ such that $v_k(\alpha) = m\cdot \mu$.
					Note that $\alpha$ satisfies the same properties for all $j \in I'$.

					Write $A_i = (S_i^{-1} K[T], v_i)$ for all $i \in I'$. 
					Then we can write 
					\[
						[A_i^{\circ}, \phi, \mu] = \bigcup_{j \in J_i} A_{(i,j)}
					,\] 
					with the poset $J_i$ and the ring $A_{(i, j)}$ as in \Cref{prop:inductive_model_form}, and uniformly used $\alpha$ as the constant with value $\mu\cdot m$.

					Define $\vd(\mathcal{A} ) = \bigcup_{i \in I'} \vd(S_i^{-1}K[T], v_i) $.
					Note that this is an increasing union. 
					Then we define \[
						J = \{(i,(\beta,1)) \in I' \times  (\vd(\mathcal{A}) \times \Z)  \mid (\beta,1) \in J_i\} 
					,\]
					which is a directed poset where $(j, (\gamma,1)) \ge (i, (\beta, 1)) \iff w(\gamma)\ge w(\beta) \wedge j \ge i$.
					Note that $w(\beta) \to \mu$ as $(i, (\beta, 1))$ increases.

					Then we easily verify that $R_2 = \bigcup_{(i, (\beta, 1)) \in J} A_{i, (\beta, 1)}$. 
					We will argue that $(\Omega_{A_{i, (\beta,1)} / R_1})_{(i, (\beta, 1)) \in J}$ satisfies condition (c) from \Cref{lem:cont_colim}. 
					Let $(j, (\gamma,1)) \ge (i, (\beta, 1))$ then the mapping $X \mapsto X, Z \mapsto Z, Y_{(\beta, 1)} \mapsto  \beta^{-1} \gamma Y_{\gamma, 1}$ induces the following commutative diagram: 
					\[
					\begin{tikzcd}
						A_i[X, Z, Y_{(\beta, 1)}] \rar \dar & A_j[X, Z, Y_{(\gamma, 1)}] \dar \\
						A_{(i, (\beta,1))} \rar &  A_{(j, (\gamma, 1))}
					\end{tikzcd}
					.\] 
					Then exactly the same argument as in the proof of \Cref{lem:enlargement_differentials} shows that \[
					\cont\left(\coker \Omega_{A_{i, (\beta, 1)} / R_1} \to \Omega_{A_{j, (\gamma, 1)} / R_1}\right) \le \mu - w(\beta)\]
					and, that condition (c) of \Cref{lem:cont_colim} holds. 
				\item[($\beth$)] \textbf{Case $\mu \not\in \Gamma_{\mathcal{A} } \otimes \Q(\infty)$:}
					Again we write $A_i = (S_i^{-1} K[T], v_i)$ and \[
						[A_i^{\circ}, \phi, \mu] = \bigcup_{j \in J_i} A_{(i, j)}
					,\] 
					as in \Cref{prop:inductive_model_form}.

					Let 
					\[
						J = \{(i,(\alpha, \beta)) \in I' \times  (\vd(\mathcal{A})^2 \times \Z)  \mid (\alpha, \beta) \in J_i\} 
					,\]
					be the directed poset with $(j, (\gamma, \delta)) \ge (i, (\alpha, \beta)) \iff w(\gamma) \ge w(\alpha) \wedge w(\delta) \ge w(\beta) \wedge j \ge i$.
					Like the previous case, we see that $R_2 = \bigcup_{(i, j) \in J} A_{(i, j)}$. 
					Now let $(j, (\gamma, \delta)) \ge (i, (\alpha, \beta))$ and consider the mapping  $X_{(\alpha, \beta)} \mapsto \alpha^{-1} \gamma X_{(\gamma, \delta)}, Y_{(\alpha, \beta)} \mapsto \beta^{-1} \delta Y_{(\gamma, \delta)}$ yields the following commutative diagram:
					\[
					\begin{tikzcd}
						A_i[X_{(\alpha, \beta)}, Y_{(\alpha, \beta)}  ] \rar \dar & A_j[X_{(\gamma,\delta)}, Y_{(\gamma, \delta)}] \dar \\
						A_{(i, (\alpha, \beta))} \rar &  A_{(j, (\gamma, \delta))}
					\end{tikzcd}.
					\] 
					We conclude like the previous case. 
		\end{itemize}
	\end{itemize}
\end{proof}
\begin{lemma}[$\ctc$ for stable continuous family or almost stable limit augmentation]\label{lem:summary_almost_stable}
	Let $v \le w$ be two semi-valuations on  $K[T]$ and  $S$ a multiplicative system of $w$-stable elements such that $(S^{-1}K[T], v)$ is scalable. 
	Let $(K[T], w) \to (L, v_L)$ be a map of semi-valued rings to a valued field $L$.
	Suppose $\fra = (v, (\psi_i, \gamma_i)_{i \in I})$ is a $w$-optimal stable continuous family or $\fra = (v, (\psi_i, \gamma_i)_{i \in I}, \phi, \mu)$ is a $w$-optimal almost stable limit augmentation. 
	Let $S' \subset K[T]$ be the multiplicative set generated by $S$ and all $\psi_i, i \in I$. 
	For each $i \in I$, write $S_i$ for the multiplicative set generated by $S$ and all $\psi_j$ with $j \le i$, and $v_i = [v, \psi_i, \gamma_i]$. 
	We write $R_1:= (S^{-1} K[T], v)^{\circ}$ and $R_2 :=({S'}^{-1}K[T], [\fra])^{\circ}$. 
	\begin{enumerate}[(i)]
		\item There is an increasing union
			\[
				R_2 = \bigcup_{i \in I} (S_i^{-1} K[T], v_i)^{\circ}			.\] 
			\item $(S'^{-1} K[T], [\fra])$ is scalable and $[\fra] = w$.
		\item $\mathbb L_{R_2 / R_1} \derotimes_{R_2} L^{\circ} = \left(\Omega_{R_2 / R_1} \otimes_{R_2} L^{\circ}\right)[0]$
		\item $\cont\left( \Omega_{R_2 / R_1} \otimes_{R_2} L^{\circ} \right) = \step\fra$.
	\end{enumerate}
\end{lemma}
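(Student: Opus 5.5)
The plan is to reduce everything to the ordinary-augmentation case, \Cref{lem:summary_ordinary_augmentation}, applied to each $v_i = [v, \psi_i, \gamma_i]$, and then pass to the filtered colimit over $I$. This is the same approximation argument as in \Cref{lem:summary_limit_augmentation}, but without the final enlargement by $\phi$.

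\textbf{Step (i): the union.} Set $w' = [\fra]$; this is the stable limit of the $v_i$ in the stable case. In the almost stable case $\mu=\infty$ (\cref{rem:almost_stable}).
\begin{itemize}
	\item An element of $R_2$ can be written as $b/s$ with $s \in S_j$ for some $j$.
	\item Take $j$ so large that $v_j(b)$ has stabilized, which is possible by stability in the first case. In the almost stable case, either $v_j(b)$ stabilizes, or $v_j(b) \to \infty$ and so eventually $v_j(b) \ge v_j(s)$, since $v_j(s)$ is constant.
	\item This gives $b/s \in (S_j^{-1}K[T], v_j)^{\circ}$.
	\item The reverse inclusion is clear, because $v_i \le w'$ and the elements of $S_i$ are $w$-stable.
	\item The union is increasing because $v_i \le v_j$ and $S_i \subset S_j$ for $i \le j$.
\end{itemize}

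\textbf{Step (ii): scalability and $[\fra]=w$.} This goes as in \Cref{lem:summary_limit_augmentation}(ii). The value group is generated by $\Gamma_v$ and the $\gamma_i$, so the products $\alpha\prod\psi_i^{a_i}$ are constants of any prescribed value. The equality $[\fra]=w$ follows from $w$-optimality via \Cref{prop:optimal_chain}-type reasoning: $w(\psi_i)=\gamma_i$ forces $v_i \le w$, and stabilization gives equality on $K[T]$. In the almost stable case the limit $\mu=\infty$ is the kernel, consistent with $w$.

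\textbf{Steps (iii) and (iv): cotangent complex and content.}
\begin{itemize}
	\item For each $i$, the ring $(S_i^{-1}K[T], v_i)^{\circ}$ is, by \Cref{lem:value_divisible_localisation}, a localization of $[(S^{-1}K[T],v)^{\circ}, \psi_i, \gamma_i]$ at $w$-stable elements of value $0$. These elements are units in $L^{\circ}$.
	\item So \Cref{lem:summary_ordinary_augmentation} together with \Cref{lem:maximal_localisation}-style localization gives $\ctc_{(S_i^{-1}K[T],v_i)^\circ/R_1}\derotimes L^{\circ} \simeq \Omega\otimes L^{\circ}[0]$.
	\item This module has content $\gamma_i - v(\psi_i)$. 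The fudge terms vanish because a continuous family only occurs when $\Gamma_v$ is dense; in the discrete case a stable family is eventually constant and the statement is trivial.
	\item Since $\ctc$ commutes with filtered colimits (\cite[1.2.3.4]{illusieComplexeCotangentDeformations2009}) and $\derotimes L^{\circ}$ is right exact, (iii) follows.
	\item For (iv), $\lim_i (\gamma_i - v(\psi_i)) = \step\fra$ by definition.
\end{itemize}

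\textbf{Main obstacle.} The hard part is checking that content commutes with this colimit. I would verify condition (c) of \Cref{lem:cont_colim}: $(\cont M_i)$ is monotone with limit $\step\fra$, and the content of the cokernels must go to $0$. For the cokernels, compare the presentations of $\Omega$ given by \Cref{prop:inductive_model_form}.
\begin{itemize}
	\item For $i\le j$, use the transition map $X_i \mapsto c\, X_j$ with $c$ a constant of value $\gamma_j-\gamma_i$. This needs $\psi_i = \psi_j$ up to a $v_i$-unit factor, which should follow from $[v,\psi_i,\gamma_i]=[v,\psi_j,\gamma_i]$ (\cref{rem:continuous_family}).
	\item The cokernel then has content at most $(\gamma_j - v(\psi_j)) - (\gamma_i - v(\psi_i))$. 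This tends to $0$ because $\lim_i(\gamma_i - v(\psi_i))$ converges, at least when $\step\fra<\infty$.
	\item When $\step\fra = \infty$, both sides are $\infty$, using monotonicity together with \Cref{lem:cont_additive}, since the kernels are controlled.
\end{itemize}
This last comparison is delicate because $\psi_i$ and $\psi_j$ differ, so making the change of variables explicit is where I expect the real work.
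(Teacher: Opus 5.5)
Your steps (i)--(iii) follow the paper. The argument for (iv), which you correctly flag as the crux, has a genuine gap, in two places.

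\textbf{The change of variables does not exist.} No transition map $X_i\mapsto c\,X_j$ with $c$ a constant (so $\dd c=0$ over $R_1$) is compatible with $A_i\to A_j$. For $i<j$ one has $v_i(\psi_j)=\gamma_i$ by \cref{rem:continuous_family}, and $\psi_j\not\sim_{v_i}\psi_i$. Hence $a:=\psi_j-\psi_i$ is a nonzero polynomial of degree $<m$ with $v(a)=\gamma_i$ exactly. So $\psi_i$ and $\psi_j$ differ additively at the critical order, not by a factor with vanishing differential. Indeed $\psi_j$ is not even inverted in $A_i=(S_i^{-1}K[T],v_i)^{\circ}$.

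\textbf{The infinite-step case is not handled.} When $\step\fra=\infty$, condition (c) of \Cref{lem:cont_colim} is unavailable. This covers every almost stable limit augmentation, since there $\gamma_i\to\infty$. Your phrase ``the kernels are controlled'' is exactly what has to be proved. A filtered colimit of modules of growing content can have small or even zero content if the transition maps have large kernels.

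\textbf{The missing idea.} This is how the paper argues. By the definition of a continuous family, $(v_i,\psi_j,\gamma_j)$ is itself an ordinary augmentation for $i<j$. So $A_j$ is the enlargement $[A_i,\psi_j,\gamma_j]$, up to localization at value-zero elements that become units in $L^{\circ}$. Applying \Cref{lem:summary_ordinary_augmentation} to $A_i\to A_j$ gives $H_1(\ctc_{A_j/A_i}\derotimes L^{\circ})=0$. The transitivity sequence for $R_1\to A_i\to A_j$ then makes $\Omega_{A_i/R_1}\otimes L^{\circ}\to\Omega_{A_j/R_1}\otimes L^{\circ}$ injective. The system is an increasing union, and \Cref{lem:cont_union} gives (iv) directly, for finite and infinite step alike. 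No cokernel estimate is needed; the cokernel $\Omega_{A_j/A_i}\otimes L^{\circ}$ has content $\gamma_j-\gamma_i$, which is your bound.

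\textbf{Repairing your explicit route.} It can also be made to work.
\begin{enumerate}
\item $v(a)=\gamma_i$ shows $\gamma_i\in\Gamma_v$. So each enlargement is of type ($\aleph$) with $m=1$.
\item Take constants $\theta,\alpha_i$ of values $v(\psi_i),\gamma_i$, and set $X_i=\alpha_i^{-1}\psi_i$. Then $\Omega_{A_i/R_1}\otimes L^{\circ}\cong L^{\circ}/(\theta^{-1}\alpha_i)$ is cyclic on $\dd X_i$.
\item The correct transition is $X_i\mapsto \alpha_i^{-1}\alpha_j X_j-\alpha_i^{-1}a$, where $\alpha_i^{-1}a\in R_1$.
\item Hence $\dd X_i\mapsto \alpha_i^{-1}\alpha_j\,\dd X_j$, and this map is injective on these cyclic modules.
\end{enumerate}

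\textbf{The fudge terms.} The equality $\Gamma_{v_i}=\Gamma_v$ is the real reason they vanish. Your justification, that continuous families require dense $\Gamma_v$, is wrong: they do occur over discretely valued $K$.
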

\begin{proof}
	\begin{enumerate}[(i)]
		\item 	For an almost stable limit augmentation we know that  $w = [\fra] = \lim_{i} v_i$ where this is the point wise limit, and it stabilizes for every $f \in K[T]$ with $v'(f) < \infty$.
			Let $f / s$ be any element in $R_2$ with $f \in K[T]$ and $s \in S'$.
		Then there is some sufficiently large $j \in I$ such that $s \in S_j$ and $v_j(f) = w(f)$ if $f\not\in \ker [\fra]$ or $v_j(f) > v_j(s)$ when $f \in \ker [\fra]$. 
	As  $f / s \in R$ we see that $[\fra](f) - [\fra](s) \ge 0$ from which we can conclude that $f / s \in (S_j^{-1}K[T], v_j)^{\circ}$. 
			The argument when $\fra$ is a stable continuous family is the same. 
		\item Let $r \in \Gamma_w$ be any element. 
			Then there is some  $j \in J$ such that $r \in \Gamma_{v_j}$. 
			Then there is a constant $\alpha \in (S_j^{-1}K[T], v_j)^{\times }$ with $v_j(\alpha) = r$. 
			Thus, $\alpha$ is a constant of $(S'^{-1}K[T], w)^{\times }$ with $w(\alpha) = r$. 
		\item[(iii-iv)] Write $A_i = (S_i^{-1}K[T], v_i)^{\circ}$ for every $i \in I$. 
			Then  $A_i$ is a localization of $[R_1, \psi_i, \gamma_i]$ by elements of value $0$, which map to invertible elements of  $L^{\circ}$. 
			Thus,
			\if\thesis1
			\begin{align*}
				\ctc_{\frac{A_i}{R_1}} \derotimes L^{\circ} &\simeq \ctc_{\frac{[R_1, \psi_i, \gamma_i]}{R_1}} \simeq ( \Omega_{\frac{A_1}{R_1}} )[0], \text{ and} \\
				 \cont(\Omega_{\frac{A_i}{R_1}}\otimes L^{\circ}) &= \gamma_i - v(\psi_i) + \inf(\Gamma_{v, >0}) - \inf(\Gamma_{[v, \psi_i, \gamma_i], >0})
			.\end{align*}
			\else
			\[
				\ctc_{\frac{A_i}{R_1}} \derotimes L^{\circ} \simeq \ctc_{\frac{[R_1, \psi_i, \gamma_i]}{R_1}} \simeq ( \Omega_{\frac{A_1}{R_1}} )[0], \quad \cont(\Omega_{\frac{A_i}{R_1}}\otimes L^{\circ}) = \gamma_i - v(\psi_i) + \inf(\Gamma_{v, >0}) - \inf(\Gamma_{[v, \psi_i, \gamma_i], >0})
			.\] 
			\fi
			Similarly, for any $j > i$ we see that $\ctc_{\frac{A_i}{R_1}} \derotimes L^{\circ} \simeq ( \Omega_{\frac{A_i}{R_1}} )[0]$ and thus that the system $(\Omega_{A_i / R_1} \otimes L^{\circ})_{i \in I}$ is an increasing union. 
			We conclude by taking the filtered colimits over $i$. 
	\end{enumerate}
\end{proof}

\begin{theorem}\label{thm:computation_main}
	Let $w$ be any semi-valuation on $K[T]$ and let $(\fra_i)_{i = 1}^{n}$ be a $w$-optimal augmentation chain starting in a simple Gauss semi-valuation $v_0$. 
	Consider $(K[T], w) \to (L, v_L)$ a map of semi-valued rings to a valued field $L$.
	Let $S_0$ be a multiplicative set of $w$-stable elements such that $(S_0^{-1}K[T], v_0)$ is scalable. 
	Further, let $S_i$ be the multiplicative set generated by all key polynomials $\phi_j, \psi_{j, k}$ for all $j \le i, k \in J_{j}$, and let $S$ be the multiplicative system generated by all $S_i$.
	Let $R_i = (S_i^{-1}K[T], v_i)^{\circ}$ and $R_w = (S^{-1}K[T], w)^{\circ}$
	Then for each $i \in [0, n]\cap \Z$ we have 
	\begin{enumerate}[(i)]
		\item $\mathbb L _{R_i / R_0} \derotimes_{R_n} L^{\circ} = \left(\Omega_{R_i / R_0}\otimes_{R_i} L^{\circ}\right)[0] $,
		\item  $\cont \left(\Omega_{R_i / R_0}\otimes_{R_i} L^{\circ}\right) = \step (\fra_j)_{j = 1}^{i} + \inf(\Gamma_{v_0, > 0}) - \inf( \Gamma_{v_i, > 0})$,
		\item $R_i$ is scalable and $R_w$ is scalable.
		\item $\mathbb L _{R_w / R_0} \derotimes_{R_w} L^{\circ} \simeq \mathbb L_{(K[T]_{\ker w}, w)^{\circ} / R_0} \derotimes L^{\circ} \simeq  \left(\Omega_{R_w / R_0}\otimes_{R_w} L^{\circ}\right)[0] $,
		\item  $\cont \left(\Omega_{R_w / R_0}\otimes_{R_w} L^{\circ}\right) =  \cont\left(\Omega_{\frac{(K[T]_{\ker w}, w)^{\circ}}{R_0}} \derotimes L^{\circ}\right) = \step (\fra_j)_{j = 1}^{n} +  \inf(\Gamma_{v_0, > 0}) - \inf( \Gamma_{w, > 0})$.
	\end{enumerate}
\end{theorem}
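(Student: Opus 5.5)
The plan is to induct on $i$ along the chain. Each step $R_{i-1}\to R_i$ is handled by \Cref{lem:summary_ordinary_augmentation}, \Cref{lem:summary_limit_augmentation} or \Cref{lem:summary_almost_stable}, according to the type of $\fra_i$. The pieces are then glued with the transitivity triangle of cotangent complexes, and $R_w$ and $(K[T]_{\ker w},w)^{\circ}$ are reached at the end by a filtered colimit and \Cref{lem:maximal_localisation}.

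For the base case $i=0$ everything is trivial: $\mathbb L_{R_0/R_0}=0$, the step of the empty chain is $0$, and the fudge terms cancel. Scalability of $R_0$ holds because $v_0$ is a simple Gauss valuation: after the substitution $S=b^{-1}(T-a)$ the unit ball is $K^{\circ}[S]$, and $S_0$ is chosen so that $(S_0^{-1}K[T],v_0)$ is scalable.

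For the inductive step, assume (i)–(iii) hold for $i-1$. Applying the relevant summary lemma to $v=v_{i-1}\le w$, the multiplicative set $S_{i-1}$ and the $w$-optimal augmentation $\fra_i$ gives three things:
\begin{itemize}
\item $R_i$ is scalable and the new elements of $S_i$ are $w$-stable, which is (iii);
\item $\mathbb L_{R_i/R_{i-1}}\derotimes L^{\circ}$ is concentrated in degree $0$;
\item its $H_0$ has content $\step\fra_i+\inf\Gamma_{v_{i-1},>0}-\inf\Gamma_{v_i,>0}$. For limit augmentations the fudge terms vanish because the value group is dense.
\end{itemize}
The $w$-optimality hypotheses of those lemmas hold because $w$-optimality of the chain is inherited by each $\fra_i$ relative to $w$.

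Now tensor the transitivity triangle $\mathbb L_{R_{i-1}/R_0}\derotimes_{R_{i-1}}R_i\to\mathbb L_{R_i/R_0}\to\mathbb L_{R_i/R_{i-1}}$ with $L^{\circ}$. Both outer terms are concentrated in degree $0$, so the long exact sequence forces the middle term into degree $0$ as well. It also yields a short exact sequence of $H_0$'s, since $H_1$ of the third term vanishes. \Cref{lem:cont_additive} then adds the contents, and the fudge terms telescope to $\inf\Gamma_{v_0,>0}-\inf\Gamma_{v_i,>0}$. This proves (i) and (ii).

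For (iv) and (v), first treat a finite chain ending in an ordinary, limit or stable/almost stable augmentation. Here $[\fra_n]=w$ and $R_w=R_n$, so the claims for $R_w$ are the case $i=n$. For $n=\infty$ (case (c) of \Cref{thm:vaquie_main}), $R_w=\bigcup_i R_i$ is an increasing union. This uses the stable-limit argument of \Cref{lem:summary_almost_stable}(i): each element $f/s$ lies in some $R_j$ once $s\in S_j$ and $v_j(f)$ has stabilized. Then \cite[eq.~1.2.3.4]{illusieComplexeCotangentDeformations2009} expresses $\mathbb L_{R_w/R_0}\derotimes L^{\circ}$ as the colimit of the degree-$0$ complexes, which stays in degree $0$.

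For content, note that by the triangle argument above each map $\Omega_{R_i/R_0}\otimes L^{\circ}\to\Omega_{R_j/R_0}\otimes L^{\circ}$ is injective. Its kernel is a quotient of $H_1(\mathbb L_{R_j/R_i}\derotimes L^{\circ})=0$, where the chain from $i$ to $j$ is handled by the same induction. So the system is an increasing union and \Cref{lem:cont_union} gives the limit of the partial steps. Scalability of $R_w$ follows as in \Cref{lem:summary_almost_stable}(ii): every value of $w$ is already a value of some $v_j$, realized by a unit. Finally \Cref{lem:maximal_localisation} identifies $\mathbb L_{R_w/R_0}\derotimes L^{\circ}$ with $\mathbb L_{(K[T]_{\ker w},w)^{\circ}/R_0}\derotimes L^{\circ}$, and hence also the contents.

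The main obstacle is the infinite-chain colimit. One must check that $R_w$ really is the union of the $R_i$, and in particular handle elements of $\ker w$, which must satisfy $v_j(f)\ge v_j(s)$ for large $j$ because $v_j(f)\to\infty$. One must also make sure the fudge term $\inf\Gamma_{v_i,>0}$ converges to $\inf\Gamma_{w,>0}$. I would handle the latter by noting that an infinite chain forces either dense value groups, where both terms are $0$, or discrete groups that stabilize.
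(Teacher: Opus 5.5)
Your outline follows the paper's proof: induction along the chain via the transitivity triangle, injectivity of $\Omega_{R_i/R_0}\otimes L^{\circ}\to\Omega_{R_{i+1}/R_0}\otimes L^{\circ}$, an increasing union when $n=\infty$, and \Cref{lem:maximal_localisation} at the end. However, the inductive step is not covered in one case. You assert that each $\fra_i$ is handled by \Cref{lem:summary_ordinary_augmentation}, \Cref{lem:summary_limit_augmentation} or \Cref{lem:summary_almost_stable}. But a $w$-optimal chain may end with a limit augmentation $\fra_n=(v_{n-1},(\psi_k,\gamma_k)_{k},\phi_n,\infty)$ whose continuous family is essential but not almost stable, i.e.\ $\lim_k\gamma_k<\infty$; then $\ker w=(\phi_n)$. \Cref{lem:summary_limit_augmentation} explicitly assumes $\mu\neq\infty$: its colimit argument only runs through cases ($\aleph$) and ($\beth$) of \Cref{prop:inductive_model_form}. \Cref{lem:summary_almost_stable} assumes almost stability. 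So at $i=n$ your induction has no input in this case.

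The paper removes this case before inducting. Choose $\alpha\in K$ with $\lim_k[v_{n-1},\psi_k,\gamma_k](\phi_n)<v_K(\alpha)<\infty$. Replace $\fra_n$ by the finite-radius limit augmentation $\fra_n'=(v_{n-1},(\psi_k,\gamma_k)_k,\phi_n+\alpha,v_K(\alpha))$, followed by the ordinary augmentation $([\fra_n'],\phi_n,\infty)$. The new chain is still $w$-optimal and approximates $w$, and every step is now covered by the summary lemmas. Statements (i)--(iii) for $i<n$ are unchanged. Statements (iv)--(v) transfer back because, by \Cref{lem:maximal_localisation}, both chains compute the same $\mathbb L_{(K[T]_{\ker w},w)^{\circ}/R_0}\derotimes L^{\circ}$. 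The alternative would be to extend \Cref{lem:summary_limit_augmentation} to $\mu=\infty$ via case ($\daleth$), which means redoing its colimit argument.

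A smaller point: for $n=\infty$ your dichotomy ``dense, or discrete and stabilizing'' is not accurate, since the $\Gamma_{v_i}$ can all be discrete with dense union. The convergence $\inf\Gamma_{v_i,>0}\to\inf\Gamma_{w,>0}$ follows instead from $\Gamma_w=\bigcup_i\Gamma_{v_i}$. This equality holds because the limit is stable and the chain is $w$-optimal.
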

\begin{proof}
	We may assume that none of the augmentations in the chain are limit augmentations that are not almost stable and have log-radius $\infty$. 
	Suppose that there is such an augmentation then necessarily the chain is finite and the only such augmentation is the last augmentation $\fra_n = (v_{n-1}, (\psi_i, \gamma_i)_i, \phi_n, \infty)$. 

	Then we can split this limit augmentation up into one limit augmentation with finite log-radius and one ordinary augmentation with infinite log-radius. 
	Indeed, let $\alpha \in K^{\circ}$ with $\mu_n' = v(\alpha)$ be such that $\lim_i [v_{n-1}, \psi_i, \gamma](\phi_n) < \mu_n' < \infty$. 
	Write $\phi_n'=\phi_n + \alpha$. 
	Then $\phi'$ is also a limit key polynomial of $(v_{n-1}, (\psi_i, \gamma_i))$. 
	Let $\fra_n' = (v_{n-1}, (\psi_i, \gamma_i)_i, \phi_n', \mu_n')$ and $\fra_{n+1}' = ([\fra_n'], \phi_n, \infty)$. 
	Then the chain $(\fra_1, \ldots, \fra_{n-1}, \fra_n', \fra_{n+1}')$ is also a $w $-optimal augmentation chain approximating $w$, and containing no limit augmentations of infinite log-radius.
	We write $\mathfrak{A}  = (\fra_i)_{i = 1}^{n}$ and $\mathfrak{A}' = (\fra_1, \ldots, \fra_{n-1}, \fra_n', \fra_{n+1}')$.

	Suppose that the theorem holds for the chain $\mathfrak{A} '$. 
	Then, (i), (ii), and (iii) are equivalent to the theorem for $\mathfrak{A} $ when $i < n$. 
	When  $i = n$, these are equivalent to (iv) and (v) which are equivalent for both $\mathfrak{A}$ and $\mathfrak{A}' $ by \Cref{lem:maximal_localisation}.

	From now on, we assume that there are no non-almost-stable limit augmentations of infinite log-radius in $(\fra_i)_{i = 1}^{n}$. 
	We will first show (i), (ii), (iii) using induction and then obtain (iv) and (v) by taking colimits.
	Clearly $\mathbb L_{R_0 / R_0} = 0$ and $\Omega_{R_0 / R_0} = 0$, thus the base case is trivial. 
	Suppose that (i), (ii) hold for $i$, then consider the tower of rings $R_0 \to R_i \to R_{i + 1}$ and the corresponding distinguished triangle of cotangent complexes tensored by $L^{\circ}$:
	\[
		\mathbb{L}_{R_i / R_0} \derotimes_{R_n} L^{\circ} \to \mathbb{L}_{R_{i + 1} / R_0} \derotimes_{R_{i + 1}} L^{\circ} \to \mathbb{L}_{R_{i + 1} / R_i} \derotimes_{R_{i + 1}} L^{\circ}
	.\] 
	From \Cref{lem:summary_ordinary_augmentation,lem:summary_limit_augmentation} and the induction hypothesis we see that the outer complexes are concentrated in degree $0$, so we obtain  (i). We conclude (ii) and (iii) from the same lemmas and the additivity of content. 
	From this triangle we also see that the map $\Omega_{R_i / R_0} \otimes L^{\circ} \to \Omega_{R_{i+1} / R_0} \otimes L^{\circ}$ is injective. 

	If $n$ is finite, then $R_w = R_n$ thus (iv) and (v) hold immediately. 
	If $n = \infty$, then we obtain $R_w = \bigcup_{i \in I}R_i$, which is an increasing union, and as we've just argued $(\Omega_{R_i / R_0} \otimes L^{\circ})_{i = 1}^{\infty}$ is an increasing union too. 
	Then we obtain (iv) and (v) by taking colimits as before. 
\end{proof}

\begin{remark}\label{rem:explicit_complete_intersection_type_II}
	We continue with the notation as in \Cref{thm:computation_main}. 
	When $K$ is discretely valued, we know that no limit augmentations occur. 
	Suppose that the augmentation chain $(\fra_i)_{i = 1}^{n}$ is finite and only consists of ordinary augmentations with log-radius in $\Gamma_K \otimes \Q$.
	In the Berkovich setting, this is equivalent to $w$ being a type II point. 
	Then each $R_{i + 1}$ is an enlargement of $R_i$ of type ($\aleph$.d) and thus by \Cref{rem:discrete_filtered_system} we know that  $R_{i + 1}$ is complete intersection over $R_i$ given by three variables $[X_i, Y_i, Z_i]$ and relations  $X_iZ_i-1, f_i, g_i$ as in \Cref{prop:inductive_model_form}.
	Note that $Y_i$ maps to a constant of minimal valuation in $R_{i + 1}$ so $Y_i$ can take the role of $\pi$ in \Cref{rem:discrete_filtered_system} when finding the presentation of $R_{i + 2}$ as an $R_{i + 1}$ algebra.  

	So we can find an explicit presentation of $R_n$ as a complete intersection over $R_0$ as 
	\[
		R_n = \frac{R_0[X_0, Y_0, Z_0, \ldots, X_{n-1}, Y_{n-1}, Z_{n-1}]}{(X_0Z_0-1, f_0, g_0, \ldots, X_{n-1}Y_{n-1}-1, f_{n-1}, g_{n-1})}
	.\] 
	It might also be possible to compute other invariants like the precise structure of $\Omega_{R_n / R_0}$ and not only its content. 
\end{remark}

\subsection{The log cotangent complex of an enlargement} \label{sec:the_log-cotangent_complex_of_an_enlargement}

\begin{para}
	Using the previous computation we may also compute the log differentials and more generally the log cotangent complex of enlargements. 
	Recall that a pre-log structure on a ring $A$ is the data of a monoid  $M$ and a morphism of monoids $\alpha: M \to (A, \cdot)$. 
	A ring equipped with a pre-log structure is called a pre-log ring, and we will write it as $(M \xrightarrow{\alpha} A)$. Some sources may write this as $(A, M)$. 
	A morphism of pre-log rings $f:(M_A \to A) \to (M_B \to B) $ is the data of a map of monoids  $f_M: M_A \to M_B$ and a map of rings $f_R: A \to B$ such that the following diagram commutes. \[
	\begin{tikzcd}
		M_A \rar \dar{f_M} & A \dar{f_R} \\
		M_B \rar & B
	\end{tikzcd}
	\]

	We say that the pre-log structure or pre-log ring $(M \xrightarrow{\alpha} A)$ is a log structure or log ring, respectively, if $\alpha$ induces an isomorphism $\alpha^{-1}(A^{\times }) \to A^{\times }$. 
	Any pre-log ring or pre-log structure can be turned into a log ring or log structure in a canonical way, called logification, which is left adjoint to the inclusion of the category of log rings to the category of pre-log rings, see \cite[Chapter II, Proposition 1.1.5]{ogusLecturesLogarithmicAlgebraic2018}. 
	Intuitively, one can think of logification as adjoining all units of $A$ to $M$. 
\end{para}
\begin{para}
	For a morphism of pre-log rings $(M_A \to A) \to (M_B \xrightarrow{\beta} B)$, the module $\lOmega_{((M_B) \to B) / (M_A \to A)}$ is the logarithmic analogue of the ordinary differentials. 
	It is the universal log derivation, meaning that it has a differential $\dd: B \to \lOmega_{(M_B \to B) / (M_A \to A)}$ and the log differential map $\dlog: M_B \to \lOmega_{(M_B\to B) / (M_A \to A)}$ which satisfies $\dd \beta(m) = \beta(m)\cdot \dlog m$ for every $m \in M_B$.

	Let $(M_A \to A) \to (M_B \to B) \to (M_C \to C)$ be a tower of pre-log rings. 
	Analogous to ordinary differentials, there is an exact sequence \[
		\lOmega_{\scriptstyle\frac{(M_B \to B)}{(M_A \to A)}} \otimes_{B} C \to \lOmega_{\scriptstyle\frac{(M_C \to C)}{(M_A \to A)}} \to \lOmega_{\scriptstyle\frac{(M_C \to C)}{(M_B \to B)}} \to 0
	,\] 
	and one may wonder whether this sequence can be extended to the left. 
	Analogous to the cotangent complex, the log cotangent complex provides an answer to this question. 
	In this text, the log cotangent will always mean Gabber's log cotangent complex.
	The alternative, Olsson's log cotangent complex, is unsuitable for our purposes as it is only defined for fs log rings. 
	Gabber's log cotangent complex is described in \cite[§~8]{olssonLogarithmicCotangentComplex2005}, and together with \cite[§~2]{hubnerLogarithmicDifferentialsDiscretely2024} it gives a good description of how to use it in practice. 
	For any tower of pre-log rings as above, there is a distinguished triangle \[
		\lctc_{\scriptstyle \frac{(M_B \to B)}{(M_A \to A)}} \derotimes_{B} C \to \lctc_{\scriptstyle\frac{(M_C \to C)}{(M_A \to A)}} \to \lctc_{\scriptstyle\frac{(M_C \to C)}{(M_B \to B)}}
	.\] 
	The log cotangent complex is also compatible with the ordinary cotangent complex in the sense that for any ring morphism $A \to B$ and pre-log structure $M \to A$ there are natural quasi-isomorphisms \begin{equation}\label{eq:ctc_eq_lctc}
		\ctc_{\scriptstyle\frac{B}{A}} \simeq \lctc_{\scriptstyle\frac{(0 \to B)}{(0 \to A)}} \simeq \lctc_{\scriptstyle\frac{M \to B}{M \to A}}
	,\end{equation}
	see \cite[Lemma 8.21, Lemma 8.22]{olssonLogarithmicCotangentComplex2005}. 
	The log cotangent complex is also invariant under logification, in the sense that if two morphisms $(M_A \to A) \to (M_B \to B) $ and $(M_A' \to A) \to (M_B' \to B)$ logify to the same morphism then there is a natural quasi-isomorphism \[
	\lctc_{\scriptstyle\frac{(M_B \to B)}{(M_A \to A)}} \simeq \lctc_{\scriptstyle\frac{(M_B' \to B)}{(M_A' \to A)}}
	,\] 
	see \cite[Lemma 8.20]{olssonLogarithmicCotangentComplex2005}.
\end{para}
\begin{para}\label{par:total_and_boundar_log_structure}
	Let $(A, v)$ be a semi-valued $K$-algebra without zero divisors. 
	The \textit{total log structure on $A^{\circ}$} is the inclusion of monoids  $\vd(A^{\circ}) \into A^{\circ}$.
	Also note that if $A = L$ is a field, then the total log structure on $L^{\circ}$ is $L^{\circ} \setminus \{0\} $ and thus agrees with the usual notion of the total or integral log structure.

	If $\ker v\subset A$ is a principal ideal generated by $f$, then the \emph{boundary log structure} is defined as $\bls (A^{\circ}) = (A[f^{-1}])^{\times } \cap A^{\circ}$.
\end{para}
\begin{para}
	Let $(A, v_A) \to (B, v_B)$ be a map of scalable $K$-algebras and let $M_B \to B^{\circ}$ be the total or boundary log structure if that is well-defined. 
	Let $\phi, \psi \in B^{\circ}$ be two elements that only differ by a multiplication by a constant in $A$, say without loss of generality $\psi = \alpha \phi$ with $\alpha \in \const A^{\circ}$. 
	Then as elements of $\lOmega_{(M_B \to B^{\circ}) / (\const A^{\circ}\to A^{\circ})}$ we see that $\dlog \psi = \dlog \alpha + \dlog \phi = \dlog \phi$. 
	So by abuse of notation, for $\phi \in B$ such that there is a  $\theta \in \const A^{\circ}$ such that $\theta\cdot \phi \in M_B$  we will write $\dlog \phi = \dlog \theta \phi$. 
\end{para}
\begin{para}\label{par:log_non_log_same_val_grp}
	Let $(A, v_A) \to (B, v_B)$ be a map of scalable $K$-algebras. 
	Suppose that $\Gamma_{A} = \Gamma_B$, then it is easily verified that $(\const A \to B)$ logifies to $(\const B \to B)$. 
	Then \[
	\lctc_{\textstyle \frac{(\const B \to B)}{(\const A \to A)}} \simeq \lctc_{\textstyle \frac{(\const A \to B)}{(\const A \to A)}} \simeq \ctc_{\textstyle \frac{B}{A}}
	.\] 
\end{para}

\begin{lemma}\label{lem:lctc_add_to_the_log_structure}
	Let $(M_A \xrightarrow{\alpha} A)$ be a pre-log ring and let $a \in A$ be any element. 
	Consider the pre-log ring $(M_A \oplus \N \to A)$ where the monoid map is given by $(m, n) \mapsto \alpha(m) \cdot a^{n}$. 
	Then \[
		\lctc_\frac{(M_A \oplus \N \to A)}{(M_A \to A)} \simeq (A\cdot \dd a \to A\cdot \dlog a)[-1,0]
	.\] 
	In particular, when $a$ is not a zero divisor, this is quasi-isomorphic to $(\frac{A}{a\cdot A}\cdot \dlog a)[0]$.
\end{lemma}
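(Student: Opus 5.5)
The plan is to factor the map $(M_A \to A) \to (M_A\oplus\N \to A)$ through a free log-polynomial algebra and compute $\lctc$ from the resulting triangle. Consider the pre-log ring $(M_A \oplus \N \to A[X])$, where $(m,n)\mapsto \alpha(m)X^{n}$. There are morphisms of pre-log rings
\[
	(M_A \to A) \to (M_A\oplus \N \to A[X]) \to (M_A \oplus \N \to A),
\]
where the second map sends $X \mapsto a$ and is the identity on monoids. The first map is the base change of $(\N \to \Z[X]) / (0 \to \Z)$, which is log smooth in Gabber's sense. Hence $\lctc$ of the first map is $\lOmega[0]$, free of rank one on $\dlog X$; I will invoke the standard computation in \cite[§~8]{olssonLogarithmicCotangentComplex2005} for this. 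For the second map the monoids agree, so by \eqref{eq:ctc_eq_lctc} its log cotangent complex equals $\ctc_{A / A[X]}$. Since $A = A[X]/(X-a)$ and $X-a$ is a nonzerodivisor in $A[X]$, this is $\bigl((X-a)/(X-a)^2\bigr)[1] \simeq A\cdot\langle X-a\rangle [1]$ by \stacks{08SJ}.

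The transitivity triangle for the tower then reads
\[
	\lctc_{\frac{(M_A\oplus\N\to A[X])}{(M_A\to A)}}\derotimes_{A[X]} A \to \lctc_{\frac{(M_A\oplus \N \to A)}{(M_A\to A)}} \to A\cdot\langle X - a\rangle[1],
\]
with first term $A\cdot \dlog X[0]$. So $\lctc$ is the cone of the connecting map $A\cdot\langle X-a\rangle \to A\cdot\dlog X$, placed in degrees $[-1,0]$ in the cohomological indexing of the statement. The connecting map is the log differential of the relation: $X - a \mapsto \dd(X-a) = \dd X = X\dlog X$, which after base change becomes $a\cdot\dlog X$. Writing $\dd a$ for the generator $X-a$, this identifies with $\dd a\mapsto a\,\dlog a$, which is the complex $(A\cdot \dd a\to A\cdot\dlog a)$ in the statement.

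The main obstacle I expect is justifying that the connecting homomorphism is indeed $\dd(X-a)\mapsto a\dlog X$ at the level of Gabber's complex, rather than merely on $H_0$. For this I would compare with the non-log triangle via the natural map $\ctc\to\lctc$. Alternatively, I would note that $H_0$ of the cone must be $\lOmega = A\dlog a/(a\,\dlog a)$, computed directly from the universal property: one generator $\dlog a$ with the single relation $\dd a = a\dlog a$, and $\dd a=0$ since $a\in A$. A two-term complex of free rank-one modules is determined up to quasi-isomorphism by its differential, and that differential is read off from this presentation. Finally, if $a$ is not a zero divisor, multiplication by $a$ is injective, so the complex is quasi-isomorphic to $(A/aA)\cdot\dlog a$ in degree $0$.
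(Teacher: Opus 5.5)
Your proposal is correct and is essentially the paper's argument: the paper uses the same auxiliary pre-log ring $(M_A\oplus\N\to A[T])$ with $A\simeq A[T]/(T-a)$, and cites \cite[Proposition~2.7]{hubnerLogarithmicDifferentialsDiscretely2024} for the two-term complex $\bigl((T-a)/(T-a)^2\xrightarrow{-\dd}\lOmega\otimes A\bigr)$, which packages exactly the connecting-map identification you obtain by comparing with the non-log transitivity triangle. Rely on that comparison rather than on your $H_0$ fallback, because $H_0$ only shows that the differential generates the ideal $aA$; that determines it up to a unit when $a$ is a nonzerodivisor, but not for arbitrary $a$.
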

\begin{proof}
	Consider the pre-log ring $(M_A \oplus \N \to A[T])$ with the monoid map given by $(m, n) \mapsto \alpha(m) \cdot T^{n}$. 
	Note that $A \simeq A[T] / (T-a)$ and that  $T-a$ is clearly a regular sequence in $A[T]$. 
	By \cite[Proposition 2.7]{hubnerLogarithmicDifferentialsDiscretely2024} we see that \[
		\lctc_\frac{(M_A \oplus \N \to A)}{(M_A \to A)} \simeq \left( \frac{(T-a)}{(T-a)^2} \xrightarrow{-\dd} \lOmega_\frac{(M_A \oplus \N \to A[T])}{(M_A \to A)} \otimes A \right)[-1,0] 
	.\]
	By \cite[Proposition 2.6]{hubnerLogarithmicDifferentialsDiscretely2024} the module in degree $0$ is $A\cdot \dlog a$. 
	As $T-a$ is regular, $(T-a) / (T-a)^2$ is a rank 1 free $A$-module and its generator $(T-a)$ is mapped to $-\dd a$. 
	So we may identify it with $A\cdot \dd a$.  
\end{proof}
\begin{proposition}\label{prop:lctc_enlargement}
	Let $ (A, v)$ be a scalable $K$-algebra with $\ker v = \{0\} $ and let $w \ge v$ be another semi-valuation on $A$.  
	Let  $\phi \in A$ be a prime element, and let $\mu = w(\phi)$. 
	Consider the enlargement $[A^{\circ}, \phi, \mu]$, and let $M_{A, \phi, \mu}$ be the submodule generated by $\vd(A^{\circ})$ and $\alpha^{-1}\phi^{m}$ with $n\cdot \mu - v(\alpha)\ge 0$. 
	Then there is a natural quasi-isomorphism 
	\begin{align*}
		\lctc_{\textstyle \frac{(M_{A^{\circ}, \phi, \mu} \to[A^{\circ}, \phi, \mu])}{(\vd(A^{\circ}) \to A^{\circ})}} &\simeq \left( [A^{\circ}, \phi, \mu] \cdot \dd(\theta^{-1}\phi) \to  [A^{\circ}, \phi, \mu] \cdot \dlog(\theta^{-1} \phi)\right) \\
																       &\simeq \left( \frac{[A^{\circ}, \phi, \mu]}{(\theta^{-1} \phi) } \cdot \dlog \phi  \right)[0] .
	\end{align*}

	Moreover, if the semi-valuation $v'$ from \Cref{lem:inductive_model_unit_ball} is well-defined, then $M_{[A^{\circ}, \phi, \mu]}$ is the total log structure on $([A^{\circ}, \phi, \mu], v')$ when $\mu \ne \infty$ and the boundary log structure on when $\mu = \infty$. Recall that $[A^{\circ}, \phi, \mu]$ equals $(A, v')^{\circ}$ when $\mu = \infty$ and $(A[\phi^{-1}], v')^{\circ}$  when $\mu < \infty$.
\end{proposition}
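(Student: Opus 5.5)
The plan is to reduce the statement to \Cref{lem:lctc_add_to_the_log_structure} by changing the log structures in two steps, each an invariance under logification, and then to read off the second description of the log structure from \Cref{lem:inductive_model_unit_ball}. Write $B = [A^{\circ}, \phi, \mu]$ and $\psi = \theta^{-1}\phi$, where $\theta$ is a constant with $v(\theta) = v(\phi)$; note that $\psi \in A^{\circ}$.

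First, consider the tower
\[
(\vd(A^{\circ}) \to A^{\circ}) \to (\vd(A^{\circ}) \to B) \to (M_{A^{\circ},\phi,\mu} \to B).
\]
By \eqref{eq:ctc_eq_lctc} the first relative complex is $\ctc_{B/A^{\circ}}$. So I would instead compare the target with the pre-log ring $(\vd(A^{\circ}) \oplus \N \to A^{\circ})$, $(m,n) \mapsto m\psi^{n}$, and work over the base $(\vd(A^{\circ})\oplus\N \to A^{\circ})$. \Cref{lem:lctc_add_to_the_log_structure} computes the log cotangent complex of this base relative to $(\vd(A^{\circ}) \to A^{\circ})$ as $(A^{\circ}\dd\psi \to A^{\circ}\dlog\psi)[-1,0]$.

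The key claim is that the morphism $(\vd(A^{\circ}) \oplus \N \to A^{\circ}) \to (M_{A^{\circ},\phi,\mu} \to B)$ has vanishing log cotangent complex. The idea is that $M_{A^{\circ},\phi,\mu}$ is the monoid $\{a^{-1}\psi^{n} : a \in \vd(A), (-v(a), n) \in C\}$, and $B = A^{\circ}[M]$ is essentially the ``monoid algebra base change''
\[
A^{\circ} \otimes_{\Z[\vd(A^{\circ})\oplus\N]} \Z[M].
\]
I would make this precise using the presentations of \Cref{prop:inductive_model_form}. For each $A_i$, the generators $X, Y_i$ (resp.\ $X_i, Y_i$) are monomials $a^{-1}\psi^{n}$, and the relations $g_i, h_i$ are binomial relations expressing that the corresponding monoid map is an isomorphism after groupification. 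Log-étaleness then follows from the criterion (\cite[Prop.~2.6, 2.7]{hubnerLogarithmicDifferentialsDiscretely2024}), because the monoid map $\vd(A^{\circ})\oplus\N \to M_i$ has torsion-free cokernel on groupifications (it is an isomorphism, since $M_i^{gp}$ is generated by constants and $\psi$). Concretely, for each $A_i$ the complex in the Hübner–Temkin presentation is the log Jacobian of $g_i, h_i$ with respect to $\dlog X, \dlog Y_i$. That matrix has determinant $\pm(\ell d - em)=\pm1$ in case ($\aleph$), and likewise $\pm1$ in ($\beth$), ($\gimel$) by the continued fraction identity, so it is invertible. In case ($\daleth$) there is one generator, with log Jacobian $1$. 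The log cotangent complex therefore vanishes for each $A_i$ with the monoid generated by the images of $X, Y_i$ and the constants. Passing to the filtered colimit over $i$ (the log cotangent complex commutes with filtered colimits of pre-log rings) gives vanishing for $B$.

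With that in hand, the transitivity triangle for
\[
(\vd(A^{\circ}) \to A^{\circ}) \to (\vd\oplus\N \to A^{\circ}) \to (M \to B)
\]
gives $\lctc_{(M\to B)/(\vd\to A^{\circ})} \simeq (A^{\circ}\dd\psi \to A^{\circ}\dlog\psi)\otimes B$. This is the first claimed quasi-isomorphism. The second follows because $\psi$ is a nonzerodivisor in the domain $B$, using $\dlog\psi = \dlog\phi$ by the abuse-of-notation paragraph.

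For the last assertion, \Cref{lem:inductive_model_unit_ball} identifies $B$ with $(A[\phi^{-1}],v')^{\circ}$ or $(A,v')^{\circ}$. The constants of the former are units of $A[\phi^{-1}]$ with nonnegative value by \Cref{lem:unit_value_divisor}. Since $A$ is scalable, every such unit has the form (unit of valuation $0$ in $B$)$\cdot a^{-1}\phi^{n}$, so $M$ logifies to the total log structure. The boundary case for $\mu=\infty$ is analogous, with $\ker v' = (\phi)$.

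I expect the main obstacle to be the log-étaleness step: verifying carefully that the binomial presentations are ``log smooth of relative dimension zero'' in Gabber's sense, so that the Hübner–Temkin two-term complex applies with the correct monoid at each finite stage, and checking compatibility along the colimit.
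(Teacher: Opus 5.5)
Your overall architecture is the paper's. The intermediate pre-log ring $(\vd(A^{\circ})\oplus\N\to A^{\circ})$ with $\N\mapsto\psi=\theta^{-1}\phi$ is exactly the paper's $M_0$. The transitivity triangle, \Cref{lem:lctc_add_to_the_log_structure}, the passage to the filtered colimit, and the identification of the log structure also all match.

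The gap is at the step you flagged: the vanishing of $\lctc_{(M_i\to A_i)/(M_0\to A^{\circ})}$. The justification you give for it does not work. Proposition~2.7 of \cite{hubnerLogarithmicDifferentialsDiscretely2024}, applied to $A_i$ as the quotient of $(M_0\oplus\N^k\to A^{\circ}[\underline X])$ by the regular sequence of relations, computes the log cotangent complex of the \emph{pushforward} pre-log structure $M_0\oplus\N^k\to A_i$. In that structure the binomial relations hold only in the ring, not in the monoid. Its differential sends a relation $u-c$ (with $u$ a constant times a monomial in the $X$'s and $c\in A^{\circ}$) to $\dd u=u\,\dlog u$. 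This equals $\dlog u$ only when $u$ becomes a unit.

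\begin{itemize}
\item In case $(\daleth)$ one gets $g_\alpha\mapsto\theta^{-1}\alpha X_\alpha\,\dlog X_\alpha=\psi\,\dlog X_\alpha$. Hence $H_0=(A_\alpha/\psi A_\alpha)\,\dlog X_\alpha\neq 0$.
\item In case $(\aleph)$ the determinant is $\psi$ times the non-unit constant occurring in $g_i$, not $\pm(\ell d-em)$.
\end{itemize}

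Logification does not repair this, because it only identifies monoid elements through units. For example, $\psi$ and $(\theta^{-1}\alpha)\cdot X_\alpha$ have the same image in $A_\alpha$ but stay distinct. So $M_0\oplus\N^k\to A_i$ does not logify to $M_i\to A_i$.

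Your determinant-$\pm1$ matrix really only encodes $M_i^{\mathrm{gp}}=M_0^{\mathrm{gp}}$. That is a statement about the monoid algebras $\Z\{M_0\}\to\Z\{M_i\}$. A Kato-type log-\'etaleness criterion does not close the gap either: $M_0$ is not fine, and what must be controlled is the derived passage from the monoid algebras to $A^{\circ}\to A_i$.

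The missing idea is the Tor-independence of this base change. The paper proceeds as follows.
\begin{itemize}
\item It lifts the relations to binomials $\underline{F_{i,j}}$ in $\Z\{M_0\oplus\underline{X_{i,j}}^{\N}\}$.
\item By the same order-counting argument as in \Cref{prop:inductive_model_form} and \Cref{comp:quotient_by_gi}, it shows $\Z\{M_i\}\cong\Z\{M_0\oplus\underline{X_{i,j}}^{\N}\}/(\underline{F_{i,j}})$ with $\underline{F_{i,j}}$ a regular sequence.
\item It base-changes the Koszul resolution to obtain $A^{\circ}\derotimes_{\Z\{M_0\}}\Z\{M_i\}\simeq A_i$.
\item Since $M_0$ and $M_i$ are integral, the corresponding square of pre-log rings is a homotopy pushout (Lemma~2.3 and Corollary~2.4 of \cite{hubnerLogarithmicDifferentialsDiscretely2024}).
\item Hence $\lctc_{(M_i\to A_i)/(M_0\to A^{\circ})}\simeq(M_i^{\mathrm{gp}}/M_0^{\mathrm{gp}})\derotimes_{\Z}A_i=0$ by \cite[Lemma~8.23(ii)]{olssonLogarithmicCotangentComplex2005}.
\end{itemize}
With this argument in place of your log-Jacobian step, the rest of your proof goes through as written.
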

\begin{proof}
	By \Cref{prop:inductive_model_form}, we know that $[A^{\circ}, \phi, \mu]$ is the increasing union of a filtered system $(A_i)_{i \in I}$ of $A^{\circ}$-algebras.
	Each is $A_i$ has an explicit presentation as $A_i \simeq A^{\circ}[\underline{X_{i, j}}] / (\underline{f_{i, j}}) $, where the relations $\underline{f_{i, j}}$ form a regular sequence, see \Cref{comp:inductive_model_differentials}.
	Consider the pre-log structure $M_i \to A_i$ where \[
		M_i = (\vd(A) \oplus \phi^{\Z}) \cap A_i = \{\alpha^{-1} \phi^{n} \mid \alpha \in \vd(A), n \in \Z, n \cdot \mu - v(\alpha) \ge 0 \} 
	.\]
	Define $M_0 := \const(A^{\circ}) \oplus (\theta^{-1}\phi)^{\N}$ which is a log structure on $A^{\circ}$.

	Note that the relations $f_{i, j}$ from \Cref{prop:inductive_model_form} identify elements from the monoid $M_0 \oplus \underline{X_{i,j}}^{\N}$.
	We write $\underline {F_{i, j}} $ for the corresponding relations on $\Z \{M_0 \oplus \underline{X_{i,j}}^{\N}\}.$\footnote{We use the notation $R\{M\}$ for a monoid ring and reserve square brackets for adjoining free variables or elements of a larger ring in which $R$ is contained.
	For example, here $A_i = A^{\circ}[M_i]$ but this is not the monoid ring $A^{\circ} \{M_i\} $.}
	The exact same arguments as in \Cref{prop:inductive_model_form}, and \Cref{comp:inductive_model_differentials} show that $\Z \{M_i\}  \simeq \Z \{M_0 \oplus \underline{X_{i,j}}^{\N}\} / (\underline{F_{i,j}})$ and that $(\underline{F_{i,j}})$ is a regular sequence in $\Z \{M_0 \oplus \underline{X_{i,j}}^{\N}\}$.
	Now we see that 
	\begin{align*}
		A^{\circ} \derotimes_{\Z \{M_0\} } \Z \{M_i\} &\simeq A^{\circ} \otimes_{\Z \{M_0\} } K_{\bullet}(\Z \{M_0 \oplus \underline{X_{i,j}}\}, \underline{F_{i, j}})\\
				     &=K_{\bullet}(A^{\circ}[\underline{X_{i, j}}], \underline{f_{i, j}})\\
				     &\simeq A_i
	.\end{align*}
	Here $K_{\bullet}$ denotes the Kozul complex, and we use the well known fact that the Kozul complex associated to a regular sequence is a free resolution. 
	This shows that the diagram 
	\[
		\begin{tikzcd}
			\Z \{M_0\} \dar \rar & \Z \{M_i\} \dar \\
			A^{\circ} \rar & A_i
		\end{tikzcd}
	,\] 
	is a homotopy push-out. Moreover, the pre-log structures $M_0, M_i$ are integral, thus by \cite[Corollary 2.4, Lemma 2.3]{hubnerLogarithmicDifferentialsDiscretely2024} the corresponding diagram of pre-log rings is a homotopy pushout. 
	Then  
	\[
		\lctc_{\textstyle \frac{(M_i \to A_i)}{(M_0 \to A^{\circ})}} \simeq \lctc_{\textstyle \frac{(M_i \to \Z \{M_i\} )}{(M_0 \to \Z \{M_0\})}} \derotimes_{\Z \{M_i\}} A_i \simeq \frac{M_i^{\text{gp}}}{M_0^{\text{gp}}} \derotimes_{\Z}A_i = 0
	,\] 
	where the first quasi-isomorphism is due to Lemma 2.2 of loc.\ cit., the second is due to \cite[Lemma 8.23.(ii)]{olssonLogarithmicCotangentComplex2005} and the last equality holds because both $M_i$ and $M_0$ groupify to $\vd(A) \oplus \phi^{\Z}$.

	By \cite[Proposition 2.9]{hubnerLogarithmicDifferentialsDiscretely2024} the log cotangent complex commutes with filtered colimits, thus by taking the colimit of the complex above we see that \[
		\lctc_{\textstyle \frac{(M_{A^{\circ}, \phi, \mu} \to [A^{\circ}, \phi, \mu])}{(M_0 \to A^{\circ})}} \simeq 0
	.\] 
	Consider the tower of pre-log rings $(\const A^{\circ} \to A^{\circ}) \to (M_0 \to A^{\circ}) \to (M^{w}_{A^{\circ}, \phi, \mu} \to [A^{\circ}, \phi, \mu])$. 
	Then, by the distinguished triangle and the vanishing above, we see that 
	\begin{align*}
		\lctc_{\textstyle \frac{(M^{w}_{A^{\circ}, \phi, \mu} \to [A^{\circ}, \phi, \mu])}{(\const A^{\circ} \to A^{\circ})}} &\simeq \lctc_{\textstyle \frac{(M_0 \to A^{\circ})}{(\const A^{\circ}\to A^{\circ})}} \derotimes [A^{\circ}, \phi, \mu]\\
														 &\simeq \left( [A^{\circ}, \phi, \mu] \cdot \dd(\theta^{-1}\phi) \to  [A^{\circ}, \phi, \mu] \cdot \dlog(\theta^{-1} \phi)\right) 
	,\end{align*} 
	where the last quasi-isomorphism is due to \Cref{lem:lctc_add_to_the_log_structure}.

	We still have to show that the log structures are as claimed when $v'$ is well-defined. 
	Note that $v'(\alpha^{-1}\phi^{n}) = \mu\cdot n - v(\alpha)$, with $\alpha \in A^{\times }$. 
	Then we easily verify that $M_{A^{\circ}, \phi, \mu} = A[\phi^{-1}]^{\times } \cap (A[\phi^{-1}], v')^{\circ } $ if $\mu < \infty$, and $M_{A^{\circ}, \phi, \mu} = A[\phi^{-1}]^{\times } \cap (A, v')^{\circ } $  when $\mu = \infty$. 
\end{proof}

\begin{remark}\label{rem:total_log_structure}
	Continue with the context of \Cref{prop:lctc_enlargement}. 
	The statement leaves open what the log cotangent complex of $[A^{\circ}, \phi, \mu] / A^{\circ}$ is when $v'$ is well-defined, $\mu$ is infinite, and we take the total log structure on $[A^{\circ}, \phi, \mu]$. 
	This easily reduces to the non-log case. 
	Indeed, in this case $\const A^{\circ} = \const (A, v')^{\circ}$. 
	Both are the constants of $A^{\times }$, with positive valuation with respect to $v, v'$ respectively. 
	But the constants are $v'$ stable, see \cref{para:constants_are_stable}, hence both sets are identical. 
	Then as in \eqref{eq:ctc_eq_lctc}, we see that there is a quasi-isomorphism of complexes
	\[
		\lctc_{\textstyle \frac{(\const A^{\circ} \to [A^{\circ}, \phi, \infty])}{(\const A^{\circ} \to A^{\circ})}} \simeq \ctc_{\textstyle \frac{[A^{\circ}, \phi, \infty]}{A^{\circ}}}
	,\] 
	and the latter is described in \Cref{lem:enlargement_differentials} after tensoring with a suitable field $L^{\circ}$. 
\end{remark}
\todo[]{I feel like the following corollary this should hold for $\lctc$ as well, and somehow the proof can be translated. Maybe the 3x3 lemma works?}
\begin{corollary}\label{cor:boundary_log_structure_quotient}
	Continue with the context of \Cref{prop:lctc_enlargement} and suppose that $\mu = \infty$.
	Let $(M_B \to B) \to (\const A^{\circ} \to A^{\circ})$ be any morphism of log rings such that $H_1(\lctc_{\frac{(\const A^{\circ} \to A^{\circ})}{(M_B \to B)}}) = 0$
	Then 
	\begin{align*}
		\lOmega_{\textstyle \frac{(\const A^{\circ} \to A^{\circ} / (\theta^{-1}\phi))}{(M_B \to B)}} \otimes [A^{\circ}, \phi, \mu] \simeq \lOmega_{\textstyle \frac{(M_{A^{\circ}, \phi, \infty} \to [A^{\circ}, \phi, \infty])}{(M_B \oplus \Phi^{\N} \to B[\Phi])}} 
	,\end{align*}
	where $\Phi$ is a free variable mapping to $\theta^{-1}\phi$. 
\end{corollary}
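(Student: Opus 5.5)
The plan is to compare both sides through the intermediate pre-log ring $(M_0 \to A^{\circ})$, where $M_0 = \const(A^{\circ}) \oplus (\theta^{-1}\phi)^{\N}$ as in the proof of \Cref{prop:lctc_enlargement}. Write $E := [A^{\circ}, \phi, \infty]$ and $a := \theta^{-1}\phi \in A^{\circ}$. There is a morphism $(M_B \oplus \Phi^{\N} \to B[\Phi]) \to (M_0 \to A^{\circ})$ sending $\Phi \mapsto a$. This gives a tower
\[
(M_B \oplus \Phi^{\N} \to B[\Phi]) \to (M_0 \to A^{\circ}) \to (M_{A^{\circ}, \phi, \infty} \to E).
\]
The proof of \Cref{prop:lctc_enlargement} shows that $\lctc_{(M_{A^{\circ},\phi,\infty} \to E)/(M_0 \to A^{\circ})} \simeq 0$. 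That argument is a colimit of the homotopy pushouts $\Z\{M_0\} \to \Z\{M_i\}$, and it is valid for $\mu = \infty$. The distinguished triangle then identifies the right-hand side of the corollary with $\lOmega_{(M_0 \to A^{\circ})/(M_B \oplus \Phi^{\N} \to B[\Phi])} \otimes_{A^{\circ}} E$. The problem therefore reduces to a statement purely about $A^{\circ}$.

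Next I compute $\lOmega_{(M_0 \to A^{\circ})/(M_B \oplus \Phi^{\N} \to B[\Phi])}$ using the second tower $(M_B \to B) \to (M_B \oplus \Phi^{\N} \to B[\Phi]) \to (M_0 \to A^{\circ})$. The middle object is free over the base on $\dd\Phi$ and $\dlog \Phi$ with $\dd \Phi = \Phi \dlog \Phi$. The usual right exact sequence then shows that the module equals $\lOmega_{(M_0 \to A^{\circ})/(M_B \to B)}$ modulo $\dlog a$ (and hence also modulo $\dd a$). To describe $\lOmega_{(M_0 \to A^{\circ})/(M_B \to B)}$, I use the tower $(M_B \to B) \to (\const A^{\circ} \to A^{\circ}) \to (M_0 \to A^{\circ})$ together with \Cref{lem:lctc_add_to_the_log_structure}. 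That lemma says the top term is $(A^{\circ}/a)\dlog a$ in degree $0$, with vanishing $H_1$ because $a$ is a nonzerodivisor. This yields
\[
0 \to \lOmega_{(\const A^{\circ} \to A^{\circ})/(M_B \to B)} \to \lOmega_{(M_0 \to A^{\circ})/(M_B \to B)} \to (A^{\circ}/a)\dlog a \to 0,
\]
with $\dd a \mapsto a \dlog a$. Quotienting by $\dlog a$ gives $\lOmega_{(\const A^{\circ}\to A^{\circ})/(M_B\to B)}/(A^{\circ}\dd a)$.

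On the left-hand side, I use the tower $(M_B \to B) \to (\const A^{\circ} \to A^{\circ}) \to (\const A^{\circ} \to A^{\circ}/(a))$. The last step is a strict closed immersion cut out by the regular element $a$, so its log cotangent complex is $(a)/(a^2)[1]$. The hypothesis $H_1(\lctc_{(\const A^{\circ}\to A^{\circ})/(M_B \to B)}) = 0$ is then used to keep the conormal sequence
\[
(a)/(a^2) \xrightarrow{\dd} \lOmega_{(\const A^{\circ}\to A^{\circ})/(M_B\to B)} \otimes A^{\circ}/(a) \to \lOmega_{(\const A^{\circ} \to A^{\circ}/(a))/(M_B \to B)} \to 0
\]
exact and well controlled after base change. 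This identifies the left-hand side as $\lOmega_{(\const A^{\circ}\to A^{\circ})/(M_B\to B)}/(a, \dd a)$, tensored with $E$.

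The main obstacle is reconciling the extra relation $a\cdot(-)$: the two sides differ exactly by the image of multiplication by $a$. I would first try to show that, after tensoring with $E$, every element $a\omega$ already lies in the span of $\dd a$ up to terms that vanish. The idea is to use that $a = \theta^{-1}\alpha X_\alpha$ is infinitely divisible by constants in $E$ as $v(\alpha) \to \infty$, and to pass to the colimit over the rings $A_\alpha = A^{\circ}[\alpha^{-1}\phi]$ of \Cref{prop:inductive_model_form}. At each stage I would run the above computation with $M_0$ replaced by $M_\alpha$; there the relation $\dd a = \theta^{-1}\alpha\,\dd X_\alpha$ makes $a$-multiples of $\dlog$-classes expressible through $\dd X_\alpha$. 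The hypothesis on $H_1$ is what should make these identifications compatible in the colimit. If this direct route fails, I would instead compare both sides via a $3\times 3$ diagram of distinguished triangles built from the two towers above.
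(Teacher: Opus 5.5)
\textbf{There is a genuine gap, and it cannot be closed.} Your first two reductions are sound and in fact compute the right-hand side exactly. Write $E=[A^{\circ},\phi,\infty]$ and $a=\theta^{-1}\phi$. Then the right-hand side is $\bigl(\lOmega_{(\const A^{\circ}\to A^{\circ})/(M_B\to B)}\otimes_{A^{\circ}}E\bigr)/E\,\dd a$. The left-hand side is the further quotient by $a\cdot\bigl(\lOmega_{(\const A^{\circ}\to A^{\circ})/(M_B\to B)}\otimes_{A^{\circ}}E\bigr)$; that identification needs only right exactness, not the $H_1$ hypothesis.

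The step that fails is the one you single out. In general $a\,\omega\notin E\,\dd a$. Infinite divisibility of $a$ in $E$ cannot help: writing $a\omega=(\theta^{-1}\alpha)\cdot(\alpha^{-1}\phi)\,\omega$ only moves the problem to $(\alpha^{-1}\phi)\,\omega$.

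Here is a concrete counterexample. Take $A=K[S,T]$ with the Gauss valuation $v$, $w(f)=v(f(S,0))$, $\phi=T$, $\theta=1$, and $(M_B\to B)=(K^{\circ}\setminus\{0\}\to K^{\circ})$. All hypotheses hold; in particular $\lctc_{(\const A^{\circ}\to A^{\circ})/(M_B\to B)}\simeq(A^{\circ}\dd S\oplus A^{\circ}\dd T)[0]$. Here $E=K^{\circ}[S]+T\cdot K[S,T]$. Your reduction gives the right-hand side as $E\,\dd S$, which is free of rank one over the domain $E$. The left-hand side is $\Omega_{K^{\circ}[S]/K^{\circ}}\otimes_{A^{\circ}}E=(E/TE)\,\dd S$, a nonzero module killed by $T$. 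So the isomorphism with $\otimes_{A^{\circ}}[A^{\circ},\phi,\infty]$ is false as stated, and no argument can supply your last step.

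Your fallback, a $3\times 3$ diagram with the snake lemma, is exactly the paper's proof, and it has the same hole. The paper asserts that the column
$0\to E\,\dd a\to\lOmega_{(\const A^{\circ}\to A^{\circ})/(M_B\to B)}\otimes E\to\lOmega_{(\const A^{\circ}\to A^{\circ}/(a))/(M_B\to B)}\otimes E\to 0$
is exact. Exactness in the middle is precisely the containment $a\,(\lOmega\otimes E)\subseteq E\,\dd a$. In the example above that containment fails: the kernel is $TE\,\dd S\oplus E\,\dd T$, not $E\,\dd T$.

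What your argument does establish is the corrected statement
\[
\lOmega_{(M_{A^{\circ},\phi,\infty}\to E)/(M_B\oplus\Phi^{\N}\to B[\Phi])}\simeq\bigl(\lOmega_{(\const A^{\circ}\to A^{\circ})/(M_B\to B)}\otimes_{A^{\circ}}E\bigr)/E\,\dd(\theta^{-1}\phi).
\]
After base change to any $A^{\circ}/(\theta^{-1}\phi)$-algebra, the factor $a$ dies. This holds in particular for $L^{\circ}$ in the later theorem identifying $\ldisc\dlog f$ with the log different, which is the only place the corollary is used. There the corrected statement agrees with the left-hand side tensored over $A^{\circ}/(\theta^{-1}\phi)$. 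That is the version you should state and prove. Your last paragraph should be replaced by this observation rather than by an attempt to kill $a\,\omega$.
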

\begin{proof}
	For notational clarity, we omit writing the log structures in the following commutative diagram. 
	\if\thesis1
	\[
	\adjustbox{scale=.9,center}{\begin{tikzcd}[sep=small]
		0 \rar & {[A^{\circ}, \phi, \mu]}\cdot \dd(\theta^{-1}\phi) \rar \dar & {[A^{\circ},\phi, \mu]}\cdot \dlog(\theta^{-1} \phi)\rar\dar & \frac{[A^{\circ}, \phi, \mu]}{(\theta^{-1}\phi)}\cdot \dlog(\theta^{-1}\phi) \dar \rar &0\\
		0 \rar & \lOmega_{\frac{A^{\circ}}{B}} \otimes [A^{\circ},\phi, \mu] \rar \dar & \lOmega _{\frac{[A^{\circ}, \phi, \mu]}{B}} \rar \dar & \lOmega_{\frac{[A^{\circ}, \phi, \mu]}{A^{\circ}}} \rar \dar & 0 \\
		0 \rar   & \lOmega_{\frac{A^{\circ} / (\theta^{-1} \phi)}{B}} \otimes [A^{\circ}, \phi, \mu] \rar & \lOmega_{\frac{[A^{\circ}, \phi, \mu]}{B[\Phi]}} \rar & 0
	\end{tikzcd}}
	\]
	\else 
		\[
	\begin{tikzcd}[sep=small]
		0 \rar & {[A^{\circ}, \phi, \mu]}\cdot \dd(\theta^{-1}\phi) \rar \dar & {[A^{\circ},\phi, \mu]}\cdot \dlog(\theta^{-1} \phi)\rar\dar & \frac{[A^{\circ}, \phi, \mu]}{(\theta^{-1}\phi)}\cdot \dlog(\theta^{-1}\phi) \dar \rar &0\\
		0 \rar & \lOmega_{\frac{A^{\circ}}{B}} \otimes [A^{\circ},\phi, \mu] \rar \dar & \lOmega _{\frac{[A^{\circ}, \phi, \mu]}{B}} \rar \dar & \lOmega_{\frac{[A^{\circ}, \phi, \mu]}{A^{\circ}}} \rar \dar & 0 \\
		0 \rar   & \lOmega_{\frac{A^{\circ} / (\theta^{-1} \phi)}{B}} \otimes [A^{\circ}, \phi, \mu] \rar & \lOmega_{\frac{[A^{\circ}, \phi, \mu]}{B[\Phi]}} \rar & 0
	\end{tikzcd}
	\]
	\fi
	The first two rows are is exact and each column is short exact.  
	By the snake lemma, the third row is also exact.
\end{proof}

\subsection{The log cotangent complex for pure transcendental extensions}\label{sec:the_log-cotangent_complex_for_pure_trancendental_extensions}

\begin{lemma}\label{lem:summary_log_ordinary_augmentation}
	Let $v \le w$ be two semi-valuations on $K[T]$, and let $S$ be a multiplicative system of $w$-stable elements such that $(S^{-1} K[T], v)$ is scalable. 
	Consider $(K[T], w) \to (L, v_L)$  a map of semi-valued rings to a valued field $L$.
	Let $\fra = [v, \phi, \mu]$ be a $w$-optimal ordinary augmentation and let $S'$ be the multiplicative system generated by $S$ and $\phi$ if $\mu \ne \infty$ and $S' = S$ if $\mu = \infty$.
	Write $R_1 = (S^{-1}K[T], v)^{\circ}$ and $R_2 = (S'^{-1}K[T], [\fra])^{\circ}$. 
	Let $\theta \in \const (S^{-1}K[T],v)$ with  $v(\theta) = v(\phi)$.
	\begin{itemize}
		\item 
	If $\mu \ne \infty$, 	
	\if\thesis1
	\begin{align*}
		\lctc_{\textstyle \frac{(\const R_2 \to R_2)}{(\const R_1 \to R_1)}} &\simeq \left( R_2 \dd(\theta^{-1}\phi) \to R_2 \dlog (\theta^{-1}\phi)\right)\\
		& \simeq \left( \frac{R_2}{\theta^{-1}\phi} \cdot \dlog (\theta^{-1}\phi)\right) [0]
	\end{align*}
	\else	
	\begin{align*}
		\lctc_{\textstyle \frac{(\const R_2 \to R_2)}{(\const R_1 \to R_1)}} \simeq \left( R_2 \dd(\theta^{-1}\phi) \to R_2 \dlog (\theta^{-1}\phi)\right)  \simeq \left( \frac{R_2}{\theta^{-1}\phi} \cdot \dlog (\theta^{-1}\phi)\right) [0]
	\end{align*}
	\fi
	Thus $\lOmega_{{(\const R_2 \to R_2)/(\const R_1 \to R_1)}} \otimes L^{\circ}$ is a cyclic module of content $\step \fra$.
		\item If $\mu = \infty$, then for the boundary log structure it is the case that

			\begin{align*}
				\lctc_{\textstyle \frac{(\bls R_2 \to R_2)}{(\const R_1 \to R_1)}} \derotimes_{R_2} L^{\circ} \simeq \left( L^{\circ} \dd(\theta^{-1}\phi) \xrightarrow{0} L^{\circ} \dlog (\theta^{-1}\phi)\right)[-1,0]
			\end{align*}
			whose zeroth and first homology are isomorphic to $L^{\circ}$.
			For the total log structure, we find
			\begin{align*}
				\lctc_{\textstyle \frac{(\const R_2 \to R_2)}{(\const R_1 \to R_1)}} \derotimes_{R_2} L^{\circ} \simeq \left(\frac{L\cdot \dd\phi}{L^{\circ}\cdot \dd(\theta^{-1}\phi)}\right)[0]
			.\end{align*}
	\end{itemize}
\end{lemma}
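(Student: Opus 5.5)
The plan is to reduce everything to \Cref{prop:lctc_enlargement}, applied to the scalable domain $A = (S^{-1}K[T], v)$ with the prime element $\phi$, and to use \Cref{lem:summary_ordinary_augmentation} to identify $R_2$ with the enlargement $[R_1, \phi, \mu]$. By \Cref{lem:summary_ordinary_augmentation}(i),(ii), $R_2 = [(S^{-1}K[T],v)^{\circ}, \phi, \mu]$, and $(S'^{-1}K[T],[\fra])$ is scalable. The semi-valuation $v'$ of \Cref{lem:inductive_model_unit_ball} is well defined here, since it is just $[\fra]$ defined via $\phi$-expansions. Hence the last part of \Cref{prop:lctc_enlargement} identifies $M_{R_1,\phi,\mu}$ with the relevant log structure on $R_2$. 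If $\mu \neq \infty$, this is the total log structure $\const R_2$ (using \Cref{lem:unit_value_divisor} to see $\const R_2 = R_2 \cap (S'^{-1}K[T])^{\times}$). If $\mu = \infty$, it is the boundary log structure $\bls R_2$.

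\textbf{Case $\mu\ne\infty$.} By \Cref{prop:lctc_enlargement} directly, $\lctc$ is $(R_2\,\dd(\theta^{-1}\phi)\to R_2\,\dlog(\theta^{-1}\phi))$, which is quasi-isomorphic to $R_2/(\theta^{-1}\phi)\cdot\dlog(\theta^{-1}\phi)[0]$. The last step uses that $\theta^{-1}\phi$ is a nonzero divisor in the domain $R_2$, via \Cref{lem:lctc_add_to_the_log_structure}. Tensoring with $L^{\circ}$ gives $L^{\circ}/(\theta^{-1}\phi)$, a cyclic module. Its content is $v_L(\theta^{-1}\phi) = w(\phi)-v(\theta) = \mu - v(\phi) = \step\fra$, using $w$-optimality ($w(\phi)=\mu$) and $w$-stability of the constant $\theta$ (\cref{para:constants_are_stable}).

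\textbf{Case $\mu=\infty$, boundary structure.} Again \Cref{prop:lctc_enlargement} gives the two-term complex $R_2\,\dd(\theta^{-1}\phi)\to R_2\,\dlog(\theta^{-1}\phi)$, where the map is multiplication by $\theta^{-1}\phi$. After $\derotimes L^{\circ}$ the terms become free $L^{\circ}$-modules. The map becomes multiplication by the image of $\theta^{-1}\phi$, which is $0$ because $w(\phi)=\infty$, so $\phi\in\ker w$ maps to $0$ in $L$. Thus $H_0\simeq H_1\simeq L^{\circ}$.

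\textbf{Case $\mu=\infty$, total structure.} By \Cref{rem:total_log_structure}, $\lctc$ reduces to $\ctc_{R_2/R_1}$. By \Cref{lem:summary_ordinary_augmentation}(iii), after tensoring with $L^{\circ}$ this is $\Omega_{R_2/R_1}\otimes L^{\circ}[0]$. To identify this module, use the case ($\daleth$) presentation of \Cref{prop:inductive_model_form} and \Cref{comp:inductive_model_differentials}. For each $\alpha\in I$, $\Omega_{A_\alpha/R_1}\otimes L^{\circ}$ is $L^{\circ}\dd X_\alpha/(\theta^{-1}\alpha\,\dd X_\alpha)$, with $\dd X_\alpha=\alpha^{-1}\dd\phi$. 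This is $L^{\circ}\alpha^{-1}\dd\phi/L^{\circ}\theta^{-1}\dd\phi$. The proof of \Cref{lem:enlargement_differentials} shows these form an increasing union. Since $\alpha$ ranges over constants with $v(\alpha)>v(\phi)$ arbitrarily large, the union $\bigcup_\alpha L^{\circ}\alpha^{-1}\dd\phi$ is $L\cdot\dd\phi$. This gives $L\,\dd\phi/L^{\circ}\dd(\theta^{-1}\phi)$.

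The main obstacle I expect is the bookkeeping of log structures: checking that $M_{R_1,\phi,\mu}$ really equals $\const R_2$ (resp.\ $\bls R_2$) for $v'=[\fra]$, and that the base log structure $\const R_1$ matches $\const A^{\circ}$ after localizing at $S$. I would handle this by the explicit description $v'(\alpha^{-1}\phi^n)=n\mu-v(\alpha)$ at the end of the proof of \Cref{prop:lctc_enlargement}. A secondary subtlety is the exhaustion in the last case, which needs $\Gamma$ unbounded among constants; scalability of $R_1$ together with non-triviality of $v_K$ provides this.
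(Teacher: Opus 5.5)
Your proposal is correct and follows essentially the same route as the paper. \Cref{prop:lctc_enlargement}, with \Cref{lem:summary_ordinary_augmentation} identifying $R_2$ with the enlargement and its log structure, settles the case $\mu \ne \infty$ and the case $\mu=\infty$ with the boundary log structure. For $\mu=\infty$ with the total log structure, you reduce to the non-log cotangent complex via \Cref{rem:total_log_structure} and take the colimit of the ($\daleth$) presentations, exactly as the paper does. The details you add are precisely what the paper's terse proof leaves implicit: the content computation $v_L(\theta^{-1}\phi)=\mu-v(\phi)$, the differential vanishing after base change because $\phi$ maps to $0$ in $L$, and the exhaustion $\bigcup_\alpha L^{\circ}\alpha^{-1}\dd\phi = L\cdot \dd\phi$ coming from $K^{\times}\subset A^{\times}$.
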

\begin{remark}
	When $\mu = \infty$ we see that for both the boundary log structure and the total log structure, the zeroth homology of the log cotangent complex has infinite content. 
	However, the former is a free indivisible module, whereas the latter is a divisible torsion module. 
\end{remark}
\begin{proof}[Proof of \Cref{lem:summary_log_ordinary_augmentation}]
	The case where $\mu \ne \infty$ and the result for the boundary log structure when $\mu = \infty$ follow immediately from \Cref{prop:lctc_enlargement}.
	The if $\mu = \infty$ and we consider the total log structure on $R_2$, then we know from \Cref{rem:total_log_structure} that this is the same as the non-log case.
	The result now follows from tensoring \eqref{eq:fin_pres_omega_type_1} with $L^{\circ}$ and taking the colimit. 
\end{proof}

The proof of the analogous statement for limit augmentations in the logarithmic case is fortunately a bit easier than in the non-log case. 
\begin{lemma}\label{lem:summary_log_limit_augmentation}
	Let $v \le w$ be two semi-valuations on  $K[T]$ and  $S$ a multiplicative system of $w$-stable elements such that $(S^{-1}K[T], v)$ is scalable. 
	Let $(K[T], w) \to (L, v_L)$ be a map of semi-valued rings to a valued field $L$.
	Suppose $\fra = (v, (\psi_i, \gamma_i)_{i \in I}, \phi, \mu)$ is a $w$-optimal limit augmentation. 
	Let $S' \subset K[T]$ be the multiplicative set generated by $S$, all $\psi_i, i \in I$, and $\phi$ when $\mu \ne \infty$. 
	For each $i \in I$, write $S_i$ for the multiplicative set generated by $S$ and all $\psi_j$ with $j \le i$, and $v_i = [v, \psi_i, \gamma_i]$. 

	\begin{itemize}
		\item If $\mu \ne \infty$ then
		\if\thesis1
	\begin{align*}
		&\lctc_{\textstyle \frac{( M_2 \to R_2)}{(\const R_1 \to R_1)}} \derotimes  L^{\circ} \simeq \left( \lOmega_{\textstyle \frac{( \const R_2 \to R_2)}{(\const R_1 \to R_1)}}  \otimes L^{\circ} \right) [0] \text{ and} \\
		&\cont\left(\lOmega_{\textstyle \frac{(\const R_2\to R_2)}{(\const R_1 \to R_1)}}  \otimes L^{\circ} \right) = \step \fra
	.\end{align*}
	\else
		\begin{align*}
		\lctc_{\textstyle \frac{( M_2 \to R_2)}{(\const R_1 \to R_1)}} \derotimes  L^{\circ}\simeq \left( \lOmega_{\textstyle \frac{( \const R_2 \to R_2)}{(\const R_1 \to R_1)}}  \otimes L^{\circ} \right) [0] \text{ and }
		\cont\left(\lOmega_{\textstyle \frac{(\const R_2\to R_2)}{(\const R_1 \to R_1)}}  \otimes L^{\circ} \right) = \step \fra
	.\end{align*}
	\fi 
	\item When $\mu = \infty$ then 
	\[
		\cont\left(\lOmega_{\textstyle \frac{(\bls R_2\to R_2)}{(\const R_1 \to R_1)}} \otimes L^{\circ} \right) = \step \fra = \infty
	.\] 
	\end{itemize}
\end{lemma}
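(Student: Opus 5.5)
Throughout, $R_1 = (S^{-1}K[T], v)^{\circ}$, $R_2 = ({S'}^{-1}K[T], [\fra])^{\circ}$, and $M_2$ is the total log structure $\const R_2$ when $\mu \ne \infty$ and the boundary log structure $\bls R_2$ when $\mu = \infty$. The plan is to write the log extension as a filtered colimit of two-step towers, as in the proof of \Cref{lem:summary_limit_augmentation}, and use the fact that log cotangent complexes commute with filtered colimits \cite[Proposition 2.9]{hubnerLogarithmicDifferentialsDiscretely2024}. By \Cref{lem:summary_limit_augmentation}(i) (and \Cref{lem:summary_almost_stable}(i) when $\mu=\infty$), $R_2$ is the increasing union of the rings $B_i := [(S_i^{-1}K[T], v_i)^{\circ}, \phi, \mu]$. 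Each $B_i$ sits in a tower
\[
R_1 \to A_i := (S_i^{-1}K[T], v_i)^{\circ} \to B_i,
\]
with total log structures on $R_1$ and $A_i$. The log structure on $B_i$ is the total one, or the boundary one when $\mu=\infty$.

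For each fixed $i$ I use the triangle of log cotangent complexes for this tower. Up to a localization by elements of value $0$, $A_i$ is the enlargement $[R_1, \psi_i, \gamma_i]$; since $\gamma_i<\infty$, \Cref{lem:summary_log_ordinary_augmentation} identifies the first term with $(A_i/(\theta_i^{-1}\psi_i))\cdot \dlog \psi_i[0]$. \Cref{prop:lctc_enlargement} identifies the second term with $(B_i/(\theta^{-1}\phi))\cdot\dlog\phi[0]$ when $\mu<\infty$, and with the two-term complex $B_i\,\dd(\theta^{-1}\phi)\to B_i\,\dlog(\theta^{-1}\phi)$ when $\mu=\infty$. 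Logification invariance \cite[Lemma 8.20]{olssonLogarithmicCotangentComplex2005} takes care of the localizations, and also of the harmless difference between the constant monoids of the localized and unlocalized rings. After $\derotimes L^{\circ}$, for $\mu\ne\infty$ both outer terms are concentrated in degree $0$, hence so is the middle term. Moreover the degree-$0$ part $\lOmega_{B_i/R_1}\otimes L^{\circ}$ is an extension of two cyclic modules, of contents $w(\theta_i^{-1}\psi_i)=\gamma_i - v(\psi_i)$ and $w(\theta^{-1}\phi)=\mu - v_i(\phi)$, using $w$-optimality and $w$-stability of constants. By \Cref{lem:cont_additive} its content is $\gamma_i - v(\psi_i)+\mu - v_i(\phi)$, and the injectivity of the left map follows from the vanishing of $H_1$ of the third term.

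I then pass to the colimit over $i$. Since tensor products commute with filtered colimits, $\lctc_{(M_2\to R_2)/(\const R_1\to R_1)}\derotimes L^{\circ}$ is concentrated in degree $0$ and equals $\colim_i \lOmega_{B_i/R_1}\otimes L^{\circ}$. Its content is
\[
\lim_i\bigl(\gamma_i - v(\psi_i)+\mu-v_i(\phi)\bigr)=\step\fra,
\]
\emph{provided} content commutes with this colimit. This is the main obstacle, and it should be easier than in the non-log case because the transition maps are explicit on cyclic generators. The generator $\dlog\phi$ is literally the same element in every $B_i$. Moreover $\dlog\psi_i=\dlog\psi_j$ modulo the image of $\dlog(\psi_j/\psi_i)$, and $\psi_j/\psi_i$ is $v_i$-equivalent to a constant of value $0$ because $\psi_j\sim_{v_i}\psi_i$ fails only by a unit. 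So I will check condition (c) of \Cref{lem:cont_colim}. The sequence of contents is convergent since it is monotone and bounded. The cokernel of $\lOmega_{B_i/R_1}\otimes L^{\circ}\to\lOmega_{B_j/R_1}\otimes L^{\circ}$ is a quotient of the cyclic module generated by $\dlog\psi_j$ modulo $\dlog\psi_i$, which I expect to have content at most $\gamma_j-\gamma_i$, and this tends to $0$. Should this bound be hard to verify, the fallback is condition (a): bound the content of the kernels by showing that the kernels are killed by elements of value $\gamma_j-\gamma_i$.

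For $\mu=\infty$, the same tower and colimit give a degree-$0$ homology that surjects onto, and in fact contains, the free module $L^{\circ}\dlog(\theta^{-1}\phi)$ coming from \Cref{prop:lctc_enlargement}. Its content is therefore $\infty$. This agrees with $\step\fra=\infty$, which holds by \Cref{def:step} since $\mu=\infty$. No colimit subtlety arises here, because an infinite-content submodule already appears at each finite stage and \Cref{lem:cont_union} applies.
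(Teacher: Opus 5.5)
Your proposal works in outline, but it uses a different decomposition from the paper.

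\textbf{Comparison of routes.} You take the colimit over $i$ of the two-step towers $R_1\to A_i\to B_i$. You must then show that content commutes with that colimit via \Cref{lem:cont_colim}(c), which is the strategy of the non-log \Cref{lem:summary_limit_augmentation}. The paper instead inserts the intermediate ring $A=\bigcup_i A_i$ and uses the single tower $R_1\to A\to R_2$.
\begin{itemize}
\item For $A/R_1$, it applies \Cref{lem:summary_log_ordinary_augmentation} to the ordinary augmentation $(v_i,\psi_j,\gamma_j)$ in the towers $R_1\to A_i\to A_j$. This shows that the cyclic modules $\lOmega_{A_i/R_1}\otimes L^{\circ}$ form an increasing union, with $\dlog\psi_i=\kappa_{ij}\dlog\psi_j$ and $v_L(\kappa_{ij})=\gamma_j-\gamma_i$. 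Then \Cref{lem:cont_union} gives content $\step\frb$.
\item For $R_2/A$, it computes the colimit of the two-term complexes of \Cref{prop:lctc_enlargement} explicitly. The result is $L^{\circ}/L_{>\mu-\lim_i v_i(\phi)}\cdot\dlog\phi$, or a free $L^{\circ}\dlog\phi$ in degree $0$ when $\mu=\infty$.
\end{itemize}
One triangle then adds the two contents. In the paper's route every transition map is either injective or computed outright, so no cokernel estimate is needed, and the generators $\dlog\phi,\dlog\psi_i$ come out as a by-product. Your route gains uniformity with the non-log proof, but it has to pay for that estimate.

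\textbf{The cokernel estimate.} This is where your write-up needs repair. The bound $\cont\le\gamma_j-\gamma_i$ for the cokernel $\lOmega_{B_j/B_i}\otimes L^{\circ}$ is correct, but your reason for it is not: $\psi_j/\psi_i$ is not $v_i$-equivalent to a constant of value $0$. $\psi_j$ is not a unit of $S_i^{-1}K[T]$. Moreover, $\psi_j\not\sim_{v_i}\psi_i$ is exactly what makes $\lOmega_{A_j/A_i}\otimes L^{\circ}$ nonzero, of content $\gamma_j-\gamma_i$. Derive the bound from the same tower the paper uses:
\begin{itemize}
\item $\lOmega_{B_j/R_1}\otimes L^{\circ}$ is generated by $\dlog\psi_j$ and $\dlog\phi$.
\item $\dlog\phi$ comes from $B_i$ up to $\dlog$ of constants of $A_j$, and these lie in $L^{\circ}\dlog\psi_j$. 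Hence the cokernel is generated by the image of $\dlog\psi_j$.
\item The cokernel is killed by $\kappa_{ij}$, because $\kappa_{ij}\dlog\psi_j=\dlog\psi_i$ comes from $B_i$.
\end{itemize}
You should also say why $\gamma_j-\gamma_i\to 0$: for $\mu<\infty$ the family is not almost stable, so $\lim_i\gamma_i<\infty$.

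\textbf{The case $\mu=\infty$.} Here \Cref{lem:cont_union} does not apply, since you have not shown the system is increasing. Argue as the paper does instead. The colimit surjects onto $\colim_i L^{\circ}\dlog(\theta_i^{-1}\phi)\cong L^{\circ}\dlog\phi$, which is free, so the content is $\infty$.
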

\begin{proof}
	We write $R_1:= (S^{-1} K[T], v)^{\circ}$ and $R_2 :=({S'}^{-1}K[T], [\fra])^{\circ}$, and let $M_B$ be the total log structure or boundary log structure on $R_2$ when $\mu < \infty$ or $\mu = \infty$ respectively.
	For each $i \in I$ we write $v_i = [v, \psi, \gamma_i]$, $A_i := (S_i^{-1} K[T], v_i)^{\circ}$, $A_i' := [S^{-1}K[T], \psi_i, \gamma_i]$, $B_i= [A_i,\phi, \mu]$ and $A = \bigcup_{i \in I} A_i$.
	Moreover, for each $i \in I$ we choose a constant $\theta_i \in (S_i^{-1}K[T], v_i)$ with $w(\theta) = v_i(\phi)$ and a constant $\lambda \in (S^{-1}K[T], v)$ with $w(\lambda) = v(\psi_i)$, which is independent of $i$ by \Cref{rem:continuous_family}.
	Finally, we write $M_i = (S^{-1}K[T])^{\times }\cap (S^{-1}K[T]) \cap [A_i, \phi, \mu] $. 
	Like in the proof of \Cref{lem:summary_limit_augmentation}, we know that 
	\[
		R_2 = \bigcup_{i \in I} B_i
	,\] 
	is an increasing union and that for each $i \in I$, $A_i$ is a localization of $A_i'$ by inverting elements which map to invertible elements of $L^{\circ}$.
	Moreover, $(\const A_i \to A_i)$ is the logification of $(\const A_i' \to A_i)$. 
	Thus, by \Cref{lem:summary_log_ordinary_augmentation} \[
		\lctc_{\textstyle \frac{(\const A_i \to A_i)}{(\const R_1 \to R_1)}} \derotimes L^{\circ} \simeq \left( \frac{L^{\circ}}{(\lambda^{-1}\psi_i)} \dlog (\psi_i) \right) 	.\] 
	As for any $j \ge i$ we know that $(v_i, \psi_j, \gamma_j)$ is a well-defined augmentation, by the same argument the tower $(\const R_1 \to R_1) \to (\const A_i \to A_i) \to (\const A_j \to A_j)$
	\[
		0 \to \frac{L^{\circ}}{\lambda^{-1} \psi_i}\dlog \psi_i \into \frac{L^{\circ}}{\lambda^{-1} \psi_j}\dlog \psi_j \to \frac{L^{\circ}}{\kappa^{-1} \psi_j}\dlog \psi_j \to 0
	,\]
	where $\kappa$ is a constant of value $\gamma_j-\gamma_i$. 
	So we see that $\dlog\psi_i = \kappa_{ij} \dlog \psi_j$ where $\kappa_{ij} \in L$ with $v_L(\kappa_{ij})  = \gamma_j - \gamma_i$. 
	More suggestively, we may write the middle module as $\frac{\kappa_{ij}^{-1}\cdot L^{\circ} \dlog \psi_i}{(\lambda^{-1}\psi_i)}$. 
	So taking the colimit over $j$ we see that 
	\[
		\lctc_{\frac{(\const A \to A)}{(\const R_1 \to R_1)}} \derotimes L^{\circ} \simeq \left(\frac{L_{> \lim_j \gamma_j - \gamma_i}}{L_{\ge v_i - v(\lambda)} }\cdot \dlog \psi_i \right)[0] 
	,\] 
	which is a module of content $\lim_{i} \gamma_j - v(\lambda) = \step \frb$, where $\frb$ is the continuous family $(v, \psi_i, \gamma_i)_{i \in I}$. 
	It is not finitely generated, but it is generated by all $\dlog \psi_j$.

	Moreover, by \Cref{prop:lctc_enlargement}, we know that 
	\[
		\lctc_{\textstyle \frac{(M_i \to B_i)}{(\const A_i \to A_i)}} \derotimes L^{\circ} \simeq \theta_i^{-1} L^{\circ} \cdot (\phi\dlog(\phi))\to L^{\circ}\dlog(\phi)[-1, 0]
	.\] 
	Taking the colimits over $i$ of this complex, we see that 
	\begin{align*}
		\lctc_{\frac{(M_2 \to R_2)}{(\const A \to A)}} \derotimes L^{\circ} &\simeq \left(L_{> -\lim_i {v_i(\phi)}} \cdot \phi \dlog \phi \to L^{\circ} \dlog \phi\right)[-1, 0] \\
		&\simeq \begin{cases}
			\displaystyle \left( \frac{L^{\circ}}{L_{> -\lim_i v_i(\phi)+\mu}} \cdot \dlog\phi \right) &\text{ if } \mu < \infty\\
			\displaystyle \left(L_{>-\lim_i v_i(\phi)} \xrightarrow{0} L ^{\circ}\cdot \dlog  \phi \right)[-1, 0] & \text{ if } \mu = \infty
		\end{cases}
	.\end{align*}
	In both cases we see that the content of the zeroth homology is  $\lim_i \mu - v_i(\phi)$. 
	Then the distinguished triangle associated to the tower of log rings $(\const R_1 \to R_1) \to (\const A \to A) \to (M_2 \to R_2)$ shows 
	\[
		\lctc_{\frac{(\const A \to A)}{(\const R_1 \to R_1)}} \derotimes L^{\circ} \to \lctc_{\frac{(M_2 \to R_2)}{(\const R_1 \to R_1)}} \derotimes L^{\circ} \to \lctc_{\frac{(M_2 \to R_2)}{(\const R_1 \to R_1)}} \derotimes L^{\circ} 
	.\] 
	When $\mu < \infty$ then $M_2$ is the total log structure, the outer complexes are concentrated in degree $0$, thus so is the middle one. 
	By the additivity of content we see that \[
		\cont\left(\lOmega_{\textstyle \frac{(\const R_2\to R_2)}{(\const R_1 \to R_1)}} \otimes L^{\circ} \right) = \step \frb + \lim_{i} \mu - v_i(\phi)=  \step \fra
	.\]
	When $\mu = \infty$, the distinguished triangle shows that $L^{\circ}\cdot \dlog \phi$ is a quotient of $\lOmega_{{(\bls R_2\to R_2) / (\const R_1 \to R_1)}} \otimes L^{\circ}$, and thus it has infinite content. 
\end{proof}
\begin{remark}\label{rem:generators_log_differentials}
	From the proof of \Cref{lem:summary_log_limit_augmentation}, we see that
	$\lOmega_{\frac{(M_2\to R_2)}{(\const R_1 \to R_1)}} \otimes L^{\circ}$ is generated by $\dlog \phi$ and $\dlog \psi_i, i \in I$, where $M_2$ is the total or boundary log structure when $\mu < \infty$ or $\mu = \infty$ respectively. 
\end{remark}
\begin{lemma}[$\lctc$ for a continuous family or an almost stable limit augmentation]\label{lem:summary_log_almost_stable}
	Let $v \le w$ be two semi-valuations on  $K[T]$ and  $S$ a multiplicative system of $w$-stable elements such that $(S^{-1}K[T], v)$ is scalable. 
	Let $(K[T], w) \to (L, v_L)$ be a map of semi-valued rings to a valued field $L$.
	Suppose $\fra = (v, (\psi_i, \gamma_i)_{i \in I})$ is a $w$-optimal stable continuous family or $\fra = (v, (\psi_i, \gamma_i)_{i \in I}, \phi, \mu)$ is a $w$-optimal almost stable limit augmentation. 
	Let $S' \subset K[T]$ be the multiplicative set generated by $S$ and all $\psi_i, i \in I$. 
	For each $i \in I$, write $S_i$ for the multiplicative set generated by $S$ and all $\psi_j$ with $j \le i$, and $v_i = [v, \psi_i, \gamma_i]$. 
	We write $R_1:= (S^{-1} K[T], v)^{\circ}$ and $R_2 :=({S'}^{-1}K[T], [\fra])^{\circ}$. 
	\if\thesis1
	\begin{align*}
			&\lctc_{\textstyle \frac{(\const R_2 \to R_2)}{(\const R_1 \to  R_1)}} \derotimes_{R_2} L^{\circ} = \left(\lOmega_{\textstyle \frac{(\const R_2 \to R_2)}{(\const R_1 \to  R_1)}} \otimes_{R_2} L^{\circ}\right)[0], \text{ and}\\ &\cont\left(\lOmega_{\textstyle \frac{(\const R_2 \to R_2)}{(\const R_1 \to  R_1)}} \otimes_{R_2} L^{\circ}\right) = \step \fra
	\end{align*}
	\else 
	\[
		\lctc_{\textstyle \frac{(\const R_2 \to R_2)}{(\const R_1 \to  R_1)}} \derotimes_{R_2} L^{\circ} = \left(\lOmega_{\textstyle \frac{(\const R_2 \to R_2)}{(\const R_1 \to  R_1)}} \otimes_{R_2} L^{\circ}\right)[0], \quad \cont\left(\lOmega_{\textstyle \frac{(\const R_2 \to R_2)}{(\const R_1 \to  R_1)}} \otimes_{R_2} L^{\circ}\right) = \step \fra
	\]
	\fi 
\end{lemma}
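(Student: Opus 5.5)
The plan is to follow the proof of \Cref{lem:summary_almost_stable} in the non-log setting, using the computation for ordinary augmentations in \Cref{lem:summary_log_ordinary_augmentation} as input. By \Cref{lem:summary_almost_stable}(i), $R_2 = \bigcup_{i \in I} A_i$ is an increasing union, where $A_i := (S_i^{-1}K[T], v_i)^{\circ}$. Each $A_i$ is a localization of the enlargement $A_i' := [R_1, \psi_i, \gamma_i]$ at $w$-stable elements of value $0$. These elements map to units of $L^{\circ}$, and $(\const A_i \to A_i)$ is the logification of $(\const A_i' \to A_i)$. So exactly as in the proof of \Cref{lem:summary_log_limit_augmentation}, with $\lambda$ a constant of value $v(\psi_i)$ (independent of $i$ by \Cref{rem:continuous_family}), \Cref{lem:summary_log_ordinary_augmentation} gives
\[
	\lctc_{\textstyle \frac{(\const A_i \to A_i)}{(\const R_1 \to R_1)}} \derotimes L^{\circ} \simeq \left(\frac{L^{\circ}}{(\lambda^{-1}\psi_i)}\cdot \dlog \psi_i\right)[0].
\]
This is a cyclic module of content $\gamma_i - v(\psi_i)$; here $\gamma_i < \infty$, since the $\gamma_i$ are strictly increasing.

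Next, for $j \ge i$ I would use that $(v_i, \psi_j, \gamma_j)$ is an ordinary augmentation. The tower $(\const R_1 \to R_1) \to (\const A_i \to A_i) \to (\const A_j \to A_j)$ and its distinguished triangle then show that the relative complex of $A_j$ over $A_i$, tensored with $L^{\circ}$, is again a cyclic module in degree $0$. Its content is $\gamma_j - \gamma_i$, which is the step of $(v_i, \psi_j, \gamma_j)$, since $v_i(\psi_j) = \gamma_i$. From the long exact sequence, all three complexes are concentrated in degree $0$, and the transition map $\lOmega_{A_i/R_1}\otimes L^{\circ} \to \lOmega_{A_j/R_1}\otimes L^{\circ}$ is injective. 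Hence the system is an increasing union. The log cotangent complex commutes with filtered colimits \cite[Proposition 2.9]{hubnerLogarithmicDifferentialsDiscretely2024}, and the logification of $\varinjlim \const A_i$ is $\const R_2$: every constant of $R_2$ has value in some $\Gamma_{v_j}$ by the proof of \Cref{lem:summary_almost_stable}(ii), and constants are units. Taking the colimit therefore yields
\[
	\lctc_{\textstyle \frac{(\const R_2 \to R_2)}{(\const R_1 \to R_1)}} \derotimes L^{\circ} \simeq \left(\lOmega \otimes L^{\circ}\right)[0].
\]
By \Cref{lem:cont_union}, the content is $\lim_i (\gamma_i - v(\psi_i)) = \step \frb$.

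It remains to compare with $\step \fra$. For a stable continuous family this is exactly the definition. For an almost stable limit augmentation, $\mu = \infty$, and the definition adds $\lim_i(\mu - v_i(\phi)) = \infty$. However, almost stability forces $\gamma_i \to \infty$, so $\step \frb = \infty$ already, and both sides are $\infty$. Note also that in the almost stable case $R_2$ does not involve $\phi$ at all, since $S'$ contains only the $\psi_i$. This is consistent with \Cref{lem:summary_almost_stable}(i).

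The main obstacle I expect is the identification of log structures in the colimit step. One has to check that $(\const A_i \to A_i)$ really is the logification of the enlargement's total log structure $M_{R_1,\psi_i,\gamma_i}$ from \Cref{prop:lctc_enlargement} after localization. One also has to check that the filtered colimit of the log rings $(\const A_i \to A_i)$ logifies to $(\const R_2 \to R_2)$, so that Olsson's invariance under logification \cite[Lemma 8.20]{olssonLogarithmicCotangentComplex2005} applies. I would verify both by computing units: an element of $S_i^{-1}K[T]^{\times}\cap A_i$ is, up to a unit of value $0$, a product of a constant of $R_1$ and powers of the $\psi_j$, $j \le i$.
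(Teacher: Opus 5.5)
Your proposal is correct and is essentially the paper's proof. The paper just says to rerun the argument of \Cref{lem:summary_almost_stable} with the log cotangent complex: write $R_2$ as the increasing union of the localized enlargements $A_i$, use the ordinary-augmentation computation (\Cref{lem:summary_log_ordinary_augmentation}) for $A_i/R_1$ and $A_j/A_i$ to get degree-$0$ concentration and injective transition maps, and pass to the colimit using \Cref{lem:cont_union}. Your extra checks on the log structures (in fact $\const R_2=\bigcup_j \const A_j$, since a constant of $R_2$ is a unit of some $S_j^{-1}K[T]$, where $v_j$ and $[\fra]$ agree) and on $\step\fra=\infty$ in the almost stable case fill in details the paper leaves implicit.
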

\begin{proof}
	The proof is exactly the same as \Cref{lem:summary_almost_stable}, but uses the log cotangent complex instead of the cotangent complex. 
\end{proof}
\begin{lemma}\label{lem:maximal_localisation_log}
	Let $w$ be a semi-valuation on $K[T]$ and let $(K[T], w) \to (L, v_L)$ be a map of semi-valued rings to a valued field $L$. 
	Let $S$ be a multiplicative subset of $K[T]$ such that $A = (S^{-1}K[T], w)$ is scalable and let $(M_B \to B) \to (\const A^{\circ} \to A^{\circ})$ be any morphism of log rings.
	Then 
	\[
		\lctc_{\displaystyle \frac{(\const A^{\circ} \to A^{\circ})}{(M_B \to B)}} \derotimes L^{\circ} \simeq \lctc_{\displaystyle \frac{(\const (K[T]_{\ker w}, w)^{\circ} \to (K[T]_{\ker w}, w)^{\circ})}{(M_B \to B)}} \derotimes L^{\circ}
	.\] 
	Moreover, when $\ker w \ne \{0\} $ then 
	\[
		\lctc_{\displaystyle \frac{(\bls A^{\circ} \to A^{\circ})}{(M_B \to B)}} \derotimes L^{\circ} \simeq \lctc_{\displaystyle \frac{(\bls (K[T]_{\ker w}, w)^{\circ} \to (K[T]_{\ker w}, w)^{\circ})}{(M_B \to B)}} \derotimes L^{\circ}
	.\] 
\end{lemma}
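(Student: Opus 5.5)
The plan is to mimic the proof of \Cref{lem:maximal_localisation}: realise $(K[T]_{\ker w}, w)^{\circ}$ as a localization of $A^{\circ}$ at elements of value $0$, check that the log structures match up to logification, and then invoke the localization/étale invariance of Gabber's log cotangent complex.

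\textbf{Step 1: the localization.} Since $A$ is scalable, \Cref{lem:value_divisible_localisation} gives a multiplicative set $S' \subset A$ with $S'^{-1}A = K[T]_{\ker w}$. We may take $S'$ to consist of elements $a^{-1}s$ of value $0$, where $a$ is a constant. Then
\[
(K[T]_{\ker w}, w)^{\circ} = S'^{-1}A^{\circ} =: C .
\]
Every element of $S'$ maps to a unit of $L^{\circ}$, because it has value $0$ and $w$ factors through $v_L$.

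\textbf{Step 2: matching the log structures.} For the total log structure, \Cref{lem:unit_value_divisor} identifies $\const C$ with $C \cap (K[T]_{\ker w})^{\times}$. Any such element has the form $c/s$ with $s \in S'$ and $c \in \const A^{\circ}$ up to a value-$0$ unit of $C$. Hence $(\const A^{\circ} \to C)$ logifies to $(\const C \to C)$, and by \cite[Lemma 8.20]{olssonLogarithmicCotangentComplex2005} the two give the same log cotangent complex.

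For the boundary log structure when $\ker w = (f) \neq 0$, the same argument applies with $A[f^{-1}]^{\times}$ in place of the units. One has to check that $S'$ can be chosen inside $A[f^{-1}]^{\times} \cap A^{\circ}$, or at least that adjoining $S'$ to $\bls A^{\circ}$ logifies to $\bls C$. This holds because $S' \subset C^{\times}$, and logification adds all units.

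\textbf{Step 3: the triangle.} Apply the distinguished triangle to the tower
\[
(M_B \to B) \to (\const A^{\circ} \to A^{\circ}) \to (\const A^{\circ} \to C).
\]
The last term is the log cotangent complex of a localization with the same monoid. By \eqref{eq:ctc_eq_lctc} it agrees with $\ctc_{C/A^{\circ}}$, and the latter vanishes since localization is flat with $\Omega = 0$. So the first map in the triangle is an isomorphism after base change to $C$. Tensoring with $L^{\circ}$ over $C$, and noting that $- \derotimes_{A^{\circ}} C \derotimes_C L^{\circ} = - \derotimes_{A^{\circ}} L^{\circ}$, gives the claim. The boundary case is identical.

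\textbf{Main obstacle.} The delicate point is Step 2 in the boundary case. There one must verify that the elements inverted, of the form $a^{-1}s$, lie in the boundary monoid after logification, that is, that $s$ becomes invertible after inverting $f$. Every $s \in S'$ is a unit of $K[T]_{\ker w}$, hence not divisible by $f$. In $K[T]$, however, $s$ may have other prime factors; these are units only after localizing, and logification of $(\bls A^{\circ} \to C)$ handles exactly this. So I would argue via logification rather than claiming equality of monoids.
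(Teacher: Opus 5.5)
Your proposal is correct and follows the paper's route. As in \Cref{lem:maximal_localisation}, $C=(K[T]_{\ker w},w)^{\circ}$ is a localization of $A^{\circ}$ at value-zero elements, and the only extra input the paper records is exactly your Step~2: $(\const A^{\circ}\to C)$ and $(\bls A^{\circ}\to C)$ logify to $(\const C\to C)$ and $(\bls C\to C)$. Your Step~3 simply makes the localization step explicit via the triangle and \eqref{eq:ctc_eq_lctc}.

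One small correction in the boundary case: the logification there does not follow from $S'\subset C^{\times}$ alone. It rests on writing each $x\in\bls C=C\setminus\{0\}$ as $(f^{k}c)\cdot u$, where $c$ is a constant of $A$ (using scalability), $f^{k}c\in\bls A^{\circ}$, and $u\in C^{\times}$.
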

\begin{proof}
	The proof is exactly the same as the proof of \cref{lem:maximal_localisation}. 
	The only thing to note is that $(\const (K[T]_{\ker w}, w)^{\circ} \to (K[T]_{\ker w}, w)^{\circ})$ and  $(\bls (K[T]_{\ker w}, w)^{\circ} \to (K[T]_{\ker w}, w)^{\circ})$ are the logifications of $(\const A^{\circ} \to (K[T]_{\ker w}, w)^{\circ})$ and $(\bls A^{\circ} \to (K[T]_{\ker w}, w)^{\circ})$ respectively. 
\end{proof}

\begin{theorem}\label{thm:computation_main_log}
	Let $w$ be any semi-valuation on $K[T]$ extending  $v_K$ and let $(\fra_i)_{i = 1}^{n}$ be a $w$-optimal augmentation chain on $v_0$ approximating $w$. 
	Consider $(K[T], w) \to (L, v_L)$ a map of semi-valued rings to a valued field $L$.
	Let $S_0$ be a multiplicative set of $w$-stable elements such that $(S_0^{-1}K[T], v_0)$ is scalable. 
	Further, let $S_i$ be the multiplicative set generated by all key polynomials $\phi_j, \psi_{j, k}$ for all $j \le i, k \in J_{j}$, and let $S$ be the multiplicative set generated by all $S_i$.
	Let $R_i = (S_i^{-1}K[T], v_i)^{\circ}$ and $R_w = (S^{-1}K[T], w)^{\circ}$
	Then for each $i \in [0, n]\cap \Z$ we have 
	\begin{enumerate}[(i)]
		\item $\displaystyle \lctc_{\textstyle \frac{(\const R_i \to R_i)}{(\const R_0 \to R_0)}} \derotimes_{R_i} L^{\circ} = \lOmega_{\textstyle \frac{(\const R_i \to R_i)}{(\const R_0 \to R_0)}}\otimes_{R_i} L^{\circ}[0] $,
		\item $\displaystyle \cont \left(\lOmega_{\textstyle \frac{(\const R_i \to R_i)}{(\const R_0 \to R_0)}}\otimes_{R_i} L^{\circ}\right) = \step (\fra_j)_{j = 1}^{i}$.
	\end{enumerate}
	We write $A_w := (K[T]_{\ker w}, w)^{\circ} $, then
	\if\thesis1
	\begin{enumerate}
		\item[(iii)] \resizebox{.95\linewidth}{!}{$\displaystyle 
			\begin{aligned}\lctc_{\textstyle \frac{(\const R_w \to R_w)}{(\const R_0 \to R_0)}}\derotimes_{R_w} L^{\circ} \simeq \lctc_{\textstyle \frac{(\const A_w \to A_w} {(\const R_0 \to R_0)}} \derotimes L^{\circ} 
				\simeq  \left(\lOmega_{\textstyle \frac{(\const R_w \to R_w)}{(\const R_0 \to R_0)}}\otimes_{R_w} L^{\circ}\right)[0] 
			\end{aligned}$},
		\item[(iv)]  \resizebox{.95\linewidth}{!}{$\displaystyle \cont \left(\lOmega_{\textstyle \frac{(\const R_w \to R_w)}{(\const R_0 \to R_0)}}\otimes_{R_w} L^{\circ}\right) 
			=  \cont\left(\lOmega_{\textstyle \frac{(\const A_w \to A_w)}{(\const R_0 \to R_0)}} \derotimes L^{\circ}\right)
			= \step (\fra_j)_{j = 1}^{n} $}.
	\end{enumerate}
	\else
		\begin{enumerate}
		\item[(iii)] {$\displaystyle 
			\begin{aligned}\lctc_{\textstyle \frac{(\const R_w \to R_w)}{(\const R_0 \to R_0)}}\derotimes_{R_w} L^{\circ} \simeq \lctc_{\textstyle \frac{(\const A_w \to A_w} {(\const R_0 \to R_0)}} \derotimes L^{\circ} 
				\simeq  \left(\lOmega_{\textstyle \frac{(\const R_w \to R_w)}{(\const R_0 \to R_0)}}\otimes_{R_w} L^{\circ}\right)[0] 
			\end{aligned}$},
		\item[(iv)]  {$\displaystyle \cont \left(\lOmega_{\textstyle \frac{(\const R_w \to R_w)}{(\const R_0 \to R_0)}}\otimes_{R_w} L^{\circ}\right) 
			=  \cont\left(\lOmega_{\textstyle \frac{(\const A_w \to A_w)}{(\const R_0 \to R_0)}} \derotimes L^{\circ}\right)
			= \step (\fra_j)_{j = 1}^{n} $}.
	\end{enumerate}
	\fi 
	Moreover, when $\ker w \ne \{0\} $, then 
	\begin{itemize}
		\item[(v)]  $\displaystyle \cont \left(\lOmega_{\textstyle \frac{(\bls R_w \to R_w)}{(\const R_0 \to R_0)}}\otimes_{R_w} L^{\circ}\right) = \cont \left(\lOmega_{\textstyle \frac{(\bls A_w \to A_w)}{(\const R_0 \to R_0)}}\otimes_{R_w} L^{\circ}\right) = \infty$.
	\end{itemize}
\end{theorem}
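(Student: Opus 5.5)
We follow the proof of \Cref{thm:computation_main} step by step, replacing the cotangent complex with Gabber's log cotangent complex and each input lemma with its logarithmic analogue. As in that proof, we first reduce to the case where no augmentation in the chain is a non-almost-stable limit augmentation of infinite log-radius. If such an augmentation occurs, it must be the last one, $\fra_n$. We split it into a limit augmentation $\fra_n'$ with key polynomial $\phi_n' = \phi_n + \alpha$ and finite log-radius $\mu_n'$, followed by an ordinary augmentation $\fra_{n+1}' = ([\fra_n'], \phi_n, \infty)$. For $i < n$ the statements (i) and (ii) are unchanged by this splitting. For $i = n$, statements (iii)--(v) only depend on $w$ and $R_w$ up to the localization handled by \Cref{lem:maximal_localisation_log}. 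The steps also agree, since $\step \fra_n = \step \fra_n' + \step \fra_{n+1}'$. This last identity follows by comparing definitions, using $w(\phi_n) = \infty$ and $[\fra_n'](\phi_n) = \mu_n'$.

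For (i) and (ii) we induct on $i$; the base case $i = 0$ is trivial. For the inductive step we use the tower of log rings
\[
(\const R_0 \to R_0) \to (\const R_i \to R_i) \to (\const R_{i+1} \to R_{i+1}),
\]
apply the distinguished triangle, and tensor with $L^{\circ}$. The left term is concentrated in degree $0$ by the induction hypothesis. The right term is concentrated in degree $0$ and has content $\step \fra_{i+1}$; this is \Cref{lem:summary_log_ordinary_augmentation} for ordinary augmentations, \Cref{lem:summary_log_limit_augmentation} for limit augmentations of finite radius, and \Cref{lem:summary_log_almost_stable} for stable families and almost stable limit augmentations. Here we use that $R_{i+1}$ is scalable, which comes from \Cref{thm:computation_main}(iii), so that these lemmas apply with $S = S_i$.

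The one case to watch is an ordinary augmentation with $\mu = \infty$ equipped with the total log structure: there the right term is $(L\cdot \dd\phi / L^{\circ}\dd(\theta^{-1}\phi))[0]$, of infinite content, and $\step = \infty$, so it is consistent. Such a step can only be the last one, since after it the valuation has nontrivial kernel. The long exact sequence then shows that the middle term is concentrated in degree $0$. It also shows that $\lOmega_{R_i/R_0} \otimes L^{\circ} \to \lOmega_{R_{i+1}/R_0} \otimes L^{\circ}$ is injective, and \Cref{lem:cont_additive} gives the additivity of content.

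For (iii) and (iv): if $n$ is finite, then $R_w = R_n$, and the middle isomorphism comes from \Cref{lem:maximal_localisation_log}. If $n = \infty$, then $R_w = \bigcup_i R_i$ and $(\const R_w \to R_w)$ is the filtered colimit of the $(\const R_i \to R_i)$. The log cotangent complex commutes with filtered colimits \cite[Proposition 2.9]{hubnerLogarithmicDifferentialsDiscretely2024}, so the complex is again concentrated in degree $0$. By the injectivity above, the modules form an increasing union, and \Cref{lem:cont_union} gives $\cont = \sum_j \step \fra_j$.

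For (v), assume $\ker w \neq 0$. Then the last augmentation has $\mu = \infty$: it is either an ordinary augmentation, or an almost stable or infinite-radius limit augmentation (after the reduction above it is ordinary, or an almost stable limit augmentation). In the ordinary case, \Cref{lem:summary_log_ordinary_augmentation} shows that $\lOmega$ with the boundary log structure has a free quotient $L^{\circ}\dlog(\theta^{-1}\phi)$. In the limit case, \Cref{lem:summary_log_limit_augmentation} gives the analogous statement. The right-exact sequence of $\lOmega$ for the tower
\[
(\const R_0 \to R_0) \to (\const R_{n-1} \to R_{n-1}) \to (\bls R_w \to R_w)
\]
shows that $\lOmega_{(\bls R_w \to R_w)/(\const R_0 \to R_0)} \otimes L^{\circ}$ surjects onto a module of infinite content, so its own content is infinite. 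Passing to $A_w$ uses the second half of \Cref{lem:maximal_localisation_log}. The main obstacle is the bookkeeping around infinite-radius steps: checking that the splitting trick preserves both the log structures and the total step, and that the boundary and total log structures are used in the right places.
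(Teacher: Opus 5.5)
Your proposal is correct and matches the paper's proof. Items (i)--(iv) are obtained by rerunning the proof of \Cref{thm:computation_main} with the log cotangent complex and the logarithmic summary lemmas. Item (v) follows because the last step $\lOmega_{(\bls R_n \to R_n)/(\const R_{n-1}\to R_{n-1})}\otimes L^{\circ}$ has infinite content and is a quotient of the full module via the fundamental exact sequence. One small slip: at the step $i \to i+1$ the lemmas require scalability of the base ring $R_i$, not $R_{i+1}$, though this too is supplied by \Cref{thm:computation_main}(iii).
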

\begin{proof}
	The proof of (i)-(iv) is the same as \Cref{thm:computation_main} with the log cotangent complex instead of the cotangent complex. 

	For (v), note that - as argued at the start of the proof of \Cref{thm:computation_main} - we may assume that $n$ is finite and that $\fra_n$ is an ordinary or almost stable limit augmentation with log-radius equal to $\infty$.
	Then we know that \[
		\cont\left(\lOmega_{\frac{(\bls R_n \to R_n)}{(\const R_{n-1} \to R_{n-1})}} \otimes L^{\circ}\right) = \infty
	,\] by \Cref{lem:summary_log_ordinary_augmentation} and \Cref{lem:summary_log_limit_augmentation}. 
	By the fundamental exact sequence, this is a quotient of \(\cont\left(\lOmega_{\frac{(\bls R_n \to R_n)}{(\const R_{0} \to R_{0})}}\right) \) which also must have infinite content. 
\end{proof}

\begin{remark}\label{rem:total_generators_log_differentials}
	Continue with the notation of \Cref{thm:computation_main_log}. 
	From \Cref{lem:summary_log_ordinary_augmentation}, \Cref{rem:generators_log_differentials} and the proof of \Cref{thm:computation_main_log} we see that $\lOmega_{(\const R_i \to R_i) / (\const R_0 \to R_0)}\otimes L^{\circ}$ is generated by all $\dlog \phi$  where $\phi$ is a key polynomial in $(\fra_j)_{j = 1}^{i}$ including the key polynomials in the continuous families. 
	Likewise, when $\ker w = \{0\}$ we see that $\lOmega_{(\const K(T)^{\circ} \to K(T)^{\circ}) / (\const R_0 \to R_0)}$ is generated by all key polynomials in $(\fra_i)_{i = 1}^{n}$.

	Recall that by  $\dlog \phi$ we actually mean $\dlog(\kappa\cdot \phi)$ with $\kappa \in K$ such that $w(\kappa\cdot \phi) \ge 0$, which is independent of the choice of $\kappa$.
\end{remark}

\section{Extensions of valued fields, (log) differents, and discrepancies}\label{sec:extensions_of_valued_fields}
\todo[inline]{Write some more remarks. I'm afraid readers might get lost.}
\subsection{Finite extensions and their (log) differents} \label{sec:the_different_and_log_different_of_finite_field_extensions}
\begin{para}
Let $(L, v_L) / (K, v_K)$ be a finite extension of valued fields.
The \emph{different} of $L / K$ is defined as $\diff_{L / K} := \cont(\Omega_{L^{\circ} / K^{\circ}})$ and the \emph{log different} is defined likewise as $\ldiff_{L/ K}:= \cont(\lOmega_{L^{\circ} /K^{\circ}})$, where both $K^{\circ}$ and $L^{\circ}$ are equipped with their respective total log structures. 
	The computations from the previous section allow us to give formulas for the (log) differents of finite extensions without much extra work. 

Suppose that $L = K[T] / (f)$ where $f$ is some irreducible polynomial. We may assume $f$ to be monic and have coefficients in $K^{\circ}$, by substituting $f(T)$ for $\alpha^{\deg f}f(\alpha^{-1} T)$ with $v(\alpha)$ sufficiently large. 
Then the valuation  $v_L$ induces a semi-valuation $w$ on $K[T]$ with kernel  $(f)$. 

Let $v_0$ be the Gauss valuation with center $0$ and log-radius $0$. 
Then we may approximate $w$ with a $w$-optimal augmentation chain $(\fra_i)_{i = 1}^{n}$ on $v_0$, see \Cref{cor:optimal_simple_chain}.
Like in the beginning of the proof of \Cref{thm:computation_main}, we may assume that $\fra_n = ([\fra_{n-1}], f, \infty)$ is an ordinary augmentation or an almost stable limit augmentation $\fra_n = ([\fra_{n-1}], (\psi_i, \gamma_i)_{i \in I}, f, \infty)$.
\end{para}
\begin{definition}
	Let $\fra = (v, (\psi_i, \gamma_i)_{i \in I}, \phi, \infty) $ be an almost stable limit augmentation.
	Write $v(\psi) = v(\psi_i)$, which we recall is independent of $i$, see \Cref{rem:continuous_family}.
	We define 
	\[
	p_{\fra}: [v(\psi), \infty] \to \mathcal{V} (K[T]): \lambda \mapsto 
		\begin{cases}
			[v, \psi_j, \lambda] &\text{if } \lambda \le \gamma_j  \\
			[\fra] & \text{if } \lambda = \infty
		\end{cases}
	,\] 
	where $\mathcal{V} (K[T])$ is the set of all semi-valuations on $K[T]$ that extend $v_K$.
	Note that $p_{\fra}$ is well-defined by \cite[Lemma~2.8]{nartMacLaneVaquieChains2021}. 
	Moreover, for any $f \in K[T]$, we define \[
		|f|_{\fra}: [v(\psi), \infty] \to \R\infty, \lambda \mapsto (p_{\fra}(\lambda))(f)
	.\]
\end{definition}
\begin{para}\label{par:slope_almost_stable_limit}
	Let $\fra$ be an almost stable limit augmentation and let $f \in K[T]$ be any polynomial. 
	Take any $b > v(\psi)$ and  $i \in I $ such that $\gamma_i \ge  b$. 
	Let  $f = \sum_{k= 0}^{m} a_k \psi_i^{k}$ be the $\psi_i$ expansion of $f$. 
	Then 
	\[
		|f|_{\fra} |_{[v(\psi), b)]}: \lambda \mapsto \min_{k = 0}^{m} \{v(a_k) + k\cdot \lambda\} 
	,\] 

	from which we easily see that $|f|_{\fra} $ is a non-decreasing piecewise $\Z$-linear function with non-increasing slopes. 
	In particular, the slope of $|f|_{\fra}$ stabilizes to a minimal value. 
\end{para}

\begin{proposition}\label{prop:ctc_finite_ext}
	With notation as above, if $L / K$ is separable (i.e. $f' \ne 0$) then
	\[
		\ctc_{\textstyle \frac{L^{\circ}}{ K^{\circ}}} \simeq \left( \Omega_{\textstyle \frac{L^{\circ}}{ K^{\circ}}} \right)[0], \quad  \text{and} \quad
		\lctc_{\textstyle \frac{(\const L^{\circ} \to L^{\circ})}{(\const K^{\circ} \to K^{\circ})}} \simeq \left( \lOmega_{\textstyle \frac{(L^{\circ} \to L^{\circ})}{(K^{\circ} \to K^{\circ})}} \right)[0]
	.\] 
	If $L / K$ is inseparable, then 
	\begin{align*}
		H_i\left(\ctc_{\textstyle \frac{L^{\circ}}{ K^{\circ}}}\right) =   H_i\left(\lctc_{\textstyle \frac{(\const L^{\circ} \to L^{\circ})}{(\const K^{\circ} \to K^{\circ})}}   \right)  &= 0, \quad i \ge 2 
	,\end{align*} 
	and the first homology is torsion free of rank $1$. 
\end{proposition}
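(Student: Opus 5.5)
We reduce to \Cref{thm:computation_main} and \Cref{thm:computation_main_log}, applied to the semi-valuation $w$ on $K[T]$ with $\ker w = (f)$ induced by $v_L$, together with the chain $(\fra_i)_{i=1}^{n}$ on $v_0$ fixed above, whose last augmentation has log-radius $\infty$ and key polynomial $f$. We take $L$ itself as the target field. Since $K[T]_{\ker w}/(f)\cdot K[T]_{\ker w} = L$, we have $A_w/(f) \cdot\ldots$, and more precisely $L^{\circ} = (K[T]_{\ker w}, w)^{\circ}$ modulo its kernel $\ker w \cap A_w$. We work with the tower $K^{\circ}\to R_0 = K^{\circ}[T] \to R_{n-1}\to L^{\circ}$.

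First, $\ctc_{R_0/K^{\circ}} = \Omega_{R_0/K^{\circ}}[0]$ is free of rank one on $\dd T$, since $R_0$ is a polynomial ring. By \Cref{thm:computation_main}(i) applied to $i = n-1$, the complex $\ctc_{R_{n-1}/R_0}\derotimes L^{\circ}$ is concentrated in degree $0$, with finite content. The key step is the last stage. Arguing as in the proof of \Cref{thm:computation_main}, we split $\fra_n$ as an augmentation of finite radius followed by an ordinary augmentation $([\fra'_n], \phi, \infty)$. We then pass from the enlargement $[R',\phi,\infty]$ to $L^{\circ}$ by killing $\theta^{-1}\phi$. Since $\theta^{-1}\phi$ is a nonzerodivisor in the domain $[R',\phi,\infty]$, this is a complete intersection in one element. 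Hence $\ctc_{L^{\circ}/[R',\phi,\infty]} \simeq (\theta^{-1}\phi)/(\theta^{-1}\phi)^2[1]$, which is free of rank one over $L^{\circ}$ and sits in degree $1$. In the almost stable limit case we instead use \Cref{lem:summary_almost_stable} together with a colimit over the $\psi_i$, writing $L^{\circ}$ as the colimit of the quotients. For this we use that the cotangent complex commutes with filtered colimits \cite[eq.~1.2.3.4]{illusieComplexeCotangentDeformations2009}.

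Assembling the transitivity triangles, we get $H_i(\ctc_{L^{\circ}/K^{\circ}}) = 0$ for $i\ge 2$. We also get an exact sequence
\[0\to H_1(\ctc_{L^{\circ}/K^{\circ}}) \to L^{\circ}\cdot f \xrightarrow{\ \dd\ } M \to \Omega_{L^{\circ}/K^{\circ}}\to 0,\]
where $M$ is an extension of a torsion module of finite content by $L^{\circ}\dd T$, so that $M\otimes L \cong L\,\dd T$. The connecting map sends $f$ to $f'(T)\,\dd T$ up to a unit and torsion corrections. If $f'\neq 0$, i.e.\ $f'(\bar T)\neq 0$ in $L$, this map is injective because the source is torsion free and its generic fibre is nonzero; hence $H_1 = 0$. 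If $f' = 0$, the map is zero after $\otimes L$. Since the source is torsion free, $H_1$ is then a nonzero submodule of $L^{\circ}$, so it is torsion free of rank $1$.

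For the log statement the structure is identical. We use \Cref{thm:computation_main_log}(i) for $R_{n-1}$ over $R_0$, and \Cref{lem:lctc_add_to_the_log_structure} together with \Cref{prop:lctc_enlargement} for the last step. The log analogue of the complete-intersection quotient is obtained from the homotopy-pushout argument of \Cref{prop:lctc_enlargement}, now with an extra generator killed. The base step $(\const K^{\circ}\to K^{\circ}) \to (\const R_0 \to R_0)$ has the same value group by \cref{par:log_non_log_same_val_grp}, so it agrees with the non-log computation. The same rank-one free degree-$1$ term appears, and its map to degree $0$ is generically multiplication by $f'$, so the same dichotomy gives the claim.

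The main obstacle is the last stage: making the degree-$1$ contribution precise after passing to the quotient by the kernel. In particular we must show that the transition maps in the colimit (limit augmentation case) remain injective on $H_1$, so that $H_1$ really is the colimit of rank-one free modules. We plan to handle this with \Cref{lem:cont_colim}-type arguments, exactly as in \Cref{lem:enlargement_differentials} case ($\daleth$).
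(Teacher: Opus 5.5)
Your overall strategy matches the paper's. You use the tower through $R_0=K^{\circ}[T]$, the degree-$0$ concentration from \Cref{thm:computation_main} and \Cref{thm:computation_main_log}, and a rank-one degree-$1$ term coming from the kernel of the surjection onto $L^{\circ}$. You then read off the $f'\neq 0$ versus $f'=0$ dichotomy after $\otimes L$.

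However, the step you call the key one is false. By \Cref{lem:inductive_model_unit_ball}, $[R',\phi,\infty]$ is the unit ball of a semi-valuation in which $\phi$ has value $\infty$. So it contains $\alpha^{-1}\phi$ for every constant $\alpha$, of arbitrarily large valuation. But $\alpha^{-1}\phi$ is a multiple of $\theta^{-1}\phi$ only when $v(\alpha)\le v(\theta)$. Hence the kernel of $[R',\phi,\infty]\to L^{\circ}$ is the non-finitely generated ideal $\bigcup_i \theta_i^{-1}\phi\cdot[R',\phi,\infty]$ with $v(\theta_i)\to\infty$, and $[R',\phi,\infty]/(\theta^{-1}\phi)\neq L^{\circ}$.

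The relative cotangent complex is therefore the colimit of one-relator complete intersections. Its $H_1$ is $L\cdot\phi$, a divisible module, not a free $L^{\circ}$-module of rank one. This is exactly how the paper proceeds, with $R_n$ in the role of your enlargement, and it arrives at $0\to H_1(\ctc_{L^{\circ}/K^{\circ}})\to L\cdot f\to \Omega_{R_n/K^{\circ}}\otimes L^{\circ}\to \Omega_{L^{\circ}/K^{\circ}}\to 0$. Your description of $M$ is inconsistent for the same reason. If you pass through the enlargement, $M$ contains $\Omega_{[R',\phi,\infty]/R'}\otimes L^{\circ}$, which has infinite content: the step of an augmentation of log-radius $\infty$ is $\infty$ (\Cref{lem:summary_ordinary_augmentation}).

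The repair is easy, in either of two ways:
\begin{enumerate}
\item Take the colimit over the $\theta_i$, as the paper does.
\item Drop the enlargement and quotient the finite-radius ring $R'$ (in the ordinary case just $R_{n-1}$) by $\theta^{-1}f$, where $\theta$ is a constant of valuation $v'(f)$ for the semi-valuation $v'$ defining $R'$. There the kernel really is principal; this is the identification $R'/(\theta^{-1}f)=L^{\circ}$ used in the proof of \Cref{prop:comp_log_diff}. Your exact sequence, with source $L^{\circ}\cdot\theta^{-1}f$ and finite-content $M$, then becomes correct.
\end{enumerate}

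In either version, the obstacle you single out disappears. All the degree-one groups are submodules of $L\cdot f$, compatible with the inclusions $\theta_i^{-1}fL^{\circ}\subseteq\theta_j^{-1}fL^{\circ}$. So injectivity on $H_1$ is automatic, and no content argument in the style of \Cref{lem:cont_colim} is needed.

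Working with $R_n$ directly, as the paper does, also removes the need to treat the almost stable limit case separately. \Cref{thm:computation_main} already gives $\ctc_{R_n/K^{\circ}}\derotimes L^{\circ}$ in degree $0$.

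For the log statement, the quotient step needs no homotopy-pushout argument. Give $L^{\circ}$ the pre-log structure $\const R_n\to L^{\circ}$, which logifies to the total one. Then the log cotangent complex of the quotient coincides with the non-log one by \eqref{eq:ctc_eq_lctc}.
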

\begin{proof}
	Continue with the notation of \Cref{thm:computation_main}. 
	By our choice of $v_0$ we see that $R_0 = K^{\circ}[T]$.
	Note that  $\const R_0 = \const K^{\circ}$. 
	Then \[
		\ctc_{\frac{R_0}{K^{0}}} \simeq
		\lctc_{\frac{(\const R_0 \to R_0)}{(\const K^{0} \to K^{\circ})}} \simeq \left( R_0\cdot \dd T \right) [0]
	.\] 
	Then by the distinguished triangle associated to the tower $K \to R_0 \to R_n$, equipped with the total log structure when considered as log rings, thus we see that 
	\[
		\ctc_{\frac{R_n}{K^{\circ}}}\derotimes L^{\circ} , \quad \lctc_{\frac{(\const R_n \to R_n)}{(\const K^{\circ} \to K^{0})}} \derotimes L^{\circ}
	\] 
	are concentrated in degree $0$ and the zeroth homology has rank $\ge 1$, recall \Cref{thm:computation_main} and \Cref{thm:computation_main_log}. 
	Note that $R_n \to L^{\circ}$ is surjective with kernel the ideal $\mathcal{I} = (\theta^{-1} f \mid \theta \in \const R_w)$. 
	Let $(\theta_i)$ be a sequence of constants with $v_i(\theta_i)$ increasing and going to  $\infty$, let $\mathcal{I}_i = \theta_i^{-1}f  \cdot R_n$. 
	Then this is an increasing sequence of ideals generated by regular elements $\theta_i^{-1}f$. 
	Thus,
	\[
		\ctc_{\textstyle \frac{R_n / \mathcal{I}_i }{R_n}} \simeq \lctc_{\textstyle \frac{(\const R_n \to R_n / \mathcal{I}_i )}{(\const R_n \to R_n)}}  \simeq \left(\frac{R_n}{(\theta_i^{-1} f)}\theta_i^{-1} f\right)[-1]
	.\] 
	Taking the colimit over $i$ we see that \[
		\ctc_{\textstyle \frac{L^{\circ}}{R_n}} \simeq \lctc_{\textstyle \frac{(\const L^{\circ} \to L^{\circ})}{(\const R_n \to R_n)}} \simeq \left(L\cdot f\right)[-1]
	.\]
	Take the tower of rings $K_0 \to R_n \to L^{\circ}$ and the associated long exact sequence 
	\begin{equation}\label{eq:finite_extension_exact_sequence}
		0 \to H^{1}(\ctc_{\frac{L^{\circ}}{K^{\circ}}}) \to L\cdot f \xrightarrow{\dd}  \Omega_{\frac{R_n}{K^{\circ}}} \otimes L^{\circ} \to \Omega_{L^{\circ} / K^{\circ}} \to 0
	.\end{equation} 
	If $L / K$ is separable, then $f' \ne 0$ and $\dd f$ does not vanish in $\Omega_{L / K}$. 
	Then $L \cdot f$ is not torsion, thus $\dd$ is injective and thus $H_{1}( \ctc_{L^{\circ} / K^{0}})$ vanishes. 

	If $L / K$ is inseparable, then $f' = 0$ and $\dd f$ does vanish in $\Omega_{L / K}$. 
	So the morphism $\dd$ is not injective and therefore $H_{1}(\ctc_{L^{\circ} / K^{\circ}})$ is a non-trivial submodule of $L$, whence it is of the prescribed form. 
	The exact same argument holds for the log cotangent complex.
\end{proof}

\begin{theorem}\label{prop:comp_log_diff}
	Continue with the notation as above. 
	When $\fra_n$ is an ordinary augmentation, the (log)different equals, 
	\begin{equation}\label{eq:comp_log_diff_1}
	\begin{aligned}
		\diff_{L / K} &= \step (\fra_i)_{i =1}^{n-1}- v_{n-1}(f) + v_{n-1}(f') + \left( \inf \Gamma_{K, > 0} - \inf \Gamma_{L, > 0} \right),  \\
		\ldiff_{L / K} &= \step (\fra_i)_{i = 1}^{n - 1} - v_{n-1}(f) + v_{n-1}(f'). 
	\end{aligned}
	\end{equation}
	When $\fra_n$ is an almost stable limit augmentation on a continuous family $\frb = (\frb_i)_{i \in I}$, then 
	\begin{equation}\label{eq:comp_log_diff_2}
	\begin{aligned}
		\diff_{L / K} &= \lim_{i \in I } \step (\fra_j)_{j =1}^{i-1} + \step \frb_i - [\frb_i](f) + [\frb_i](f') + \left( \inf \Gamma_{K, > 0} - \inf \Gamma_{L, > 0} \right),  \\
		\ldiff_{L / K} &= \lim_{i \in I} \step (\fra_j)_{j = 1}^{i - 1} + \step \frb_i- [\frb_i](f) + [\frb_i](f'). 
	\end{aligned}
	\end{equation}
	Moreover, these limits stabilize.
\end{theorem}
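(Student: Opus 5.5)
We read the different off the exact sequence \eqref{eq:finite_extension_exact_sequence} from the proof of \Cref{prop:ctc_finite_ext}, together with its log analogue. In the separable case it is a short exact sequence
\[
0 \to L\cdot f \xrightarrow{\dd} \Omega_{R_n/K^{\circ}}\otimes L^{\circ} \to \Omega_{L^{\circ}/K^{\circ}} \to 0 .
\]
The cleaner way to organise it is to stop the chain one step early. We use the tower $K^{\circ}\to R_{n-1}\to L^{\circ}$ instead of $K^{\circ}\to R_n\to L^{\circ}$, since $R_n\to L^{\circ}$ only differs from $R_{n-1}\to L^{\circ}$ by the last augmentation with log-radius $\infty$.

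The distinguished triangle for $K^{\circ}\to R_0\to R_{n-1}$ combined with \Cref{thm:computation_main} shows that $\Omega_{R_{n-1}/K^{\circ}}\otimes L^{\circ}$ sits in an extension
\[
0\to L^{\circ}\dd T\to \Omega_{R_{n-1}/K^{\circ}}\otimes L^{\circ}\to \Omega_{R_{n-1}/R_0}\otimes L^{\circ}\to 0 .
\]
Since $R_0=K^{\circ}[T]$ and $\Omega_{K^{\circ}[T]/K^{\circ}}=K^{\circ}[T]\,\dd T$, this module is torsion-free of rank one up to a torsion part of content $\step(\fra_i)_{i=1}^{n-1}+\inf\Gamma_{K,>0}-\inf\Gamma_{v_{n-1},>0}$. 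In the log case we use \Cref{thm:computation_main_log} and get content $\step(\fra_i)_{i=1}^{n-1}$ with no fudge term.

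Next we compute the last step $R_{n-1}\to L^{\circ}$. Here $L^{\circ}$ is the colimit of the quotients $[R_{n-1},f,\infty]$ by the ideals $(\theta^{-1}f)$, exactly as in the proof of \Cref{prop:ctc_finite_ext}. This gives $\ctc_{L^{\circ}/R_{n-1}}\simeq (L^{\circ}_{>-v_{n-1}(f)}\cdot f)[-1]$, or, after rescaling by constants, the part of $L\cdot f$ mapping into the unit ball. The relevant transition map is then
\[
\dd: \theta^{-1}f\,L^{\circ}\longrightarrow \Omega_{R_{n-1}/K^{\circ}}\otimes L^{\circ}, \qquad \theta^{-1}f\mapsto \theta^{-1}f'\,\dd T,
\]
with $v(\theta)$ approaching $v_{n-1}(f)$. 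Additivity of content (\Cref{lem:cont_additive}), applied to the resulting exact sequence, gives
\[
\cont\Omega_{L^{\circ}/K^{\circ}} = \cont(\text{torsion of }\Omega_{R_{n-1}}\otimes L^{\circ}) + \bigl(w(f')\ \text{measured in } \Omega_{R_{n-1}}\text{ relative to }\dd T\bigr) - v_{n-1}(f).
\]

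The main obstacle is identifying the position of $\dd f=f'\dd T$ inside the rank-one torsion-free quotient of $\Omega_{R_{n-1}/K^{\circ}}\otimes L^{\circ}$, that is, showing that it contributes exactly $v_{n-1}(f')$. My first attempt will use the explicit generators: in the log case, $\dlog\phi_j$ generate (\Cref{rem:total_generators_log_differentials}). I would expand $f'$ and compare $v_{n-1}(f')$ with $w(f')$; since $\deg f'<\deg f$ and $f$ is a key polynomial over $v_{n-1}$, minimality gives $v_{n-1}(f')=w(f')$. The torsion-free quotient is then generated by $\dd T$ scaled so that its norm is measured by $v_{n-1}$. I expect the fudge term to change from $\inf\Gamma_{v_{n-1},>0}$ to $\inf\Gamma_{L,>0}$ because $\Gamma_L=\Gamma_{v_{n-1}}$ when $\mu_n=\infty$.

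For the almost stable limit case we apply the same argument to the tower through $(S_i^{-1}K[T],[\frb_i])^{\circ}$ for each $i$, with $[\frb_i]$ in place of $v_{n-1}$. We then pass to the colimit using \Cref{lem:cont_colim}. Stabilisation follows from \Cref{par:slope_almost_stable_limit}: the increments of $\step\frb_i$ are cancelled by the change in $[\frb_i](f)$ once the slope of $|f|_{\fra}$ reaches its minimum $1$ (as $f$ is the limit key polynomial, its slope is $1$ while $\deg f'<\deg f$ has slope $0$ eventually).
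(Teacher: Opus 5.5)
Your route for the ordinary case works. It is more direct than the paper's, which assembles the answer from $R_0/(f)$, $R'/(f)$ and $R'/(\theta^{-1}f)$ so that $v_{n-1}(f')$ appears directly as $\cont(L^{\circ}\dd T/f'L^{\circ}\dd T)$. Your single tower $K^{\circ}\to R_{n-1}\to L^{\circ}$ is fine, but three points need fixing.

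First, since $R_{n-1}$ is scalable you can take $v_{n-1}(\theta)=v_{n-1}(f)$ exactly. Then $L^{\circ}=R_{n-1}/(\theta^{-1}f)$ and $\ctc_{L^{\circ}/R_{n-1}}\simeq (L^{\circ}\cdot\theta^{-1}f)[-1]$. No limit over $\theta$ and no strict inequality are involved; with $L_{>-v_{n-1}(f)}$ you would be off by a uniformizer when $\Gamma_L$ is discrete.

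Second, what you call the main obstacle is settled by an elementary fact. If $x,y$ are non-torsion elements of a rank-one $L^{\circ}$-module $M$ with $\cont(M/L^{\circ}x)<\infty$, then $\cont(M/L^{\circ}y)=\cont(M/L^{\circ}x)+v_L(y/x)$, with the ratio taken in $M\otimes L$. Apply this to $M=\Omega_{R_{n-1}/K^{\circ}}\otimes L^{\circ}$, $x=\dd T$ and $y=\dd(\theta^{-1}f)$, whose ratio in $L\cdot\dd T$ is $\theta^{-1}f'$.

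Third, two of your identifications are wrong. The quantity $\step(\fra_i)_{i=1}^{n-1}+\inf\Gamma_{K,>0}-\inf\Gamma_{v_{n-1},>0}$ is $\cont(M/L^{\circ}\dd T)$, not the content of the torsion of $M$. Also, $\dd(\theta^{-1}f)$ is not literally $\theta^{-1}f'\dd T$ in $M$: $\theta$ is a unit of $S^{-1}K[T]$, and $\theta^{-1}f'$ need not be integral. For $f=T^2-p$ over $\Q_p$ with $p$ odd, the quotient has content $1$, the torsion has content $1/2$, and $v_L(\theta^{-1}f')=-1/2$.

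The genuine gap is the almost stable limit case. There $L^{\circ}$ is only the colimit of the rings $\tilde R_i=R_i/(\theta_i^{-1}f)$. Your one-step argument computes $\cont(\Omega_{\tilde R_i/K^{\circ}}\otimes L^{\circ})$ for each $i$, and $\Omega_{L^{\circ}/K^{\circ}}$ is the colimit of these modules. That the numerical formula stabilizes says nothing about the content of the colimit, because content does not commute with filtered colimits. To use condition (c) of \Cref{lem:cont_colim} you must show that the cokernels $\Omega_{\tilde R_j/\tilde R_i}\otimes L^{\circ}$ become negligible. These are quotients of $\Omega_{R_j/R_i}\otimes L^{\circ}$, whose content $\gamma_j-\gamma_i$ tends to $\infty$, so nothing comes for free.

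The missing idea is the paper's map $\alpha\colon (L^{\circ}/(\theta_i^{-1}\theta_j))\cdot\dd(\theta_j^{-1}f)\to\lOmega_{R_j/R_i}\otimes L^{\circ}\cong (L^{\circ}/(\kappa^{-1}\psi_j))\cdot\dlog(\kappa^{-1}\psi_j)$. Once the slope of $|f|_{\fra_n}$ is $1$, the linear term of the $\psi_j$-expansion of $f$ dominates. So $\dd(\theta_j^{-1}f)$ is a unit multiple of the generator, and $\alpha$ is surjective. The equality $w(\theta_i^{-1}\theta_j)=\gamma_j-\gamma_i$ makes it injective. The snake lemma then gives $\lOmega_{\tilde R_j/\tilde R_i}\otimes L^{\circ}=0=H_1(\lctc_{\tilde R_j/\tilde R_i}\derotimes L^{\circ})$, so the system is eventually constant. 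The non-log case follows once the value groups have stabilized.

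Finally, slope $1$ does not follow from $f$ being a limit key polynomial. For a purely inseparable $f=T^p-c$ one has $f(a+X)=f(a)+X^p$, so the slope is $p$. The paper derives $s=1$ from separability. If $s\ge 2$, then $[\frb_i](f)-\step\frb_i\to\infty$, and non-negativity of $\cont(\lOmega_{\tilde R_i/K^{\circ}}\otimes L^{\circ})$ forces $[\frb_i](f')\to\infty$, that is, $f'=0$.
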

\begin{proof}
	Throughout the proof we assume that log structure on the rings in the arguments of the log cotangent complex and log differentials are equipped with the total log structure. 
	Any quotients like $A / \mathcal{I} $ we also equip with the log structure $\const A$. 
	Let $\mathfrak{c} = \fra_{n-1}$ when $\fra_n$ is an ordinary augmentation and let $\mathfrak{c}  = \frb_i$ for some $i \in I$ when $\fra_n$ is an almost stable limit augmentation. 
	Write $v' = [\mathfrak{c}]$.

	If $L / K$ is inseparable, then we see from \eqref{eq:finite_extension_exact_sequence} that $\Omega_{L^{\circ} / K^{\circ}}$ and $\lOmega_{L^{\circ} / K^{\circ}}$ must be rank 1 modules. 
	Therefore, the (log)-different is $\infty$, and the right-hand sides are also $\infty$ as $f' = 0$. 

	Assume that $L / K $ is separable. 
	By \Cref{thm:computation_main} and \Cref{thm:computation_main_log}, there is a multiplicative system $S$ such that $(S^{-1}K[T], v')$ is scalable, and the content of the (log)-differentials of $R' / R_0$ is given by these theorems, where $R' = (S^{-1}K[T], v')^{\circ}, R_0 = K^{\circ}[T] = (K[T], v_0)^{\circ}$. 
	Let $\theta$ be a constant in $(S^{-1}, K[T], v')$ with $w(\theta) = v'(f)$. 
	We will compute  $\ctc_{\frac{R' / (\theta^{-1}f)}{K^{\circ}}}$ by understanding $\ctc_{\frac{R_0 / (f)}{K^{\circ}}}$, $\ctc_{\frac{R' / (f)}{R_0 / (f)}} \derotimes L^{\circ}$, and $\ctc_\frac{R' / (\theta^{-1} f)}{R' / (f)}$. Then we can put this together using distinguished triangles. The same works for the log cotangent complexes. 
	\begin{itemize}
		\item Let's focus our attention on $\ctc_{\frac{R_0 / (f)}{K^{\circ}}}$ and $\lctc_{\frac{R_0 / (f)}{K^{\circ}}}$. 
			We know that  $R_0 = K^{\circ}[T]$  and therefore $f \in R_0$ is not a zero divisor and $f$ is a regular sequence. 
			Hence, $\ctc{\frac{R_0 / (f)}{K^{\circ}} }\simeq \lctc_{\frac{R_0 / (f)}{K^{\circ}} }  \simeq \left((R_0 / (f)) \cdot f \xrightarrow{\dd} (R_0 / (f)) \dd T\right) $ in degree $-1$ and  $0$.
			Applying $-\derotimes_{R_0 / (f)} L^{\circ}$ shows that 
			\if\thesis1 \begin{align*}
				\ctc_{\frac{R_0 / (f)}{K^{\circ}}}\derotimes L^{\circ} &\simeq \lctc_{\frac{R_0 / (f)}{K^{\circ}}} (L^{\circ} \cdot f\xrightarrow{\cdot f'} L^{\circ} \dd T), \text{ and } \\ \cont\left(\frac{L^{\circ} \dd T}{L^{\circ} f' \dd T}\right)&= w(f') = v_{n-1}(f')
			.\end{align*}
			\else
			\[
				\ctc_{\frac{R_0 / (f)}{K^{\circ}}}\derotimes L^{\circ} \simeq \lctc_{\frac{R_0 / (f)}{K^{\circ}}} (L^{\circ} \cdot f\xrightarrow{\cdot f'} L^{\circ} \dd T), \text{ and } \cont\left(\frac{L^{\circ} \dd T}{L^{\circ} f' \dd T}\right)= w(f') = v_{n-1}(f')
			.\] 
			\fi 
			As $L / K$ separable, the map  $\dd$ is injective, and thus these complexes are concentrated in degree $0$. 
		\item Note that $R'/ (f\cdot R') \simeq R' \derotimes_{R_0} R_0 / (f\cdot R_0)$.	
			So by \stacks{08QQ}, we have that $\mathbb L_{\frac{R'}{R_0}} \derotimes _{R'} \frac{R'}{ (f)} \simeq \mathbb L_{\frac{R' / (f)}{R_0 / (f)}}$. 
			Applying $- \derotimes_{R' / f} L^{\circ}$ to both sides and simplifying, we see that $\mathbb L_{\frac{R' / (f)}{R_0 / (f)}} \derotimes_{R' / (f)} L^{\circ} \simeq \mathbb L_{\frac{R'}{R_0}} \otimes_{R'} L^{\circ}$. 
			So by \Cref{thm:computation_main} we find that 
			\begin{align*}
				\mathbb L_{\frac{R' / (f)}{R_0 / (f)}} \derotimes_{R'/ (f)} L^{\circ} &\simeq \Omega_{\frac{R'}{R_0}} \otimes L^{\circ}[0], \\
				\cont\left(\Omega_{\frac{R'}{R_0}}\otimes L^{\circ} \right) &= \step (\fra_i)_{i = 1}^{n-2} + \step \mathfrak{c}  + \left( \inf \Gamma_{K, > 0} - \inf \Gamma_{v', > 0} \right) 
			.\end{align*}
			Using \cite[Proposition~2.1, Lemmas~2.2-2.3]{hubnerLogarithmicDifferentialsDiscretely2024}, the same argument shows that 
			\if\thesis1
			\begin{align*}
				\lctc_{\frac{R' / (f)}{R_0 / (f)}} \derotimes_{R'/ (f)} L^{\circ} &\simeq \lOmega_{\frac{R'}{R_0}} \otimes L^{\circ}[0], \text{ and} \\ 
				\cont\left(\lOmega_{\frac{R'}{R_0}}\otimes L^{\circ} \right) &= \step (\fra_i)_{i = 1}^{n-2} +  \step \mathfrak{c}  
			.\end{align*}
			\else
			\begin{align*}
				\lctc_{\frac{R' / (f)}{R_0 / (f)}} \derotimes_{R'/ (f)} L^{\circ} \simeq \lOmega_{\frac{R'}{R_0}} \otimes L^{\circ}[0], \quad 
				\cont\left(\lOmega_{\frac{R'}{R_0}}\otimes L^{\circ} \right) = \step (\fra_i)_{i = 1}^{n-2} +  \step \mathfrak{c}  
			.\end{align*}
			\fi 
		\item Finally, we turn our attention to $\ctc_{\frac{R' / (\theta^{-1} f)}{R'  / (f)}}$.
			Consider the tower $R' \to R' / (f) \to  R' / (\theta^{-1} f)$, and the corresponding triangle of cotangent complexes, 
			\[
				\ctc_{\frac{R' / (f) }{ R'}} \derotimes L^{\circ} \to \ctc_{\frac{R' / (\theta^{-1} f)}{R'}} \to \ctc_{\frac{R' / (\theta^{-1} f)}{R' / (f)}}
			.\] 
			Note that the both $f$ and $\theta^{-1} f$ are regular sequences in $R'$, therefore we see that 
			\begin{align*}
			\ctc_{\frac{R' / (f) }{ R'}}  \derotimes L^{\circ} \simeq \lctc_{\frac{R' / (f)}{R'}}\simeq  \left(f\cdot L^{\circ}\right)[-1], \\
			\ctc_{\frac{R' / (\theta^{-1} f) }{ R'}}  \derotimes L^{\circ} \simeq \lctc_{\frac{R' / (\theta^{-1}f)}{R'}}\simeq  \left(\theta^{-1} f\cdot L^{\circ}\right)[-1], 		
			\end{align*} 
		Filling this in the triangle above (and the logarithmic analogue), we see that \[
			\mathbb L_\frac{R' / (\theta^{-1} f)}{R' / (f)} \simeq \frac{\theta^{-1} f L^{\circ}}{f L^{\circ}} [-1], \text{ and } \cont\left( \frac{\theta^{-1} f\cdot L^{\circ}}{f\cdot L^{\circ}} \right) = w(\theta) = v'(f)
		.\] 
	\end{itemize}
	The tower of rings $K^{\circ} \to R_0 / (f) \to R' /(f)$ gives the triangle \[
		\mathbb L_{\frac{R_0 / (f)}{K^{\circ}}} \derotimes L^{\circ} \to \mathbb L_{\frac{R_{n-1} / (f)}{K^{\circ}}} \derotimes L^{\circ} \to \mathbb L_{\frac{R_{n-1} / (f)}{R_0 / (f)}} \derotimes L^{\circ}
	.\] 
	So we see that 
	\begin{align*}
		\mathbb L_{\frac{R' / (f)}{K^{\circ}}} \derotimes L^{\circ} &\simeq \left(\Omega_{\frac{R'/(f)}{K^{\circ}}}\otimes L^{\circ}\right) [0],\\
		\cont \left(\Omega_{\frac{R'/(f)}{K^{\circ}}}\otimes L^{\circ}\right) &= \step (\fra_i)_{i = 1}^{n-2} + \step \mathfrak{c}  + \left( \inf \Gamma_{K, > 0} - \inf \Gamma_{v', > 0} \right) + v'(f)
	.\end{align*}
	The tower $K^{\circ} \to R' \to R' / (\theta^{-1} f)$ shows that $\ctc_{\frac{R'(\theta^{-1} f)}{K^{\circ}}}$ is concentrated in degree $0$. 
	Next, consider the triangle corresponding to the tower $K^{\circ} \to R' / (f) \to R' / (\theta^{-1}f)$, 
	\[
		\ctc_{\frac{R / (f)}{K^{\circ}}} \derotimes L^{\circ}\to \mathbb L_{\frac{R' / (\theta^{-1} f)}{K^{\circ}}} \to \ctc_{\frac{R' / (\theta^{-1}f)}{R' / (f)}}
	.\] 
	The corresponding LES reads
	\[
		\ldots \to 0 \to \frac{\theta^{-1} f \cdot L^{\circ}}{f\cdot L^{\circ}} \to \Omega_{\frac{R_{n-1} / (f)}{K^{\circ}}} \otimes L^{\circ} \to \Omega_{\frac{L^{\circ}}{K^{\circ}}} \to 0 \to 0 
	.\] 
	Thus,  
	\[\cont \Omega_{\frac{R' / (\theta^{-1}f)}{K^{\circ}}} = \step (\fra_i)_{i = 1}^{n-2} + \step \mathfrak{c}  - v'(f) + v'(f') + \left( \inf \Gamma_{K, > 0} - \inf \Gamma_{v', > 0} \right) . \]
	The exact same argument shows that $\lctc_{\frac{R' / (\theta^{-1} f)}{K^{\circ}}}$ is concentrated in degree $0$ and that
	\[\cont \lOmega_{\frac{R' / (\theta^{-1}f)}{K^{\circ}}} = \step (\fra_i)_{i = 1}^{n-2} + \step \mathfrak{c}  - v'(f) + v'(f') . 
	\]
	If $\fra_n$ is an ordinary augmentation, then the result follows from noting that  $R' / (\theta^{-1} f) = L^{\circ}$ and that $\const R' \to L^{\circ}$ logifies to $\const L^{\circ} \to L^{\circ}$. 

	If $\fra_n$ is an almost stable limit augmentation on the continuous family $\frb = (\frb_i)_{i \in I}$, then we may choose multiplicative sets $S_i$ as in \Cref{lem:summary_almost_stable}, write $v_i = [\frb_i]$, and write $R_i = (S_i^{-1}K[T], v_i)$. 
	We write $\tilde R_i := R_i / (\theta_i^{-1} f)$. 
	Note that $(\const R_i \to \tilde R_i)$ logifies to $(\const \tilde R_i \to R_i)$. 
	As we have just computed 
	\begin{equation}\label{eq:_comp_log_diff}
	\if\thesis1
	\begin{aligned}
		\cont \lOmega_{{\tilde R_i}/{K^{\circ}}} &= \step (\fra_i)_{i = 1}^{n-2} + \step \frb_i  - v_i(f) + v_i(f') \\
		\cont \Omega_{{\tilde R_i} / {K^{\circ}}} &= \cont \lOmega_{{\tilde R_i}/{K^{\circ}}} + \left( \inf \Gamma_{K, > 0} - \inf \Gamma_{v_i, > 0} \right)
	.\end{aligned}
	\else
		\begin{aligned}
		\cont \Omega_{{\tilde R_i} / {K^{\circ}}} &= \step (\fra_i)_{i = 1}^{n-2} + \step \frb_i  - v_i(f) + v_i(f') + \left( \inf \Gamma_{K, > 0} - \inf \Gamma_{v_i, > 0} \right) \\
		\cont \lOmega_{{\tilde R_i}/{K^{\circ}}} &= \step (\fra_i)_{i = 1}^{n-2} + \step \frb_i  - v_i(f) + v_i(f')
	.\end{aligned}
	\fi
	\end{equation}
	We will show that  $\left(\Omega_{{\tilde R_i} / {K^{\circ}}}\right)_{i \in I}$, and $\left(\lOmega_{{\tilde R_i} / {K^{\circ}}}\right)_{i \in I}$ stabilize. 

	Consider the function $|f|_{\fra_n}: \left[ [\fra_{n-1}](\psi), \infty\right] \to \R(\infty)$, is a piecewise linear function whose slope becomes constant to $s$ from  $\lambda_0$ onwards, see \Cref{par:slope_almost_stable_limit}. 
	Clearly $s \ne 0$ because $f$ is unstable for $\frb$. 
	Suppose that  $s \ge 2$, by evaluating $|f|_{\fra_n}$ in $(\gamma_i)$ we see that  \[
		v_i(f) - \step \frb_i \ge s\cdot (\gamma_i - [\fra_{n-1}](\psi)) + [\fra_{n-1}](f) - (\gamma_i - [\fra_{n-1}](\psi))
	,\] 
	where the right hand side tends to $\infty$ as $i$ increases. 
	Further we know that 
	\[\lOmega_{\frac{R_i / (\theta^{-1}_if)}{K^{\circ}}} = \step (\fra_i)_{i = 1}^{n-2} + \step \frb_i  - v_i(f) + v_i(f')  \ge 0
	\]
	which is non-negative as it is the content of a module. Thus, $v_i(f') \to \infty$. 
	As $\deg f' < \deg f$, we know that $(v_i(f'))$ stabilizes to $\infty$ and therefore $f' = 0$, thus $L / K$ is inseparable. 

	Thus, $s = 1$ whenever $L / K$ is separable. 
	Choose $i \in I$ sufficiently large such that $\Gamma_{v_i}$ has stabilized had $\gamma_i > \lambda_0$. 
	Then let $j > i$.
	Consider the following commutative diagram with exact rows and columns. 
	\[
		\begin{tikzcd}[sep = small]
			& & & 0  \dar \\
		& 0 \dar \rar & 0 \dar \rar  & H_1\left(\lctc_{\textstyle {\tilde R_i} / {\tilde R_j}} \derotimes L^{\circ}\right) \dar \\
		0 \rar & L^{\circ}\cdot \dd (\theta_i^{-1} f) \dar \rar & \lOmega_{\textstyle {R_i} / {K^{\circ}}} \otimes L^{\circ} \dar \rar & \lOmega_{\textstyle {\tilde R_i} / {K^{\circ}}} \otimes L^{\circ} \dar \rar & 0 \\
		0 \rar & L^{\circ}\cdot \dd (\theta_j^{-1} f) \dar \rar & \lOmega_{\textstyle {R_j} / {K^{\circ}}} \otimes L^{\circ} \dar \rar & \lOmega_{\textstyle {\tilde R_j} / {K^{\circ}}} \otimes L^{\circ} \dar \rar & 0 \\
		 & \frac{L^{\circ}}{(\theta_i^{-1}\theta_j)} \dd (\theta_j^{-1} f) \dar \rar{\alpha} & \lOmega_{\textstyle {R_j} / {R_i}} \otimes L^{\circ} \dar \rar & \lOmega_{\textstyle{\tilde R_j} / {\tilde R_i}} \otimes L^{\circ} \dar \rar & 0 \\
		       & 0 & 0 & 0
	\end{tikzcd}
	\] 
	We will show that the map 
	\[
		\alpha: \frac{L^{\circ}}{(\theta_i^{-1}\theta_j)} \cdot \dd (\theta_j^{-1} f)\to \lOmega_{\textstyle{R_j} / {R_i}} = \frac{L^{\circ}}{\kappa^{-1} \psi_j} \dlog (\kappa^{-1}\psi_j) 
	,\] 
	is an isomorphism, where $\kappa$ is a constant with $v_i(\kappa) = v_i(\psi_j) = \gamma_i$, and we have used that $v_j = [v_i, \psi_j, \gamma_j]$ and \Cref{lem:summary_log_ordinary_augmentation}.
	Let $f = \sum_{s = 0}^{m} a_s \psi_j^{s}$ be the $\psi_j$ expansion of $f$. 
	As the slope of $|f|_{\fra_n}$ has stabilized to $1$ we see that $a_1\psi_j^{1}$ is the term with minimal valuation for $v_j$. 
	Then we see that  $\theta_j ^{-1} f = \sum_{s = 0}^{m} \theta^{-1} a_s \kappa^{s}(\kappa^{-1}\psi_j)^{s}$ and as an element of $\lOmega_{\textstyle{R_j} / {R_i}}$ we compute  
	\if\thesis1
	\begin{align*}
		\dd (\theta_j^{-1} f) &= \left(\sum_{s = 1}^{m}\theta_j^{-1} a_s \kappa^{s}(\kappa^{-1}\psi_j)^{s-1}\right) \cdot \dd(\kappa^{-1}\psi_j) \\
		&=  \left(\sum_{s = 1}^{m}\theta_j^{-1} a_s \kappa^{s}(\kappa^{-1}\psi_j)^{s}\right) \cdot \dlog(\kappa^{-1}\psi_j) 
	\end{align*}
	\else
	\[
		\dd (\theta_j^{-1} f) = \left(\sum_{s = 1}^{m}\theta_j^{-1} a_s \kappa^{s}(\kappa^{-1}\psi_j)^{s-1}\right) \cdot \dd(\kappa^{-1}\psi_j) =  \left(\sum_{s = 1}^{m}\theta_j^{-1} a_s \kappa^{s}(\kappa^{-1}\psi_j)^{s}\right) \cdot \dlog(\kappa^{-1}\psi_j) 
	\]
	\fi 
	We easily check that in the last sum the term $\theta_j^{-1} a_1 \kappa (\kappa^{-1} \psi_j)^{k} $ has the lowest valuation equal to $0$, and thus the sum is a unit in $L^{\circ}$.
	So $\alpha$ takes $d(\theta^{-1}_jf)$ to a generator of $\lOmega_{R_j / R_i}$, and thus $\alpha$ is surjective. 
	We find that $\alpha$ is also injective because \[
	w(\theta_i^{-1} \theta_j) = |f|_{\fra_n}(\gamma_j) - |f|_{\fra_n}(\gamma_i) = \gamma_j - \gamma_i = w(\kappa^{-1}\psi_j).
	\] 
	Here we used that the slope of $|f|_{\fra_n}$ in this region is $1$.  

	As $\alpha$ is an isomorphism, the snake lemma shows that $H_1(\lctc_{\tilde R_j/\tilde R_i} \derotimes L^{\circ}) = \lOmega_{\tilde R_j / \tilde R_i } \otimes L^{\circ} = \{0\} $. 
	The same is also true for $\ctc_{\tilde R_j / \tilde R_i} \derotimes L^{\circ}$ as it is isomorphic to $\lctc_{\tilde R_j/\tilde R_i} \derotimes L^{\circ}$, see \Cref{par:log_non_log_same_val_grp}.
	Then the tower of (log) rings $K^{\circ} \to \tilde R_i \to \tilde R_j$ shows that $\Omega_{\tilde R_j / K^{\circ}} \otimes L^{\circ} = \Omega_{\tilde R_i / K^{\circ}} \otimes L^{\circ} $ and thus the system $(\Omega_{\tilde R_j / K^{\circ}} \otimes L^{\circ})_{j \in I}$ stabilizes for $j \ge i$, likewise for the log differentials. 
	Taking the colimit, we see that 
	\[
	\Omega_{L^{\circ} / K^{\circ}} = \Omega_{\tilde R_i / K^{\circ}} \otimes L^{\circ}, \quad 
	\lOmega_{L^{\circ} / K^{\circ}} = \lOmega_{\tilde R_i / K^{\circ}} \otimes L^{\circ}
	,\] 
	whose content were computed in \eqref{eq:_comp_log_diff}.
\end{proof}
\begin{remark}
	Combining the computations from the previous proof with \Cref{rem:explicit_complete_intersection_type_II} we see that when $K$ is discretely valued and $\fra_n$ is ordinary, can we write $L^{\circ} / K^{\circ}$ as the explicit complete intersection \[
		L^{\circ} = \frac{K^{\circ}[T, X_0, Y_0, Z_0, \ldots, X_{n-2}, Y_{n-2}, Z_{n-2}]}{(X_0Y_0-1, f_0, g_0, \ldots, X_{n-2}Y_{n-2}-1, f_{n-2}, g_{n-2}, \theta^{-1}f)}
	,\] 
with notation as in the previous proof and \Cref{rem:explicit_complete_intersection_type_II}. 
\end{remark}

\subsection{Purely transcendental extensions and (log) discrepancies}\label{sec:purely_trancendental_extensions_and_log_discrepancies}
Now we study the case that $L = (K(T), w)$ is a purely transcendental field extension of $(K, v_K)$. 
\begin{proposition}\label{prop:pure_trans_deg_0}
	With notation as above. 
	\begin{align*}
		\ctc_{\textstyle \frac{L^{\circ}}{K^{\circ}}} \simeq \left( \Omega_{\frac{L^{\circ}}{K^{\circ}}} \right)[0], \quad \lctc_{\textstyle \frac{(\const L^{\circ} \to L^{\circ})}{(\const K^{\circ} \to K^{\circ})}} \simeq \left( \lOmega_{\textstyle \frac{(\const L^{\circ} \to L^{\circ})}{(\const K^{\circ} \to K^{\circ})}} \right)[0]
	.\end{align*}
\end{proposition}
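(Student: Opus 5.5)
Here $L = (K(T), w)$ with $w$ a valuation, so $\ker w = 0$ and $(K[T]_{\ker w}, w)^{\circ} = (K(T), w)^{\circ} = L^{\circ}$. The plan is to factor $K^{\circ} \to L^{\circ}$ through a polynomial ring and apply \Cref{thm:computation_main} and \Cref{thm:computation_main_log} with $L$ itself as the target field. This choice of target makes the tensor $-\derotimes L^{\circ}$ harmless.

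First, by \Cref{cor:optimal_simple_chain}, I choose a $w$-optimal augmentation chain $(\fra_i)_{i=1}^{n}$ approximating $w$ on a simple Gauss valuation $v_0 = [v_K, T-a, r]$ with $r \in \Gamma_K$. After rescaling $T-a$ by a constant $b$ with $v_K(b) = r$, we get $R_0 = K^{\circ}[S]$ with $S = b^{-1}(T-a)$. Take $S_0 = \{1\}$; then $(K[T], v_0)$ is scalable, since constants of $K$ realize every value in $\Gamma_{v_0} = \Gamma_K$.

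Next, let $(K[T], w) \to (L, v_L)$ be the inclusion into $L = K(T)$. Part (iv) of \Cref{thm:computation_main} then gives
\[
\ctc_{L^{\circ}/R_0} \derotimes_{L^{\circ}} L^{\circ} = \ctc_{L^{\circ}/R_0} \simeq \Omega_{L^{\circ}/R_0}[0].
\]
The analogous log statement, for the total log structures, follows from part (iii) of \Cref{thm:computation_main_log}. Since $L^{\circ} \otimes_{L^{\circ}} L^{\circ} = L^{\circ}$, no information is lost in the tensor.

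Then I use the tower $K^{\circ} \to R_0 \to L^{\circ}$. The polynomial ring $R_0$ is smooth over $K^{\circ}$, so $\ctc_{R_0/K^{\circ}} \simeq R_0\,\dd S[0]$. Since $\const R_0 = \const K^{\circ}$, the log version is the same by \eqref{eq:ctc_eq_lctc}. The distinguished triangle
\[
\ctc_{R_0/K^{\circ}} \derotimes_{R_0} L^{\circ} \to \ctc_{L^{\circ}/K^{\circ}} \to \ctc_{L^{\circ}/R_0}
\]
has both outer terms concentrated in degree $0$. Hence $H_i(\ctc_{L^{\circ}/K^{\circ}}) = 0$ for $i \ge 2$, and $H_1$ is the kernel of the map $L^{\circ}\dd S \to \Omega_{L^{\circ}/K^{\circ}}$, which is the connecting map $H_1(\ctc_{L^{\circ}/R_0}) = 0 \to \ldots$ continued. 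More precisely, the long exact sequence reads
\[
0 \to H_1(\ctc_{L^{\circ}/K^{\circ}}) \to 0 \to L^{\circ}\dd S \to \Omega_{L^{\circ}/K^{\circ}} \to \Omega_{L^{\circ}/R_0} \to 0,
\]
so $H_1(\ctc_{L^{\circ}/K^{\circ}})$ vanishes as well. I still want to double-check this injectivity at the level of $H_0$: $\dd S$ is nonzero in $\Omega_{L/K}$ and $L^{\circ}\dd S$ is torsion free, so the map cannot fail to be injective anyway. The log case is identical, using the log triangle and \Cref{thm:computation_main_log}.

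The main obstacle is bookkeeping around $R_w$ versus $L^{\circ}$ and the log structures. In \Cref{thm:computation_main} (iv), $R_w = (S^{-1}K[T], w)^{\circ}$ is only a localization, so I must use the identification with $(K[T]_{\ker w}, w)^{\circ} = L^{\circ}$ stated in that theorem (via \Cref{lem:maximal_localisation} and \Cref{lem:maximal_localisation_log}). In the log case I must also check that the total log structure $\const L^{\circ} = L^{\circ}\setminus\{0\}$ agrees with the one produced there. Beyond that, everything is a formal consequence of the earlier computations.
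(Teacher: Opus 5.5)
Your proposal is correct and is essentially the paper's proof: take a $w$-optimal chain on a simple Gauss valuation so that $R_0 = K^{\circ}[S]$, use \Cref{thm:computation_main}(iv) and \Cref{thm:computation_main_log}(iii) with the target field taken to be $L$ itself to see that $\ctc_{L^{\circ}/R_0}$ and its log analogue are concentrated in degree $0$, and conclude from the triangle for $K^{\circ} \to R_0 \to L^{\circ}$. One phrase is a slip: $H_1(\ctc_{L^{\circ}/K^{\circ}})$ is not the kernel of $L^{\circ}\,\dd S \to \Omega_{L^{\circ}/K^{\circ}}$, but the long exact sequence you then write down is the correct one and gives the vanishing directly.
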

\begin{proof}
	Let $(\fra_i)_{i = 1}^{n}$ be a $w$-optimal augmentation chain on a simple Gauss semi-valuation $v_0$ approximating $w$. 
	Let $S_i, R_i$ as in \Cref{thm:computation_main}. 
	We know that $R_0 = K^{\circ}[T']$ for some linear polynomial $T'$. 
	Then \[
		\ctc_{R_0 / K^{\circ}} \derotimes L^{\circ}\simeq \lctc_{\textstyle \frac{(\const R_0 \to R_0)}{(\const K^{\circ} \to K^{\circ})}} \derotimes L^{\circ} \simeq \left( L \cdot \dd T' \right) [0]
	.\] 
	We also know that $\ctc_{L^{\circ} / R_0}$ and $\lctc_{(\const L^{\circ} \to L^{\circ}) / (\const R_0 \to R_0)}$ are concentrated in degree $0$ by \Cref{thm:computation_main} and \Cref{thm:computation_main_log}.
	Then we conclude by taking the distinguished triangle of (log) cotangent complexes associated to the tower $K^{\circ} \to R_0 \to L^{\circ}$. 
\end{proof}

\begin{para}
	Clearly the module of (log) differentials $\Omega_{L^{\circ} / K^{\circ}}$ is of rank $1$ and hence the content is infinite for any such field extension, and this is not a useful notion. 
	However, the (log) discrepancy still yields a useful invariant of a similar flavor. 
\end{para}

\begin{definition}\label{def:(log)-discrepancy}
	Let $(A, v)$ be any scalable $K$ algebra, and $(L, v_L)$ a valued field with with morphism of semi-valued rings $(A, v) \to (L, v_L)$. 
	Let $\omega \in \Omega_{A^{\circ} / K^{\circ}}$. 
	Then we define the \emph{discrepancy} of $\omega$ as  \[
		\disc a = \cont\left( \frac{\Omega_{A^{\circ} / K^{\circ}}\otimes L^{\circ}}{L^{\circ}\cdot w	} \right)  
	.\] 
	Likewise, let $M_A = \const A^{\circ}$,  or $M_A = \bls A^{\circ}$ when defined, and let $\omega \in \lOmega_{(M_A \to A^{\circ})/(\const K^{\circ} \to K^{\circ})}$ then we define the \emph{log discrepancy} of $\omega$ as 
	\[
		\ldisc \omega = \cont\left( \frac{\lOmega_{(M_A \to A^{\circ})/(\const K^{\circ} \to K^{\circ})}\otimes L^{\circ}}{L^{\circ} \cdot w} \right) 
	.\] 
\end{definition}
\begin{remark}
	Note that the choice of $L$ in \Cref{def:(log)-discrepancy} does not matter. 
	Indeed, consider $F = \mathrm{frac}(A)$, then naturally  $F \subset L$ and $L^{\circ}$ is flat over $F^{\circ}$. 
	So by \Cref{lem:cont_invariance}, the (log) discrepancy is the same for either choice. 
\end{remark}
\begin{theorem}\label{thm:disc_and_log_disc_pure_trans}
	Let $(L, v_L) = (K[T]_{\ker w}, w)$ for some semi-valuation $w$ on $K[T]$. 
	Note that if $\ker w = \{0\} $ then $L$ is a purely transcendental field extension. 
	Let $(\fra_i)_{i = 1}^{n}$ be a $w$-optimal augmentation chain starting on a Gauss semi-valuation $v_0$ with center $a$ and log-radius $\mu$. We write $\phi_0 = T-a$. 
	\begin{enumerate}[(i)]
		\item If $\mu \in \Gamma_K$, i.e. $v_0$ is a simple Gauss valuation,		\begin{align*}
			\disc \dd(\theta^{-1} \phi_0) &= \step (\fra_i)_{i = 1}^{n} + (\inf \Gamma_{v_K, > 0} - \inf\Gamma_{w, > 0})\\
			\ldisc \dd(\theta^{-1} \phi_0) &= \step (\fra_i)_{i = 1}^{n}
		,\end{align*}
		 where $\theta \in K$ with $v_K(\theta) = \mu$.

		\item If $\mu = w(\phi_0)$ then \[
		\ldisc \dlog (\phi_0) = \step (\fra_i)_{i = 1}^{n}
		.\] 
	\end{enumerate}
\end{theorem}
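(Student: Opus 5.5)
The plan is to reduce both parts to \Cref{thm:computation_main} and \Cref{thm:computation_main_log}, via the exact triangle attached to the tower $K^{\circ} \to R_0 \to R_w$ and the localization invariance of \Cref{lem:maximal_localisation,lem:maximal_localisation_log}.

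\textbf{Part (i).} Put $S := \theta^{-1}\phi_0$. Since $v_0$ is simple, $R_0 = (K[T],v_0)^{\circ} = K^{\circ}[S]$. Hence $\Omega_{R_0/K^{\circ}} = R_0\,\dd S$ is free of rank one, and $\ctc_{R_0/K^{\circ}} \simeq \Omega_{R_0/K^{\circ}}[0]$. The log version agrees, because $\const R_0 = \const K^{\circ}$ and so, by \eqref{eq:ctc_eq_lctc}, the log and non-log complexes coincide.

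Take the triangle for $K^{\circ} \to R_0 \to R_w$ and apply $-\derotimes L^{\circ}$. By \Cref{thm:computation_main}(iv) the complex $\ctc_{R_w/R_0}\derotimes L^{\circ}$ is concentrated in degree $0$. The long exact sequence therefore reduces to
\[ H_1(\ctc_{R_w/R_0}\derotimes L^{\circ}) = 0 \to L^{\circ}\,\dd S \to \Omega_{R_w/K^{\circ}}\otimes L^{\circ} \to \Omega_{R_w/R_0}\otimes L^{\circ} \to 0 . \]
Consequently $\Omega_{R_w/K^{\circ}}\otimes L^{\circ}/L^{\circ}\dd S \cong \Omega_{R_w/R_0}\otimes L^{\circ}$. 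Its content is given by \Cref{thm:computation_main}(v) as $\step(\fra_i)_{i=1}^n + \inf\Gamma_{v_0,>0} - \inf\Gamma_{w,>0}$. Since $\Gamma_{v_0} = \Gamma_K$, this is exactly the claimed formula for $\disc \dd(\theta^{-1}\phi_0)$.

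To pass from $R_w$ to $A_w = (K[T]_{\ker w},w)^{\circ}$, the ring appearing in \Cref{def:(log)-discrepancy}, use \Cref{lem:maximal_localisation}. It shows that tensoring with $L^{\circ}$ gives the same $\Omega$, with $\dd S$ corresponding to $\dd S$. The log case is identical, using \Cref{thm:computation_main_log}(iii),(iv) and \Cref{lem:maximal_localisation_log}. There the fudge terms are absent, which gives $\step(\fra_i)_{i=1}^n$.

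\textbf{Part (ii).} Here $\mu = w(\phi_0)$ need not lie in $\Gamma_K$, so $R_0$ need not be $K^{\circ}[S]$. We modify the chain as in the proof of \Cref{cor:optimal_simple_chain}:
\begin{itemize}
\item choose $r \in \Gamma_K$ with $r < \mu$, and let $v_{-1} = [v_K,\phi_0,r]$, which is a simple Gauss valuation;
\item prefix the chain with the $w$-optimal ordinary augmentation $\fra_0 = (v_{-1},\phi_0,\mu)$, whose step is $\mu - r$.
\end{itemize}

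Apply the log computation to the extended chain, relative to $R_{-1} = K^{\circ}[\theta^{-1}\phi_0]$ with $v_K(\theta)=r$. By \Cref{lem:summary_log_ordinary_augmentation} the first step contributes $\lOmega_{R_0'/R_{-1}}\otimes L^{\circ} = L^{\circ}/(\theta^{-1}\phi_0)\cdot\dlog\phi_0$. This module is generated by $\dlog\phi_0$ and has content $\mu - r$. Here $R_0'$ is the localization $(K[T,\phi_0^{-1}],v_0)^{\circ}$.

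For the tower $K^{\circ}\to R_{-1}\to R_w$, the log differentials tensored with $L^{\circ}$ contain $L^{\circ}\dd(\theta^{-1}\phi_0) = L^{\circ}\cdot(\theta^{-1}\phi_0)\dlog\phi_0$. The element $\dlog\phi_0$ generates a free submodule $L^{\circ}\dlog\phi_0$ containing it, with quotient $L^{\circ}/(\theta^{-1}\phi_0)$ of content $\mu - r$. Additivity of content (\Cref{lem:cont_additive}) combined with part (i) for the extended chain then gives
\[ \ldisc\dlog\phi_0 = \bigl(\mu - r + \step(\fra_i)_{i=1}^n\bigr) - (\mu - r) = \step(\fra_i)_{i=1}^n . \]

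\textbf{Main obstacle.} The delicate point is justifying that $\dlog\phi_0$ in the log differentials over the new base agrees with the class used in the statement. We also need that the subquotient comparison is legitimate, i.e.\ the inclusion $L^{\circ}\dd(\theta^{-1}\phi_0)\subset L^{\circ}\dlog\phi_0$ inside a rank-one module. I would handle both by working in $\lOmega\otimes L$, which is one-dimensional, and comparing lattices there.
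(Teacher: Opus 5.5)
Your proposal is correct. Part (i) is exactly the paper's argument: the triangle for $K^{\circ}\to R_0\to R_w$, the vanishing of $H_1$ by \Cref{thm:computation_main}, and the log analogue.

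Part (ii) takes a genuinely different route. The paper keeps the original chain and changes the base instead. It takes $R_0=(K[T,\phi_0^{-1}],v_0)^{\circ}$ and uses the vanishing $\lctc_{(\const R_0\to R_0)/(M_{-1}\to R_{-1})}\simeq 0$ from the proof of \Cref{prop:lctc_enlargement}. From this it shows that the log cotangent complex of $R_0$ over $K^{\circ}$ is free of rank one on $\dlog\phi_0$. It then applies \Cref{thm:computation_main_log} with $S_0=\phi_0^{\N}$ to get the short exact sequence $0\to L^{\circ}\dlog\phi_0\to\lOmega_{L^{\circ}/K^{\circ}}\to\lOmega_{L^{\circ}/R_0}\to 0$.

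You instead keep the simple base $K^{\circ}[\theta^{-1}\phi_0]$ and lengthen the chain by $(v_{-1},\phi_0,\mu)$, as in \Cref{cor:optimal_simple_chain}. Part (i) then gives $\ldisc\dd(\theta^{-1}\phi_0)=\mu-r+\step(\fra_i)_{i=1}^{n}$, and you subtract the content $\mu-r$ of $L^{\circ}\dlog\phi_0/L^{\circ}\dd(\theta^{-1}\phi_0)$.

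Your route is more economical: it reuses (i) and needs only an elementary lattice comparison in a rank-one module. The paper's route yields more, namely a structural identification of the quotient by $\dlog\phi_0$ with $\lOmega_{L^{\circ}/R_0}\otimes L^{\circ}$, not just its content.

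The ``main obstacle'' you flag is not a real one, for three reasons:
\begin{enumerate}
\item $\ldisc$ is computed in the intrinsic module $\lOmega_{(\const A_w\to A_w)/(\const K^{\circ}\to K^{\circ})}\otimes L^{\circ}$, which does not depend on the chain, so there is no change of base to reconcile.
\item The identity $\dd(\theta^{-1}\phi_0)=\theta^{-1}\phi_0\,\dlog(\theta^{-1}\phi_0)$ is just the log-derivation axiom, combined with the paper's convention $\dlog\phi_0=\dlog(\theta^{-1}\phi_0)$.
\item $\dlog\phi_0$ is non-torsion, because it maps to $\phi_0^{-1}\dd T\neq 0$ in $\Omega_{K(T)/K}$. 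When $\ker w\neq 0$ both sides are $\infty$ anyway, since the subtracted quotient has content at most $\mu-r<\infty$.
\end{enumerate}
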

\begin{proof}
	We first show (i). 
	Let $R_0 = (K[T], v_0)^{\circ}$. Then $R_0 = K^{\circ}[\theta^{-1}(T-a)]$. 
	We know that 
	 \[
		 \ctc _{R_0 / K^{\circ}} \simeq \lctc_{\textstyle \frac{(\const R_0 \to R_0)}{(\const K^{\circ} \to K^{\circ})}} \simeq  \left( R_0\cdot \dd(\theta^{-1} \phi_0) \right) 
	.\] 
	Recall that we have computed $\ctc_{L^{\circ}/ R_0}$ in \Cref{thm:computation_main} and the logarithmic version in \Cref{thm:computation_main_log}. 
	By the vanishing of the higher homology in said complexes, the first exact sequence associated to $K^{\circ} \to R_0 \to L^{\circ}$ is short exact and reads \[
	0 \to L^{\circ} \cdot \dd(\theta^{-1} \phi_0) \to \Omega_{L^{\circ} / K^{\circ}} \to \Omega_{L^{\circ} / R_0} \to 0
	.\] 
	Thus,  \[
	\disc \dd(\theta^{-1} \phi_0) = \cont \left( \Omega_{L^{\circ} / R_0} \right)  = \step (\fra_i)_{i = 1}^{n} + (\inf \Gamma_{v_K} - \inf(\left<\Gamma_w )\right>\
	.\] 
	The same argument computes the log discrepancy. 

	We move on to (ii). Let $\mu_{-1} \in \Gamma_{K , \le \mu_0}$, and let $v_{-1}$ be $[K, \phi_0, \mu_{-1}]$. 
	Write $R_{-1} := (K[T], v_{-1})^{\circ} = K^{\circ}[\kappa^{-1} \phi_0]$, and $R_0 = (K[T, \phi^{-1}], v_0)^{\circ}$ with $\kappa \in K$ such that $v_K(\kappa) = \mu_{-1}$. 
	Then $R_0 = [R_{-1}, \phi, \mu_0]$ which is scalable. 
	Let $M_{-1} = \const K^{\circ} \oplus (\kappa^{-1}\phi_0)$ and consider the log ring $(M_{-1} \to R_{-1})$.
	Note that $M_{-1} $ is the same log structure as $M_0$ in the proof of \Cref{prop:lctc_enlargement}, where we have shown that $\lctc_{(\const R_0 \to R_0) / (M_{-1} \to R_{-1})} \simeq 0$. 
	So the tower of log rings $(\const K^{\circ} \to K^{\circ}) \to (M_{-1} \to R_{-1}) \to (\const R_0 \to R_0) $ shows that \[
		\lctc_{\textstyle \frac{(\const R_0 \to R_0)}{(\const K^{\circ} \to K^{\circ})}} \simeq \lctc_{\textstyle \frac{(M_{-1} \to R_{-1})}{(\const K^{\circ} \to K^{\circ})}} \derotimes R_0 \simeq (R_0 \cdot\dlog(\theta^{-1} \phi_0))[0]
	.\] 
	Then, the exact sequence of log differentials corresponding to $K^{\circ} \to R_0 \to L^{\circ}$ reads 
	\[
	0 \to L^{\circ} \cdot \dd(\theta^{-1} \phi_0) \to \lOmega_{\textstyle \frac{(\const L^{\circ} \to L^{\circ})}{(\const K^{\circ} \to K^{\circ})}}  \to \lOmega_{\textstyle \frac{(\const L^{\circ} \to L^{\circ})}{(\const R_0 \to R_0)}}  \to 0
	,\] 
	thus, \[
	\ldisc \dlog(\phi_0) = \cont \left( \lOmega_{\textstyle \frac{(\const L^{\circ} \to L^{\circ})}{(\const R_0 \to R_0)}} \right)  = \step (\fra_i)_{i = 1}^{n}
	.\] 
\end{proof}
\begin{para}
	Note that if $\ker w$ is non-trivial, then $\step (\fra_{i})_{1}^{n} = \infty$. 
	So the (log) discrepancy of $\dd f$ with $f$ a linear polynomial that does not generate $\ker w$ will always be $\infty$. 
	However, if we equip $(K[T]_{\ker w}, w)^{\circ}$ with the boundary log structure then there are still elements for which the log discrepancy is a meaningful invariant. 
\end{para}
\begin{remark}
	If $\ker w = 0$ then the function $\ldisc$ uniquely extends to a semi-valuation on $\Omega_{K(T) / K}$, see \Cref{rem:hint_weight}. 
	As $\Omega_{K(T) / K}$ is a one dimensional vector space, the value of $\ldisc$ of one non-zero element is sufficient to fix it for any element. 
\end{remark}
\begin{remark}
	We can interpret \Cref{thm:disc_and_log_disc_pure_trans} geometrically on the valuative tree. 
	Recall that if $(\fra_i)_{i = 1}^{n}$ is an augmentation chain on $v_0$ approximating $w$, then we may think of $\step (\fra_i)_{i = 1}^{n}$ as the distance between $v_0$ and $w$. 
	So, for any linear polynomial $aT + b$ in $L^{\circ}$ $\ldisc \dd(aT + b)$ measures the distance between $w$ and the Gauss valuation $v_0 = [v_K, T + b/a, -v(a)]$.
	Likewise, $\ldisc \dlog(aT + b)$ measures the minimal distance from $w$ to the line $\{[v_K, aT + b, \lambda] \mid \lambda \in \R(\infty)\}$.

	This can be made precise when $K$ is complete and the valuative tree is identified with the Berkovich affine line, in which case the log discrepancy is a generalization of the weight function as defined in \cite{mustataWeightFunctionsNonArchimedean2015}. 
	Then $\dlog (aT + b)$ is the unique canonical form up to scaling on the marked projective line $(\pro_K^{1, \text{an}}, \{\infty, -b / a\} )$.
	The line $\{[v_K, aT + b, \lambda] \mid \lambda \in \R(\infty)\}$ gives precisely the skeleton of $(\pro_K^{1, \text{an}}, \{\infty, -b / a\} )$, and $\ldisc \dlog (aT + b)$ measures how far a point is from the skeleton. 
	The author will explore this in upcoming work.
\end{remark}

\begin{theorem}
	Let $w$ be a semi-valuation on $K[T]$ with $\ker w = (f)$. 
	We assume $f$ to be a monic polynomial and equip $(K[T]_{\ker w}, w)^{\circ}$ with the boundary log structure. 
	Then $\ldisc \dlog f = \ldiff_{(K[T] / (f), w) / K}$. 
\end{theorem}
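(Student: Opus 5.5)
Write $A_w := (K[T]_{(f)}, w)^{\circ}$ with the boundary log structure $\bls A_w = (A_w[f^{-1}])^{\times}\cap A_w$, and $L := K[T]/(f)$ with $L^{\circ} = A_w/\mathcal{I}$, where $\mathcal{I} = \ker(A_w\to L^{\circ})$. Since $L$ is a field we have $A_w\otimes L^{\circ} = L^{\circ}$. Here $\dlog f$ means $\dlog(\kappa f)$ for a suitable $\kappa\in K$. The claim is then
\[
\cont\Bigl(\lOmega_{(\bls A_w\to A_w)/(\const K^{\circ}\to K^{\circ})}\otimes L^{\circ}\big/ L^{\circ}\dlog f\Bigr) = \cont \lOmega_{L^{\circ}/K^{\circ}}.
\]
The plan is to produce a natural surjection from the left-hand quotient onto $\lOmega_{L^{\circ}/K^{\circ}}$ (total log structures) and to show that it is an isomorphism. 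Morally, we want to ``kill $\dlog f$'' and compare with the residue log ring, exactly as in the residue sequence for a boundary divisor. This is what \Cref{cor:boundary_log_structure_quotient} provides in the single-enlargement case.

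\textbf{Step 1 (reduce to one enlargement).} Following the start of the proof of \Cref{thm:computation_main} and of \Cref{prop:comp_log_diff}, choose a $w$-optimal chain on $v_0=[v_K,T,0]$ whose last member is either $\fra_n=(v_{n-1},f,\infty)$ (ordinary) or an almost stable limit augmentation with limit key polynomial $f$. In the ordinary case, \Cref{lem:maximal_localisation_log} lets us replace $A_w$ by $R_n = [R_{n-1},f,\infty]$, an enlargement of the scalable ring $R_{n-1}$, and $L^{\circ}$ by $R_n/(\theta_n^{-1}f)$-colimit as in the proof of \Cref{prop:ctc_finite_ext}. Here $M_{R_{n-1},f,\infty} = \bls R_n$ by \Cref{prop:lctc_enlargement}.

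\textbf{Step 2 (the residue sequence).} Apply \Cref{cor:boundary_log_structure_quotient} with $(M_B\to B) = (\const K^{\circ}\to K^{\circ})$ and $A^{\circ}=R_{n-1}$. The hypothesis $H_1=0$ is \Cref{thm:computation_main_log}(i). The relevant diagram, tensored with $L^{\circ}$, has top row $L^{\circ}\dd(\theta^{-1}f)\to L^{\circ}\dlog f\to \frac{L^{\circ}}{(\theta^{-1}f)}\dlog f$; the last term vanishes in $L^{\circ}$. The snake argument there then identifies $\lOmega_{\bls R_n/K^{\circ}}\otimes L^{\circ}/L^{\circ}\dlog f$ with $\lOmega_{(R_{n-1}/(\theta^{-1}f))/K^{\circ}}\otimes L^{\circ}$ (total structure from $\const R_{n-1}$). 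This last module is exactly the module whose content was computed in the proof of \Cref{prop:comp_log_diff}, and it equals $\lOmega_{L^{\circ}/K^{\circ}}$ because $\const R_{n-1}\to L^{\circ}$ logifies to $\const L^{\circ}$. Here we must check that the middle column quotient by $\dlog f$ corresponds to killing $\dlog\Phi$ over $B[\Phi]$, which is what the corollary states.

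\textbf{Step 3 (limit case; main obstacle).} When $\fra_n$ is almost stable, write $R_2=\bigcup_i [R_i, f,\infty]$, localized, with $R_i=(S_i^{-1}K[T],v_i)^{\circ}$. Apply Step 2 to each enlargement $[R_i,f,\infty]$ and take the colimit over $i$; log differentials commute with filtered colimits. On the right-hand side we get $\colim_i \lOmega_{\tilde R_i/K^{\circ}}\otimes L^{\circ}$, which stabilizes and equals $\lOmega_{L^{\circ}/K^{\circ}}$ by the proof of \Cref{prop:comp_log_diff}. The delicate point is that $\dlog f$ and the identifications are compatible across $i$, so the quotients form a compatible system whose colimit is the quotient of the colimit. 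This compatibility should follow from naturality of the corollary's diagram. If the stabilization argument via the map $\alpha$ fails in the inseparable case, handle that case separately: both sides are then $\infty$, since $\lOmega_{L^{\circ}/K^{\circ}}$ has rank $1$ by \Cref{prop:ctc_finite_ext}, and the left-hand quotient surjects onto it.
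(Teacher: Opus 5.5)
Your proposal is essentially the paper's proof. You reduce via \Cref{lem:maximal_localisation_log} to the enlargement $R_n=[R_{n-1},f,\infty]$ with its boundary log structure, use \Cref{cor:boundary_log_structure_quotient} to identify $(\lOmega_{R_n/K^{\circ}}\otimes L^{\circ})/L^{\circ}\dlog f$ with $\lOmega_{(R_{n-1}/(\theta^{-1}f))/K^{\circ}}\otimes L^{\circ}=\lOmega_{L^{\circ}/K^{\circ}}$, and in the almost stable case you do this for each $[R_i,f,\infty]$ and pass to the colimit, using the stabilization from the proof of \Cref{prop:comp_log_diff}.

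There is one slip in Step~2, and it is fixable.
\begin{itemize}
\item \textbf{The vanishing claim is wrong.} Since $\theta^{-1}f\mapsto 0$ in $L^{\circ}$, the term $\frac{L^{\circ}}{(\theta^{-1}f)}\dlog f$ equals $L^{\circ}\dlog f$, not $0$. What dies is the map $\dd(\theta^{-1}f)\mapsto \theta^{-1}f\,\dlog f$. So you cannot run the snake lemma on the tensored diagram, whose rows and columns are no longer exact.
\item \textbf{The fix: quotient first, then tensor.} Quotient the exact sequence $0\to\lOmega_{R_{n-1}/K^{\circ}}\otimes R_n\to\lOmega_{R_n/K^{\circ}}\to\frac{R_n}{(\theta^{-1}f)}\dlog f\to 0$ by $R_n\dlog f$. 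This gives the right exact sequence $R_n\,\dd(\theta^{-1}f)\to\lOmega_{R_{n-1}/K^{\circ}}\otimes R_n\to\lOmega_{R_n/K^{\circ}}/R_n\dlog f\to 0$. Only then tensor with $L^{\circ}$.
\item \textbf{What the fix buys.} It yields the identification with $\lOmega_{(R_{n-1}/(\theta^{-1}f))/K^{\circ}}\otimes L^{\circ}$ directly. It also removes any need for the corollary's $H_1$ hypothesis, which \Cref{thm:computation_main_log}(i) only supplies after $\derotimes L^{\circ}$.
\end{itemize}
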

\begin{proof}
	Write $(M_A \to A)$ for the log ring $(\bls (K[T]_{\ker w}, w)^{\circ} \to (K[T]_{\ker w}, w)^{\circ})$ and $L = K[T] / (f)$
	When no log structure is written in the arguments of the log differentials, we assume that the rings are equipped with the total log structure. 
	We may again consider a $w$-optimal augmentation chain $(\fra_i)_{i = 1}^{n}$ on a simple Gauss semi-valuation $v_0$, approximating $w$ with $\fra_n$ either an ordinary augmentation or an almost stable limit augmentation, and in both cases with (limit) key polynomial $f$.
	If $\fra_n = ([\fra_{n-1}], f, \infty)$ is an ordinary augmentation then \Cref{cor:boundary_log_structure_quotient} shows that 
	\[
		\frac{\lOmega_{(M_A \to A) / K^{\circ}}}{A \cdot \dlog f} \otimes L^{\circ}\simeq \lOmega_{\textstyle \frac{(M_A \to A)}{(\const K^{\circ} \oplus f^{\N} \to K^{\circ}[f])}} \otimes L^{\circ} \simeq \lOmega_{L^{\circ} / K^{\circ}}
	.\] 
	The equality follows from comparing the content of these modules. 

	Now assume that $\fra_n  = ([\fra_{n-1}], (\psi, \gamma)_{i \in I}, f, \infty)$ is an almost stable limit augmentation. 
	Let $S_i$, $R_i, \tilde R_i, v_i$ and $\theta_i$ be as in the almost stable limit case in the proof of \Cref{prop:comp_log_diff}.
	Let $M_i$ be the boundary log structure on $R_i$. 
	Then like above we see that 
	\[
		\frac{\lOmega_{(M_i \to R_i)/ K^{\circ}}}{ R_i\cdot \dlog f} \otimes L^{\circ}\simeq \lOmega_{\textstyle \frac{(M_i \to A)}{(\const K^{\circ} \oplus f^{\N} \to K^{\circ}[f])}} \otimes L^{\circ} \simeq \lOmega_{R_i / K^{\circ}} \otimes L^{\circ}
	.\]
	The result follows from taking the colimit over  $i$ and comparing content. 
\end{proof}

\subsection{Arbitrary field extensions}\label{sec:arbitrary_field_extensions}

The following theorem is a generalization of \cite[Theorem 6.5.12]{gabberAlmostRingTheory2003} which also includes the analogous result for the log cotangent complex.  
\begin{theorem}\label{thm:lctc_higher_homology}
	Let $(L, v_L) / (K, v_K)$ be any extension of valued fields. 
	Let $\mathcal{L} $ be either $\ctc_{L^{\circ} / K^{\circ}}$ or $\lctc_{(\const L^{\circ} \to L^{\circ}) / (\const K^{\circ} \to K^{\circ})}$. 
	Then $H_1(\mathcal{L} )$ is torsion free and for any $i\ge 2$, $H_i(\mathcal{L} ) = \{0\} $. 
	Moreover, $H_1(\mathcal L)$ is trivial if and only if $L / K$ is separable.
\end{theorem}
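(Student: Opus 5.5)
We reduce to the two cases already treated, finite and purely transcendental simple extensions, by a filtered-colimit and dévissage argument. First, since the (log) cotangent complex commutes with filtered colimits (\cite[eq.~1.2.3.4]{illusieComplexeCotangentDeformations2009} and \cite[Proposition~2.9]{hubnerLogarithmicDifferentialsDiscretely2024}), homology commutes with filtered colimits, and filtered colimits of torsion-free modules are torsion free, we may write $L$ as the directed union of its finitely generated subextensions $L_\lambda$. Then $L^{\circ}=\bigcup_\lambda L_\lambda^{\circ}$ and $(\const L^\circ \to L^\circ)$ is the colimit of the total log structures. So it suffices to treat $L/K$ finitely generated. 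Separability is also detected on finitely generated subextensions, and $H_1$ of the colimit vanishes iff it vanishes in a cofinal family. This last point needs care, since the transition maps on $H_1$ need not be injective; we handle it below.

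For $L/K$ finitely generated, choose a tower $K=K_0\subset K_1\subset\cdots\subset K_r=L$ with each step $K_{j+1}=K_j(t_j)$ either purely transcendental or finite and simple. Give each $K_j$ the restricted valuation; the restriction is rank one, since it is a subgroup of $\Gamma_L\subset\R$. When $L/K$ is separable we take a separating transcendence basis first, so that every finite step is separable. For each step, \Cref{prop:pure_trans_deg_0} and \Cref{prop:ctc_finite_ext} describe $\mathcal L_j:=\ctc_{K_{j+1}^\circ/K_j^\circ}$ and its log analogue: it is concentrated in degree $0$ in the separable and transcendental cases, and in the inseparable case it has $H_i=0$ for $i\ge2$ and $H_1$ torsion free of rank $1$. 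Base change along $K_{j+1}^\circ\to L^\circ$ preserves these properties, since $L^\circ$ is flat over $K_{j+1}^\circ$ and tensoring a torsion-free module over a valuation ring with a flat extension stays torsion free. The transitivity triangles $\mathcal L_{K_j^\circ/K^\circ}\derotimes L^\circ\to \mathcal L_{K_{j+1}^\circ/K^\circ}\derotimes L^\circ\to\mathcal L_j\derotimes L^\circ$ then give, by induction on $j$, that $H_i=0$ for $i\ge2$. They also give an exact sequence
\[
H_2(\mathcal L_j)=0\to H_1(\text{previous})\to H_1(\text{new})\to H_1(\mathcal L_j).
\]
Hence $H_1$ of the new complex is an extension of a submodule of a torsion-free module by a torsion-free module, and so is torsion free.

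For the equivalence with separability, note first that rationally $H_1(\mathcal L)\otimes_{L^\circ}L=H_1(\ctc_{L/K})$ (resp.\ its log analogue, which agrees generically). By the classical theory, this vanishes iff $L/K$ is separable. Since $H_1(\mathcal L)$ is torsion free, it vanishes iff its rationalization vanishes. This argument settles the cofinality concern for the colimit as well, because rationalization is exact and commutes with colimits. In the log case we identify $\lctc\otimes L$ with $\ctc_{L/K}$ using the fact that the total log structures become groups generically (cf.\ \eqref{eq:ctc_eq_lctc} and logification invariance).

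The main obstacle is the colimit step combined with the degreewise passage in the triangles. The finite-step results are stated after tensoring with $L^\circ$ of that step, so we must check that they persist after base change to the top field $L^\circ$, and this base change is not a localization. We will use flatness of valuation ring extensions and the fact that $\derotimes$ with a flat module commutes with homology. For the log version, we must additionally confirm that the total log structure on $K_{j+1}^\circ$ pulls back compatibly in the triangle. This follows from \Cref{par:log_non_log_same_val_grp}-type logification arguments.
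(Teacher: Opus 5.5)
Your proposal is correct and follows essentially the same route as the paper: reduce to finitely generated $L/K$ by filtered colimits, induct along a tower of finite or purely transcendental steps using \Cref{prop:ctc_finite_ext}, \Cref{prop:pure_trans_deg_0} and the transitivity triangle, and detect separability via $H_1(\mathcal L)\otimes_{L^{\circ}} L\simeq H_1(\ctc_{L/K})$. Where you differ, you are slightly cleaner: you use this rationalization together with torsion-freeness for both directions of the separability criterion, whereas the paper claims vanishing in the separable case directly from the induction, even though for an arbitrary choice of generators a step $L/F$ can be inseparable when $L/K$ is separable.
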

\begin{proof}
	We write $\mathcal{L}_{L^{\circ} / K^{\circ}} $ for either the (non-log) cotangent complex or the log cotangent complex, where $L^{\circ}$ and $K^{\circ}$ are equipped with their respective total log structure. 
	We first show that 
	\begin{itemize}
		\item 	$H_{\ge 2}(\mathcal{L}_{L^{\circ} / K^{\circ}})$ vanishes, and 
		\item $H_1(\mathcal{L} _{L^{\circ} / K^{\circ}})$ is torsion free and vanishes if $L / K$ is separable. 
	\end{itemize}
	These properties will be referred to as $(\star)$.
	Later, we will show that $H_1(\mathcal{L} _{L^{\circ} / K^{\circ}})$ does not vanish in the inseparable case.

	Suppose that $L / K$ is finitely generated, i.e.,\ $L = K(x_1, \ldots, x_n)$ for some $x_1, \ldots, x_n \in L$. 
	We induct on $n$. 
	Let $F = K(x_1, \ldots, x_{n-1})$.
	Consider the tower of (log) rings $K^{\circ} \to F^{\circ} \to L^{\circ}$. 
	The associated distinguished triangle of (log) cotangent complexes is \[
	\mathcal{L}_{F^{\circ} / K^{\circ}} \derotimes L^{\circ} \to \mathcal{L} _{L^{\circ} / K^{\circ}} \to \mathcal{L} _{L^{\circ} / F^{\circ}} 
	.\] 
	By induction, $\mathcal{L} _{F^{\circ} / K^{\circ}}$ satisfies $(\star)$. 
	The extension  $L / F$ is either finite or purely transcendental. So $\mathcal{L} _{L^{\circ} / K^{\circ}}$ satisfies $(\star)$ by \Cref{prop:ctc_finite_ext} and \Cref{prop:pure_trans_deg_0} respectively. 
	From the long exact sequence associated with the distinguished triangle, we easily verify $(\star)$ for $\mathcal{L} _{L^{\circ} / K^{\circ}}$.  

	For general $L / K$ we may conclude $(\star)$ by taking the colimit over all subfields of $L$ that are finitely generated over $K$. 

	Suppose that $L / K$ is inseparable. 
	Then $H_1(\mathcal{L} _{L ^{\circ} / K^{\circ}}) \otimes L \simeq H_1(\ctc_{L / K}) \ne 0$. 
	Therefore, $H_1(\mathcal{L} _{L ^{\circ} / K^{\circ}}) \ne 0$. 
\end{proof}

\section{Semi-valuations on differentials and the absolute log different} \label{sec:the_kahler_valuation_and_the_absolute_log_different}
\subsection{The Kähler semi-valuation and key polynomials} \label{sec:the_kahler_norm_and_key_polynomials}

\begin{para}
	Let $(A, v_A) \to (B, v_B)$ be a morphism of semi-valued rings. 
	In \cite{temkinMetrizationDifferentialPluriforms2016}, Temkin defines the Kähler norm on $\Omega_{B / A}$ as the maximal seminorm such that the map $\dd: (B, v_B) \to (\Omega_{B / A})$ is non-expansive. 
	In our additive language, we define the \emph{Kähler semi-valuation}, $\kv: \Omega_{B / A} \to \R(\infty)$ as the minimal semi-valuation on $\Omega_{B/ A}$ such that $\kv(\dd b) \ge v_B(b)$ for all $b \in B$.  
	When considering multiple morphisms of rings, we might decorate $\kv$ as $\kv_{B / A}$ or $\kv_{v_B}$ to avoid confusion. 
\end{para}

\begin{para}
	One easily checks that the Kähler semi-valuation is compatible with localization in the following way. 
	Let $S$ be a multiplicative set of $B$ with $\infty \not\in v_B(S)$ , then $\kv_{S^{-1}B / A}: \Omega_{S^{-1}B / A} \to \R(\infty): \omega / s \mapsto \kv_{B / A}(\omega) - w(s)$. 
\end{para}

\begin{para}
	Now suppose that $(L, v_L) / (K, v_K)$ is an extension of valued fields. 
	Then $\kv_{L / K}$ is related to the $\lOmega_{L^{\circ} / K^{\circ}}$ as follows. 
	The morphism $\lOmega_{L^{\circ} / K^{\circ}} \to \Omega_{L / K}$ naturally identifies $(\lOmega_{L^{\circ} / K^{\circ}})_{tf} $ as a semilattice in $\Omega_{L / K}$, and this semilattice is an almost unit ball of $\kv_{L / K}$.
	This is easily seen from the observation that $\kv(\dlog f) \ge 0$ is equivalent to $\kv f \ge v_L(f)$ for every $f \in L$.
	See \cite[Theorem~5.1.8]{temkinMetrizationDifferentialPluriforms2016} for more details. 
\end{para}

\begin{para}
	A priori, evaluating the Kähler semi-valuation seems like a difficult problem. 
	When $B$ is uncountable, $\kv$ is defined in terms of an uncountable number of inequalities $\kv(\dd b) \ge v_B(b)$ for any $b \in B$. 
	One may wonder whether there is a smaller and more tractable subset of $B$ which is sufficient to define $\kv$. 
	The main result of this section \Cref{thm:comp_kahler} roughly states that when $(A, v_A) = (K, v_K)$ and $(B, v_B) = (K[T], w)$, such a set is given by the key polynomials in a MacLane-Vaquié chain approximating $w$. 
\end{para}
\begin{para}
	For the rest of this section, let $(K[T], w)$ be a semi-valued $(K, v_K)$ algebra with $\ker w = \{0\} $. 
	Let $A \subset K[T]$ be a subset. 
	A semi-value $u$ on $\Omega_{K[T] / K} = K[T] \cdot \dd T$ is \emph{$A$-non-expansive} if for any $a \in A$ we have that $w(\dd a) \ge v(a)$.
	We say $u$ is \emph{non-expansive} if it is $K[T]$-non-expansive.
	Let $(\fra_i)_{i = 1}^{n}$ be a $w$-optimal augmentation chain on a simple Gauss valuation $v_0 = [v_K, T-a, w(T-a)]$ approximating a valuation $w'$. 
	Let $\keys((\fra_i)_{i = 1}^{n})$  be the set consisting of $T-a$ and all key polynomials in $(\fra_i)_{i = 1}^{n}$, including the key polynomials in the unstable families of the limit augmentations.
	Further, we write $K[T]_{v_n = w} = \{f \in K[T] \mid v_n(f) = w\}$.
\end{para}

\begin{lemma}\label{lem:nonexpansive_keys}
	With notation as above, let $u$ be a semi-valuation on $\Omega_{K[T] / K}$.
	Then, the following statements are equivalent,
	\begin{itemize}
		\item[(i)] $u$ is $\keys((\fra_i)_{i = 1}^{n})$ non-expansive,
		\item[(ii)] $u$ is $K[T]_{w' = w}$ non-expansive.
	\end{itemize}
\end{lemma}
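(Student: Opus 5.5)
The plan is to prove the two implications separately. (ii)$\Rightarrow$(i) is a membership check; (i)$\Rightarrow$(ii) uses a Leibniz-rule estimate on a monomial expansion in the key polynomials. Throughout, ``$A$-non-expansive'' means $u(\dd a) \ge w(a)$ for $a \in A$, and $u$ is $K$-linear-compatible: $u(c\,\omega) = v_K(c) + u(\omega)$ for $c \in K$.

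For (ii)$\Rightarrow$(i) I would show $\keys((\fra_i)_{i=1}^{n}) \subset K[T]_{w'=w}$.
\begin{itemize}
\item For $T-a$: by assumption $v_0(T-a) = w(T-a)$, and $v_0 \le w' \le w$, so $w'(T-a) = w(T-a)$.
\item For a key polynomial $\phi_i$ of an ordinary or limit augmentation: $w$-optimality gives $\mu_i = w(\phi_i)$. Since $[\fra_i] \le w' \le w$ and $[\fra_i](\phi_i) = \mu_i$, also $w'(\phi_i) = w(\phi_i)$.
\item For the $\psi_{i,k}$ in a continuous family: $w$-optimality gives $\gamma_k = w(\psi_k)$, and $[v,\psi_k,\gamma_k](\psi_k) = \gamma_k \le w'(\psi_k) \le w(\psi_k)$, so again equality holds.
\end{itemize}
Hence $K[T]_{w'=w}$-non-expansiveness restricts to $\keys$-non-expansiveness.

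For (i)$\Rightarrow$(ii), take $f$ with $w'(f) = w(f)$; if this value is $\infty$ then $f=0$ and there is nothing to show, since $\ker w = 0$. The key step is to write $f = \sum_J c_J \Phi^J$, a finite sum with $c_J \in K$ and each $\Phi^J$ a finite product of elements of $\keys$, such that
\[
w'(f) = \min_J w'(c_J\Phi^J).
\]
I would build this recursively. Choose a stage $v_m$ of the chain with $v_m(f) = w'(f)$; it exists because either the chain is finite, or $w'$ is a stable limit and $f$ stabilizes at a finite stage. Take the $\phi_m$-expansion of $f$; since $v_m$ is defined as the minimum over this expansion, the minimum is attained termwise. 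Each coefficient $a_j$ has $\deg a_j < \deg\phi_m$, so for a limit augmentation the values $v_k(a_j)$ on the continuous family stabilize. I would then choose $k$ large enough that $a_j$ is stabilized, expand $a_j$ in $\psi_k$, and recurse into the previous stages, ending at $v_0$, where expansions in $T-a$ have coefficients in $K$.

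Given such an expansion, Leibniz gives
\[
\dd(c_J\Phi^J) = c_J \sum_{\phi \mid \Phi^J} (\text{mult})\cdot \frac{\Phi^J}{\phi}\,\dd\phi .
\]
Bounding with $u(\dd\phi) \ge w(\phi)$ and multiplicativity of $w$, each summand has $u$-value at least $v_K(c_J) + \sum w(\phi) = w(c_J\Phi^J)$; integer multiplicities only increase the value. Since all keys satisfy $w = w'$ by the first part, $w(c_J\Phi^J) = w'(c_J\Phi^J)$. Therefore
\[
u(\dd f) \ge \min_J w'(c_J\Phi^J) = w'(f) = w(f).
\]

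The main obstacle is making the recursive expansion rigorous, specifically that the minimum is preserved through nested expansions. The point to verify is that expanding the coefficients $a_j$ at an earlier stage does not lower their value below $v_m(a_j)$. This should follow from Nart's results: $v_{m-1}(a_j) = v_m(a_j)$ for $\deg a_j < \deg\phi_m$, and the analogous stabilization statement for limit key polynomials. Those same results should also handle the choice of $k$ inside a continuous family; I would cite \cite[Corollary~2.5, Proposition~3.5]{nartMacLaneVaquieChains2021} for both.
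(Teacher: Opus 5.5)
Your proof is correct. It differs from the paper's in a way worth noting.

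\textbf{The paper's route.} The paper inducts on the length of the chain, with the hypothesis ``$u$ is $K[T]_{v_{n-1}=w}$-non-expansive'', and applies the Leibniz rule to a single $\phi_n$-expansion $f=\sum_i a_i\phi_n^i$. To use the hypothesis on the coefficients, it needs $v_{n-1}(a_i)=w(a_i)$; in the limit case it needs the continuous family to reach $w(a_i)$. It deduces this from $\deg a_i<\deg\phi_n$ alone. That deduction is true for a non-trivial augmentation ($\mu_n>v_{n-1}(\phi_n)$), but only via a Nart-type argument that no polynomial of degree below $\deg\phi_n$ is $w$-unstable for $v_{n-1}$. The paper leaves this implicit.

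\textbf{Your route.} You unroll the induction into a monomial expansion in the keys whose value at the relevant stage is attained termwise. The only stability you ever use is that of the keys themselves, which you already proved for (ii)$\Rightarrow$(i). In effect you prove the stronger bound $u(\dd f)\ge v_m(f)$ for every $f$ and every stage $v_m$. The hypothesis $w'(f)=w(f)$ enters only in your last line. The price is the bookkeeping of nested expansions, which the paper's induction hides.

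\textbf{Small points.}
\begin{enumerate}
\item The ``obstacle'' you flag is definitional, not something to cite from Nart. One has $[v,\phi,\mu](a)=v(a)$ for $\deg a<\deg\phi$ by the definition of the augmented valuation. The limit case is just the definition of a limit key polynomial.
\item What actually closes your recursion is that a later stage agrees with an earlier one on monomials in earlier keys. This follows from $v_k(\phi_k)=w(\phi_k)$ together with $v_k\le v_m\le w$.
\item That inequality, and the $w'\le w$ you take for granted, hold because $[v,\phi,w(\phi)]\le w$ whenever $v\le w$. To see this, compare termwise in the $\phi$-expansion; the same argument works for limit augmentations.
\item Your Leibniz estimate uses $u(g\,\omega)=w(g)+u(\omega)$ for $g\in K[T]$, not only for constants in $K$ as your opening convention states. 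This is available because $u$ is a semi-valuation of $(K[T],w)$-modules.
\end{enumerate}
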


\begin{proof}
	As usual, we write $v_i = [\fra_i]$. 
	Note that $\keys((\fra_i)_{i = 1}^{n})$ is a subset of $K[T]_{v_n = w}$ and thus the implication from (ii) $\implies$ (i) is trivial.  
	We will prove the implication (i) $\implies$ (ii) by induction on $n$. 
	First suppose that $n = 0$, and let $f \in (K[T], v_0)$ be $w$-stable. 
	Let $f = \sum_{i = 0}^{m}a_i (T-a)^{i}$ be the $T-a$ expansion. 
	Then $f'\cdot (T-a) = \sum_{i = 0}^{m} i \cdot a_i (T-a)^{i}$.
	Clearly
	\[
	w(f) = v_0(f) \le v_0(f'(T-a)) \le w(f'(T-a)) \le w(f')\cdot u(\dd T-a) = u(\dd f)
	,\]
	thus $u$ is $K[T]_{v_0 = w}$-non-expansive.

	Suppose that the theorem holds for chains of length $n-1$.
	We first consider the case in which $\fra_n = (v_{n-1}, \phi_n, \mu_n)$ is an ordinary augmentation. 
	Let $f \in K[T]_{v_n = w}$ and let $f = \sum_{i = 0}^{m}a_i \phi_n ^{i}$ be the $\phi_n$ expansion. 
	Then 
	\begin{align*}
		w(f) = v_0(f) &= \min_{i} \{i\cdot w(\phi_n) + w(a_i)\}   \\
			      &\le \min_{i} \{i w(\phi_n) + u(\dd a_i), w(a_i) + (i-1) w(\phi_n) + u(\dd \phi_n)\} \\
			      &\le u \left( \sum_{i = 0}^{m} \phi_n^{i} \dd a_i + a_i\cdot i\cdot \phi_n^{i-1} \dd \phi_n \right) = u (\dd f)
	.\end{align*}
	Here we use that $u(\dd \phi_n) \ge w(\phi_n)$ as $u$ is $\phi_n$-non-expansive, and that $u(\dd a_i) \ge w(a_i)$ due to the induction hypothesis and the fact that $\deg a_i < \deg \phi_n$, thus $a_i \in K[T]_{v_{n-1} = w}$. 
	If $\fra_n = (v_{n-1}, (\psi_j, \gamma_j)_{j \in J})$ is a stable continuous family, then the same holds when using the $\psi_j$ expansion of $f$ where $i$ is chosen such that $[v_{n-1}, \psi_j, \gamma_j](f) = w(f)$. 

	Now suppose that $\fra_n = (v_{n-1}, (\psi_j, \gamma_j)_{j \in J}, \phi_n, \mu_n)$ is a limit augmentation. 
	Again, let $f \in K[T]_{v_n = w}$ and let $f = \sum_{i = 1}^{m} a_i \phi_n ^{i}$ be the $\phi_n$ expansion. 
	Choose some $j \in J$ sufficiently large such that $[v_{n-1}, \psi_j, \gamma_j](a_i) = w(a_i)$ for all $a_i$. 
	Write $\frb = (v_{n-1}, \psi_j, \gamma_j)$. 
	Then $(\fra_1, \ldots, \frb)$ is a $w$-optimal augmentation chain of length $n$ for which we have just shown that the theorem holds. 
	As $a_i \in K[T]_{[\frb] = w}$ we know that $u(\dd a_i) \ge w(a_i)$. 
	The same computation as in the ordinary case shows that  $u(\dd f) \ge w(f)$, hence the theorem holds for finite $n$. 

	Finally, we assume that $n = \infty$. 
	Then the values $(v_i)_{i =1}^{n}$ have a stable limit. 
	Let $f \in K[T]_{w' = w}$, then for some sufficiently large $m$, $f \in K[T]_{v_m = w}$. 
	Then $u$ is $\keys((\fra_i)_{i = 1}^{m})$-non-expansive, and thus $u(\dd f) \ge w(f)$. 
\end{proof}

\begin{theorem}\label{thm:comp_kahler}
	Let $w $ be a valuation on $K[T]$ and let $(\fra_i)_{i = 1}^{n}$ be a $w$-optimal augmentation chain approximating $w$ on a Gauss valuation $v_0 = [v_K, T-a, w(T-a)]$.
	Then $\kv_{w}$ is the unique semi-valuation on $\Omega_{K[T] / K}$ as a $(K[T], w)$-module with 
	\[
	\kv_w(\dd T) = \sup \{w(\phi) - w(\phi') \mid \phi \in \keys\left( (\fra_i)_{i = 1}^{n}  \right) \} 
	.\] 
\end{theorem}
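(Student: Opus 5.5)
The argument reduces everything to the rank-one structure of $\Omega_{K[T]/K} = K[T]\cdot \dd T$. A semi-valuation $u$ on this module, viewed as a $(K[T],w)$-module, is determined by the single value $u(\dd T)=c$, because $u(g\,\dd T) = w(g) + c$. For $f \in K[T]$ we have $\dd f = f'\,\dd T$, so $u(\dd f) = w(f') + c$. Hence $u$ is $A$-non-expansive exactly when $c \ge w(f) - w(f')$ for all $f \in A$ with $f' \ne 0$. The Kähler semi-valuation $\kv_w$ is the minimal non-expansive semi-valuation, so
\[
\kv_w(\dd T) = \sup\{ w(f) - w(f') \mid f \in K[T] \}.
\]
Polynomials with $f' = 0$ impose no condition. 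If $w$ is only defined on $K[T]$, localization extends this to $K(T)$ by the compatibility paragraph above.

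The claim is therefore that this supremum over all of $K[T]$ equals the supremum $c_0$ over $\keys((\fra_i)_{i=1}^{n})$. The inequality $c_0 \le \kv_w(\dd T)$ is immediate, since the key set is contained in $K[T]$.

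For the reverse inequality, let $u_0$ be the semi-valuation with $u_0(\dd T) = c_0$. By construction $u_0$ is $\keys$-non-expansive. Since the chain is $w$-optimal and approximates $w$, we have $w' = w$ in the notation of \Cref{lem:nonexpansive_keys}. That lemma then shows $u_0$ is $K[T]_{w'=w}$-non-expansive, and here $K[T]_{w'=w} = K[T]$. So $u_0(\dd f) \ge w(f)$ for every $f$, i.e.\ $c_0 \ge w(f) - w(f')$ for all $f$. This gives $c_0 \ge \kv_w(\dd T)$. Uniqueness is automatic, because a semi-valuation of this form is pinned down by its value on the generator $\dd T$.

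The step needing most care is the identification $w' = w$ together with the applicability of \Cref{lem:nonexpansive_keys}. The lemma is stated for chains on a simple Gauss valuation $v_0 = [v_K, T-a, w(T-a)]$, whereas here $v_0$ is only a Gauss valuation. I would first check that the lemma's base case never uses $\mu \in \Gamma_K$, only that $v_0$ agrees with $w$ on $v_0$-stable elements. If it does use it, I would prepend an ordinary augmentation as in \Cref{cor:optimal_simple_chain}; this adds only $T-a$ to the key set, which is already included.

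A second subtlety is the case of an infinite chain with a stable limit. There every $f$ satisfies $v_m(f) = w(f)$ for large $m$, which the lemma's final paragraph already handles. The supremum may be $+\infty$, and then both sides are infinite. That case is consistent with the statement if semi-valuations are allowed to take the value $\infty$ on $\dd T$.
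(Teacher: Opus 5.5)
Your proposal is correct and is essentially the paper's proof. Both reduce to the single value on $\dd T$ via $\dd f = f'\,\dd T$, then invoke \Cref{lem:nonexpansive_keys} with $w' = w$, so that $K[T]_{w'=w} = K[T]$, to replace non-expansiveness on all of $K[T]$ by non-expansiveness on $\keys((\fra_i)_{i=1}^{n})$, which is exactly the condition $\kv_w(\dd T) \ge w(\phi) - w(\phi')$ for each such $\phi$. Your extra checks also hold up: the lemma's base case never uses $w(T-a) \in \Gamma_K$, and an infinite supremum simply yields the constant-$\infty$ semi-valuation.
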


\begin{proof}
	By \Cref{lem:nonexpansive_keys} $\kv$ is the minimal semi-valuation for which $\kv(\dd\phi) = w(\phi') + \kv(\dd T) \ge w(\phi)$ for every $\phi \in  \keys\left( (\fra_i)_{i = 1}^{n}\right)$. 
	Equivalently, $\kv$ is the minimal semi-valuation for which $\kv(\dd T) \ge w(\phi) - w(\phi')$ for each $\phi \in \keys\left( (\fra_i)_{i = 1}^{n}\right)$, from which the theorem follows. 
\end{proof}

\begin{remark}
	We give this proof of \Cref{thm:comp_kahler}, because it is very elementary.
	However, we can also deduce this from the computations in \Cref{sec:the_computations}. 
	We sketch an alternative proof.
	The set $\{\dlog \phi \mid \phi \in  \keys((\fra_i)_{i = 1}^{n})\} $ generates $\Omega^{\text{log}}_{K(T)^{\circ} / K^{\circ}}$ by \Cref{rem:total_generators_log_differentials}. 
	For this module to be an almost unit ball of $\kv$, $\kv$ has to be the smallest semi-valuation such that $\kv(\dlog \phi) \ge 0$ for all $\phi \in \keys( (\fra_i)_{i = 1}^{n})$. 
	The condition $\kv(\dlog \phi) \ge 0$ is equivalent to $\kv(\dd\phi) \ge w(\phi)$ which is equivalent to $\kv(\dd T) \ge w(\phi) - w(\phi')$.
\end{remark}

\subsection{The absolute log different} \label{sec:the_absolute_log_different}

\begin{definition}
	Let $L / K$ be an extension of valued fields. 
	The \emph{absolute} log different of $L / K$ is 
	\[
	\aldiff_{L / K} = \cont\left(\left(\Omega_{\frac{(\const L^{\circ} \to L^{\circ})}{(\const K^{\circ} \to K^{\circ})}}^{\text{log}}\right)_{\text{tor}}\right)
	,\]  when $L / K$ is separable and $\aldiff_{L / K} = \infty$ when $L / K$ is inseparable. 
\end{definition}
\begin{para}
	This is a version of the log different that also makes sense for non-algebraic field extensions and was introduced in \cite{temkinMetrizationDifferentialPluriforms2016}.  
	Note that in \cite{temkinMetrizationDifferentialPluriforms2016} this is simply called the log different. 
	But as this is quite a more subtle invariant than the classical log different, we prefer to give it a different name. 
\end{para}

\begin{proposition}\label{prop:ldisc_kv_ldiff}
	Let $(L, v_L) / (K, v_K)$ be a separable extension of transcendence degree 1 (e.g.,  $L = K(T)$) and let $\omega \in \lOmega_{L^{\circ} / K^{\circ}}$. 
	Then $\ldisc \omega = \kv(\omega_L) + \ldiff_{L / K}$, where $\omega_L$ denotes the image of $\omega$ in $\Omega_{L / K}$. 
\end{proposition}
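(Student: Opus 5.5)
The plan is to split the quotient defining $\ldisc \omega$ into a torsion part and a torsion-free part, and then identify the contents of the two pieces separately. Throughout, $\ldiff_{L/K}$ in the statement is read as the absolute log different $\aldiff_{L/K}$ of the preceding definition, and we take the valued field in \Cref{def:(log)-discrepancy} to be $L$ itself; this choice is harmless by the remark after that definition.

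Write $M := \lOmega_{(\const L^{\circ} \to L^{\circ})/(\const K^{\circ} \to K^{\circ})}$, and write $M_{\mathrm{tor}}$ and $M_{tf} = M/M_{\mathrm{tor}}$ for its torsion part and torsion-free quotient.

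\textbf{Degenerate case.} If $\omega_L = 0$, then $\omega$ is torsion, since $M \otimes L = \Omega_{L/K}$. Because $L/K$ is separable of transcendence degree $1$, $\Omega_{L/K}$ is one-dimensional. Hence $M/L^{\circ}\omega$ still has rank $1$, so its content is $\infty$. The right-hand side is also $\infty$, because $\kv(0) = \infty$. So both sides agree.

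\textbf{Splitting.} Assume from now on that $\omega_L \neq 0$. Then $L^{\circ}\omega \cap M_{\mathrm{tor}} = 0$, and the surjection $M/L^{\circ}\omega \to M_{tf}/L^{\circ}\omega_L$ has kernel exactly $M_{\mathrm{tor}}$. This gives a short exact sequence
\[
0 \to M_{\mathrm{tor}} \to M/L^{\circ}\omega \to M_{tf}/L^{\circ}\omega_L \to 0 .
\]
By \Cref{lem:cont_additive} we get
\[
\ldisc\omega = \cont(M/L^{\circ}\omega) = \cont(M_{\mathrm{tor}}) + \cont(M_{tf}/L^{\circ}\omega_L) .
\]
The first summand is $\aldiff_{L/K}$ by definition, since $L/K$ is separable.

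\textbf{The torsion-free piece.} It remains to show that $\cont(M_{tf}/L^{\circ}\omega_L) = \kv(\omega_L)$. By the paragraph recalling \cite[Theorem~5.1.8]{temkinMetrizationDifferentialPluriforms2016}, $M_{tf}$ embeds in the one-dimensional $L$-vector space $\Omega_{L/K}$ as an almost unit ball for $\kv = \kv_{L/K}$. Concretely,
\[
\{x : \kv(x) > 0\} \subseteq M_{tf} \subseteq \{x : \kv(x) \ge 0\} .
\]
Fix a nonzero $\eta \in \Omega_{L/K}$ and identify $\Omega_{L/K} = L\eta$. Then $\kv(c\eta) = v_L(c) + \kv(\eta)$. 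Under this identification, $M_{tf}$ is sandwiched between $L^{>r}\eta$ and $L^{\ge r}\eta$, where $r = -\kv(\eta)$, and $L^{\circ}\omega_L = L^{\ge s}\eta$ with $s = v_L(\omega_L/\eta)$. So $\kv(\omega_L) = s - r$.

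Now compute the contents of the two extreme quotients. The module $L^{\ge r}\eta/L^{\ge s}\eta$ is isomorphic to $L^{\ge r}/L^{\ge s}$, and its content is $s - r$: when $r \in \Gamma_L$ this is a finitely presented cyclic module, and in general one writes it as a filtered union and applies \Cref{lem:cont_union}. The module $L^{>r}\eta/L^{\ge s}\eta$ differs from it by $L^{\ge r}/L^{>r}$. That module has content $0$: it is $0$ when $r \notin \Gamma_L$, it is $0$ when $\Gamma_L$ is dense (its finitely presented subquotients are almost zero), and in the discrete case it is dealt with by taking $M_{tf}$ to be an honest unit ball. Applying \Cref{lem:cont_additive} to the chain $L^{\ge s}\eta \subseteq L^{>r}\eta \subseteq M_{tf} \subseteq L^{\ge r}\eta$ then gives $\cont(M_{tf}/L^{\circ}\omega_L) = s - r = \kv(\omega_L)$, which finishes the proof.

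\textbf{Main obstacle.} I expect the delicate step to be this last content computation for non-finitely-presented rank-one modules and almost-zero quotients. In particular one has to check carefully, via \Cref{def:cont}, that $L^{\ge r}/L^{>r}$ contributes nothing. One also has to make sure that ``almost unit ball'' is exactly the sandwich used above, in the normalization of \cite[§~5.1]{temkinMetrizationDifferentialPluriforms2016} translated into additive language.
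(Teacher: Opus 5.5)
Your proposal is correct and is the paper's proof: the same short exact sequence $0\to M_{\mathrm{tor}}\to M/L^{\circ}\omega\to M_{tf}/L^{\circ}\omega_L\to 0$, additivity of content, and the almost-unit-ball property of $M_{tf}$ to identify the last content with $\kv(\omega_L)$. The paper also reads $\ldiff_{L/K}$ as $\aldiff_{L/K}$, and it does the last step in one line.

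The one step you only assert, the discrete case, is justified as follows. We have $\kv(\omega)=\sup\{\min_i v_L(a_ib_i)\mid \omega=\sum_i a_i\,\dd b_i\}$ and $\sum_i a_i\,\dd b_i=\sum_i a_ib_i\,\dlog b_i$. Hence $\kv$ is exactly the gauge valuation $\omega\mapsto\sup\{v_L(c)\mid \omega\in c\,M_{tf}\}$ of $M_{tf}$. When $\Gamma_L$ is discrete this supremum is attained, so $M_{tf}=\{\kv\ge 0\}$ exactly. The gauge description gives $\cont(M_{tf}/L^{\circ}\omega_L)=\kv(\omega_L)$ in every case. This includes $\kv(\omega_L)=0$, where your chain $L^{\ge s}\eta\subseteq L^{>r}\eta$ does not literally hold.
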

\begin{proof}
	If $\omega$ is torsion, then $\ldisc \omega = \infty$ and  $\omega_L = 0$, thus $\kv \omega_L = \infty$ and the equality holds. 
	Suppose that $\omega$ is not torsion. 
	Then we easily verify that the following sequence is SES. 
	\[
		0 \to \left(\lOmega_{L^{\circ} / K^{\circ}}\right)_{\text{tor}} \to \frac{\lOmega_{L^{\circ} / K^{\circ}}}{L^{\circ}\cdot \omega} \to \frac{\left(\lOmega_{L^{\circ} / K^{\circ}}\right)_{\text{tf}}}{L^{\circ}\cdot \omega} \to 0
	.\] 
	The content of the first two modules are by definition $\ldiff_{L / K}$ and  $\ldisc \omega $ respectively. 
	As $\left(\lOmega_{L^{\circ} / K^{\circ}}\right)_{\text{tf}}$ is an almost unit ball of $\kv$ we see that the content of the third module is $\kv \omega_L $. 
	The result follows from the additivity of content.
\end{proof}
\begin{remark}\label{rem:hint_weight}
	When $K$ is complete and discretely valued and $L$ discretely valued then $\kv(\omega_L) + \ldiff_{L / K}$ is actually equal to the weight of $\omega$ in \cite{mustataWeightFunctionsNonArchimedean2015}, see \cite[Theorem 8.3.3]{temkinMetrizationDifferentialPluriforms2016}.
	So one may characterize the weight semi-valuation $\wt$ on $\Omega_{L / K}$ as the unique semi-valuation such that $\ldisc(\omega) = \wt(\omega_L)$ for every $\omega \in \lOmega _{L^{\circ} / K^{\circ}}$.
\end{remark}

\begin{para}
	If we can find an $\omega \in \Omega_{L^{\circ} / K^{\circ}}$ such that $\ldisc \omega$,  $\kv \omega_L$ are computable and $\kv \omega_L \ne \infty$, then we can also compute the absolute log different $\aldiff_{L / K}$.
	This is essentially what the first part of the following theorem states. 
	The next part shows that even if there are no such elements, we can still compute the absolute log different by considering the log different on intermediate approximations of $w$, which on Berkovich spaces can be seen as a sort of continuity property of the absolute log different at type IV and non-rigid type I points. 
\end{para}
\begin{theorem}\label{thm:comp_log_different}
	Let $w$ be a valuation on $L = K(T)$ and let $(\fra_{i})_{i = 1}^{n}$ be a $w$-optimal augmentation chain on a Gauss valuation $v_0 = [v_K, \phi_0, w(\phi_0)]$ approximating $w$. 
	If $\step (\fra_{i})_{i = 1}^{n}< \infty$, then 
	\[
		\ldiff_{(K(T), w) / K} = \step (\fra_{i})_{i = 1}^{n} + w(\phi_0) - \sup \{w(\phi) - w(\phi') \mid \phi \in \keys\left( (\fra_i)_{i = 1}^{n}  \right) \} 
	.\] 
	If $\step (\fra_i)_{i = 1} = \infty$, then 
	\[
		\aldiff_{(K(T), w) / K} = \lim_{j \in I} \aldiff_{(K(T), v_j) / K}
	,\] 
	where either 
	\begin{itemize}
		\item $n = \infty$, $I = \N$, and $v_j = [\fra_j]$ or
		\item $n$ is finite, $\fra_n = ([\fra_{n-1}], (\psi_i, \gamma_i)_{i \in I})$ is a stable limit, and $v_j = [[\fra_{n-1}], \psi_j, \gamma_j] $. 
	\end{itemize} 
	Moreover, $(\aldiff_{(K(T), v_j) / K})_{j \in I}$ is non-decreasing, and each element of the sequence can be computed as in the first case. 
\end{theorem}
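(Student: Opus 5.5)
The plan is to combine \Cref{prop:ldisc_kv_ldiff}, \Cref{thm:disc_and_log_disc_pure_trans}(ii) and \Cref{thm:comp_kahler}, applied to the single differential $\omega = \dlog \phi_0$. (In the first formula, $\ldiff$ is to be read as the absolute log different $\aldiff$.) Since $L = K(T)$ is separable of transcendence degree $1$, \Cref{prop:ldisc_kv_ldiff} gives
\[
\ldisc \dlog \phi_0 = \kv(\dlog \phi_0) + \aldiff_{L/K}.
\]
Because $v_0 = [v_K, \phi_0, w(\phi_0)]$ has log-radius $\mu = w(\phi_0)$, \Cref{thm:disc_and_log_disc_pure_trans}(ii) identifies the left-hand side with $\step (\fra_i)_{i=1}^{n}$.

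For the Kähler term, note that $\phi_0$ is linear, so $\phi_0' = 1$ and $\kv(\dlog\phi_0) = \kv(\dd T) - w(\phi_0)$. By \Cref{thm:comp_kahler},
\[
\kv(\dd T) = \sup\{w(\phi) - w(\phi') \mid \phi \in \keys((\fra_i)_{i=1}^n)\}.
\]
When the step is finite, $\ldisc\dlog\phi_0$ is finite, so $\kv(\dlog\phi_0)$ is finite as well, since it is bounded by $\ldisc$ (content is nonnegative). Solving for $\aldiff$ then gives exactly the first formula. Before doing so, I must check that \Cref{thm:disc_and_log_disc_pure_trans}(ii) applies with $\dlog\phi_0$ viewed in $\lOmega_{L^\circ/K^\circ}$, which uses the abuse of notation $\dlog\phi_0 = \dlog(\theta^{-1}\phi_0)$. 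I also need a $w$-optimal chain on such a non-simple Gauss valuation; this holds because $\mu = w(\phi_0)$ is part of the hypothesis.

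For the infinite-step case, each approximant $v_j$ is approximated by the truncated chain: $(\fra_1,\dots,\fra_j)$ when $n = \infty$, or $(\fra_1,\dots,\fra_{n-1},(v_{n-1},\psi_j,\gamma_j))$ in the stable-limit case. These truncated chains are $v_j$-optimal and have finite step, so the first case computes $\aldiff_{(K(T),v_j)/K}$.

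The main obstacle is proving that $\aldiff$ is non-decreasing along $j$ and converges to $\aldiff$ for $w$. I would realise $\lOmega_{L^\circ/K^\circ}\otimes L^\circ$ as the increasing union of the modules $\lOmega_{R_j/R_0}\otimes L^\circ$ coming from \Cref{thm:computation_main_log}, where injectivity of the transition maps comes from the distinguished triangles in that proof. Torsion parts of an increasing union form an increasing union, so \Cref{lem:cont_union} gives
\[
\cont\bigl((\lOmega)_{\mathrm{tor}}\bigr) = \lim_j \cont\bigl((\lOmega_{R_j/R_0}\otimes L^\circ)_{\mathrm{tor}}\bigr).
\]
The delicate point is identifying each torsion content with $\aldiff_{(K(T),v_j)/K}$, even though the modules are tensored with $L^\circ$ for $w$ rather than $v_j$. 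I would handle this with \Cref{lem:cont_invariance}, together with the fact that $R_j$ differs from $(K(T),v_j)^\circ$ only by a localization at $w$-stable elements of value $0$ (\Cref{lem:maximal_localisation_log}). Monotonicity of the sequence then follows from the injectivity of the transition maps.
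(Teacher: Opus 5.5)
Your finite-step argument is the paper's: \Cref{prop:ldisc_kv_ldiff} for $\omega=\dlog\phi_0$, with \Cref{thm:disc_and_log_disc_pure_trans}(ii) and \Cref{thm:comp_kahler} supplying the two terms. In the infinite-step case your skeleton also matches the paper: an increasing union of torsion parts, \Cref{lem:cont_union}, and monotonicity from injectivity. Two things go wrong, however.

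First, the base must be $K^{\circ}$, not $R_0$. Each $\lOmega_{R_j/R_0}\otimes L^{\circ}$ is entirely torsion, of content $\step\mathfrak{A}_j$. Their union is $\lOmega_{L^{\circ}/R_0}=\lOmega_{L^{\circ}/K^{\circ}}/L^{\circ}\dlog\phi_0$, which has content $\step(\fra_i)_{i=1}^n=\infty$. So your displayed limit is $\infty$, not $\aldiff$. You need $M_j:=\lOmega_{R_j/K^{\circ}}\otimes L^{\circ}$, which contains $L^{\circ}\dlog\phi_0$ with quotient $\lOmega_{R_j/R_0}\otimes L^{\circ}$. Since $(M_j)_{\mathrm{tor}}\cap L^{\circ}\dlog\phi_0=0$, injectivity of $(M_i)_{\mathrm{tor}}\to(M_j)_{\mathrm{tor}}$ follows from injectivity of $\lOmega_{R_i/R_0}\otimes L^{\circ}\to\lOmega_{R_j/R_0}\otimes L^{\circ}$.

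Second, and this is the genuine gap, your route to $\cont((M_j)_{\mathrm{tor}})=\aldiff_{(K(T),v_j)/K}$ does not work.
\begin{itemize}
\item \Cref{lem:cont_invariance} needs an extension of valued fields. But $(K(T),v_j)$ and $(K(T),w)$ are two distinct rank-one valuations on the same field, so neither valuation ring contains the other.
\item The ``fact'' you cite is false. $(K(T),v_j)^{\circ}$ is obtained from $R_j$ by inverting all elements of $v_j$-value $0$. These include $\kappa^{-1}f$ for any $f\in K[T]$ with $v_j(f)<w(f)$ and $\kappa$ a constant with $v_j(\kappa)=v_j(f)$. Such an element has positive $w$-value, so $R_j\to L^{\circ}$ does not factor through $(K(T),v_j)^{\circ}$.
\item \Cref{lem:maximal_localisation_log} only compares localizations for one fixed valuation, so it says nothing here.
\end{itemize}
There is no general principle that transports torsion content between these two incomparable base changes. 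Each is $\step\mathfrak{A}_j$ minus a K\"ahler-type content computed with its own valuation.

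The paper instead computes the torsion content directly from the exact sequence
\[
0\to(M_j)_{\mathrm{tor}}\to\lOmega_{R_j/R_0}\otimes L^{\circ}\to(M_j)_{\mathrm{tf}}/L^{\circ}\dlog\phi_0\to0 .
\]
The middle term has content $\step\mathfrak{A}_j$. By \Cref{rem:total_generators_log_differentials}, $M_j$ is generated by the $\dlog\phi$ with $\phi\in\keys(\mathfrak{A}_j)$. Hence the right term has content $\sup\{w(\phi)-w(\phi')\mid\phi\in\keys(\mathfrak{A}_j)\}-w(\phi_0)$. One then checks that $w$ and $v_j$ agree on $\phi_0$, on the keys of $\mathfrak{A}_j$ and on their derivatives. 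The result is therefore exactly the finite-step formula for $v_j$ with the chain $\mathfrak{A}_j$.
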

\begin{para}
	Note that the case in which $\step (\fra_i)_{i = 1} = \infty$ and $n < \infty$ does not occur when $K$ is complete.
	When $n = \infty$, then the result can be written in the perhaps simpler form,  
	\[
	\aldiff_{(K(T), w) / K} = \lim_{j \in \N} \step (\fra_{i})_{i = 1}^{j} + w(\phi_0) - \sup \{w(\phi) - w(\phi') \mid \phi \in \keys( (\fra_i)_{i = 1}^{j}  ) \}
	.\] 
\end{para}
\begin{proof}[Proof of \Cref{thm:comp_log_different}]
	First we assume that $\step (\fra_i)_{i = 1}^{n} < \infty$.
	Then by \Cref{prop:ldisc_kv_ldiff}, $\ldiff_{(K(T), w) / K} = \ldisc(\dlog \phi_0) - \kv(\dlog \phi_0)$ as long as one of the terms on the right is finite. 
	The result now follows by filling in the results of \Cref{thm:comp_kahler} and part (ii) of \Cref{thm:disc_and_log_disc_pure_trans}. 

	We now treat the case where $\step(\fra_i) = \infty$. 
	We assume that all log rings are equipped with their standard log structure. 
	Recall that there is an increasing union of $K^{\circ}$ algebras $R_i, i \in I$ such that $(K(T), w)^{\circ}$ is the localization obtained by inverting all elements of valuation $0$ in $R_{\infty} = \bigcup_{i} R_i$, and for each $j \ge i \in I$, $\lctc_{R_j / R_i} \derotimes L^{\circ} = \lOmega_{R_j / R_i}\otimes L^{\circ}[0]$.
	When $n = \infty$, this is due to Theorems \ref{thm:computation_main} and \ref{thm:computation_main_log} and when $n < \infty$ and $\fra_n$ is a stable continuous family, this is due to Lemmas \ref{lem:summary_almost_stable} and \ref{lem:summary_log_almost_stable}.
	Write $R_0 = (K[T, \phi_0^{-1}], v_0)^{\circ}$. 
	Moreover, when $n = \infty$ we let $\mathfrak{A}_j  = (\fra_{i})_{i = 1}^{j}$ and when $n < \infty$ we let  $\mathfrak{A} _j = \left(\fra_1, \ldots, \fra_{n-1}, ([\fra_{n-1}], \psi_j, \gamma_j)\right)$. 
	Then $\mathfrak{A}_j$ approximates $v_j$.

	For $j \ge i$, we may consider the map of SES,
	\if\thesis1
	\[
	\begin{tikzcd}[column sep=small]
		0 \rar & \left( \lOmega_{R_i / K^{\circ}} \derotimes L^{\circ} \right)_{\text{tor}} \rar \dar{f} & \displaystyle \frac{\lOmega_{R_i / K^{\circ}} \derotimes L^{\circ} }{L^{\circ}\cdot \dlog \phi_0} \rar \dar{g} &  \displaystyle \frac{\left(\lOmega_{R_i / K^{\circ}} \derotimes L^{\circ}\right)_{\text{tf}}}{L^{\circ}\cdot \dlog \phi_0} \rar \dar & 0 \\
		0 \rar & \left( \lOmega_{R_j / K^{\circ}} \derotimes L^{\circ} \right)_{\text{tor}} \rar & \displaystyle \frac{\lOmega_{R_j / K^{\circ}} \derotimes L^{\circ} }{L^{\circ}\cdot \dlog \phi_0} \rar &  \displaystyle \frac{\left(\lOmega_{R_j / K^{\circ}} \derotimes L^{\circ}\right)_{\text{tf}}}{L^{\circ} \cdot \dlog \phi_0} \rar & 0 
	\end{tikzcd}
	.\] 
	\else
	\[
	\begin{tikzcd}
		0 \rar & \left( \lOmega_{R_i / K^{\circ}} \derotimes L^{\circ} \right)_{\text{tor}} \rar \dar{f} & \displaystyle \frac{\lOmega_{R_i / K^{\circ}} \derotimes L^{\circ} }{L^{\circ}\cdot \dlog \phi_0} \rar \dar{g} &  \displaystyle \frac{\left(\lOmega_{R_i / K^{\circ}} \derotimes L^{\circ}\right)_{\text{tf}}}{L^{\circ}\cdot \dlog \phi_0} \rar \dar & 0 \\
		0 \rar & \left( \lOmega_{R_j / K^{\circ}} \derotimes L^{\circ} \right)_{\text{tor}} \rar & \displaystyle \frac{\lOmega_{R_j / K^{\circ}} \derotimes L^{\circ} }{L^{\circ}\cdot \dlog \phi_0} \rar &  \displaystyle \frac{\left(\lOmega_{R_j / K^{\circ}} \derotimes L^{\circ}\right)_{\text{tf}}}{L^{\circ} \cdot \dlog \phi_0} \rar & 0 
	\end{tikzcd}
	.\] 
	\fi
	Note that the middle modules are simply $\lOmega_{R_i / R_0} \otimes L^{\circ}$ and $\lOmega_{R_j / R_0} \otimes L^{\circ}$, which have content $\step \mathfrak{A} _i$, $\step \mathfrak{A} _j$ respectively. 
	We then also see that $g$ is injective by the vanishing of $H_1( \lctc_{R_j / R_i} \derotimes L^{\circ} ) $, and thus $f$ is also injective and the family $(( \lOmega_{R_i / K^{\circ}} \derotimes L^{\circ} )_{\text{tor}})_{i \in I}$ is an increasing union with colimit $(\lOmega_{L^{\circ} / K^{\circ}})_{\text{tor}}$. 

	By \Cref{lem:cont_union}, we already see that $\aldiff_{L / K} = \lim_{i} \cont\left( ( \lOmega_{R_i / K^{\circ}} \derotimes L^{\circ} )_{\text{tor}} \right) $ and that the limit is non-decreasing. 
	It remains to be shown that $\cont\left( ( \lOmega_{R_i / K^{\circ}} \derotimes L^{\circ} )_{\text{tor}} \right) = \aldiff_{(K(T), v_i) / K}$.

	Moreover, by \Cref{rem:total_generators_log_differentials}, we know that $\lOmega_{R_i / K^{\circ}} \derotimes L^{\circ}$ is generated by $\{\dlog \phi, \phi \in \keys \mathfrak{A}_i \}$, and therefore $(\lOmega_{R_i / K^{\circ}} \derotimes L^{\circ})_{\text{tf}}$ maps to an almost unit ball of the minimal semi-valuation on $\Omega_{L / K}$ that is $\keys(\mathfrak{A}_i)$-non-expansive, denoted by $u_i$, which like in \Cref{thm:comp_kahler} is determined by \[
	u_i(\dd T) = \sup \{w(\phi) - w(\phi') \mid \phi \in \keys \mathfrak{A}_i\}
	.\]
	Then, the additivity of content states  
	\begin{align*}
		\cont\left( \lOmega_{R_i / K^{\circ}} \derotimes L^{\circ} \right)_{\text{tor}} &= \step \mathfrak{A} _i + w(\phi_0) -   \sup \{w(\phi) - w(\phi') \mid \phi \in \keys \mathfrak{A}_i\}\\
		&= \step \mathfrak{A} _i + v_i(\phi_0) -   \sup \{v_i(\phi) - v_i(\phi') \mid \phi \in \keys \mathfrak{A}_i\}\\
		&=  \aldiff_{(K(T), v_i) / K}
	.\end{align*}
\end{proof}

\printbibliography

\end{document}